\setlist[itemize]{nosep}
\apptocmd{\thebibliography}{\raggedright}{}{}
\patchcmd{\@maketitle}{\global\topskip42\p@\relax}
  {\global\topskip42\p@\relax \vspace*{-38pt}}
  {}{}
\renewcommand*{\backref}[1]{}
\renewcommand*{\backrefalt}[4]{%
    \ifcase #1 (Not cited.)%
    \or        (Cited on page~#2.)%
    \else      (Cited on pages~#2.)%
    \fi}
\DeclareSymbolFont{bbold}{U}{bbold}{m}{n}
\DeclareSymbolFontAlphabet{\mathbbold}{bbold}
\DeclareSymbolFont{extraup}{U}{zavm}{m}{n}
\DeclareMathSymbol{\varheartsuit}{\mathalpha}{extraup}{86}
\newcommand{\arxiv}[1]{\href{http://arxiv.org/abs/#1}{{\tt arXiv:#1}}}
\numberwithin{equation}{section}
\theoremstyle{plain}
\newtheorem{theorem}{Theorem}[section]
\newtheorem{maintheorem}{Theorem}
\newtheorem{maincorollary}[maintheorem]{Corollary}
\newtheorem{proposition}[theorem]{Proposition}
\newtheorem{lemma}[theorem]{Lemma}
\newtheorem{corollary}[theorem]{Corollary}
\newtheorem{conjecture}[theorem]{Conjecture}
\newtheorem*{unnumberedclaim}{Claim}
\newenvironment{step}[1]
 {\stepx}
 {\endstepx}
\newenvironment{case}[1]
 {\casex}
 {\endcasex}
\newenvironment{claim}[1]
 {\claimx}
 {\endclaimx}
\providecommand{\previous}{}
\newenvironment{primedtheorem}[1]{%
  \edef\previous{\getrefnumber{#1}$'$}%
  \let\@currentlabel\previous
  \primedtheoreminner
}{\endprimedtheoreminner}
\providecommand{\previous}{}
\newenvironment{numberedtheorem}[2]{%
  \edef\previous{\getrefnumber{#1}.#2}%
  \let\@currentlabel\previous
  \numberedtheoreminner
}{\endnumberedtheoreminner}
\theoremstyle{definition}
\newtheorem{asm}[theorem]{Assumption}
\newtheorem{defn}[theorem]{Definition}
\newenvironment{definition}[1][]{\begin{defn}[#1]\pushQED{\qed}}{\popQED \end{defn}}
\newtheorem{notn}[theorem]{Notation}
\newtheorem{covn}[theorem]{Convention}
\newenvironment{convention}[1][]{\begin{covn}[#1]\pushQED{\qed}}{\popQED \end{covn}}
\theoremstyle{remark}
\newtheorem{rmk}[theorem]{Remark}
\newenvironment{remark}[1][]{\begin{rmk}[#1] \pushQED{\qed}}{\popQED \end{rmk}}
\newtheorem{eg}[theorem]{Example}
\newenvironment{example}[1][]{\begin{eg}[#1] \pushQED{\qed}}{\popQED \end{eg}}
\DeclareMathOperator{\GL}{GL}
\DeclareMathOperator{\SL}{SL}
\DeclareMathOperator{\Sp}{Sp}
\DeclareMathOperator{\SO}{SO}
\DeclareMathOperator{\fgl}{\mathfrak{gl}}
\newcommand\R{\ensuremath{\mathbb{R}}}
\newcommand\C{\ensuremath{\mathbb{C}}}
\newcommand\Z{\ensuremath{\mathbb{Z}}}
\newcommand\Q{\ensuremath{\mathbb{Q}}}
\newcommand\N{\ensuremath{\mathbb{N}}}
\DeclareMathOperator{\HH}{H}
\newcommand\RH{\ensuremath{\widetilde{\HH}}}
\DeclareMathOperator{\CC}{C}
\newcommand\RC{\ensuremath{\widetilde{\CC}}}
\DeclareMathOperator{\Tor}{Tor}
\DeclareMathOperator{\Aut}{Aut}
\DeclareMathOperator{\Ind}{Ind}
\DeclareMathOperator{\Res}{Res}
\DeclareMathOperator{\Char}{char}
\newcommand\Span[1]{\ensuremath{\langle #1 \rangle}}
\newcommand\SpanSet[2]{\ensuremath{\langle \text{#1 $|$ #2} \rangle}}
\newcommand\Set[2]{\ensuremath{\left\{\text{#1 $|$ #2}\right\}}}
\newcommand\cB{\ensuremath{\mathcal{B}}}
\newcommand\cL{\ensuremath{\mathcal{L}}}
\newcommand\cP{\ensuremath{\mathcal{P}}}
\newcommand\cT{\ensuremath{\mathcal{T}}}
\newcommand\fP{\ensuremath{\mathfrak{P}}}
\newcommand\fR{\ensuremath{\mathfrak{R}}}
\newcommand\fS{\ensuremath{\mathfrak{S}}}
\newcommand\fg{\ensuremath{\mathfrak{g}}}
\newcommand\bA{\ensuremath{\mathbf{A}}}
\newcommand\bB{\ensuremath{\mathbf{B}}}
\newcommand\bD{\ensuremath{\mathbf{D}}}
\newcommand\bG{\ensuremath{\mathbf{G}}}
\newcommand\bH{\ensuremath{\mathbf{H}}}
\newcommand\bL{\ensuremath{\mathbf{L}}}
\newcommand\bP{\ensuremath{\mathbf{P}}}
\newcommand\bQ{\ensuremath{\mathbf{Q}}}
\newcommand\bR{\ensuremath{\mathbf{R}}}
\newcommand\bS{\ensuremath{\mathbf{S}}}
\newcommand\bT{\ensuremath{\mathbf{T}}}
\newcommand\bU{\ensuremath{\mathbf{U}}}
\newcommand\bV{\ensuremath{\mathbf{V}}}
\newcommand\bW{\ensuremath{\mathbf{W}}}
\newcommand\bX{\ensuremath{\mathbf{X}}}
\newcommand\bY{\ensuremath{\mathbf{Y}}}
\newcommand\bZ{\ensuremath{\mathbf{Z}}}
\newcommand\bb{\ensuremath{\mathbf{b}}}
\newcommand\bPhi{\ensuremath{\mathbf{\Phi}}}
\newcommand\bDelta{\ensuremath{\mathbf{\Delta}}}
\newcommand\bbA{\ensuremath{\mathbb{A}}}
\newcommand\bbF{\ensuremath{\mathbb{F}}}
\newcommand\bbG{\ensuremath{\mathbb{G}}}
\newcommand\bbP{\ensuremath{\mathbb{P}}}
\newcommand\bbS{\ensuremath{\mathbb{S}}}
\newcommand\tf{\ensuremath{\widetilde{f}}}
\newcommand\tw{\ensuremath{\widetilde{w}}}
\newcommand\ok{\ensuremath{\overline{k}}}
\newcommand\oDelta{\ensuremath{\overline{\Delta}}}
\newcommand\hbb{\ensuremath{\widehat{\bb}}}
\newcommand\dA{\ensuremath{\operatorname{A}}}
\newcommand\dB{\ensuremath{\operatorname{B}}}
\newcommand\dC{\ensuremath{\operatorname{C}}}
\newcommand\dBC{\ensuremath{\operatorname{BC}}}
\newcommand\dD{\ensuremath{\operatorname{D}}}
\newcommand\dE{\ensuremath{\operatorname{E}}}
\newcommand\dF{\ensuremath{\operatorname{F}}}
\newcommand\dG{\ensuremath{\operatorname{G}}}
\newcommand\dX{\ensuremath{\operatorname{X}}}
\newcommand\dZ{\ensuremath{\operatorname{Z}}}
\newcommand*{\Cdot}[1][1.25]{%
  \mathpalette{\CdotAux{#1}}\cdot%
}
\newdimen\CdotAxis
\newcommand*{\CdotAux}[3]{%
  {%
    \settoheight\CdotAxis{$#2\vcenter{}$}%
    \sbox0{%
      \raisebox\CdotAxis{%
        \scalebox{#1}{%
          \raisebox{-\CdotAxis}{%
            $\mathsurround=0pt #2#3$%
          }%
        }%
      }%
    }%
    \dp0=0pt %
    \sbox2{$#2\bullet$}%
    \ifdim\ht2<\ht0 %
      \ht0=\ht2 %
    \fi
    \sbox2{$\mathsurround=0pt #2#3$}%
    \hbox to \wd2{\hss\usebox{0}\hss}%
  }%
}
\newcommand\St{\ensuremath{\operatorname{St}}}
\newcommand\Tits{\ensuremath{\cT}}
\newcommand\bPhik{\ensuremath{\tensor*[_k]{\bPhi}{}}}
\newcommand\bPhiQ{\ensuremath{\tensor*[_{\Q}]{\bPhi}{}}}
\newcommand\bDeltak{\ensuremath{\tensor*[_k]{\bDelta}{}}}
\newcommand\bDeltaQ{\ensuremath{\tensor*[_{\Q}]{\bDelta}{}}}
\newcommand\bGder{\ensuremath{\bG_{\operatorname{der}}}}
\newcommand\diag{\ensuremath{\operatorname{diag}}}
\newcommand\ssE{\ensuremath{\operatorname{E}}}
\newcommand\rank{\ensuremath{\operatorname{rank}}}
\newcommand\sign{\ensuremath{\operatorname{sign}}}
\newcommand\Apart[1]{\ensuremath{\bbA\llbracket #1 \rrbracket}}
\newcommand\Gen[1]{\ensuremath{\llbracket #1 \rrbracket}}
\newcommand\unit{\ensuremath{\operatorname{\mathbbold{1}}}}
\DeclareRobustCommand\Bind{b}
\title[Homological vanishing for the Steinberg representation II]{Homological vanishing for the Steinberg representation II: reductive groups and integral conjectures}
\author{Jeremy Miller}
\address{Dept of Mathematics; Purdue University; 150 N. University; West Lafayette, IN 47907}
\email{jeremykmiller@purdue.edu}
\author{Peter Patzt}
\address{Dept of Mathematics; University of Oklahoma; 601 Elm Ave; Norman, OK 73019}
\email{ppatzt@ou.edu}
\author{Andrew Putman}
\address{Dept of Mathematics; University of Notre Dame; 255 Hurley Hall; Notre Dame, IN 46556}
\email{andyp@nd.edu}
\thanks{JM was supported by NSF grants DMS-2202943 and DMS-2504473 and a Simons Foundation Travel Support for Mathematicians grant.  
PP was supported by NSF grant DMS-2405310 and
a Simons Foundation Collaboration Grant.  AP was supported by NSF grant DMS-2305183.}
\begin{document}

\begin{abstract}
We prove that the homology groups of any connected reductive group over a field with
coefficients in the Steinberg representation vanish in a range.  The generalizes
work of Ash--Putman--Sam on the classical split groups.
We state a connectivity conjecture 
that would allow us to prove such a vanishing result for
$\SL_n(\Z)$, as was conjectured by Church--Farb--Putman.  We prove some special
cases of this conjecture and use it to refine known results about the first and second
of homology of $\SL_n(\Z)$ with Steinberg coefficients.
\end{abstract}

\maketitle
\thispagestyle{empty}

\tableofcontents

\vspace{-35pt}
\section{Introduction}
\label{section:introduction}

Let\footnote{Here are we using the functorial language of algebraic groups.  For an algebraic
group $\bG$ over a field $k$, there is a group
$\bG(k)$ of $k$-points of $\bG$, and more generally for any commutative $k$-algebra $A$ there is a group $\bG(A)$
of $A$-points.  For $\bG = \GL_n$, these are the groups $\GL_n(k)$ and $\GL_n(A)$.}
$\bG$ be a (connected)\footnote{Here ``connected'' refers to the Zariski topology.  Our convention
is that all reductive groups are connected.} reductive group\footnote{For a reader who has not seen it before, the definition of a reductive
group is not particularly enlightening and we refer them to standard sources for it, e.g., \cite{BorelBook, MilneBook}.
A lot of this paper can be understood by assuming that $\bG$ is $\GL_n$ or $\SL_n$, or more generally one
of the classical split groups $\{\GL_n, \SL_n, \Sp_{2n}, \SO_{n,n}, \SO_{n,n+1}\}$.} 
 over a field $k$.  For example, $\bG$ might
be $\GL_n$ or $\Sp_{2n}$.  The Steinberg representation of $\bG$ encodes the combinatorial
structure of the parabolic\footnote{A $k$-subgroup $\bP$ of $\bG$ is parabolic if $\bG/\bP$ is a projective variety.  For example,
for $\bG = \GL_{n+1}$ the stabilizer $\bP$ of a line $L$ in $k^{n+1}$ is a parabolic subgroup with
$\bG / \bP \cong \bbP^n_k$; indeed, $\bG(k) = \GL_{n+1}(k)$ acts transitively on the set
$\bbP^n_k(k)$ of lines in $k^{n+1}$ and the stabilizer of the point $L \in \bbP^n_k(k)$ is $\bP(k)$.  Projective varieties are compact from the viewpoint
of algebraic geometry, so parabolic subgroups are ``large'' subgroups.} $k$-subgroups of $\bG$ and plays a fundamental role
in representation theory and algebraic topology.  For the classical split groups
\[\bG \in \{\GL_n, \SL_n, \Sp_{2n}, \SO_{n,n}, \SO_{n,n+1}\},\]
Ash--Putman--Sam \cite{AshPutmanSam} proved that the homology of
 $\bG(k)$ with
coefficients in the Steinberg representation vanishes in a range.  
Using a new approach inspired by Miller--Patzt--Wilson's work on the Rognes connectivity
conjecture \cite{MillerPatztWilsonRognes}, this paper extends \cite{AshPutmanSam} to all reductive groups.\footnote{For example, we
can handle $k$-forms of $\GL_n$, i.e., algebraic groups $\bG$ over $k$ that are not necessarily isomorphic to $\GL_n$, but whose
base-change $\bG_{\ok}$ to an algebraic closure $\ok$ is isomorphic to $\GL_n$.}
Even for the classical split groups, our approach 
is technically easier.\footnote{To handle general reductive groups we will have to use
a lot of their basic structural properties, so this might not be clear on a first reading.  One
concrete simplification is that unlike \cite{AshPutmanSam}, our proof is self-contained and
does not rely on understanding the topology of the complexes of partial bases and partial
isotropic bases.}

Church--Farb--Putman \cite{ChurchFarbPutmanConjecture} made a vanishing conjecture
about the high-dimensional cohomology of $\SL_n(\Z)$.  By Borel--Serre duality,
this is equivalent to the vanishing in a stable range of the
homology of $\SL_n(\Z)$ with coefficients in the Steinberg representation of $\SL_n(\Q)$.
This can be viewed as an integral refinement of the $\SL_n$ case Ash--Putman--Sam's vanishing theorem.

We show that our approach to the
vanishing of the homology of $\SL_n(\Q)$ with coefficients in its Steinberg representation
can be extended to $\SL_n(\Z)$ if a certain simplicial complex is highly connected, and
prove this high connectivity in some simple cases.  Using this, we refine the
known cases of this conjecture in degrees $1$ and $2$.

\subsection{Tits building}

Let the semisimple $k$-rank of $\bG$ be $n$.  The spherical Tits building for $\bG$, denoted
$\Tits(\bG)$, is an $(n-1)$-dimensional simplicial complex whose $r$-simplices
are the proper parabolic $k$-subgroups $\bP$ of $\bG$ whose semisimple $k$-rank is $n-1-r$.
The simplex corresponding to a proper parabolic $k$-subgroup $\bP'$ is a face of the one
corresponding to $\bP$ if\footnote{It is not obvious that this specifies a simplicial complex.  Also, we remark
that if $\bP$ is a parabolic $k$-subgroup of $\bG$ and $\bQ$ is another $k$-subgroup with $\bP \subset \bQ$, then $\bQ$
is a parabolic $k$-subgroup.} $\bP \subset \bP'$.  
The conjugation action of $\bG(k)$ on itself
permutes $k$-points $\bP(k)$ of the different parabolic $k$-subgroups $\bP$, giving an action of $\bG(k)$ on $\Tits(\bG)$.
See \cite{TitsBN, BrownBuildings} for more details.

\begin{example}
If $\bG$ is either $\GL_{n+1}$ or $\SL_{n+1}$, then the semisimple $k$-rank of $\bG$ is $n$ and
the parabolic $k$-subgroups of $\bG$ are the stabilizers of flags
\begin{equation}
\label{eqn:flag}
0 \subsetneq V_0 \subsetneq V_1 \subsetneq \cdots \subsetneq V_r \subsetneq k^{n+1}
\end{equation}
of subspaces of $k^{n+1}$.  The quotient of $\bG$ by the stabilizer of \eqref{eqn:flag} is a flag variety.
The semisimple $k$-rank of the stabilizer of \eqref{eqn:flag}
is $n-1-r$.  The Tits building $\Tits(\bG)$ is the simplicial complex
whose $r$-simplices are length-$r$ flags of subspaces of $k^{n+1}$.
\end{example}

\begin{remark}
If $\bG$ has no proper parabolic $k$-subgroups, then its semisimple $k$-rank is $0$ and
$\Tits(\bG) = \emptyset$.  Such $\bG$ are called {\em anisotropic}.  Easy examples
include $\GL_0$ and $\GL_1$.  A more interesting example is
the orthogonal group of a quadratic form $q$ on a $k$-vector space $V$ that is anisotropic, i.e., such that the only
$\vec{v} \in V$ with $q(\vec{v}) = 0$ is $\vec{v} = 0$.
\end{remark}

\subsection{Steinberg representation}

Let $\bbF$ be a commutative ring.  The Solomon--Tits theorem \cite{SolomonTits} says
that $\Tits(\bG)$ is homotopy equivalent to a wedge of $(n-1)$-dimensional spheres.  The
Steinberg representation of $\bG$ with coefficients in $\bbF$, denoted $\St(\bG;\bbF)$, 
is its reduced $(n-1)$-dimensional homology group with coefficients in $\bbF$, i.e., $\St(\bG;\bbF) = \RH_{n-1}(\Tits(\bG);\bbF)$.
For $\bbF = \Z$, we will omit $\bbF$ from our notation and just write $\St(\bG)$.
The action of $\bG(k)$ on $\Tits(\bG)$ induces an action of $\bG(k)$ on $\St(\bG;\bbF)$.
See \cite{HumphreysSurvey, SteinbergSurvey} for surveys.

These are finite-dimensional representations of $\bG(k)$
when $k$ is finite, and Steinberg and Curtis \cite{Steinberg1, Steinberg2, Steinberg3,
CurtisBN} showed that if $k$ is finite and $\bbF$ is a field, then $\St(\bG;\bbF)$ is
usually\footnote{For example, this holds if $\Char(\bbF) = 0$ or $\Char(\bbF) = \Char(k)$.} 
an irreducible representation of $\bG(k)$.  Except in degenerate cases, for infinite $k$ the representation $\St(\bG;\bbF)$
is infinite-dimensional.
Putman--Snowden \cite{PutmanSnowden} showed that in this case $\St(\bG;\bbF)$ is
irreducible for all fields $\bbF$. 

\begin{remark}
\label{remark:anisotropicst}
If $\bG$ is anisotropic, then $n=0$ and $\Tits(\bG) = \emptyset$.  Our convention
is 
$\St(\bG;\bbF) = \RH_{-1}(\Tits(\bG);\bbF) = \RH_{-1}(\emptyset;\bbF) = \bbF$.
The group $\bG(k)$ acts trivially on $\St(\bG;\bbF) = \bbF$.
\end{remark}

\subsection{Weak form of main theorem}

We can now state a weak form of our main theorem.

\begin{maintheorem}[Weak form]
\label{maintheorem:fieldseasy}
Let $\bG$ be a reductive group over a field $k$ with semisimple $k$-rank $n$ and let
$\bbF$ be a commutative ring.  Then $\HH_i(\bG(k);\St(\bG;\bbF))=0$ for
$i \leq b(n)$, where $b\colon \N \rightarrow \Z$ is a function with $\lim_{n \mapsto \infty} b(n) = \infty$.
\end{maintheorem}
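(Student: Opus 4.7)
I would proceed by induction on the semisimple $k$-rank $n$ of $\bG$, using an equivariant spectral sequence attached to the action of $\bG(k)$ on a well-chosen simplicial complex. The base cases are handled directly: when $n=0$ we have $\St(\bG;\bbF)=\bbF$ by Remark \ref{remark:anisotropicst}, and the assertion only requires $b(0)\le -1$; when $n=1$ the Tits building is discrete, and a direct analysis of $\HH_0(\bG(k);\St(\bG;\bbF))$ suffices.

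For the inductive step, the plan is to build a $\bG(k)$-equivariant simplicial complex $X$ with three features: (i) the connectivity of $X$ grows at least linearly with $n$; (ii) each simplex stabilizer $\mathrm{Stab}(\sigma)$ in $X$ is, up to a finite-index or finite-kernel correction, the group of $k$-points of a reductive $k$-subgroup of $\bG$ of strictly smaller semisimple $k$-rank; and (iii) the restriction of $\St(\bG;\bbF)$ to each such stabilizer can be expressed, modulo harmless finite-rank twists, in terms of a Steinberg representation of the smaller group. Granted such an $X$, the isotropy spectral sequence
\[
E^1_{p,q} \;=\; \bigoplus_{[\sigma]\in X_p/\bG(k)} \HH_q\bigl(\mathrm{Stab}(\sigma);\St(\bG;\bbF)\bigr) \;\Longrightarrow\; \HH^{\bG(k)}_{p+q}\bigl(X;\St(\bG;\bbF)\bigr)
\]
has abutment equal to $\HH_{p+q}(\bG(k);\St(\bG;\bbF))$ in the range in which $X$ is connected. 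Rewriting the $E^1$ entries using the Steinberg-compatibility (iii) and invoking the inductive hypothesis kills them for $q$ up to roughly $b(n-1)$, so the combined range of vanishing for $\HH_{\ast}(\bG(k);\St(\bG;\bbF))$ grows with $n$.

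The main obstacle, and the technical heart of the argument, is the construction of $X$ and the proof of its high connectivity. Following the paper's stated inspiration from Miller--Patzt--Wilson's treatment of the Rognes connectivity conjecture, the natural candidate for $X$ is a complex built from generic building-theoretic configurations---for example, pairwise-opposite parabolic $k$-subgroups, apartments of $\Tits(\bG)$, or maximal split $k$-tori---arranged so that the link of a simplex in $X$ is itself a building of smaller semisimple rank. High connectivity should then follow from a Quillen-style induction on $n$ combined with a careful link analysis. A secondary, but still nontrivial, obstacle is verifying the Steinberg-compatibility condition (iii); this should be handled using the apartment generating set for $\St(\bG;\bbF)$ and the Levi decomposition of parabolic subgroups to discard unipotent radicals, replacing the ad hoc partial-basis arguments of Ash--Putman--Sam by intrinsic building-theoretic input that is uniform across all reductive $\bG$.
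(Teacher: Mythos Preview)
Your broad architecture---induction on the semisimple rank via a spectral sequence whose $E^1$ terms involve smaller-rank groups---matches the paper, but you have inverted the emphasis on where the work lies.

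The paper takes $X=\Tits(\bG)$ itself; its high connectivity is the Solomon--Tits theorem and needs no new argument. Concretely, the paper tensors the (exact) augmented chain complex of $\Tits(\bG)$ with $\St(\bG)$ to obtain a resolution
\[
0 \to \St(\bG)^{\otimes 2} \to \bR_{n-1} \to \cdots \to \bR_0 \to \St(\bG) \to 0,
\]
and the associated hyperhomology spectral sequence has $\ssE^1_{pq}\cong\bigoplus_{\Delta\in\cL_p(\bG)}\HH_q(\bL_\Delta(k);\St(\bL_\Delta))$ for $0\le p\le n-1$. This is equivalent to your isotropy spectral sequence for the $\bG(k)$-action on $\Tits(\bG)$, with simplex stabilizers the parabolics $\bP_\Delta(k)$. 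Your step (iii) is precisely what allows the rewriting $\HH_q(\bP_\Delta(k);\St(\bG))\cong\HH_q(\bL_\Delta(k);\St(\bL_\Delta))$, and it is handled not by an ad hoc argument but by a clean theorem of Reeder: as a $\bP(k)$-representation, $\St(\bG)\cong\Ind_{\bL(k)}^{\bP(k)}\St(\bL)$ for any parabolic $\bP$ with Levi $\bL$. So your proposed complexes of opposite parabolics, apartments, or maximal split tori are unnecessary detours, and the ``main obstacle'' you flag is essentially absent.

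The genuine technical heart, which your outline omits, is that the inductive vanishing of the $\ssE^1_{pq}$ does not by itself give the claimed bound: certain terms on the critical antidiagonal (typically $\ssE^1_{0d}$, and sometimes $\ssE^1_{1,d-1}$) survive the naive estimate, and one must show that specific $\ssE^1$-differentials surject onto them. The paper handles this by strengthening the inductive hypothesis to include, beyond vanishing up to $\bb(\bPhik(\bG))$, a surjectivity statement for the map $\HH_{b+1}(\bL_{\Delta}(k);\St(\bL_{\Delta}))\to\HH_{b+1}(\bG(k);\St(\bG))$ from a distinguished Levi, and then computes the relevant differentials type by type. This differential bookkeeping, together with the reduction from reducible to irreducible relative root systems via K\"unneth-type arguments, is where most of the effort actually goes.
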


\begin{remark}
For anisotropic groups $\bG$, we have $n=0$.  The function
$b$ satisfies $b(0) = -1$, so our theorem says nothing nontrivial about such $\bG$.
\end{remark}

\begin{remark}
If $k$ is a finite field and $\bG(k)$ is a finite group of Lie type, then $\St(\bG;k)$ is a projective
$\bG(k)$-module (see \cite{HumphreysSurvey}) and thus $\HH_i(\bG(k);\St(\bG;k)) = 0$ for $i \geq 0$.  However, 
$\St(\bG;\bbF)$ is not projective for a general commutative ring like $\bbF = \Z$, so even for finite groups
of Lie type Theorem \ref{maintheorem:fieldseasy} has nontrivial content.
\end{remark}

The stronger form of our main theorem gives a specific vanishing range for $\bG$ depending on
its relative root system, which we now discuss.

\subsection{Relative root system}

Let $\bT$ be a maximal\footnote{Here ``maximal'' means among $k$-split tori.} $k$-split torus\footnote{A {\em $k$-split torus} is an algebraic group $\bT$ defined over $k$ that is
isomorphic to $\left(\GL_1\right)^{d}$ for some $d \geq 0$, so $\bT(k) = (k^{\times})^{d}$.
If $\bG$ is $\GL_{n+1}$ or $\SL_{n+1}$, then its subgroup
of diagonal matrices is a maximal $k$-split torus.} in $\bG$ and let $\fg$ be the Lie algebra of $\bG$.  The
action of $\bT$ on $\fg$ decomposes into eigenspaces.  The
{\em relative root system} of $\bG$, denoted $\bPhik(\bG)$, is the set of characters of
$\bT$ that are nontrivial eigenvalues for this action.  As we will discuss
in more detail in \S \ref{section:reductive}, there is a natural inner product on $\bPhik(\bG)$
for which $\bPhik(\bG)$ is a root system.  All maximal $k$-split tori in $\bG$ are conjugate, so this
does not depend on the choice of $\bT$.

\begin{example}
Reduced (cf.\ Remark \ref{remark:reduced}) irreducible\footnote{A root system is irreducible if it does not
decompose as a nontrivial product of root systems.} root systems fall into $4$ infinite families $\{\dA_n, \dB_n, \dC_n, \dD_n\}$ and $5$ exceptional systems
$\{\dG_2, \dF_4, \dE_6, \dE_7, \dE_8\}$.  For the classical split groups, the relative root systems are as follows:
\begin{alignat*}{11}
&&\bPhik(\GL_{n+1}) &\cong &\dA_n &\quad &\bPhik(\SL_{n+1}) &\cong &\dA_{n} &\quad &\bPhik(\SO_{n,n+1}) &\cong &\dB_n \\
&&\bPhik(\Sp_{2n})  &\cong &\dC_n &\quad &\bPhik(\SO_{n,n}) &\cong &\dD_n   &      &                    &      &
\end{alignat*}
\par \vspace{-1.7\baselineskip}\qedhere
\end{example}

\begin{remark}
\label{remark:reduced}
When $k$ is not algebraically closed, it might be the case that maximal tori\footnote{An (algebraic) torus is an algebraic group
$\bT$ defined over $k$ whose base-change $\bT_{\overline{k}}$ to an algebraic closure is a $\overline{k}$-split torus.  For example,
let $k = \R$ and let $\bT = \SO(2)$.  For an $\R$-algebra $A$, we have $\bT(A) = \Set{$(x,y) \in A^2$}{$x^2+y^2=1$}$ with
the multiplication $(x_1,y_1) \Cdot (x_2,y_2) = (x_1 x_2-y_1 y_2, x_1 y_2 + x_2 y_1)$.  This is a non-split torus
with $\bT(\R) \cong \bbS^1$ and $\bT_{\C} \cong \GL_1$.  
The isomorphism
between $\bT_{\C}(\C) = \bT(\C)$ and $\GL_1(\C) = \C^{\times}$ takes $(x,y) \in \C^2$ with $x^2+y^2 = 1$ to $x + i y \in \C^{\times}$.} in $\bG$ are
not split over $k$, so $\bT$ is not a maximal torus.  In this case, the relative root system $\bPhik(\bG)$
might not be reduced, i.e., there might be roots $\alpha \in \bPhik(\bG)$ such that
$c \alpha \in \bPhik(\bG)$ for some $c \neq \pm 1$.  It turns out that the only additional possibility
here is $c = \pm 2$, and the only nonreduced irreducible root systems are those of
type $\dBC_n$.
\end{remark}

\begin{remark}
\label{remark:absoluteroot}
The {\em absolute root system} of $\bG$ is the root system $\bPhi(\bG)$ associated to
a maximal torus $\bS$.  All maximal tori in $\bG$ are conjugate, so this does not depend on the choice of $\bS$.  Choosing $\bS$ with
$\bT \subset \bS$, we can restrict characters from $\bS$ to $\bT$ and get a surjective map
\[\bPhi(\bG) \sqcup \{0\} \longrightarrow \bPhik(\bG) \sqcup \{0\}.\]
If $\bS$ splits\footnote{This always holds when $k$ is algebraically closed.} and thus $\bS = \bT$, then $\bPhi(\bG) = \bPhik(\bG)$ and we say that $\bG$ is {\em split}.
\end{remark}

\subsection{Stronger form of main theorem}

\begin{table}[t]
\begin{tblr}{r|l}
$\bPhi$ & $\bb(\bPhi)$ \\
\hline
\hline
$\dA_n$                                      & $\left\lfloor \frac{n-1}{2} \right\rfloor$ \\
\hline
$\dB_n$, $\dC_n$, $\dBC_n$                   & $\left\lfloor \frac{n-2}{2} \right\rfloor$ \\
\hline
$\dD_n$                                      & $\left\lfloor \frac{n-3}{2} \right\rfloor$ \\
\hline
$\dG_2$, $\dF_4$, $\dE_6$, $\dE_7$, $\dE_8$  & $0$ \\
\hline
\hline
\end{tblr}
\caption{The bounds $\bb(\bPhi)$ associated to irreducible relative root systems $\bPhi$.}
\label{table:bounds}
\end{table}

The irreducible root systems $\bPhi$ are listed in Table \ref{table:bounds}.  For each of them,
let $\bb(\bPhi)$ be the bound listed in that table.  A general root system $\bPhi$ can be written
in the form
$\bPhi = \bPhi_1 \times \cdots \times \bPhi_m$
with each $\bPhi_j$ irreducible, and we define
\begin{equation}
\label{eqn:reducibleb}
\bb(\bPhi) = (m-1) + \sum_{j=1}^m \bb(\bPhi_j).
\end{equation}
If $\bPhi = \emptyset$, then $m=0$ and $\bb(\bPhi) = -1$.  With this notation, our main theorem is as follows:

\begin{maintheorem}[Stronger form]
\label{maintheorem:fields}
Let $\bG$ be a reductive group over a field $k$ and let
$\bbF$ be a commutative ring.  Then $\HH_i(\bG(k);\St(\bG;\bbF))=0$ for
$i \leq \bb(\bPhik(\bG))$.
\end{maintheorem}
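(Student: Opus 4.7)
The plan is to induct on $\bb(\bPhik(\bG))$, with vacuous base case $\bb=-1$ (which covers the anisotropic situation by Remark~\ref{remark:anisotropicst}). The inductive step splits naturally into a Künneth-style reduction to the irreducible case and, in that case, a spectral-sequence argument driven by a highly connected auxiliary $\bG(k)$-complex whose simplex-stabilizers are reductive with strictly smaller bound.

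\smallskip

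\noindent\textbf{Reduction to the irreducible case.} Suppose $\bPhik(\bG)=\bPhi_1\times\cdots\times\bPhi_m$ with $m\geq 2$. Then up to a central isogeny with kernel of multiplicative type, $\bG$ is a direct product $\bG_1\times\cdots\times\bG_m$ with $\bPhik(\bG_j)=\bPhi_j$; correspondingly $\Tits(\bG)$ is the join $\Tits(\bG_1)\ast\cdots\ast\Tits(\bG_m)$ and $\St(\bG;\bbF)\cong\bigotimes_j\St(\bG_j;\bbF)$ as a representation of $\prod_j\bG_j(k)$. A Künneth spectral sequence together with the inductive hypothesis applied to each $\bG_j$ and the defining identity $\bb(\bPhi_1\times\cdots\times\bPhi_m)=(m-1)+\sum_j\bb(\bPhi_j)$ then yields the bound. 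The central-isogeny kernel, being finite and of multiplicative type, contributes only through a Lyndon--Hochschild--Serre spectral sequence whose low-degree behavior is easily controlled.

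\smallskip

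\noindent\textbf{Irreducible case.} When $\bPhik(\bG)$ is irreducible, the plan is to build a $\bG(k)$-simplicial complex $X=X(\bG)$ whose $p$-simplices are families of $p+1$ pairwise commuting rank-one $k$-split subtori of $\bG$ (equivalently, chains of mutually compatible opposite corank-one parabolic $k$-subgroups). The $\bG(k)$-stabilizer of a $p$-simplex is then, up to a central torus, the $k$-points of a reductive subgroup whose relative root system splits off $p+1$ copies of $\dA_1$ from $\bPhik(\bG)$; by the product formula for $\bb$ this stabilizer has strictly smaller bound. Running the Cartan--Leray spectral sequence for $\bG(k)$ acting on $X$ with coefficients in $\St(\bG;\bbF)$, invoking Shapiro's lemma and identifying the induced stabilizer contribution as a Steinberg for the smaller Levi tensored with an explicit piece for the split $\dA_1$ factors, reduces the theorem to the inductive hypothesis on the Levi stabilizers, provided $X$ is $\bb(\bPhik(\bG))$-connected.

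\smallskip

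\noindent\textbf{Main obstacle.} The heart of the proof is the connectivity statement for $X$. For split classical groups I would attempt to deduce it by a retraction argument from the known connectivity of complexes of partial (isotropic) frames due to Maazen, Charney, and Vogtmann, thereby recovering Ash--Putman--Sam by a shorter route. The genuinely new input required for general (possibly non-split) reductive $\bG$ is an intrinsic connectivity bound indexed by the relative root system, which I would pursue via a Quillen-style poset argument retracting $X$ onto a subcomplex governed by the relative apartment of $\Tits(\bG)$; the non-reduced system $\dBC_n$ will need separate handling because of the coexistence of roots of two lengths, and the sharper bound $\lfloor(n-1)/2\rfloor$ in type $\dA$ (one better than in types $\dB$, $\dC$, $\dBC$) likely requires a self-duality argument particular to the palindromic $\dA_n$ Dynkin diagram.
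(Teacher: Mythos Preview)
Your reduction to the irreducible case via K\"unneth and central isogeny is in the right spirit and matches what the paper does in \S\ref{section:reducible}--\S\ref{section:reductionirreducible}, though the paper handles the isogeny carefully via Lemmas~\ref{lemma:reducetosemisimple} and~\ref{lemma:reducetoproduct}.

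Your approach to the irreducible case, however, differs from the paper's and has real gaps. The paper builds no auxiliary complex whose connectivity must be proved. It uses the Tits building $\Tits(\bG)$ itself: tensoring its augmented chain complex with $\St(\bG)$ and invoking Reeder's theorem (Theorem~\ref{theorem:reedermap}) produces a resolution with terms $\Ind_{\bL_\Delta(k)}^{\bG(k)}\St(\bL_\Delta)$ (Proposition~\ref{proposition:resolution}), whose associated spectral sequence (Corollary~\ref{corollary:spectralsequence}) has $E^1_{pq}=\bigoplus_\Delta\HH_q(\bL_\Delta(k);\St(\bL_\Delta))$. Solomon--Tits supplies all the connectivity needed; the paper explicitly notes that, unlike \cite{AshPutmanSam}, it avoids partial-frame complexes entirely.

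Two gaps in your plan. First, your description of simplex stabilizers is off: the centralizer of a product of $p{+}1$ commuting rank-one split tori is a Levi of corank $p{+}1$, whose relative root system is a corank-$(p{+}1)$ subsystem of $\bPhik(\bG)$, not something with $p{+}1$ factors of $\dA_1$ split off. More seriously, the identification you need---that $\St(\bG)$ restricted to such a stabilizer is ``Steinberg of the smaller Levi tensored with an explicit piece''---is precisely the content of Reeder's theorem for \emph{parabolic} subgroups, and you give no argument that anything analogous holds for your stabilizers. Second, even with the paper's cleaner setup, the inductive vanishing hypothesis alone does not kill all relevant $E^1$-terms: in type $\dA_{2d}$, for instance, $E^1_{0d}$ and $E^1_{1,d-1}$ survive the vanishing range (Lemma~\ref{lemma:vanishan}) and must be killed by differentials, whose surjectivity (Lemmas~\ref{lemma:differential1an}, \ref{lemma:differential2an}, \ref{lemma:differential3an}) requires a \emph{surjectivity} statement about maps $\HH_*(\bL_{\Delta'};\St)\to\HH_*(\bL_\Delta;\St)$ between nested Levis. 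The paper therefore proves a strengthened inductive statement (Theorems~\ref{theorem:strongan}, \ref{theorem:strongbcn}, \ref{theorem:strongdn}) packaging vanishing and surjectivity together; your proposal does not anticipate this, and without it the spectral sequence argument cannot close.
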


\begin{remark}
For the classical split groups, this was proved by Ash--Putman--Sam \cite{AshPutmanSam}.  Their
bound is the same as ours for $\bG \in \{\GL_n, \SL_n, \SO_{n,n+1}, \Sp_{2n}\}$.  However, for $\bG = \SO_{n,n}$
their bound is $\lfloor (n-2)/2 \rfloor$, while ours is $\lfloor (n-3)/2 \rfloor$.
\end{remark}

\begin{remark}
The bounds in Table \ref{table:bounds} for the exceptional root systems are not the best that can be achieved via our methods.\footnote{They 
are not entirely trivial since they say something about $\HH_0$.  Also, due to the $(m-1)$ in \eqref{eqn:reducibleb} these zeros do not affect
the fact that $\bb(\bPhik(\bG))$ goes to infinity as the semisimple $k$-rank of $\bG$ goes to infinity.}
With more work, one could instead achieve the bounds
\begin{align*}
\bb(\dG_2) &= 0, \quad 
\bb(\dF_4) =  0, \quad
\bb(\dE_6) =  1, \quad
\bb(\dE_7) =  1, \quad
\bb(\dE_8) =  2.
\end{align*}
However, doing this would make our proof less uniform, so we decided not to do it.
\end{remark}

\subsection{Arithmetic groups}

Our proof of Theorem \ref{maintheorem:fields} suggests an approach to also proving vanishing
theorems for certain arithmetic groups, which we now describe.
Consider $\bG = \SL_n$ over the field $k = \Q$.  This algebraic group has a natural $\Z$-form, so we
can talk about the arithmetic group $\bG(\Z) = \SL_n(\Z)$.  
Borel--Serre \cite{BorelSerreCorners} proved that the virtual cohomological dimension of $\SL_n(\Z)$ is $\binom{n}{2}$.  They also
proved that $\SL_n(\Z)$ is a virtual duality group with dualizing module the Steinberg representation.  This implies
that 
\begin{equation}
\label{eqn:duality}
\HH^{\binom{n}{2} - i}(\SL_n(\Z);\Q) \cong \HH_i(\SL_n(\Z);\St(\SL_n;\Q)) \quad \text{for $i \geq 0$}.
\end{equation}
Church--Farb--Putman \cite{ChurchFarbPutmanConjecture} conjectured that
\[\HH^{\binom{n}{2} - i}(\SL_n(\Z);\Q) = 0 \quad \text{for $n \geq i+2$}.\]
By \eqref{eqn:duality}, this is equivalent to the following:

\begin{conjecture}
\label{conjecture:cfp}
For $n \geq 2$, we have $\HH_i(\SL_n(\Z);\St(\SL_n;\Q)) = 0$ for $i \leq n-2$.  
\end{conjecture}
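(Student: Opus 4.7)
The plan is to mimic the approach used for Theorem \ref{maintheorem:fields} in the case $\bG = \SL_n$ over $k = \Q$, but with an integral simplicial complex replacing the rational one. In the field case the argument exploits an $\SL_n(\Q)$-action on a highly connected auxiliary complex whose cell stabilizers are parabolic subgroups, feeds it into an equivariant spectral sequence, and uses the Levi structure together with a Solomon--Tits type decomposition to reduce to Steinberg homology of smaller rank groups. For $\SL_n(\Z)$ I would introduce an $\SL_n(\Z)$-equivariant complex $X_n$ built out of integral flag data --- for instance, chains of primitive direct summands of $\Z^n$, or a suitable ``split'' variant in the spirit of Miller--Patzt--Wilson \cite{MillerPatztWilsonRognes} --- so that the stabilizer of a $p$-simplex $\sigma$ is an extension of a product of smaller integral $\SL$ groups by a $\Z$-unipotent (hence rationally acyclic) group.

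Granting such a complex, I would run the equivariant spectral sequence
\[ E^1_{p,q} \;=\; \bigoplus_{[\sigma]\in X_n^{(p)}/\SL_n(\Z)} \HH_q\bigl(\mathrm{Stab}(\sigma);\, \St(\SL_n;\Q)\bigr) \;\Longrightarrow\; \HH_{p+q}\bigl(\SL_n(\Z);\, \St(\SL_n;\Q)\bigr). \]
The key input is that, since $\St(\SL_n;\Q)$ is defined from the rational Tits building, its restriction to the stabilizer of a $p$-simplex of $X_n$ ought to decompose (via the link formula underlying Solomon--Tits) as a tensor product of Steinberg representations of the associated rational Levi factors; by Künneth and a Shapiro-type argument over the unipotent radical, this expresses each $\HH_q(\mathrm{Stab}(\sigma); \St(\SL_n;\Q))$ in terms of Steinberg homology of strictly smaller integral $\SL$ groups, to which an induction on $n$ applies. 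In the desired range $i\leq n-2$, the inductive hypothesis together with the $(n-2)$-connectivity of $X_n$ combine to force the relevant $E^\infty$-terms to vanish.

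The main obstacle --- and precisely the reason Conjecture \ref{conjecture:cfp} is stated conditionally in the paper rather than as a theorem --- is the required high connectivity of $X_n$, namely $(n-2)$-connectivity. Even pinning down the correct complex is delicate: naive integral flag complexes are often insufficiently connected because of arithmetic obstructions such as class group or unit group phenomena, and because $\SL_n(\Z)$ fails to act transitively on the kind of refined data that makes the rational argument clean. For small $n$ the connectivity can be verified directly by explicit bad-simplex nullhomotopies played against a cofinal system of apartments in $\Tits(\SL_n(\Q))$, which is exactly what underlies the paper's refinements of the known $\HH_1$ and $\HH_2$ statements; pushing this connectivity through for all $n$ is the genuinely hard step, and is precisely the content of the separate connectivity conjecture whose resolution would imply Conjecture \ref{conjecture:cfp}.
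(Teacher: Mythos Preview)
Your overall plan---replace the rational building by an integral complex, feed it into an equivariant/hyperhomology spectral sequence, and induct on rank via a Levi/K\"unneth decomposition---is exactly the paper's strategy. The paper makes the complex explicit: it is the \emph{double Tits building} $\Tits^2(\Z^n)$, or equivalently the chain complex $\bS_\bullet(\Z^n)$ of \S\ref{section:integralrefinement}, whose terms are direct sums of tensor products $\St(W_1)\otimes\cdots\otimes\St(W_{p+2})$ over integral splittings $\Z^n=W_1\oplus\cdots\oplus W_{p+2}$. High connectivity of $\Tits^2(\Z^n)$ is precisely the ``$b$-integral resolution conjecture'' (Conjecture~\ref{conjecture:integralresolution}), and the spectral sequence you describe is Lemma~\ref{lemma:spectralsequenceintegral}.

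There is, however, a genuine gap in your final claim. You assert that $(n-2)$-connectivity of $X_n$ together with the inductive hypothesis forces vanishing in the full range $i\le n-2$. The paper shows that this is \emph{not} what the method delivers: even granting the strongest possible connectivity of $\Tits^2(\Z^n)$ (equivalently, full exactness of $\bS_\bullet(\Z^n)$), the argument only yields vanishing of slope $1/2$, namely $i\le\lfloor(n-2)/2\rfloor$ with $2,3$ inverted, or slope $1/3$ integrally (Theorem~\ref{maintheorem:doublecomvanish}). The reason is intrinsic to the spectral-sequence induction. If one assumes the slope-$1$ conjecture for all smaller ranks, then for a Levi of shape $\GL_{a_1}\times\cdots\times\GL_{a_{p+2}}$ with $\sum a_j=n$, K\"unneth gives vanishing only for $q\le n-p-2$, so the entire diagonal $p+q=n-1$ of the $E^1$-page is potentially nonzero---roughly $n$ terms that must all be killed by differentials. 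The paper's differential computations (the ``surjectivity'' half of Theorems~\ref{theorem:strongan} and~\ref{theorem:strongintegral}) can handle at most one or two such terms, which is why the self-consistent slope is $1/2$ rather than $1$. So connectivity is indeed the main unproved input, but it is not the only obstruction: closing the gap between slope $1/2$ and the conjectured slope $1$ would require genuinely new ideas beyond the spectral-sequence bookkeeping you outline.
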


This can be viewed as an integral version of Theorem \ref{maintheorem:fields}, though the stable
range in it is stronger than Theorem \ref{maintheorem:fields}.
Conjecture \ref{conjecture:cfp} is known to hold for $n \leq 7$ (see \cite[Table 1]{ChurchFarbPutmanConjecture}).  It has
also been proven for $i=0$ by Lee--Szczarba \cite{LeeSzczarba}, for $i=1$ by Church--Putman \cite{ChurchPutmanCodimOne}, and
for $i=2$ by Br\"{u}ck--Miller--Patzt--Sroka--Wilson \cite{BruckMillerPatztSrokaWilson}.

\begin{remark}
More generally, it is natural to conjecture that something like Conjecture \ref{conjecture:cfp} holds for
other integral Chevalley groups $\bG$ like $\Sp_{2g}$ or $\SO_{n,n}$.  This is asked explicitly
in \cite[Question 1.2]{BruckSantosRegoSroka}, which in many cases proves this for $i=0$.  See
also \cite{BruckPatztSroka}, which proves this for $i=1$ in the special case of $\bG = \Sp_{2g}$.
Our conjectural approach for proving a version of Conjecture \ref{conjecture:cfp} can also
be generalized to this setting, though to keep this paper a reasonable length we chose not
to make this explicit.
\end{remark}

\subsection{Double Tits building}
\label{section:doubletits}

As part of their work on the Rognes connectivity conjecture \cite{RognesConjecture}, Miller--Patzt--Wilson \cite{MillerPatztWilsonRognes}
gave an approach to Conjecture \ref{conjecture:cfp}.  
Define $\Tits(\Z^n)$ to be the simplicial complex whose $r$-simplices are flags
\begin{equation}
\label{eqn:singleflag}
0 \subsetneq V_0 \subsetneq V_1 \subsetneq \cdots \subsetneq V_r \subsetneq \Z^n
\end{equation}
of direct summands of $\Z^n$.  There is a bijection between subspaces of $\Q^n$ and direct
summands of $\Z^n$ that takes a subspace $V \subset \Q^n$ to $V \cap \Z^n$.  Using this,
there is an isomorphism $\Tits(\Z^n) \cong \Tits(\SL_n)$.  Here just like above
$\SL_n$ is taken over the field $k = \Q$.

For every flag \eqref{eqn:singleflag}, there exists a basis $B = \{\vec{v}_1,\ldots,\vec{v}_n\}$
for $\Z^n$ such that each $V_i$ is the span of some subset of $B$.  Say that two flags
\begin{equation}
\label{eqn:compatibleflags}
0 \subsetneq V_0 \subsetneq V_1 \subsetneq \cdots \subsetneq V_r \subsetneq \Z^n \quad \text{and} \quad
0 \subsetneq W_0 \subsetneq W_1 \subsetneq \cdots \subsetneq W_s \subsetneq \Z^n
\end{equation}
of direct summands are {\em compatible} if there exists a basis $B = \{\vec{v}_1,\ldots,\vec{v}_n\}$ for $\Z^n$
such that each $V_i$ and $W_j$ is the span of some subset of $B$.  Define the {\em double building}
$\Tits^2(\Z^n)$ to be the simplicial complex whose $p$-simplices are pairs of of compatible flags
\eqref{eqn:compatibleflags} with $p=r+s+1$.  
Here if the $V_i$ (resp.\ the $W_j$) form the empty flag we have $r=-1$ (resp.\ $s = -1$).
We can identify $\Tits^2(\Z^n)$ with a subcomplex of the join $\Tits(\Z^n) \ast \Tits(\Z^n)$.  The
flags formed by the $V_i$ live in the first term, and the flags formed by the $W_j$ live in the second.

\subsection{Connectivity of double Tits building}

The complex $\Tits^2(\Z^n)$ is $(2n-3)$-dimensional.  Miller--Patzt--Wilson \cite{MillerPatztWilsonRognes}
proved that if it was $(2n-4)$-connected, then the Rognes connectivity conjecture \cite[Conjecture 12.3]{RognesConjecture}
would hold for the ring $\Z$.
Their results show that this would also imply the following slight weakening of Conjecture \ref{conjecture:cfp}:
\begin{itemize}
\item For $n \geq 2$ and $\bbF$ a field of characteristic $0$, we have 
$\HH_i(\GL_n(\Z);\St(\GL_n;\bbF)) = \HH_i(\SL_n(\Z);\St(\SL_n;\bbF)) = 0$
for $i \leq \lfloor (n-2)/2 \rfloor$.
\end{itemize}
This combines results from \cite{MillerPatztWilsonRognes} (cf.\ Theorem \ref{theorem:doubletitshomology} below)
with \cite[Theorems 7.2 \& 7.8]{MillerNagpalPatzt}.

\begin{remark}
By \cite{AshPutmanSam} (which is generalized in Theorem \ref{maintheorem:fields}), for all commutative rings $\bbF$ we have
$\HH_i(\GL_n(\Q);\St(\GL_n;\bbF)) = \HH_i(\SL_n(\Q);\St(\SL_n;\bbF)) = 0$
for $i \leq \lfloor (n-2)/2 \rfloor$.  This is the same range Miller--Patzt--Wilson's 
work conjecturally gives for $\GL_n(\Z)$ and $\SL_n(\Z)$.
\end{remark}

\begin{remark}
It is natural to hope that $\Tits^2(\Z^n)$ is $(2n-4)$-connected.  Indeed, if
we were working over a field $k$ rather than $\Z$, then it
is easy to see that any two flags are compatible.  Thus $\Tits^2(k^n) = \Tits(k^n) \ast \Tits(k^n)$,
which is $(2n-4)$-connected since $\Tits(k^n)$ is $(n-3)$-connected.
\end{remark}

\subsection{Integral theorems}

We refine this.  Roughly speaking, the following says
that to prove vanishing for $\GL_n(\Z)$ and $\SL_n(\Z)$ up to degree $i=b$, we must
prove that $\Tits^2(\Z^n)$ is $(n-2+b)$-connected for $n \gg 0$:

\begin{maintheorem}
\label{maintheorem:doublecomvanish}
Fix $b \geq 1$.  Assume that the following holds for all $n \geq 4$:
\begin{equation}
\tag{$\dagger$}\label{eqn:doublecomasm} \text{The space
$\Tits^2(\Z^n)$ is $n-2+\min(b,\lfloor (n-1)/2 \rfloor)$-connected.}
\end{equation}
Then $\HH_i(\GL_n(\Z);\St(\GL_n;\bbF)) = \HH_i(\SL_n(\Z);\St(\SL_n;\bbF)) = 0$
for all commutative rings $\bbF$ and all $n,i \geq 0$ such that:\noeqref{eqn:doublecomasm}
\begin{itemize}
\item $i \leq \min(b,\lfloor (n-2)/2 \rfloor)$ if $2$ and $3$ are invertible in $\bbF$; and
\item $i \leq \min(b,\lfloor (n-3)/3 \rfloor)$ in general.
\end{itemize}
\end{maintheorem}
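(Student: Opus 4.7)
\medskip

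\noindent\textbf{Proof plan.} The plan is to leverage the high connectivity of $\Tits^2(\Z^n)$ to build a spectral sequence converging to $\HH_*(\SL_n(\Z);\St(\SL_n;\bbF))$ and to control its $E^1$-page using the field version (Theorem \ref{maintheorem:fields}) applied inductively to block subgroups. The main mechanism is that pairs of compatible flags come equipped with a common split basis, so the stabilizer of a pair in $\Tits^2(\Z^n)$ sits inside a block-diagonal subgroup whose factors are smaller $\GL$'s over $\Z$, allowing induction on $n$.

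First I would record the key reduction. Since $\St(\GL_n;\bbF) \cong \St(\SL_n;\bbF)$ as $\SL_n(\Z)$-modules (up to the usual sign/orientation subtleties), it suffices to treat one case; I work with $\SL_n(\Z)$. Using Shapiro's lemma and the Solomon--Tits theorem, the Steinberg module $\St(\SL_n;\bbF)$ is the top reduced homology of $\Tits(\Z^n)$. Viewing $\Tits^2(\Z^n)$ as a subcomplex of $\Tits(\Z^n)\ast\Tits(\Z^n)$, I would run the equivariant spectral sequence associated to the $\SL_n(\Z)$-action on $\Tits^2(\Z^n)$, in which one axis records simplices of $\Tits^2(\Z^n)$ and the other records group homology of stabilizers, with coefficients obtained by cap product with the top class of $\Tits(\Z^n)$. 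The hypothesis \eqref{eqn:doublecomasm} makes the augmented chain complex of $\Tits^2(\Z^n)$ acyclic up to degree $n-2+\min(b,\lfloor (n-1)/2\rfloor)$, and in that range the spectral sequence abuts to $\HH_*(\SL_n(\Z);\St(\SL_n;\bbF))$ via the identification of the top homology of one of the two joined copies with the Steinberg representation.

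Next I would analyze the $E^1$-page. A $p$-simplex of $\Tits^2(\Z^n)$ consists of two compatible flags $V_\bullet, W_\bullet$, and compatibility means there is a basis of $\Z^n$ adapted to both. Taking the common refinement of $V_\bullet$ and $W_\bullet$ gives an ordered direct sum decomposition $\Z^n = L_1 \oplus \cdots \oplus L_k$ with $\dim L_j = n_j$, $\sum n_j = n$, and the stabilizer of the pair fits in an extension with Levi quotient $\prod_{j} \GL_{n_j}(\Z)$. Contribution to $E^1$ is then (via Shapiro) the homology of $\prod_j \GL_{n_j}(\Z)$ with coefficients in a tensor product of Steinberg modules for the individual factors, possibly twisted by a determinant/sign character. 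By a K\"unneth argument and the inductive hypothesis applied to each factor (which is smaller since $k \geq 2$ for any non-maximal pair of flags), each term vanishes in the required range. The bookkeeping gives the $\lfloor (n-2)/2\rfloor$ term (respectively $\lfloor (n-3)/3\rfloor$) from Theorem \ref{maintheorem:fields}, with the improved bound when $2,3\in\bbF^\times$ coming from the absence of low-degree torsion obstructions in the field-coefficient Steinberg homology of the small factors.

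The hardest step I expect is the induction base and the careful identification of the coefficient system. One needs to show that after restricting the top class of $\Tits(\Z^n)$ to the link of a simplex in the second factor, the resulting coefficient system is a tensor product of Steinbergs of the blocks. This requires a compatible-basis version of the Solomon--Tits decomposition for $\Z$, where one must rule out monodromy beyond a determinant character -- the $2$ and $3$ invertibility hypothesis arises precisely when controlling this sign/permutation action on the K\"unneth factors. Once this is established, the spectral sequence collapses into the stated range: in the range $i \leq \min(b, \lfloor(n-2)/2\rfloor)$ (resp.\ $\lfloor(n-3)/3\rfloor$) all $E^1$-terms with $p+q=i$ vanish, either because $q$ is too small for the relevant product of smaller $\GL_{n_j}(\Z)$'s by induction, or because the simplicial direction $p$ is killed by the connectivity hypothesis \eqref{eqn:doublecomasm}.
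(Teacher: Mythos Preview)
Your plan has the right general shape—a spectral sequence whose $E^1$-terms are homology of block-diagonal subgroups with tensor-product Steinberg coefficients—but the specific mechanism you describe does not work, and several key ingredients are missing.  First, the equivariant spectral sequence for the $\SL_n(\Z)$-action on $\Tits^2(\Z^n)$ does not give what you claim.  A simplex of $\Tits^2(\Z^n)$ is a pair of compatible flags $(V_\bullet,W_\bullet)$; its stabilizer is the intersection of two parabolic-type subgroups, and there is no canonical direct-sum decomposition attached to such a pair (the adapted basis witnessing compatibility is far from unique), so your description of the stabilizer as having Levi quotient $\prod_j\GL_{n_j}(\Z)$ and of the coefficient as a tensor product of Steinbergs is not correct.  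The paper's route is different: the connectivity of $\Tits^2(\Z^{n+1})$ enters only through Theorem~\ref{theorem:doubletitshomology}, which identifies $\RH_*(\Tits^2(\Z^{n+1}))$ with the homology of the explicit chain complex
\[
\bS_i(\Z^{n+1}) \;=\; \bigoplus_{W_1\oplus\cdots\oplus W_{i+2}=\Z^{n+1}} \St(W_1)\otimes\cdots\otimes\St(W_{i+2}).
\]
Hypothesis~\eqref{eqn:doublecomasm} then says $\bS_\bullet$ is a partial resolution of $\St(\Z^{n+1})$, and the hyperhomology spectral sequence of \emph{that} resolution (Lemma~\ref{lemma:spectralsequenceintegral}) is what has $E^1_{pq}=\bigoplus_{R\in\cL_p}\HH_q(\bL_R(\Z);\St(\bL_R;\bbF))$.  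The decompositions of $\Z^{n+1}$, not pairs of flags, are what index the terms.

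Second, the inductive input cannot be Theorem~\ref{maintheorem:fields}: that is a statement about $\bG(k)$ for a field $k$ and says nothing about $\SL_m(\Z)$.  The induction must be on the integral statement itself.  More importantly, mere vanishing of $E^1$-terms in a range does not suffice: on the boundary of the range there remain nonzero terms (typically $E^1_{0d}$, and in some parities also $E^1_{1,d-1}$), and one must show that the differentials $E^1_{1d}\to E^1_{0d}$ and $E^1_{2,d-1}\to E^1_{1,d-1}$ are surjective to kill them.  This forces one to prove, alongside vanishing, a \emph{surjectivity} statement one degree higher: the map $\HH_d(\bL_{\bDelta[1]}(\Z);\St(\bL_{\bDelta[1]};\bbF))\to\HH_d(\SL_{n+1}(\Z);\St(\Z^{n+1};\bbF))$ is onto.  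The paper builds this into the inductive hypothesis (Definition~\ref{definition:hypothesisintegral}) and uses it at every step of the differential computations.  Finally, the role of inverting $2$ and $3$ is not a vague monodromy issue but a concrete base case: it is exactly what is needed to prove $\HH_1(\SL_3(\Z);\St(\Z^3;\bbF))=0$ (see Lemma~\ref{lemma:strongintegralrank2}), which launches the slope-$1/2$ induction.  Without it one must start the surjectivity at $\SL_4$, and the numerology then only supports slope $1/3$.
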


\begin{remark}
As we will see, for $n \gg 0$ the hypothesis \eqref{eqn:doublecomasm} gives
a length-$b$ partial resolution of $\St(\GL_n;\bbF)$ from which 
$\HH_i(\GL_n(\Z);\St(\GL_n;\bbF))$ can be inductively calculated, and similarly
for $\SL_n(\Z)$.  
Similar resolutions appeared in the proofs of the cases $i=1$ and $i=2$ of Conjecture \ref{conjecture:cfp}; however,
it was unclear how to extend those resolutions further.
\end{remark}

\begin{remark}
More generally, if $\Tits^2(\Z^n)$ is $(n-2+b)$-connected for $n \gg 0$ then
our proof will show that
$\HH_i(\GL_n(\Z);\St(\GL_n;\bbF)) = \HH_i(\SL_n(\Z);\St(\SL_n;\bbF)) = 0$
for $i \leq b$ and $n \gg 0$.  We remark that $\Tits^2(\Z^n)$ will not be
$(n-2+b)$-connected for small $b$, which is why our connectivity assumption
in Theorem \ref{maintheorem:doublecomvanish} is phrased the way it is.

The specific connectivity ranges in Theorem \ref{maintheorem:doublecomvanish}
are the minimal ones needed to get the optimal vanishing ranges from our proof strategy.
Without further ideas, improvements to them would not yield better vanishing ranges.
\end{remark}

As evidence that $\Tits^2(\Z^n)$ should be highly-connected, we prove the following,
which verifies the hypotheses of Theorem \ref{maintheorem:doublecomvanish} for\footnote{In fact, for $n=4$ it is
slightly better since for $b=2$ Theorem \ref{maintheorem:doublecomvanish} only requires $\Tits^2(\Z^4)$ to be $3$-connected.} $b=2$.

\begin{maintheorem}
\label{maintheorem:doubletits}
The complex $\Tits^2(\Z^n)$ is $n$-connected for $n \geq 4$.
\end{maintheorem}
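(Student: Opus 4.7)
The plan is to prove this by induction on $n$, comparing $\Tits^2(\Z^n)$ to the ambient join $\Tits(\Z^n) \ast \Tits(\Z^n)$. Under the bijection $V \mapsto V \cap \Z^n$ between $\Q$-subspaces of $\Q^n$ and direct summands of $\Z^n$, one identifies $\Tits(\Z^n) \cong \Tits(\SL_n)$ over $\Q$, so by Solomon--Tits it is $(n-3)$-connected, and hence the join $\Tits(\Z^n) \ast \Tits(\Z^n)$ is $(2n-4)$-connected. Since $n \le 2n-4$ for $n \ge 4$, the long exact sequence of a pair reduces us to showing that the inclusion
\[\iota \colon \Tits^2(\Z^n) \hookrightarrow \Tits(\Z^n) \ast \Tits(\Z^n)\]
is $(n+1)$-connected.

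The simplices in the ambient join missing from $\Tits^2(\Z^n)$ are the ``bad'' simplices: pairs of flags $(V_\bullet, W_\bullet)$ with both parts nonempty that fail to admit a common splitting basis. I would apply a bad-simplex argument in the style of Hatcher--Wahl: filter the bad simplices by a combinatorial complexity measure (e.g.\ total flag length, refined by $\dim(V_r + W_s)$), and attach them stratum-by-stratum, so that each attachment step induces an $(n+1)$-connected inclusion provided the link of each bad simplex $\sigma$ inside the relevant subcomplex is roughly $(n - \dim\sigma)$-connected. The base case $n = 4$ requires no induction: the target connectivity matches that of the ambient join, so only finitely many low-dimensional bad-simplex configurations need to be handled explicitly.

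Controlling the link of a bad simplex $\sigma = (V_\bullet, W_\bullet)$ is the main technical input. Such a link parameterizes compatible pairs of flags simultaneously refining $V_\bullet$ and $W_\bullet$. After choosing a splitting $\Z^n \cong \bigoplus U_i$ adapted to the ``good'' part of $\sigma$, this link should decompose into pieces built from smaller single buildings $\Tits(\Z^m)$ (controlled by Solomon--Tits) and smaller double buildings $\Tits^2(\Z^{n-m})$ (controlled by the inductive hypothesis); a Nerve-theorem or join-style connectivity estimate then yields the required link bound. The main obstacle is that, unlike over a field---where compatibility is automatic and $\Tits^2(k^n) = \Tits(k^n) \ast \Tits(k^n)$---over $\Z$ the integrality condition obstructs a clean join decomposition of the link, and careful bookkeeping of which compatibility constraints survive on refinements of a bad simplex is needed in order to identify the smaller double buildings correctly and close the induction.
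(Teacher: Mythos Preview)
Your approach is genuinely different from the paper's and, as written, does not constitute a proof: the argument hinges on the link analysis of bad simplices, which you correctly identify as the main obstacle but do not carry out. There is also a structural problem with applying a Hatcher--Wahl bad-simplex argument directly. Every vertex of $\Tits(\Z^n) \ast \Tits(\Z^n)$ is a single summand on one side, and any single flag is trivially compatible with the empty flag on the other side, so all vertices---indeed all simplices lying entirely in one join factor---already belong to $\Tits^2(\Z^n)$. Hence there are no ``purely bad'' simplices in the usual sense; the minimal bad simplices are edges $(V_0,W_0)$ joining the two factors, and for such a pair the relevant link has no evident reason to decompose as a join of smaller buildings and double buildings. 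Compatibility of a refinement with $(V_0,W_0)$ is a global integrality constraint on $\Z^n$, not a local one on summands of a splitting, so the inductive decomposition you sketch is not justified.

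The paper takes a non-inductive homological route. Via the identification $\RH_{i+n-1}(\Tits^2(\Z^n)) \cong \HH_i(\bS_\bullet(\Z^n))$ from \cite{MillerPatztWilsonRognes}, it reduces to showing exactness of $\bS_2(\Z^n) \to \bS_1(\Z^n) \to \bS_0(\Z^n) \to \St(\Z^n) \to 0$. Each $\St(V_j)$ is replaced by the three-term modular-symbols partial resolution $\bX_\bullet(V_j)$ of \cite{BruckMillerPatztSrokaWilson}, yielding a double complex $\bX_{pq}(\Z^n)$. One spectral sequence recovers $\bS_\bullet$; the other decomposes along fixed unordered augmented partial frames $S$, and each summand is identified with the reduced chain complex of the boundary of a simplex, whose homology is a single $\Z$ in a known degree. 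A dimension count then gives the required vanishing (with one explicit differential to check when $n=4$). Simple connectivity of $\Tits^2(\Z^n)$ is handled separately by homotoping loops into the copy of $\Tits(\Z^n)$ sitting inside. This algebraic approach sidesteps the integral link obstructions entirely.
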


\begin{remark}
The proof of Theorem \ref{maintheorem:doubletits} uses the same technology as the proofs of the cases $i=1$ and $i=2$ of
Conjecture \ref{conjecture:cfp}.
\end{remark}

Combining Theorems \ref{maintheorem:doublecomvanish} and \ref{maintheorem:doubletits}, we deduce the following:

\begin{maincorollary}
\label{corollary:integralvanish}
We have $\HH_i(\GL_n(\Z);\St(\GL_n;\bbF)) = \HH_i(\SL_n(\Z);\St(\SL_n;\bbF)) = 0$
for:
\begin{itemize}
\item $i=1$ and $n \geq 4$ if $2$ and $3$ are invertible in the commutative ring $\bbF$; and
\item $i=1$ and $n \geq 5$ for general commutative rings $\bbF$; and
\item $i=2$ and $n \geq 6$ if $2$ and $3$ are invertible in the commutative ring $\bbF$; and
\item $i=2$ and $n \geq 8$ for general commutative rings $\bbF$.
\end{itemize}
\end{maincorollary}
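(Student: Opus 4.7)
The plan is to deduce the corollary by applying Theorem~\ref{maintheorem:doublecomvanish} twice, once with $b=1$ to handle the degree-one vanishing and once with $b=2$ to handle the degree-two vanishing. In each case the high-connectivity hypothesis \eqref{eqn:doublecomasm} will be supplied by Theorem~\ref{maintheorem:doubletits}, which guarantees that $\Tits^2(\Z^n)$ is $n$-connected for $n \geq 4$. So the work reduces to a bookkeeping check that this uniform $n$-connectedness bound dominates the bound required by \eqref{eqn:doublecomasm} for the values of $b$ we need.

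For $b=1$, I would first observe that for every $n \geq 4$ one has $\lfloor (n-1)/2 \rfloor \geq 1$, so \eqref{eqn:doublecomasm} collapses to the statement that $\Tits^2(\Z^n)$ is $(n-1)$-connected, which is strictly weaker than the $n$-connectedness given by Theorem~\ref{maintheorem:doubletits}. The hypothesis is therefore verified, and the conclusion of Theorem~\ref{maintheorem:doublecomvanish} yields vanishing of $\HH_1$ once $\min(1,\lfloor (n-2)/2 \rfloor) \geq 1$ in the $2,3$-invertible case, and once the analogous inequality holds in general; reading off the least such $n$ in each case gives the stated ranges for $i=1$.

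For $b=2$, the hypothesis~\eqref{eqn:doublecomasm} asks that $\Tits^2(\Z^n)$ be $(n-2+\min(2,\lfloor (n-1)/2 \rfloor))$-connected for all $n \geq 4$. At $n=4$ this is just $3$-connectedness, which is dominated by the $4$-connectedness provided by Theorem~\ref{maintheorem:doubletits}; for $n \geq 5$ we have $\lfloor (n-1)/2 \rfloor \geq 2$ and the required connectivity is exactly $n$, matching Theorem~\ref{maintheorem:doubletits} on the nose. Applying Theorem~\ref{maintheorem:doublecomvanish} with $b=2$ and reading off the minimal $n$ in each of the two bulleted bounds then produces the $i=2$ vanishing on the claimed ranges.

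There is no genuine obstacle in this corollary: all the substance lives in the two preceding theorems being combined. The only mild subtlety is that \eqref{eqn:doublecomasm} must be verified for the whole family $n \geq 4$, not merely for $n$ in the range being asserted, but the uniform connectivity supplied by Theorem~\ref{maintheorem:doubletits} handles this at once.
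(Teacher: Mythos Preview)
The proposal is correct and takes exactly the same approach as the paper, which simply presents the corollary as an immediate combination of Theorems~\ref{maintheorem:doublecomvanish} and~\ref{maintheorem:doubletits} without further argument. One minor simplification: since verifying \eqref{eqn:doublecomasm} for $b=2$ already implies it for $b=1$, and the conclusion of Theorem~\ref{maintheorem:doublecomvanish} for $b=2$ subsumes that for $b=1$, a single application with $b=2$ handles all four bullets at once.
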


\begin{remark}
Previously, $\HH_i(\GL_n(\Z);\St(\GL_n;\bbF))$
and $\HH_i(\SL_n(\Z);\St(\SL_n;\bbF))$ were known to 
vanish for the following $i$ and $n$ and $\bbF$:
\begin{center}
\begin{tblr}{c|c|c|c}
$\mathbf{i}$ & $\mathbf{n}$ & {\bf coefficients} & {\bf reference} \\
\hline
$i=0$          & $n \geq 2$   & $\bbF$ arbitrary                       & \cite{LeeSzczarba} \\
$i=1$          & $n \geq 3$   & $\bbF$ field of characteristic $0$     & \cite{ChurchPutmanCodimOne} \\
$i=1$          & $n \geq 6$   & $\bbF$ arbitrary                       & \cite{MillerNagpalPatzt} \\
$i=2$          & $n \geq 3$   & $\bbF$ field of characteristic $0$     & \cite{BruckMillerPatztSrokaWilson} 
\end{tblr}
\end{center}
In fact, with only a little more effort the proofs in \cite{BruckMillerPatztSrokaWilson,ChurchPutmanCodimOne}
work for $\bbF$ a commutative ring
in which all primes $p \leq n$ are invertible.  See the proof of Claim \ref{claim:strongintegralrank2.3} of Lemma \ref{lemma:strongintegralrank2} below.
\end{remark}

\subsection{Outline}
We start in Part \ref{part:general} with some general results about reductive groups.
This part closes by reducing Theorem \ref{maintheorem:fields} to 
the special case where $\bPhik(\bG)$ is irreducible and non-exceptional, i.e.,
$\bPhik(\bG) \in \{\dA_n,\dB_n,\dC_n,\dBC_n,\dD_n\}$.  
The heart of the paper consists of Parts \ref{part:an} and
\ref{part:bcn} and \ref{part:dn}, which prove Theorem \ref{maintheorem:fields} for those root systems.
The three cases are $\bPhik(\bG) = \dA_n$, and $\bPhik(\bG) \in \{\dB_n,\dC_n,\dBC_n\}$, and $\bPhik(\bG) = \dD_n$.
In Part \ref{part:integral} we generalize this to $\SL_n(\Z)$ and $\GL_n(\Z)$
and prove Theorem \ref{maintheorem:doublecomvanish}.  
Finally, in Part \ref{part:doubletits} we prove Theorem \ref{maintheorem:doubletits}.

\subsection{Conventions}
\label{section:conventions}

Unless otherwise specified, all algebraic groups $\bG$ are defined over a field $k$ that is fixed throughout the paper.
All subgroups, morphisms, quotients, etc.\ we discuss
are closed and defined over $k$.  In particular, we will just talk about parabolic subgroups rather than
parabolic $k$-subgroups and the semisimple rank of a group rather than the semisimple $k$-rank.  All reductive
groups are connected.

\part{General theory (Theorem \ref{maintheorem:fields})}
\label{part:general}

We survey some general properties of reductive groups in \S \ref{section:reductive}.
Next, in \S \ref{section:initialreductions} we perform some initial reductions.
The main tool for our proofs is a spectral sequence we introduce in \S \ref{section:resolution}.
To show how this spectral sequence can be used, in \S \ref{section:rank2} we prove
a surjectivity result in rank $2$.  We then develop some tools for understanding
reducible root systems in \S \ref{section:reducible}, and close in \S \ref{section:reductionirreducible}
by reducing Theorem \ref{maintheorem:fields} to the case where the relative root
system is irreducible and non-exceptional.  Those cases will be handled in subsequent parts.

\section{Reductive groups, Tits buildings, and the Steinberg representation}
\label{section:reductive}

Let $\bG$ be a reductive group (cf.\ the conventions in \S \ref{section:conventions}).
We thus have a group $\bG(k)$ of $k$-points of $\bG$.
In this section, we discuss some background about
$\bG$ needed for the rest of the paper.  To help make this understandable to readers less familiar with the general
picture, we will also explain what this background means for\footnote{We use $\GL_{n+1}$ since it has semisimple rank $n$, so
its numerology agrees with that of the general theory.} $\bG = \GL_{n+1}$.  Unless otherwise
specified, the results in this section are due to Borel--Tits (\cite{BorelTitsReductive, BorelTitsReductive2}, see \cite[Chapter V]{BorelBook}
and \cite{MilneBook} for textbook references).

\subsection{Levi factors and unipotent radicals}

Let $\bP$ be a parabolic subgroup of $\bG$.  We can write $\bP = \bV \rtimes \bL$, with
$\bV$ a normal unipotent subgroup of $\bP$ called its {\em unipotent radical} and $\bL$
a reductive subgroup of $\bP$ called a {\em Levi factor}.  All Levi factors of $\bP$
are conjugate.  The decomposition $\bP = \bV \rtimes \bL$ is the {\em Levi decomposition}
of $\bP$.

\begin{example}
\label{example:gllevidecomp}
Let $\bG = \GL_{n+1}$.  Write $n+1 = m+m'$ with $m,m' \geq 1$.  Let $\bP<\bG$ be the subgroup
fixing the subspace $k^m \oplus 0$ of $k^{n+1} = k^m \oplus k^{m'}$.
The Levi decomposition of $\bP$ is $\bV \rtimes \bL$, where:
\begin{itemize}
\item the Levi factor $\bL$ is the subgroup $\GL_m \times \GL_{m'}$ of $\bG$; and
\item the unipotent radical $\bV$ is the unipotent subgroup consisting of block matrices
$\left(\begin{matrix} \unit_m & V \\ 0 & \unit_{m'} \end{matrix}\right)$.  Here
$V$ is an arbitrary $m' \times m$ matrix.\qedhere
\end{itemize}
\end{example}

\subsection{Split tori, minimal parabolic subgroups, and relative Weyl groups}
\label{section:basicgroups}

Let $\bB$ be a minimal parabolic subgroup of $\bG$ and let $\bT$ be a maximal split torus in $\bB$.  If $k$ was
algebraically closed or more generally if $\bG$ was split,
then $\bB$ would be a Borel subgroup\footnote{A subgroup $\bB$ of $\bG$ is a Borel subgroup if
$\bB$ is connected and solvable, and the base-change $\bB_{\overline{k}}$ to an algebraic closure is maximal
among connected solvable subgroups of $\bG_{\overline{k}}$.  Borel subgroups are always parabolic, and if they
exist then every parabolic subgroup contains a Borel subgroup.} and $\bT$ would be a maximal torus, but in general this
need not hold.
The centralizer $Z_{\bG}(\bT)$ is a Levi factor of $\bB$.  Let $\bU$ be the unipotent
radical of $\bB$.  We then have $\bB = \bU \rtimes Z_{\bG}(\bT)$.
All pairs $(\bB,\bT)$ are conjugate in $\bG$.

\begin{example}
\label{example:basicgl}
For $\bG = \GL_{n+1}$, we can let $\bB$ be the Borel subgroup of upper triangular matrices,
$\bU$ be the unipotent subgroup of strictly upper triangular matrices, and $\bT$ be the group
of diagonal matrices.  In this case, $\bT = Z_{\bG}(\bT)$.  The group $\bT$ is a Levi factor
of $\bB$, the group $\bU$ is the unipotent radical of $\bB$, and $\bB = \bU \rtimes \bT$. 
\end{example}

Let $N_{\bG}(\bT)$ be the normalizer of $\bT$ in $\bG$.  The pair $(\bB(k),N_{\bG}(\bT)(k))$ forms a BN-pair in $\bG(k)$; see 
\cite[Theorem 5.2]{TitsBN} and the references therein for the proof, and
\cite[Chapter V.2]{BrownBuildings} for a textbook reference about BN-pairs.  We have
$N_{\bG}(\bT) \cap \bB = Z_{\bG}(\bT)$, and the relative Weyl group is
\[W = N_{\bG}(\bT) / Z_{\bG}(\bT).\]
This is a finite reflection group (see \S \ref{section:relativeroots} below for more on this).  
The Bruhat decomposition asserts that representatives of $W$ form a complete set
of $(\bB(k),\bB(k))$-double cosets of $\bG(k)$.  More precisely, for each $w \in W$ let $\tw \in N_{\bG}(\bT)(k)$ be a
representative.  We then have
\[\bG(k) = \bigsqcup_{w \in W} \bB(k) \cdot \tw \cdot \bB(k).\]

\begin{example}
For $\bG = \GL_{n+1}$, let $\bB$ and $\bU$ and $\bT$ be as in Example \ref{example:basicgl}.
The normalizer $N_{\bG}(\bT)$ is the group of monomial matrices, and
\[W = N_{\bG}(\bT) / Z_{\bG}(\bT) = N_{\bG}(\bT) / \bT \cong S_{n+1}.\]
In this case, we can identify $W$ with the group of permutation matrices in $N_{\bG}(\bT)(k) \subset \bG(k)$, but
in general the surjection $N_{\bG}(\bT)(k) \rightarrow W$ need not split.
\end{example}

\subsection{Structure of the building}
\label{section:structurebuilding}

The Tits building $\Tits(\bG)$ is the spherical building associated to the BN-pair $(\bB(k),N_{\bG}(\bT)(k))$.  See
\cite{BrownBuildings} for a textbook reference on buildings.  Let $n$ be the semisimple rank of $\bG$.  The space $\Tits(\bG)$
is an $(n-1)$-dimensional simplicial complex.  Its $r$-dimensional simplices are in bijection
with parabolic subgroups of semisimple rank $(n-1-r)$.  The group $\bG(k)$ acts on $\Tits(\bG)$ via its conjugation action on itself,
which permutes the parabolic subgroups.

The top-dimensional simplices are called the {\em chambers}, and are in bijection
with the minimal parabolic subgroups.  Letting $\fP_{\min}$ be the set of minimal parabolic subgroups of $\bG$, we thus
have $\CC_{n-1}(\Tits(\bG);\bbF) \cong \bbF[\fP_{\min}]$.  The group $\bG(k)$ acts transitively on the minimal parabolic subgroups, so every
element of $\fP_{\min}$ is of the form $g \cdot \bB$ for some $g \in \bG(k)$.

By definition, $\St(\bG;\bbF) = \RH_{n-1}(\Tits(\bG);\bbF)$.
Since $\CC_n(\Tits(\bG);\bbF) = 0$, we have
\begin{align*}
\St(\bG;\bbF) &= \ker(\CC_{n-1}(\Tits(\bG);\bbF) \stackrel{\partial}{\longrightarrow} \CC_{n-2}(\Tits(\bG);\bbF)) \\
              &= \ker(\bbF[\fP_{\min}] \stackrel{\partial}{\longrightarrow} \CC_{n-2}(\Tits(\bG);\bbF)).
\end{align*}
In particular, $\St(\bG;\bbF)$ is a subrepresentation of $\bbF[\fP_{\min}]$.

\begin{example}
For $\bG = \GL_{n+1}$, the semisimple rank of $\bG$ is $n$.  The
parabolic subgroups of $\bG$ are the stabilizers of flags
\begin{equation}
\label{eqn:flag2}
0 \subsetneq V_0 \subsetneq V_1 \subsetneq \cdots \subsetneq V_r \subsetneq k^{n+1}
\end{equation} 
of linear subspaces of $k^{n+1}$.  The semisimple rank of the stabilizer of \eqref{eqn:flag2}
is $n-1-r$.  The Tits building $\Tits(\bG)$ can thus be identified with the simplicial complex
whose $r$-simplices are length-$r$ flags of linear subspaces of $k^{n+1}$.  The minimal parabolic
subgroups are the stabilizers of complete flags, so the chambers in
$\Tits(\bG)$ can be identified with the complete flags.  A complete flag has $r = n-1$, so
$\Tits(\bG)$ is $(n-1)$-dimensional.
\end{example}

\subsection{Apartments}
\label{section:apartments}

The Steinberg representation $\St(\bG;\bbF)$ is spanned by apartment classes.  These are the homology classes of certain $(n-1)$-dimensional
subcomplexes of $\Tits(\bG)$ that are isomorphic to the Coxeter complexes of the relative Weyl group
$W = N_{\bG}(\bT) / Z_{\bG}(\bT)$.  They are defined as follows.
Consider some $g \in \bG(k)$.  For each $w \in W$, pick some representative $\tw \in N_{\bG}(\bT)(k)$.  Since $Z_{\bG}(\bT)$ is
contained in $\bB$, the element $\tw \cdot \bB$ does not depend on the choice of $\tw$.  The 
apartment class $\bbA_g$ is then
\[\bbA_g = \sum_{w \in W} (-1)^w g \tw \cdot \bB \in \St(\bG;\bbF) \subset \bbF[\fP_{\min}].\]
Here $(-1)^w$ is the sign of $w$, which is defined since $W$ is a finite reflection group.

\begin{example}
For $\bG = \GL_{n+1}$, 
the Coxeter complex of the relative Weyl group $W \cong S_{n+1}$ is the barycentric subdivision
of the boundary of an $n$-simplex, so in particular it has $(n+1)!$ top-dimensional simplices.  For $1 \leq j \leq n+1$, let $L_j \subset k^{n+1}$ be the line
spanned by the $j^{\text{th}}$ coordinate vector.  Recall that $\bB$ is the subgroup of upper triangular
matrices, i.e., the stabilizer of the flag
\[0 \subsetneq L_1 \subsetneq L_1 + L_2 \subsetneq \cdots \subsetneq L_1 + \cdots + L_{n} \subsetneq k^{n+1}.\]
Consider $g \in \bG(k) = \GL_{n+1}(k)$.  For $w \in W \cong S_{n+1}$, let $\tw$ be the associated permutation matrix.  We then have $\tw L_j = L_{w(j)}$
so $g \tw \cdot \bB$ is the stabilizer of the flag
\[0 \subsetneq g L_{w(1)} \subsetneq g L_{w(1)} + g L_{w(2)} \subsetneq \cdots \subsetneq g L_{w(1)} + \cdots + g L_{w(n)} \subsetneq k^{n+1}.\]
Here $g L_{w(j)}$ is the line spanned by the $w(j)^{\text{th}}$ column of $g$.  The stabilizers of these $(n+1)!$ flags form the chambers in $\bbA_g$.
\end{example}

\subsection{Basis for Steinberg}
\label{section:steinbergbasis}

The apartment classes $\bbA_g$ are not linearly independent.  Recall that $\bU$ is the unipotent radical of $\bB$.  One
version of the Solomon--Tits theorem \cite[Theorem 4.73]{BrownBuildings} says that
\[\cB = \Set{$\bbA_u$}{$u \in \bU(k)$}\]
forms a basis for $\St(\bG;\bbF)$.  In other words, as an $\bbF$-module $\St(\bG;\bbF)$ is isomorphic to
$\bbF[\bU(k)]$.  The action of $\bB(k)$ on $\St(\bG;\bbF) \cong \bbF[\bU(k)]$ comes from the conjugation action
of $\bB(k)$ on $\bU(k)$; however, it is not easy to describe the action of $\bG(k)$ on $\bbF[\bU(k)]$ coming from this
identification.  Another way of thinking about $\cB$ is that it is precisely the set of apartment classes that
when expressed as an element of $\bbF[\fP_{\min}]$ have $\bB$-coefficient $1$.

\begin{remark}
For $b \in \bB(k)$ the $\bB$-coefficient of $\bbA_b$ is also $1$.  However, we can write $b = u t$ with $u \in \bU(k)$ and $t \in \bT(k)$.
For $w \in W$, since $W = N_{\bG}(\bT) / Z_{\bG}(\bT)$ the element $t \tw$ is also an element of $N_{\bG}(\bT)(k)$ lifting $w$, so
in particular $t \tw \cdot \bB = \tw \cdot \bB$.  We conclude that
\[\bbA_b = \sum_{w \in W} (-1)^w b \tw \cdot \bB = \sum_{w \in W} (-1)^w u t \tw \cdot \bB = \sum_{w \in W} (-1)^w u \tw \cdot \bB = \bbA_u \in \cB. \qedhere\]
\end{remark}

\subsection{Reeder map}
\label{section:reedermap}

Recall that $Z_{\bG}(\bT)$ is a Levi factor of the minimal parabolic subgroup $\bB$ and
that $\bB = \bU \rtimes Z_{\bG}(\bT)$.  The semisimple rank of $Z_{\bG}(\bT)$ is $0$, i.e., it
is anisotropic.  This implies (cf.\ Remark \ref{remark:anisotropicst}) that
$\St(Z_{\bG}(\bT);\bbF) \cong \bbF$ is the trivial representation.  Another way of stating
the isomorphism $\St(\bG;\bbF) \cong \bbF[\bU(k)]$ is that as a representation
of $\bB(k)$, we have
\[\St(\bG;\bbF) \cong \bbF[\bU(k)] = \bbF[\bB(k) / Z_{\bG}(\bT)(k)] \cong \Ind_{Z_{\bG}(\bT)(k)}^{\bB(k)} \bbF \cong \Ind_{Z_{\bG}(\bT)(k)}^{\bB(k)} \St(Z_{\bG}(\bT);\bbF).\]
Reeder gave a beautiful generalization of this:

\begin{theorem}[{Reeder, \cite[Proposition 1.1]{Reeder}}]
\label{theorem:reedermap}
Let $\bG$ be a reductive group.  Let $\bP$ be a parabolic subgroup of
$\bG$ and let $\bL$ be a Levi factor of $\bP$.  As a representation of $\bP(k)$, we then have
\[\St(\bG;\bbF) \cong \Ind_{\bL(k)}^{\bP(k)} \St(\bL;\bbF).\]
\end{theorem}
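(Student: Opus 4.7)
The plan is to construct an explicit $\bP(k)$-equivariant isomorphism between the two sides using the Solomon--Tits bases from \S\ref{section:steinbergbasis}. Choose a maximal split torus $\bT$ and minimal parabolic $\bB$ with $\bT \subset \bB \subset \bP$, and let $\bL$ be the Levi factor of $\bP$ containing $\bT$. Then $\bB \cap \bL$ is a minimal parabolic of $\bL$; its unipotent radical is $\bU_{\bL} := \bU \cap \bL$, and one has the refinement $\bU = \bU_{\bL} \cdot \bV$ (where $\bV$ is the unipotent radical of $\bP$), giving a set-theoretic bijection $\bU(k) \cong \bU_{\bL}(k) \times \bV(k)$.

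By Solomon--Tits, $\St(\bG;\bbF)$ has $\bbF$-basis $\{\bbA_{u} : u \in \bU(k)\}$ and $\St(\bL;\bbF)$ has $\bbF$-basis $\{\bbA_{u_{\bL}}^{\bL} : u_{\bL} \in \bU_{\bL}(k)\}$. Since $\bP(k)/\bL(k) \cong \bV(k)$, the induced module $\Ind_{\bL(k)}^{\bP(k)} \St(\bL;\bbF)$ has $\bbF$-basis $\{v \otimes \bbA_{u_{\bL}}^{\bL} : v \in \bV(k),\, u_{\bL} \in \bU_{\bL}(k)\}$. I would define the $\bbF$-linear map
\[
\Psi \colon \Ind_{\bL(k)}^{\bP(k)} \St(\bL;\bbF) \longrightarrow \St(\bG;\bbF), \qquad v \otimes \bbA_{u_{\bL}}^{\bL} \longmapsto \bbA_{v u_{\bL}},
\]
which is a bijection on bases via the factorization $\bU(k) \cong \bV(k) \times \bU_{\bL}(k)$. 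Hence $\Psi$ is already an $\bbF$-linear isomorphism; the content lies in its $\bP(k)$-equivariance.

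Equivariance under $\bV(k)$ is immediate, since for $v_{0} \in \bV(k)$ we have $\Psi(v_{0} \cdot (v \otimes \bbA_{u_{\bL}}^{\bL})) = \Psi(v_{0} v \otimes \bbA_{u_{\bL}}^{\bL}) = \bbA_{v_{0} v u_{\bL}} = v_{0} \cdot \bbA_{v u_{\bL}}$, where in the last step we use that $g \cdot \bbA_{h} = \bbA_{gh}$. The hard step is equivariance under $\bL(k)$: writing $\ell v = (\ell v \ell^{-1}) \ell$ with $\ell v \ell^{-1} \in \bV(k)$ (since $\bL$ normalizes $\bV$), one obtains $\ell \cdot (v \otimes \bbA_{u_{\bL}}^{\bL}) = (\ell v \ell^{-1}) \otimes \bbA_{\ell u_{\bL}}^{\bL}$, and we must verify that the image of this under $\Psi$ equals $\ell \cdot \bbA_{v u_{\bL}} = \bbA_{\ell v u_{\bL}}$.

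The main obstacle is precisely this $\bL(k)$-compatibility. It reduces to showing that for $g \in \bL(k)$, the Solomon--Tits expansion of $\bbA_{g}^{\bL} \in \St(\bL;\bbF)$ agrees, after the identification $\bU_{\bL}(k) \hookrightarrow \bU(k)$, with the Solomon--Tits expansion of $\bbA_{g} \in \St(\bG;\bbF)$. To establish this, I would decompose $W = W_{\bL} \cdot S$ along the inclusion $W_{\bL} = N_{\bL}(\bT)/Z_{\bL}(\bT) \hookrightarrow W$, and exploit the following observation: for $g \in \bL(k)$ and $w \in W$, the minimal parabolic $(g\tw)\bB(g\tw)^{-1}$ is contained in $\bP$ exactly when $\tw \in \bL(k)$, i.e.\ $w \in W_{\bL}$ (since $N_{\bG}(\bT) \cap \bP = N_{\bL}(\bT)$, as $\bV$ meets $N_{\bG}(\bT)$ trivially). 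Combining this with the factorization $\bB = (\bB \cap \bL) \cdot \bV$, one sees that the chambers of $\bbA_{g}$ lying inside $\bP$ correspond, under the bijection between minimal parabolics of $\bG$ inside $\bP$ and minimal parabolics of $\bL$ (via $\bQ \leftrightarrow \bQ \cap \bL$), precisely to the chambers of $\bbA_{g}^{\bL}$. A Bruhat decomposition argument in $\bL$, compatible with the Bruhat decomposition in $\bG$ via the inclusion $W_{\bL} \hookrightarrow W$, then identifies the two Solomon--Tits expansions and completes the $\bL(k)$-equivariance.
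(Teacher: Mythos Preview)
The paper does not give its own proof of this statement; it simply cites Reeder~\cite[Proposition~1.1]{Reeder}.  So there is nothing in the paper to compare your argument against directly, and I will instead assess the proposal on its own.

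Your setup, the $\bbF$-linear bijection via the factorization $\bU(k)\cong\bV(k)\times\bU_{\bL}(k)$, and the $\bV(k)$-equivariance are all correct (and indeed the paper uses exactly this reasoning in the special case $\bP=\bB$ just before stating the theorem).  Your reduction of $\bL(k)$-equivariance to the statement ``for $g\in\bL(k)$, the Solomon--Tits expansion of $\bbA_g^{\bL}$ in $\St(\bL;\bbF)$ agrees, under $\bU_{\bL}(k)\hookrightarrow\bU(k)$, with that of $\bbA_g$ in $\St(\bG;\bbF)$'' is also correct.

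The gap is in your last paragraph.  The observation that the chambers $g\tw\cdot\bB$ of $\bbA_g$ lying in $\bP$ are exactly those with $w\in W_{\bL}$ is true, but it does not by itself yield the Solomon--Tits compatibility you need.  The Solomon--Tits expansion of $\bbA_g$ is obtained via a retraction of the entire building onto the fundamental apartment, not just by inspecting which chambers of $\bbA_g$ lie in the sub-building attached to~$\bP$; you have not explained why this global retraction in $\Tits(\bG)$ restricts to the analogous retraction in $\Tits(\bL)$.  The phrase ``a Bruhat decomposition argument \dots\ then identifies the two Solomon--Tits expansions'' is where the actual content should be, and it is missing.

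A cleaner route (and essentially what Reeder does) is to reverse the logic: first construct an $\bL(k)$-equivariant map $\iota\colon\St(\bL;\bbF)\to\St(\bG;\bbF)$ geometrically, using that minimal parabolics of $\bG$ contained in $\bP$ biject with minimal parabolics of $\bL$ via $\bQ\mapsto\bQ\cap\bL$, so that an apartment of $\Tits(\bL)$ determines (via the join with the simplex corresponding to $\bP$) an apartment of $\Tits(\bG)$.  Equivariance of $\iota$ is then manifest.  Setting $\Psi(v\otimes x)=v\cdot\iota(x)$ gives a $\bP(k)$-map by construction, and checking that $\iota(\bbA_{u'}^{\bL})=\bbA_{u'}$ on the Solomon--Tits basis (which is the easy combinatorial statement your observations do support) shows $\Psi$ is an isomorphism.
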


Underlying the isomorphism in Theorem \ref{theorem:reedermap} is a map
$\St(\bL;\bbF) \rightarrow \St(\bG;\bbF)$ that we will call the {\em Reeder map}.

\subsection{Relative root system}
\label{section:relativeroots}

We now return to general properties of reductive groups.  
Let\footnote{If $\bG$ is a semisimple group like $\SL_{n+1}$, then $X(\bT) \cong \Z^n$, but in general it is only isomorphic to $\Z^m$ for some $m \geq n$.} $X(\bT) \cong \Z^m$ be the abelian group of characters of the maximal split torus $\bT$.  The conjugation action of $N_{\bG}(\bT)$ on $\bT$ induces
an action of $W = N_{\bG}(\bT) / Z_{\bG}(\bT)$ on $X(\bT)$.  Fix a $W$-invariant inner product on
\[X_{\R}(\bT) = X(\bT) \otimes \R \cong \R^m.\]
Let $\fg$ be the Lie algebra of $\bG$.
The action of $\bT$ on $\fg$ can be diagonalized, resulting in a decomposition
\[\fg = \fg_0 \oplus \bigoplus_{\alpha \in \bPhik(\bG)} \fg_{\alpha}.\]
Here $\bPhik(\bG) \subset X(\bT) \subset X_{\R}(\bT)$ is a finite set of nontrivial characters, and for $\alpha \in \{0\} \cup \bPhik(\bG)$
the subspace $\fg_{\alpha} \neq 0$ is the $\alpha$-eigenspace for the action of $\bT$ on $\fg$.  

The action of $W$ on $X_{\R}(\bT)$ preserves $\bPhik(\bG)$, and with respect to the $W$-invariant inner
product on $X_{\R}(\bT)$ the subset $\bPhik(\bG)$ is the relative root system\footnote{If $\bG$ is a non-semisimple group like $\GL_{n+1}$, then
$\bPhik(\bG)$ does not span $X_{\R}(\bT)$.  In this case, $\bPhik(\bG)$ is a root system in the subspace of $X_{\R}(\bT)$ spanned by $\bPhik(\bG)$.} of $\bG$.  
The orthogonal reflections of $X_{\R}(\bT)$ in the roots $\bPhik(\bG)$ preserve $\bPhik(\bG)$ and generate $W$, making $W$ into 
a finite reflection group.
Maximal split tori in $\bG$
are all conjugate, so this does not depend on the choice of $\bT$.  It is also independent of the choice of $W$-invariant inner product.

\begin{example}
For $\bG = \GL_{n+1}$, the group $\bT$ is the group of diagonal matrices.  For $t_1,\ldots,t_{n+1} \in k^{\times}$,
let $\diag(t_1,\ldots,t_{n+1})$ be the associated diagonal matrix.  We then have that $X(\bT) \cong \Z^{n+1}$, generated
by the elements $E_i \in X(\bT)$ defined via the formula
\[E_i(\diag(t_1,\ldots,t_{n+1})) = t_i \quad \text{for $t_1,\ldots,t_{n+1} \in k^{\times}$}.\]
The relative Weyl group $W \cong S_{n+1}$ permutes the $E_i$.  We have
\[\bPhik(\bG) = \Set{$E_i - E_j$}{$1 \leq i,j \leq n+1$ distinct} \cong \dA_{n},\]
and the subspace $\fg_{E_i - E_j}$ of $\fg \cong \fgl_{n+1}$ consists of matrices whose only nonzero entry is at position $(i,j)$.
The reflection in $E_i - E_j$ is the transposition $(i,j) \in W$.
\end{example}

\begin{remark}
As we discussed in Remark \ref{remark:absoluteroot}, the absolute root system $\bPhi(\bG)$ is the root
system associated to a maximal torus $\bS$.  If maximal tori are split,\footnote{This always holds if $k$ is algebraically closed.} then $\bG$ is said to be split
and $\bPhi(\bG) = \bPhik(\bG)$.  For example, $\bG = \GL_{n+1}$ is split.
While the absolute root system $\bPhi(\bG)$ is a reduced root system, 
the relative root system $\bPhik(\bG)$ need not be reduced.
See \cite{TitsSurvey} for a survey about the relationship between the absolute and relative root systems.
\end{remark}

\subsection{Root subgroups}

For $\alpha \in \bPhik(\bG)$, let $(\alpha)$ be the set of all elements of $\bPhik(\bG)$ of the form
$c \alpha$ for some $c \geq 1$ and let $\fg_{(\alpha)}$ be the direct sum of all $\fg_{\alpha'}$ with
$\alpha' \in (\alpha)$.  There then exists a unique connected unipotent subgroup $\bU_{(\alpha)}$ of $\bG$ called a {\em root subgroup} that is 
normalized by $\bT$ and has Lie algebra $\fg_{(\alpha)}$.  There are two cases:
\begin{itemize}
\item The root $\alpha$ is {\em non-multipliable}, i.e., the only $c \geq 1$ with $c \alpha \in \bPhik(\bG)$ is $c = 1$.
In this case, $\bU_{(\alpha)}$ is an abelian unipotent subgroup.  We will sometimes omit the parentheses and
just write $\bU_{\alpha}$.
\item The root $\alpha$ is {\em multipliable}, so $(\alpha) = \{\alpha, 2 \alpha\}$.  In this case, $\bU_{(\alpha)}$ is
$2$-step nilpotent and $\bU_{2 \alpha}$ is its center.
\end{itemize}
The root $\alpha$ is {\em non-divisible} if $\frac{1}{2} \alpha \notin \bPhik(\bG)$.

Recall that the relative Weyl group $W = N_{\bG}(\bT) / Z_{\bG}(\bT)$ acts on $\bPhik(\bG)$ via the conjugation action of
$N_{\bG}(\bT)$ on $\bT$.  For $w \in W$ represented by $\tw \in N_{\bG}(\bT)(k)$, we have
\[\tw \bU_{(\alpha)}(k) \tw^{-1} = \bU_{(w \cdot \alpha)}(k) \quad \text{for $\alpha \in \bPhik(\bG)$}.\]

\begin{example}
For $\bG = \GL_{n+1}$ and $\alpha = E_i - E_j \in \bPhi(\bG)$, the root subgroup $\bU_{\alpha}$ is the group of matrices that equal
the identity matrix except possibly at position $(i,j)$.
\end{example}

\begin{remark}
If $\bG$ is split, then $\bPhik(\bG) = \bPhi(\bG)$ is reduced and hence all the roots are non-multipliable.  Moreover,
each root subgroup $\bU_{\alpha}$ is one-dimensional and isomorphic to the additive group\footnote{The additive group $\bbG_a$ is the algebraic group
over $k$ with $\bbG_a(A)$ equal to the additive group of $A$ for any commutative $k$-algebra $A$.} $\bbG_a$.  
\end{remark}

\begin{remark}
In the nonsplit case, for $\alpha \in \bPhik(\bG)$ let $\{\alpha_1,\ldots,\alpha_r\}$ be the preimage of $(\alpha)$
under the surjection $\bPhi(\bG) \sqcup \{0\} \rightarrow \bPhik(\bG) \sqcup \{0\}$ that restricts characters from
a maximal torus to the maximal split torus $\bT$.  The absolute
root subgroups $\bU_{\alpha_j}$ are defined over an algebraic closure $\ok$, and $\bU_{(\alpha)}(\ok)$ is the subgroup of $\bG(\ok)$ generated
by the $\bU_{\alpha_j}(\ok)$.
\end{remark}

\subsection{Positive roots and Dynkin diagrams}
\label{section:positive}

\begin{table}[t]
\begin{tblr}{r|c}
$\bPhi$ & {\bf Dynkin Diagram} \\
\hline
\hline
\raisebox{-.5\height}{$\dA_n$}  & \raisebox{-.5\height}{\psfig{file=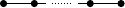,scale=2}}  \\
\hline
\raisebox{-.5\height}{$\dB_n$}  & \raisebox{-.5\height}{\psfig{file=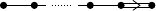,scale=2}}  \\
\hline
\raisebox{-.5\height}{$\dC_n$}  & \raisebox{-.5\height}{\psfig{file=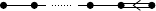,scale=2}}  \\
\hline
\raisebox{-.5\height}{$\dBC_n$} & \raisebox{-.5\height}{\psfig{file=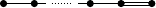,scale=2}} \\
\hline
\raisebox{-.5\height}{$\dD_n$}  & \raisebox{-.5\height}{\psfig{file=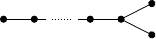,scale=2}}  \\
\hline
\hline
\end{tblr}
\caption{The Dynkin diagrams of irreducible non-exceptional root systems.}
\label{table:dynkin}
\end{table}

Recall that $\bU$ is the unipotent radical of our fixed minimal parabolic subgroup $\bB$.
Let $\bPhik^+(\bG)$ be the set of all $\lambda \in \bPhik(\bG)$ such that $\bU_{(\alpha)} \subset \bU$.  These
are the {\em positive roots}, and for every $\lambda \in \bPhik(\bG)$ there exists a unique sign $\epsilon = \pm 1$ such that
$\epsilon \lambda \in \bPhik^+(\bG)$.  Letting $\bPhik^+_{\text{nd}}(\bG)$ be the set of
non-divisible elements of $\bPhik^+(\bG)$, the product map
\begin{equation}
\label{eqn:uprod}
\prod_{\alpha \in \bPhik^+_{\text{nd}}(\bG)} \bU_{(\alpha)}(k) \rightarrow \bU(k)
\end{equation}
is a set-theoretic bijection.
Inside $\bPhik^+(\bG)$ is a unique set $\bDeltak(\bG)$ of simple roots, that is,
elements of $\bPhik^+(\bG)$ that cannot be written as the sum of two elements of
$\bPhik^+(\bG)$.  The set
$\bDeltak(\bG)$ is a linearly independent set of characters such that
\[\bPhik^+(\bG) = \Set{$\lambda \in \bPhik(\bG)$}{$\lambda = \sum_{\alpha \in \bDeltak(\bG)} c_{\alpha} \alpha$ with $c_{\alpha} \in \Z$ nonnegative}.\]
Recall from \S \ref{section:relativeroots} that the relative Weyl group $W$ is generated by reflections in the
roots from $\bPhik(\bG)$.  In fact, $W$ is generated by the reflections in the simple roots $\bDeltak(\bG)$.

The set $\bDeltak$ forms the vertices of the Dynkin diagram for $\bPhik(\bG)$.  A root system is irreducible (that is, not
isomorphic to a product) if its Dynkin diagram is connected.  The Dynkin diagrams of the irreducible non-exceptional\footnote{The exceptional
root systems are those of types $\{\dG_2,\dF_4, \dE_6, \dE_7, \dE_8\}$.  We will not need their Dynkin diagrams in this paper.}
root systems are listed in Table \ref{table:dynkin}.  A general root system is a product of irreducible root systems.  

\begin{example}
For $\bG = \GL_{n+1}$, the positive roots are
\[\bPhik^+(\bG) = \Set{$E_i-E_j$}{$1 \leq i < j \leq n+1$}\]
and the simple roots are
\[\bDeltak(\bG) = \{E_1-E_2,E_2-E_3,\ldots,E_{n}-E_{n+1}\}.\]
The root system $\bPhik(\bG)$ is of type $\dA_{n}$, and in the Dynkin diagram for $\dA_{n}$ depicted
in Table \ref{table:dynkin} the simple root $E_i-E_{i+1}$ is the $i^{\text{th}}$ node from the left.
\end{example}

\subsection{Other minimal parabolics containing torus}

The above choice of positive roots was dictated by our choice of minimal parabolic subgroup $\bB$ containing
the maximal split torus $\bT$.  The group $\bT$ is the unique maximal split torus in $\bB$.
Since $\bG(k)$ acts transitively by conjugation on the set of minimal parabolic
subgroups, the group $N_{\bG}(\bT)(k)$ acts transitively on the set of minimal parabolic subgroups containing
$\bT$.  Just like for all parabolic subgroups, we have $N_{\bG}(\bB) = \bB$.  It follows that
the $N_{\bG}(\bT)(k)$-stabilizer of $\bB$ is
\[\bB(k) \cap N_{\bG}(\bT)(k) = Z_{\bG}(\bT)(k).\]
In other words, the group $W = N_{\bG}(\bT) / Z_{\bG}(\bT)$ acts simply transitively
on the set of minimal parabolic subgroups containing $\bT$.  
For $w \in W$, let\footnote{With this notation, the apartment $\bbA_g$ corresponding
to $g \in \bG(k)$ (cf.\ \S \ref{section:apartments}) is $\bbA_g = \sum_{w \in W} (-1)^w g \cdot \bB_w$.} $\bB_w$ be the image of $\bB$ under the action of $w$ and let $\bU_w$ be the unipotent
radical of $\bB_w$.  We thus have $\bB_w = \bU_w \rtimes Z_{\bG}(\bT)$.  Just like in
\eqref{eqn:uprod}, the product map
\[\prod_{\alpha \in \bPhik^+_{\text{nd}}(\bG)} \bU_{(w(\alpha))}(k) \rightarrow \bU_w(k)\]
is a set-theoretic bijection.  If we used $\bB_w$ as our base minimal parabolic
instead of $\bB$, then the positive roots would be $w(\bPhik^+(\bG))$.  

\subsection{Opposite minimal parabolic}
\label{section:oppositeparabolic}

Let $\bPhik^-(\bG) = \Set{$-\lambda$}{$\lambda \in \bPhik^+(\bG)$}$, so
$\bPhik(\bG) = \bPhik^+(\bG) \sqcup \bPhik^-(\bG)$.  Let 
$S$ be the generating set for $W$ consisting of reflections in the simple roots $\bDeltak(\bG)$.  For $w \in W$, the
length of the minimal word in $S$ representing $w$ equals the number of $\lambda \in \bPhik^+(\bG)$
with $w(\lambda) \in \bPhik^-(\bG)$.  The group $W$ contains a unique element $w_0$ whose
$S$-word length is maximal.  This element satisfies $w_0(\bPhik^+(\bG)) = \bPhik^-(\bG)$.  Define
\[\bB^- = \bB_{w_0} \quad \text{and} \quad \bU^- = \bU_{w_0}.\]
The group $\bB^-$ is the {\em opposite} minimal parabolic subgroup to $\bB$.  It satisfies
$\bB \cap \bB^- = Z_{\bG}(\bT)$.  In particular, $\bB \cap \bU^- = 1$.

\begin{example}
For $\bG = \GL_{n+1}$, the group $\bB$ is the group of upper triangular matrices and $\bB^-$ is the group
of lower triangular matrices.
\end{example}

\subsection{Intersecting minimal parabolics}

Consider $\Lambda \subset \bPhik(\bG)$.  The set $\Lambda$ is {\em closed} if $\lambda_1+\lambda_2 \in \Lambda$
for all $\lambda_1,\lambda_2 \in \Lambda$ with $\lambda_1+\lambda_2 \in \bPhik(\bG)$.
It is {\em one-sided} if 
$\Lambda \subset w(\bPhik^+(\bG))$ for some $w \in W$, i.e., if $\Lambda$ lies in the set of positive roots
associated to some minimal parabolic containing $\bT$.  

Assume that $\Lambda$ is
closed and one-sided.  Let $\bU_{\Lambda}$ be the subgroup of $\bG$ generated by the $\bU_{(\lambda)}$ for 
$\lambda \in \Lambda$.  Our assumptions imply that $\bU_{\Lambda}$ is
a unipotent subgroup of $\bG$.  Let $\Lambda_{\text{nd}}$ be the set of all $\lambda \in \Lambda$
with $\frac{1}{2} \lambda \notin \Lambda$.  Our assumptions also
imply that the product map
\[\prod_{\lambda \in \Lambda_{\text{nd}}} \bU_{(\lambda)}(k) \rightarrow \bU_{\Lambda}(k)\]
is a set-theoretic bijection.  We then have the following:

\begin{lemma}
\label{lemma:intersectunipotent}
Let $\bG$ be a reductive group.  Fix a minimal parabolic subgroup $\bB$
with unipotent radical $\bU$.  Let $\Lambda \subset \bPhik(\bG)$ be closed and one-sided.  Set 
$\Lambda' = \Lambda \cap \bPhik^+(\bG)$.  We then have $\bB(k) \cap \bU_{\Lambda}(k) = \bU_{\Lambda'}(k)$.
\end{lemma}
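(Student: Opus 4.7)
The plan is to decompose the closed one-sided set $\Lambda$ into its positive and negative parts, apply the product decomposition for $\bU_\Lambda(k)$ with a compatible ordering, and then use that $\bU^-(k) \cap \bB(k) = 1$ to kill off the negative factor.

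First, set $\Lambda'' = \Lambda \cap \bPhik^-(\bG)$, so $\Lambda = \Lambda' \sqcup \Lambda''$. I would check that $\Lambda'$ and $\Lambda''$ are both closed: if $\lambda_1,\lambda_2 \in \Lambda'$ with $\lambda_1+\lambda_2 \in \bPhik(\bG)$, then $\lambda_1+\lambda_2 \in \Lambda$ since $\Lambda$ is closed and $\lambda_1+\lambda_2 \in \bPhik^+(\bG)$ as a sum of positive roots, so $\lambda_1+\lambda_2 \in \Lambda'$; the argument for $\Lambda''$ is symmetric. Both subsets are trivially one-sided, contained in $\bPhik^+(\bG)$ and $\bPhik^-(\bG)$ respectively. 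Note $\bU_{\Lambda'} \subseteq \bU$ since $\Lambda' \subseteq \bPhik^+(\bG)$, and $\bU_{\Lambda''} \subseteq \bU^-$ since $\bU^-$ is generated by the root subgroups $\bU_{(\beta)}$ for $\beta \in \bPhik^-(\bG)$ (as $\bU^- = \bU_{w_0}$ in the notation of \S\ref{section:oppositeparabolic}).

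The key step is to produce the group-theoretic decomposition $\bU_\Lambda(k) = \bU_{\Lambda'}(k) \cdot \bU_{\Lambda''}(k)$ with unique factorization. I would apply the set-theoretic product bijection $\prod_{\lambda \in \Lambda_{\text{nd}}} \bU_{(\lambda)}(k) \to \bU_\Lambda(k)$ stated in the paper, using an ordering of $\Lambda_{\text{nd}}$ in which all elements of $\Lambda'_{\text{nd}}$ precede all elements of $\Lambda''_{\text{nd}}$ (such an ordering exists since one may refine any total order on $\bPhik(\bG)$ with $\bPhik^+ < \bPhik^-$). Comparing with the analogous bijections for $\Lambda'$ and $\Lambda''$ individually then yields that every $g \in \bU_\Lambda(k)$ has a unique factorization $g = g_+ g_-$ with $g_+ \in \bU_{\Lambda'}(k)$ and $g_- \in \bU_{\Lambda''}(k)$.

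Granting this decomposition, the lemma follows quickly. The inclusion $\bU_{\Lambda'}(k) \subseteq \bB(k) \cap \bU_\Lambda(k)$ is immediate from $\bU_{\Lambda'} \subseteq \bU \subseteq \bB$ and $\Lambda' \subseteq \Lambda$. Conversely, given $g \in \bB(k) \cap \bU_\Lambda(k)$, write $g = g_+ g_-$ as above. Then $g_- = g_+^{-1} g \in \bB(k)$ since $g_+ \in \bU(k) \subseteq \bB(k)$ and $g \in \bB(k)$. On the other hand $g_- \in \bU^-(k)$. From $\bB \cap \bB^- = Z_\bG(\bT)$ and the semidirect product decomposition $\bB^- = \bU^- \rtimes Z_\bG(\bT)$ (which gives $\bU^- \cap Z_\bG(\bT) = 1$), we obtain $\bU^-(k) \cap \bB(k) = 1$, so $g_- = 1$ and $g = g_+ \in \bU_{\Lambda'}(k)$.

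The main obstacle is justifying the compatible ordering in the middle step: one needs to know that the product bijection for a closed one-sided set can be carried out with an ordering refining the partition $\Lambda' \sqcup \Lambda''$. This is where the precise statement of Borel--Tits' structure theorem for unipotent subgroups is used; it amounts to observing that $\Lambda'$ is an ``initial segment'' of $\Lambda$ with respect to a suitable total ordering (for instance, one induced by a linear functional positive on $\bPhik^+(\bG)$), and that the induced commutation relations among root subgroups preserve the subgroup $\bU_{\Lambda'}$ under right multiplication by $\bU_{\Lambda''}$.
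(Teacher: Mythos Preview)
Your proof is correct and follows essentially the same approach as the paper: decompose $\Lambda$ into its positive and negative parts $\Lambda'$ and $\Lambda''$, use the product bijection to write an element of $\bU_{\Lambda}(k)$ as a product of elements of $\bU_{\Lambda'}(k)$ and $\bU_{\Lambda''}(k)$, and then use $\bB(k) \cap \bU^-(k) = 1$ to kill the negative factor. The only cosmetic difference is that the paper orders the product as $\bU_{\Lambda''}(k) \times \bU_{\Lambda'}(k)$ rather than your $\bU_{\Lambda'}(k) \times \bU_{\Lambda''}(k)$, and simply asserts the product bijection without your additional commentary on compatible orderings (the paper has already recorded $\bB \cap \bU^- = 1$ in \S\ref{section:oppositeparabolic}, so you need not rederive it).
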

\begin{proof}
We clearly have have $\bU_{\Lambda'}(k) \subset \bB(k) \cap \bU_{\Lambda}(k)$, so we must
prove the other inclusion.  Consider $u \in \bB(k) \cap \bU_{\Lambda}(k)$.
Let $\Lambda'' = \Lambda \setminus \Lambda'$.  The product map
\[\bU_{\Lambda''}(k) \times \bU_{\Lambda'}(k) \rightarrow \bU_{\Lambda}(k)\]
is a set-theoretic bijection, so we can write $u = u'' u'$ with $u'' \in \bU_{\Lambda''}(k)$
and $u' \in \bU_{\Lambda'}(k)$.  To prove that $u \in \bU_{\Lambda'}(k)$, it is
enough to prove that $u'' = 1$.
Since $\bU_{\Lambda'}(k) \subset \bU(k) \subset \bB(k)$, we
have $u' \in \bB(k)$.  Since $u$ is assumed to be in $\bB(k)$, it follows
that $u'' \in \bB(k)$ and thus that $u'' \in \bB(k) \cap \bU_{\Lambda''}(k)$.  
Since $\Lambda'' \subset \bPhik^-(\bG)$ we have $\bU_{\Lambda''}(k) \subset \bU^-(k)$.
Since $\bU^-(k) \cap \bB(k) = 1$, this 
implies that $\bU_{\Lambda''}(k) \cap \bB(k) = 1$ and thus that $u'' = 1$, as desired.
\end{proof}

The following corollary will play a key role in one of the base cases of our proof:

\begin{corollary}
\label{corollary:findu}
Let $\bG$ be a reductive group.  Fix a minimal parabolic subgroup $\bB$
with maximal split torus $\bT$ and unipotent radical $\bU$.  Letting $W = N_{\bG}(\bT) / Z_{\bG}(\bT)$
be the relative Weyl group, for each $w \in W$ fix some representative $\tw \in N_{\bG}(\bT)(k)$.
There then exists some $u \in \bU(k)$ such that the only $w_1,w_2 \in W$ with
$\tw_1 u \tw_2 \in \bB(k)$ are $w_1 = w_2 = 1$.
\end{corollary}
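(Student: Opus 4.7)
The overall strategy is to collapse the two-variable condition on $(w_1, w_2)$ to a one-variable condition via Bruhat cell disjointness, translate into a concrete constraint using Lemma~\ref{lemma:intersectunipotent}, and then build $u$ by combining nontrivial elements from the simple root subgroups.

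First, suppose $\tw_1 u \tw_2 \in \bB(k)$. Since $u \in \bU(k) \subset \bB(k)$, we have $u \tw_2 \in \bB(k) \tw_2 \bB(k)$ (the $w_2$-cell); also $u \tw_2 = \tw_1^{-1}(\tw_1 u \tw_2) \in \tw_1^{-1} \bB(k) \subset \bB(k) \tw_1^{-1} \bB(k)$ (the $w_1^{-1}$-cell). Disjointness of Bruhat cells then forces $w_2 = w_1^{-1}$, so it suffices to find $u$ with $\tw u \tw^{-1} \notin \bB(k)$ for every $w \in W \setminus \{1\}$. Since $\tw u \tw^{-1} \in \bU_w(k)$, applying Lemma~\ref{lemma:intersectunipotent} with $\Lambda = w \bPhik^+(\bG)$ (the positive roots of $\bB_w$, which is closed and one-sided) gives $\bU_w(k) \cap \bB(k) = \bU_{w\bPhik^+(\bG) \cap \bPhik^+(\bG)}(k)$, and conjugating back by $\tw^{-1}$ recasts the condition as $u \notin \bU_{\Lambda_w}(k)$ where $\Lambda_w = \bPhik^+(\bG) \cap w^{-1}\bPhik^+(\bG)$.

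Now the standard fact that every $w \neq 1$ has a descent --- some $\alpha_i \in \bDeltak(\bG)$ with $w^{-1}(\alpha_i) \in \bPhik^-(\bG)$ --- together with $w^{-1}(2\alpha_i) = 2 w^{-1}(\alpha_i)$ to handle the non-reduced case, yields $\Lambda_w \subseteq \Lambda_{s_i}$ and hence $\bU_{\Lambda_w}(k) \subseteq \bU_{\Lambda_{s_i}}(k)$. So I am reduced to finding $u \in \bU(k)$ avoiding $\bU_{\Lambda_{s_i}}(k)$ for each simple $\alpha_i$. A short commutator computation (using that no simple root can be written as a positive integer combination of two positive roots from $\Lambda_{s_i}$) shows that $\bU_{\Lambda_{s_i}}$ is a normal subgroup of $\bU$ with quotient isomorphic to $\bU_{(\alpha_i)}$ via the product decomposition of $\bU$.

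To finish, for each simple $\alpha_i$ pick some nontrivial $t_i \in \bU_{(\alpha_i)}(k) \setminus \{1\}$ (nonempty since relative root subgroups of a reductive group have nontrivial $k$-rational points) and set $u = \prod_i t_i \in \bU(k)$ in any order. For $j \neq i$ we have $\alpha_i \in \Lambda_{s_j}$, so $t_i \in \bU_{\Lambda_{s_j}}(k)$; the quotient map $\bU(k) \to \bU(k)/\bU_{\Lambda_{s_j}}(k) \cong \bU_{(\alpha_j)}(k)$ therefore sends $u$ to $t_j \neq 1$, establishing $u \notin \bU_{\Lambda_{s_j}}(k)$ for every $j$, as required. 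The main obstacle is the Bruhat cell dichotomy in the first step: without collapsing to the condition $w_2 = w_1^{-1}$, one would be left trying to avoid $\bU_{\Lambda}(k)$ for many unrelated closed $\Lambda \subsetneq \bPhik^+(\bG)$, and a naive counting argument would fail over small finite fields.
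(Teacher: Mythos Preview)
Your argument is correct apart from one slip in the descent step: to obtain $\Lambda_w \subseteq \Lambda_{s_i}$ you need $\alpha_i \notin \Lambda_w = \bPhik^+(\bG) \cap w^{-1}\bPhik^+(\bG)$, which is the condition $w(\alpha_i) \in \bPhik^-(\bG)$ (a \emph{right} descent), not $w^{-1}(\alpha_i) \in \bPhik^-(\bG)$ as you wrote. The fix is immediate --- every $w \neq 1$ also has a right descent --- and the remainder of your reasoning (closedness of $\Lambda_{s_i}$, normality of $\bU_{\Lambda_{s_i}}$ in $\bU$, the quotient computation) goes through.

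The paper's proof shares your first two moves exactly: the Bruhat reduction to $w_2 = w_1^{-1}$, and the application of Lemma~\ref{lemma:intersectunipotent} to identify $\bB(k) \cap \bU_w(k)$. It diverges in the construction and the endgame. Rather than reducing to simple reflections, the paper takes $u = u_1 \cdots u_r$ with one nontrivial factor from \emph{every} non-divisible positive root group, and then argues directly: if $\tw u \tw^{-1}$ lies in $\bU_{\Lambda'}(k)$ with $\Lambda' = w\bPhik^+(\bG) \cap \bPhik^+(\bG)$, the uniqueness of the product decomposition of $\bU_w(k)$ forces every $w(\alpha_j)$ to be positive, so $w(\bPhik^+(\bG)) = \bPhik^+(\bG)$ and hence $w = 1$. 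Your route is more economical in the construction (only simple root groups are needed) at the cost of the short commutator argument establishing that $\bU_{\Lambda_{s_i}} \lhd \bU$ with quotient $\bU_{(\alpha_i)}$; the paper's route avoids that structural claim but must track the full positive system. The two are of comparable length and difficulty.
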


\begin{example}
For $\bG = \GL_{n+1}$, the group $\bU$ is the subgroup of strictly upper triangular matrices.  Pick $u \in \bU(k)$ 
such that all of its upper triangular entries are nonzero.
For $w \in W \cong S_{n+1}$, let $\tw$ be the associated permutation
matrix.  For $w_1, w_2 \in W$, the matrix $\tw_1 u \tw_2$ is obtained from $u$ by permuting the rows via $w_1$
and the columns via $w_2$.  It is an easy exercise to see that this will have a nonzero entry below
the diagonal unless $w_1 = w_2 = 1$.
\end{example}

\begin{proof}[{Proof of Corollary \ref{corollary:findu}}]
Enumerate $\bPhik^+_{\text{nd}}$ as $\bPhik^+_{\text{nd}} = \{\alpha_1,\ldots,\alpha_r\}$.  For $1 \leq j \leq r$, pick $u_j \in \bU_{(\alpha_j)}(k)$
with $u_j \neq 1$.  Set $u = u_1 \cdots u_r \in \bU(k)$, and consider $w_1,w_2 \in W$ with $\tw_1 u \tw_2 \in \bB(k)$.
We must prove that $w_1=w_2 = 1$.
Set $v = \tw_1 u \tw_2 \in \bB(k)$.  We have $\tw_1^{-1} = u \tw_2 v^{-1}$, so $\tw_2$ and $\tw_1^{-1}$ lie in the same $\bB(k)$-double
coset.  By the Bruhat decomposition, we have $\tw_2 = \tw_1^{-1}$.  Set $w = w_1$, so $v = \tw u \tw^{-1}$.  

Let $\Lambda = w(\bPhik^+(\bG))$ and $\Lambda' = \Lambda \cap \bPhik^+(\bG)$.  Lemma \ref{lemma:intersectunipotent} implies
that 
\[\bB(k) \cap \bU_w(k) = \bB(k) \cap \bU_{\Lambda}(k) = \bU_{\Lambda'}(k).\]
For $1 \leq j \leq r$, we have 
\[\tw u_j \tw^{-1} \in \bU_{(w(\alpha_j))}(k) \subset \bU_{w}(k).\]
We therefore have
\begin{equation}
\label{eqn:wuw}
\tw u \tw^{-1} = \left(\tw u_1 \tw^{-1}\right) \cdots \left(\tw u_r \tw^{-1}\right) \in \bB(k) \cap \bU_{w}(k) = \bU_{\Lambda'}.
\end{equation}
By construction, we have
$\Lambda'_{\text{nd}} \subset \bPhik^+_{\text{nd}}$.
Reordering the $\alpha_j$ if necessary, there thus exists some $1 \leq s \leq r$ with $\Lambda'_{\text{nd}} = \{\alpha_1,\ldots,\alpha_s\}$.
The product maps
\begin{alignat*}{4}
\bU_{(w(\alpha_1))}(k) &\times \cdots \times \bU_{(w(\alpha_s))}(k) \times \bU_{(w(\alpha_{s+1}))}(k) \times \cdots \times \bU_{(w(\alpha_r))}(k) &\ \longrightarrow &\ \bU_{w}(k) \\
\bU_{(\alpha_1)}(k)    &\times \cdots \times \bU_{(\alpha_s)}(k)    &\ \longrightarrow &\ \bU_{\Lambda'}(k)
\end{alignat*}
are set-theoretic bijections, and for each $1 \leq j \leq s$ there exists a unique $1 \leq \ell_j \leq r$ with
$\alpha_j = w(\alpha_{\ell_j})$.  Since each $u_j$ is nontrivial and \eqref{eqn:wuw} holds, we conclude
that in fact we have must have $r = s$, and thus $w(\bPhik^+(\bG)) = \bPhik^+(\bG)$.  In other words, the element
$w$ of the relative Weyl group $W$ takes each positive root to another positive root.  As we discussed
in \S \ref{section:oppositeparabolic}, this implies that $w = 1$, as desired.
\end{proof}

\subsection{Standard parabolic and Levi subgroups}
\label{section:standardlevi}

Recall that $\bDeltak(\bG) \subset \bPhik^+(\bG)$ is the set of simple roots.  Consider some $\Delta \subset \bDeltak(\bG)$.  Let $[\Delta] \subset \bPhik^+(\bG)$ be the closed
subset generated by $\Delta$ and let $\Psi(\Delta) = \bPhik^+(\bG) \setminus [\Delta]$.
Both $[\Delta]$ and $\Psi(\Delta)$ are closed and one-sided. 
Each $\alpha \in \Delta$ is a character of $\bT$, i.e., a homomorphism $\alpha\colon \bT \rightarrow \GL_1$.  Define\footnote{Here the $0$ indicates
that we are taking the connected component of the identity.}
\[\bT_{\Delta} = \left(\bigcap_{\alpha \in \Delta} \ker(\alpha)\right)^0 \subset \bT.\]
Let $\bL_{\Delta}$ be the centralizer of $\bT_{\Delta}$ in $\bG$ and let $\bP_{\Delta}$ be the subgroup of $\bG$
generated by $\bL_{\Delta}$ and $\bU_{\Psi(\Delta)}$.  The group $\bP_{\Delta}$ is a parabolic subgroup of $\bG$
with Levi factor $\bL_{\Delta}$ and unipotent radical $\bU_{\Psi(\Delta)}$, so
\[\bP_{\Delta} = \bU_{\Psi(\Delta)} \rtimes \bL_{\Delta}.\]
The parabolic subgroups of the form $\bP_{\Delta}$ for some $\Delta \subset \bDeltak(\bG)$ are the {\em standard
parabolic subgroups} and their Levi factors $\bL_{\Delta}$ are the {\em standard Levi subgroups}.  Every parabolic
subgroup is conjugate to a unique standard parabolic subgroup.  The standard Levi subgroup $\bL_{\Delta}$ is a
reductive group of semisimple rank $|\Delta|$ whose relative Dynkin diagram is the full subgraph
of the relative Dynkin diagram of $\bG$ with vertex set $\Delta$.

\begin{example}
\label{example:standardlevi}
Consider $\bG = \GL_8$, and let $\Delta = \{E_1-E_2, E_3-E_4,E_4-E_5,E_5-E_6\} \subset \bDeltak(\bG)$.  This corresponds to
the following subgraph of the Dynkin diagram for $\bPhi(\bG) \cong \dA_7$:\\
\centerline{\psfig{file=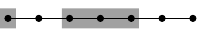,scale=2}}
We should have $\bPhi(\bL_{\Delta}) \cong \dA_1 \times \dA_3$, and indeed keeping in mind that $\bPhi(\GL_1) = \emptyset$ we have
\begin{align*}
\bT_{\Delta}(k) &= \Set{$\diag(a,a,b,b,b,b,c,d)$}{$a,b,c,d \in k^{\times}$} \subset \bT(k), \\
\bL_{\Delta}(k) &= Z_{\GL_8(k)}(\bT_{\Delta}(k)) = \GL_2(k) \times \GL_4(k) \times \GL_1(k) \times \GL_1(k) \subset \GL_8(k).
\end{align*}
More generally, if $\bG = \GL_{n+1}$ and $\Delta \subset \bDeltak(\bG)$ is such that
$\bDeltak_{\bG} \setminus \Delta$ contains $r$ roots, then
\[\bL_{\Delta} = \GL_{n_1} \times \cdots \times \GL_{n_{r+1}} \subset \GL_{n+1} \quad \text{with $n_1 + \cdots + n_{r+1} = n+1$}.\qedhere\]
\end{example}

\subsection{Conjugacy of Levi subgroups}

The following observation will play an important role in our proof:

\begin{lemma}
\label{lemma:leviconjugate}
Let $\bG$ be a reductive group with $\bPhik(\bG) = \dA_n$.
Let $\Delta, \Delta' \subset \bDeltak(\bG)$ be such that $\bPhik(\bL_{\Delta}) = \bPhik(\bL_{\Delta'}) = \dA_m$ for some
$m \geq 1$.  Then $\bL_{\Delta}(k)$ and $\bL_{\Delta'}(k)$ are conjugate\footnote{Note that
our assumptions do not imply that $\bP_{\Delta}(k)$ and $\bP_{\Delta'}(k)$ are conjugate subgroups of $\bG(k)$.}
subgroups of $\bG(k)$.
\end{lemma}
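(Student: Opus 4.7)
The plan is to realize the conjugation explicitly via a single representative of a suitable element of the relative Weyl group $W = N_{\bG}(\bT)/Z_{\bG}(\bT)$.

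First, I would identify what $\Delta$ and $\Delta'$ must look like combinatorially. By the last sentence of \S\ref{section:standardlevi}, the relative Dynkin diagram of $\bL_\Delta$ is the full subgraph of the diagram of $\bPhik(\bG) = \dA_n$ on the vertex set $\Delta$. Since $\dA_m$ is connected, the requirement $\bPhik(\bL_\Delta) \cong \dA_m$ forces $\Delta$ to be an interval of $m$ consecutive nodes in the chain; labeling $\bDeltak(\bG) = \{\alpha_1,\ldots,\alpha_n\}$ in diagram order, this means $\Delta = \{\alpha_i,\alpha_{i+1},\ldots,\alpha_{i+m-1}\}$ and $\Delta' = \{\alpha_j,\alpha_{j+1},\ldots,\alpha_{j+m-1}\}$ for some $1 \le i,j \le n-m+1$.

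Next, I would produce an element $w \in W$ with $w(\Delta) = \Delta'$. The Weyl group of a root system of type $\dA_n$ is the symmetric group $S_{n+1}$, acting by permutation on a standard basis $E_1,\ldots,E_{n+1}$ of the span of $\bPhik(\bG)$ in which $\alpha_\ell = E_\ell - E_{\ell+1}$. Any permutation $\sigma \in S_{n+1}$ sending the ordered interval $(i,i+1,\ldots,i+m)$ onto $(j,j+1,\ldots,j+m)$ sends $\alpha_{i+\ell}$ to $\alpha_{j+\ell}$ for $0 \le \ell \le m-1$, so such a $\sigma$ satisfies $\sigma(\Delta) = \Delta'$ as sets. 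The existence of $\sigma$ is trivial combinatorially.

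Finally, I would translate this Weyl conjugation into conjugation in $\bG(k)$. Lift $w = \sigma$ to some $\tw \in N_{\bG}(\bT)(k)$; such a lift exists since $N_{\bG}(\bT)(k) \to W$ is surjective (cf.\ the discussion of apartments in \S\ref{section:apartments}). Conjugation by $\tw$ acts on characters of $\bT$ through the Weyl action of $w$, so
\[
\tw \, \bT_\Delta \, \tw^{-1} \;=\; \Bigl(\bigcap_{\alpha \in \Delta} \ker(\alpha \circ \mathrm{Ad}(\tw^{-1}))\Bigr)^0 \;=\; \Bigl(\bigcap_{\alpha \in \Delta} \ker(w^{-1}\alpha)\Bigr)^0 \;=\; \bT_{w(\Delta)} \;=\; \bT_{\Delta'}.
\]
Taking centralizers in $\bG$ gives $\tw \bL_\Delta \tw^{-1} = \bL_{\Delta'}$, and passing to $k$-points yields the claim.

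There is no serious obstacle: the argument is essentially combinatorial in the root system $\dA_n$, and only uses the two structural facts already invoked in the excerpt, namely that standard Levi subgroups are recovered as centralizers of their central subtori and that the relative Weyl group of a reductive group is realized by elements of $N_{\bG}(\bT)(k)$.
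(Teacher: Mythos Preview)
Your approach is essentially the same as the paper's: find $w$ in the relative Weyl group $W \cong S_{n+1}$ with $w(\Delta) = \Delta'$, lift it to $\tw \in N_{\bG}(\bT)(k)$, and use the description $\bL_{\Delta} = Z_{\bG}(\bT_{\Delta})$ to conclude that $\tw \bL_{\Delta}(k) \tw^{-1} = \bL_{\Delta'}(k)$. The only cosmetic slip is a direction inconsistency in your displayed computation (if $\alpha \circ \mathrm{Ad}(\tw^{-1})$ is $w^{-1}\alpha$ then the intersection is $\bT_{w^{-1}(\Delta)}$, not $\bT_{w(\Delta)}$); either fix the exponent or swap $w$ for $w^{-1}$, and the argument goes through verbatim.
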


\begin{example}
Let $\bG = \GL_{n+1}$, so $\bPhi(\bG) \cong \dA_{n}$.  Let $\Delta,\Delta' \subset \bDeltak(\bG)$ be
as in Lemma \ref{lemma:leviconjugate}.  Since $\bPhi(\bL_{\Delta}) \cong \dA_m$, the
set $\Delta$ consists of $m$ consecutive vertices in the Dynkin diagram $\dA_{n}$ for $\bG = \GL_{n+1}$, for instance:\\
\centerline{\psfig{file=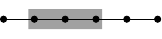,scale=2}}
Keeping in mind that $\bPhi(\GL_{m+1}) \cong \dA_m$, just like in Example \ref{example:standardlevi} we can write
\[\bL_{\Delta} = \left(\GL_1\right)^a \times \GL_{m+1} \times \left(\GL_1\right)^b \subset \GL_{n+1} \quad \text{with $a+(m+1)+b=n+1$}.\]
Similarly,
\[\bL_{\Delta'} = \left(\GL_1\right)^{a'} \times \GL_{m+1} \times \left(\GL_1\right)^{b'} \subset \GL_{n+1} \quad \text{with $a'+(m+1)+b'=n+1$}.\]
Let $w \in \bG(k)$ be an $(n+1) \times (n+1)$ permutation matrix representing a permutation
taking the subset $\{a+1,\ldots,a+m+1\}$ of $\{1,\ldots,n\}$ to $\{a'+1,\ldots,a'+m+1\}$.
We then have $w \bL_{\Delta}(k) w^{-1} = \bL_{\Delta'}(k)$, as desired.
\end{example}

\begin{proof}[{Proof of Lemma \ref{lemma:leviconjugate}}]
The full subgraphs of the Dynkin diagram of $\bPhik(\bG)$ with vertex sets $\Delta$ and $\Delta'$ are isomorphic the Dynkin diagram of $\dA_m$.
Our assumptions imply that there exists
some $w$ in the relative Weyl group $W = N_{\bG}(\bT) / Z_{\bG}(\bT) \cong \fS_n$ with $w \cdot \Delta = \Delta'$.  
The sets of subgroups
\[\Set{$\ker(w \cdot \alpha)$}{$\alpha \in \Delta$} \quad \text{and} \quad \Set{$\ker(\alpha)$}{$\alpha \in \Delta'$}\]
of $\bT$ are the same.  Letting $\tw \in N_{\bG}(\bT)(k)$ be a representative of $w$, it follows that
$\tw \bL_{\Delta_1}(k) \tw^{-1}$ equals
\[\tw Z_{\bG(k)}\left(\bigcap_{\alpha \in \Delta} \ker(\alpha)(k) \right)^0 \tw^{-1} = Z_{\bG(k)}\left(\bigcap_{\alpha \in \Delta} \ker(w \cdot \alpha)(k) \right)^0 = Z_{\bG(k)}\left(\bigcap_{\alpha \in \Delta'} \ker(\alpha)(k) \right)^0.\]
This is $\bL_{\Delta_2}(k)$,
as desired.
\end{proof}

\section{Two initial reductions}
\label{section:initialreductions}

We now begin developing the tools needed to prove Theorem \ref{maintheorem:fields}, which says that for a
reductive group $\bG$ we have $\HH_i(\bG(k);\St(\bG;\bbF)) = 0$ for $i$ in a range depending on the relative root
system $\bPhik(\bG)$. 

\subsection{Reduction to the integers}
We first show that it is enough to prove this for $\bbF = \Z$.  For a reductive group $\bG$, we will
write $\St(\bG)$ for $\St(\bG;\Z)$.

\begin{lemma}
\label{lemma:reductionlemma}
Let $\bG$ be a reductive group.  Let $b \geq 0$ be such that
$\HH_i(\bG(k);\St(\bG)) = 0$ for $i \leq b$.  Then for all commutative rings $\bbF$ we have
$\HH_i(\bG(k);\St(\bG;\bbF)) = 0$ for $i \leq b$.
\end{lemma}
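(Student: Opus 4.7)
The plan is to reduce the computation of $\HH_i(\bG(k);\St(\bG;\bbF))$ to that of $\HH_i(\bG(k);\St(\bG))$ via a universal coefficients argument, using the fact that $\St(\bG)$ is a free abelian group.

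First, I would note that by the Solomon--Tits theorem, $\Tits(\bG)$ is homotopy equivalent to a wedge of $(n-1)$-spheres, so its reduced integral homology $\St(\bG) = \RH_{n-1}(\Tits(\bG);\Z)$ is a free $\Z$-module. By the universal coefficient theorem applied to the simplicial chain complex of $\Tits(\bG)$, or equivalently by the freeness of $\St(\bG)$, there is a natural isomorphism of $\bG(k)$-modules
\[
\St(\bG;\bbF) \;\cong\; \St(\bG) \otimes_{\Z} \bbF.
\]

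Next, let $F_\bullet \to \Z$ be a free resolution of $\Z$ over $\Z[\bG(k)]$. Since each $F_j$ is free as a $\Z[\bG(k)]$-module and $\St(\bG)$ is $\Z$-free, each term of the chain complex
\[
C_\bullet \;:=\; F_\bullet \otimes_{\Z[\bG(k)]} \St(\bG)
\]
is a free $\Z$-module. Its homology computes $\HH_i(\bG(k);\St(\bG))$. Because $\St(\bG)$ is $\Z$-flat, we have an identification of chain complexes
\[
F_\bullet \otimes_{\Z[\bG(k)]} \bigl(\St(\bG) \otimes_{\Z} \bbF\bigr) \;\cong\; C_\bullet \otimes_{\Z} \bbF,
\]
whose homology computes $\HH_i(\bG(k);\St(\bG;\bbF))$. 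Applying the classical universal coefficient theorem to the complex $C_\bullet$ of free abelian groups yields a short exact sequence
\[
0 \longrightarrow \HH_i(\bG(k);\St(\bG)) \otimes_{\Z} \bbF \longrightarrow \HH_i(\bG(k);\St(\bG;\bbF)) \longrightarrow \Tor^{\Z}_1\bigl(\HH_{i-1}(\bG(k);\St(\bG)),\bbF\bigr) \longrightarrow 0.
\]

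Finally, under the assumption that $\HH_j(\bG(k);\St(\bG)) = 0$ for all $j \leq b$, both the outer terms vanish for $i \leq b$ (with the convention that $\HH_{-1} = 0$ handles the $i=0$ case), forcing $\HH_i(\bG(k);\St(\bG;\bbF)) = 0$ for $i \leq b$. There is no real obstacle here; the only thing to verify carefully is the freeness of $\St(\bG)$ as a $\Z$-module, which is immediate from Solomon--Tits, together with the formal compatibility of tensor products with the bar resolution.
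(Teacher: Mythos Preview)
Your proof is correct and is essentially identical to the paper's own argument: both take a free resolution $F_\bullet \to \Z$ over $\Z[\bG(k)]$, observe that $F_\bullet \otimes_{\Z[\bG(k)]} \St(\bG)$ is a complex of free abelian groups since $\St(\bG)$ is $\Z$-free, and then apply the universal coefficient theorem after tensoring with $\bbF$ to obtain the same short exact sequence. You give a bit more justification (explicitly invoking Solomon--Tits for the freeness of $\St(\bG)$ and spelling out the identification $\St(\bG;\bbF) \cong \St(\bG)\otimes_\Z \bbF$), but the structure is the same.
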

\begin{proof}
Letting $F_{\bullet} \rightarrow \Z$ be a free resolution of $\bG(k)$-modules, the homology groups
of $F_{\bullet} \otimes_{\bG(k)} \St(\bG)$ are $\HH_i(\bG(k);\St(\bG))$.  Since
$\St(\bG)$ is a free abelian group equipped with an action of $\bG(k)$, each $F_i \otimes_{\bG(k)} \St(\bG)$
is also a free abelian group.  We can therefore apply the universal coefficients theorem
to
\[F_{\bullet} \otimes_{\bG(k)} \St(\bG) \otimes_{\Z} \bbF = F_{\bullet} \otimes_{\bG(k)} \St(\bG;\bbF),\]
which computes $\HH_i(\bG(k);\St(\bG;\bbF))$.  The result is a short exact sequence
\[0 \rightarrow \HH_i(\bG(k);\St(\bG)) \otimes \bbF \rightarrow \HH_i(\bG(k);\St(\bG;\bbF)) \rightarrow \Tor(\HH_{i-1}(\bG(k);\St(\bG)),\bbF) \rightarrow 0.\]
The lemma follows.
\end{proof}
 
We can therefore omit the $\bbF$ from our calculations, though at one point when handling reducible root
systems we will need more general coefficients.

\subsection{Zeroth homology}

We next prove our vanishing theorem for $\HH_0$:

\begin{lemma}
\label{lemma:h0}
Let $\bG$ be a reductive group of semisimple rank $n \geq 1$.
Then $\HH_0(\bG(k);\St(\bG)) = 0$.
\end{lemma}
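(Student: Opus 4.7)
My plan is to prove this by induction on the semisimple rank $n$, reducing the case $n \geq 2$ to the rank-one case via Reeder's theorem and Shapiro's lemma, and handling $n = 1$ by an explicit Bruhat-style computation. The starting observation is that $\HH_0(\bG(k);\St(\bG)) = \St(\bG)_{\bG(k)}$ is generated as an abelian group by the apartment classes $\bbA_g$ for $g \in \bG(k)$; since $\bbA_g = g \cdot \bbA_1$, each such class collapses in $\HH_0$ to the single class of $\bbA_1$, so it suffices to show $\bbA_1 \equiv 0$ in the coinvariants.

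For the inductive step with $n \geq 2$, I fix a simple root $\alpha \in \bDeltak(\bG)$ and let $\bP = \bP_{\bDeltak(\bG) \setminus \{\alpha\}}$ be the associated standard maximal parabolic, whose standard Levi $\bL$ is reductive of semisimple rank $n-1 \geq 1$. By induction, $\HH_0(\bL(k);\St(\bL)) = 0$. Reeder's theorem (Theorem \ref{theorem:reedermap}) identifies $\St(\bG)$ with $\Ind_{\bL(k)}^{\bP(k)} \St(\bL)$ as a $\bP(k)$-module, and Shapiro's lemma for coinvariants then gives
\[
\HH_0(\bP(k);\St(\bG)) \;\cong\; \HH_0(\bL(k);\St(\bL)) \;=\; 0.
\]
Since $\bP(k) \subseteq \bG(k)$, the $\bG(k)$-coinvariants are a further quotient of the $\bP(k)$-coinvariants, and the vanishing for $\bG(k)$ follows.

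The hard part is the base case $n = 1$, where every proper parabolic has anisotropic Levi and the inductive reduction breaks down. Here $W = \{1,s\}$, and the Solomon--Tits basis $\cB = \{\bbA_v : v \in \bU(k)\}$ is given concretely by $\bbA_v = \bB - v\widetilde{s}\cdot\bB$ in $\Z[\fP_{\min}]$. The plan is to apply Corollary \ref{corollary:findu} to pick $u \in \bU(k)$ with $\widetilde{s}u\widetilde{s} \notin \bB(k)$, invoke the Bruhat decomposition to write $\widetilde{s}u\widetilde{s} = b\widetilde{s}b'$ with $b,b' \in \bB(k)$, and factor $b$ through $\bU \rtimes Z_{\bG}(\bT)$ to obtain $\widetilde{s}u\widetilde{s}\cdot\bB = u_1\widetilde{s}\cdot\bB$ for some $u_1 \in \bU(k)$. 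Expanding the apartment will then produce the key identity
\[
\bbA_{\widetilde{s}u} \;=\; \widetilde{s}\cdot\bB - u_1\widetilde{s}\cdot\bB \;=\; -\bbA_1 + \bbA_{u_1}
\]
in $\St(\bG)$. Passing to coinvariants, the left side collapses to $\bbA_1$ while the right side collapses to $-\bbA_1 + \bbA_1 = 0$, yielding $\bbA_1 = 0$ in $\HH_0$ and completing the proof.
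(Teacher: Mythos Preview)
Your proof is correct. The reduction for $n\geq 2$ is essentially the paper's Case~2 (the paper goes directly to a Levi of rank~$1$ rather than doing a full induction to rank~$n-1$, but either version works, via Reeder's theorem and Shapiro's lemma).

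Your treatment of the base case $n=1$ is genuinely different from the paper's. The paper argues geometrically: the Tits building is a discrete set of points on which $\bG(k)$ acts double-transitively, and the standard three-point trick $[P-P']=[P-Q]+[Q-P']$ together with double-transitivity gives $[P-P']=2[P-P']$, hence $[P-P']=0$. You instead use the Bruhat decomposition to produce the explicit identity $\bbA_{\widetilde{s}u}=-\bbA_1+\bbA_{u_1}$ in $\St(\bG)$ and then pass to coinvariants. Both arguments ultimately rest on the same nontriviality input (thickness of the building is equivalent to $\bU(k)\neq 1$), and your invocation of Corollary~\ref{corollary:findu} is overkill here: any $u\in\bU(k)\setminus\{1\}$ already satisfies $\widetilde{s}u\widetilde{s}\notin\bB(k)$, since $\widetilde{s}u\widetilde{s}^{-1}$ is a nontrivial element of $\bU^-(k)$ and $\bU^-(k)\cap\bB(k)=1$. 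The paper's argument is a bit shorter and requires no explicit Bruhat factorisation; yours has the virtue of being phrased entirely in terms of apartment classes and fits naturally with the later computation in Lemma~\ref{lemma:rank2}.
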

\begin{proof}
The homology group $\HH_0(\bG(k);\St(\bG))$ equals the $\bG(k)$-coinvariants $\St(\bG)_{\bG(k)}$, so we must
show that these coinvariants vanish.  We divide this into two cases.

\begin{case}{1}
\label{case:h0.1}
The semisimple rank $n$ of $\bG$ is $1$, i.e., $\bPhik(\bG) \cong \dA_1$.
\end{case}

The Tits building $\Tits(\bG)$ is $(n-1)$-dimensional, and the Steinberg representation $\St(\bG)$ equals $\RH_{n-1}(\Tits(\bG))$.
Since $n=1$, we deduce that $\Tits(\bG)$ is $0$-dimensional, i.e., a discrete set.\footnote{In the case $\bG = \GL_2$, the
Tits building $\Tits(\bG)$ is the discrete set of lines in $k^2$.}  The points of $\Tits(\bG)$ are the proper parabolic subgroups of $\bG$.
The group $\bG(k)$ acts double-transitively\footnote{The vertices of $\Tits(\bG)$ are its chambers and the apartments
are Coxeter complexes of type $\dA_1$, i.e., $0$-spheres.  This double-transitivity therefore follows
from the following standard properties of the building associated to a reductive group: any two chambers lie in an apartment, the group $\bG(k)$ acts
transitively on apartments, and the $\bG(k)$-stabilizer of an apartment acts transitively on its chambers.}
on the vertices of $\Tits(\bG)$.

For $\theta \in \St(\bG)$, let $[\theta]$ be its image in the $\bG(k)$-coinvariants.
We can identify the abelian group
$\St(\bG) = \RH_0(\Tits(\bG))$ with the collection of formal sums
\[c_1 P_1 + \cdots + c_m P_m \quad \text{with $P_1,\ldots,P_m \in \Tits(\bG)^{(0)}$ and $c_1,\ldots,c_m \in \Z$ such that $\sum c_i = 0$}.\]
This is generated by elements of the form $P - P'$ with $P,P' \in \Tits(\bG)^{(0)}$ distinct, so it is enough to show that $[P - P'] =0$.
Letting $Q \in \Tits(\bG)^{(0)}$ be a third vertex,\footnote{Whether or not this exists for all buildings of type $\dA_1$ depends
on your conventions, but this always exists for such buildings if they come from reductive groups.  In this case
the building is what is called a ``thick building''; see, e.g., \cite[Remark IV.1.1]{BrownBuildings}.}
we have
\begin{equation}
\label{eqn:kill1}
[P-P'] = [P - Q] + [Q - P'].
\end{equation}
By the double transitivity of the action of $\bG(k)$ on the vertices, we can find $g_1,g_2 \in \bG(k)$ such that
\[g_1 \cdot P - g_1 \cdot P' = P - Q \quad \text{and} \quad g_2 \cdot P - g_2 \cdot P' = Q - P'.\]
It follows that in the $\bG(k)$-coinvariants we have
\begin{equation}
\label{eqn:kill2}
[P-P'] = [P - Q] = [Q - P'].
\end{equation}
Combining \eqref{eqn:kill1} and \eqref{eqn:kill2}, we conclude that $[P - P'] = 2 [P - P']$, so $[P-P'] = 0$, as desired.

\begin{case}{2}
\label{case:h0.2}
The semisimple rank of $\bG$ is greater than $1$.
\end{case}

Let $\Delta \subset \bDeltak(\bG)$ be a set consisting of a single simple root.  The standard Levi subgroup $\bL_{\Delta}$ of
the standard parabolic subgroup $\bP_{\Delta}$ thus has semisimple rank $|\Delta|=1$, i.e., $\bPhik(\bL_{\Delta}) \cong \dA_1$.  Theorem \ref{theorem:reedermap}
says that as a $\bP_{\Delta}(k)$-representation, we have
\[\St(\bG) = \Ind_{\bL_{\Delta}(k)}^{\bP_{\Delta}(k)} \St(\bL_{\Delta}).\]
It follows that
\[\St(\bG)_{\bP_{\Delta}(k)} = \St(\bL_{\Delta})_{\bL_{\Delta}},\]
which vanishes by Case \ref{case:h0.1}.  This implies that $\St(\bG)_{\bG(k)} = 0$, as desired.
\end{proof}

\section{Resolution of Steinberg and its associated spectral sequence}
\label{section:resolution}

Let $\bG$ be a reductive group of semisimple rank $n$.  In this section, we explain
how to relate $\HH_i(\bG(k);\St(\bG))$ to the
homology groups of Levi factors of parabolic subgroups of $\bG$ with coefficients in their Steinberg representations.
This relationship is expressed as a spectral sequence that will form the foundation for our
proof of Theorem \ref{maintheorem:fields}.

\subsection{Resolution}
As in \S \ref{section:standardlevi}, for each $\Delta \subset \bDeltak(\bG)$ there is a standard Levi subgroup
$\bL_{\Delta}$.  For $i \geq 0$, let $\cL_i(\bG)$ be the set of all
subsets $\Delta \subset \bDeltak(\bG)$ with $|\Delta| = n-1-i$.  For these $\Delta$, the
semisimple rank $n-1-i$ of $\bL_{\Delta}$ is smaller than that of $\bG$, so
in some sense $\bL_{\Delta}$ is a simpler group than $\bG$.  
The following (non-projective) resolution of
$\St(\bG)$ relates it to the Steinberg representations of the $\bL_{\Delta}$:

\begin{proposition}
\label{proposition:resolution}
Let $\bG$ be a reductive group of semisimple rank $n$.
We then have an exact sequence
\[0 \rightarrow \bR_{n} \rightarrow \bR_{n-1} \rightarrow \cdots \rightarrow \bR_0 \rightarrow \St(\bG) \rightarrow 0\]
of $\Z[\bG(k)]$-modules with
\[\bR_{i} \cong \bigoplus_{\Delta \in \cL_{i}(\bG)} \Ind_{\bL_{\Delta}(k)}^{\bG(k)} \St(\bL_{\Delta}) \quad \text{for $0 \leq i \leq n-1$}\]
and with $\bR_n \cong \St(\bG)^{\otimes 2}$.
\end{proposition}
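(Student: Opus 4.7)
The plan is to obtain the resolution by tensoring the reduced chain complex of the Tits building with $\St(\bG)$ itself. Since $\Tits(\bG)$ is $(n-1)$-dimensional and, by Solomon--Tits, homotopy equivalent to a wedge of $(n-1)$-spheres, its augmented reduced chain complex is exact except in top degree and yields an exact sequence of $\bG(k)$-modules
\[
0 \to \St(\bG) \to \CC_{n-1}(\Tits(\bG)) \to \cdots \to \CC_0(\Tits(\bG)) \to \Z \to 0.
\]
Because $\St(\bG)$ embeds as a subgroup of the free abelian group $\CC_{n-1}(\Tits(\bG))$, it is itself free abelian. Hence tensoring the above sequence over $\Z$ with $\St(\bG)$ (giving each term the diagonal $\bG(k)$-action) preserves exactness and produces
\[
0 \to \St(\bG)^{\otimes 2} \to \St(\bG) \otimes \CC_{n-1}(\Tits(\bG)) \to \cdots \to \St(\bG) \otimes \CC_0(\Tits(\bG)) \to \St(\bG) \to 0,
\]
where the last term uses $\St(\bG) \otimes \Z \cong \St(\bG)$. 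I set $\bR_n = \St(\bG)^{\otimes 2}$ and $\bR_i = \St(\bG) \otimes \CC_i(\Tits(\bG))$ for $0 \leq i \leq n-1$; it then remains only to identify each $\bR_i$ in the claimed form.

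For $0 \leq i \leq n-1$, the $i$-simplices of $\Tits(\bG)$ correspond to parabolic subgroups of semisimple rank $n-1-i$, and every such parabolic is $\bG(k)$-conjugate to exactly one standard parabolic $\bP_\Delta$ with $\Delta \in \cL_i(\bG)$. By self-normalization, the $\bG(k)$-stabilizer of the simplex for $\bP_\Delta$ is $\bP_\Delta(k)$, and a brief check shows that it fixes every vertex of that simplex pointwise: any maximal proper parabolic $\bP'$ containing $\bP_\Delta$ contains $\bP_\Delta(k)$, so conjugation by elements of $\bP_\Delta(k)$ leaves $\bP'$ invariant. Hence no orientation twist appears, and
\[
\CC_i(\Tits(\bG)) \cong \bigoplus_{\Delta \in \cL_i(\bG)} \Z[\bG(k)/\bP_\Delta(k)]
\]
as $\bG(k)$-modules. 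Combining the standard identity $M \otimes_\Z \Z[\bG(k)/H] \cong \Ind_H^{\bG(k)}(M|_H)$ with Reeder's theorem (Theorem \ref{theorem:reedermap}), which gives $\St(\bG)|_{\bP_\Delta(k)} \cong \Ind_{\bL_\Delta(k)}^{\bP_\Delta(k)} \St(\bL_\Delta)$, and the transitivity of induction, I obtain
\[
\bR_i = \St(\bG) \otimes \CC_i(\Tits(\bG)) \cong \bigoplus_{\Delta \in \cL_i(\bG)} \Ind_{\bL_\Delta(k)}^{\bG(k)} \St(\bL_\Delta),
\]
as required.

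There is no single serious obstacle here. The subtle points worth care in a full writeup are: (i) verifying the freeness of $\St(\bG)$ as an abelian group, which is what makes the tensoring step preserve exactness; (ii) the orientation check showing that simplex stabilizers in $\Tits(\bG)$ act trivially on orientations, so that the chain modules really are honest permutation modules with no sign twist; and (iii) correctly chaining Reeder's theorem with the Frobenius-type identity and transitivity of induction. Once these are recorded, the entire proposition becomes a formal consequence of Solomon--Tits together with Reeder's decomposition of the restriction of the Steinberg representation to a parabolic.
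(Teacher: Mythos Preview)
Your proof is correct and follows essentially the same approach as the paper: tensor the augmented chain complex of $\Tits(\bG)$ with $\St(\bG)$, identify the chain groups as permutation modules $\bigoplus_\Delta \Z[\bG(k)/\bP_\Delta(k)]$, and then apply the Frobenius-type identity together with Reeder's theorem and transitivity of induction. Your additional remarks on freeness of $\St(\bG)$ and the orientation check are accurate and slightly more explicit than the paper's own write-up.
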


\begin{example}
\label{example:resolutiongl}
The resolution of $\St(\GL_{n+1})$ from Proposition \ref{proposition:resolution} has the following concrete form.
For a finite-dimensional $k$-vector space $V$, let $\cT(V)$ be the complex
of flags of nonzero proper subspaces of $V$, so $\cT(k^{n+1})$ is the Tits building for $\GL_{n+1}$.  For $d = \dim(V)$,
let $\St(V) = \RH_{d-2}(\cT(V))$, so $\St(\GL_{n+1}) = \St(k^{n+1})$.
For $\bG = \GL_{n+1}$, as we discussed in Example \ref{example:standardlevi}
the groups $\bL_{\Delta}$ associated to $\Delta \in \cL_i(\bG)$ are those of the
form
\[\bL_{\Delta} = \GL_{n_1} \times \cdots \times \GL_{n_{i+2}} \quad \text{with $n_1,\ldots,n_{i+2} \geq 1$ and $n_1 + \cdots + n_{i+2} = n+1$}.\]
For reductive groups $\bH_1$ and $\bH_2$, we have $\St(\bH_1 \times \bH_2) \cong \St(\bH_1) \otimes \St(\bH_2)$
(cf. Lemma \ref{lemma:steinbergproduct} below).
It follows that
\[\bR_i \cong \bigoplus_{n_1 + \cdots + n_{i+2} = n+1} \Ind_{\GL_{n_1}(k) \times \cdots \times \GL_{n_{i+2}}(k)}^{\GL_n(k)} \St(k^{n_1}) \otimes \cdots \otimes \St(k^{n_{i+2}}).\]
Here and throughout this discussion the sum is over expressions $n_1 + \cdots + n_{i+2} = n+1$
with $n_1,\ldots,n_{i+2} \geq 1$.  
Fixing such an expression, the group $\GL_{n+1}(k)$ acts transitively on
decompositions $k^{n+1} = V_1 \oplus \cdots \oplus V_{i+2}$ with $\dim(V_j) = n_j$ for $1 \leq j \leq i+2$.  Moreover, the stabilizer
of the standard decomposition $k^{n+1} = k^{n_1} \oplus \cdots \oplus k^{n_{i+2}}$ is
$\GL_{n_1}(k) \times \cdots \times \GL_{n_{i+2}}(k)$.  
It follows that
\begin{align*}
\bR_i &\cong \bigoplus_{n_1 + \cdots + n_{i+2} = n+1} \Ind_{\GL_{n_1}(k) \times \cdots \times \GL_{n_{i+2}}(k)}^{\GL_n(k)} \St(k^{n_1}) \otimes \cdots \otimes \St(k^{n_{i+2}}) \\
      &\cong \bigoplus_{n_1 + \cdots + n_{i+2} = n+1} \left(\bigoplus_{\substack{k^{n+1} = V_1 \oplus \cdots \oplus V_{i+2} \\ \dim(V_j) = n_j}} \St(V_1) \otimes \cdots \otimes \St(V_{i+2})\right) \\
      &\cong \bigoplus_{V_1 \oplus \cdots \oplus V_{i+2} = k^{n+1}} \St(V_1) \otimes \cdots \otimes \St(V_{i+2}).
\end{align*}
Here the final direct sum is over such expressions with $\dim(V_j) \geq 1$ for all $1 \leq j \leq i+2$.
It follows that aside from the final term $\bR_n = \St(k^{n+1})^{\otimes 2}$, our resolution for $\St(\GL_{n+1})$
is a version of the bar resolution for the Steinberg representation introduced in \cite{MillerNagpalPatzt}.

For the special case $\bG = \GL_{n+1}$ we are considering, \cite{MillerNagpalPatzt} proves the exactness of the truncated sequence
\[\bR_{n-1} \rightarrow \cdots \rightarrow \bR_0 \rightarrow \St(k^{n+1}) \rightarrow 0\]
from Proposition \ref{proposition:resolution}.  The reference \cite{MillerNagpalPatzt} has a technical error
that was corrected in \cite{MillerNagpalPatztCorrection}.  Another proof of Proposition \ref{proposition:resolution} for $\bG = \GL_{n+1}$ 
that also includes the initial $\St(k^{n+1})^{\otimes 2}$ term can be found in \cite[Theorem 6.3]{MillerPatztWilsonRognes}.
Our proof of Proposition \ref{proposition:resolution}
is different from and more direct than either of these approaches.  In the special case $\bG = \GL_{n+1}$, our proof
was discovered independently by Charlton--Radchenko--Rudenko \cite{SteinbergPolylogarithms}.
\end{example}

\begin{remark}
For other $\bG$, it is hard to interpret the resolution from Proposition \ref{proposition:resolution} as a bar resolution.  The
issue is that the standard Levi subgroups are often different from what you expect.  For instance, consider $\bG = \Sp_{2g}$.  For
a finite-dimensional $k$-vector space $W$ equipped with a symplectic form, let $\cT_{\Sp}(W)$ be the complex of flags
of nonzero proper isotropic subspaces of $W$, so $\cT_{\Sp}(k^{2g})$ is the Tits building for $\Sp_{2g}$.  For $d = \dim(W)$, let
$\St_{\Sp}(W) = \RH_{d/2-1}(\cT_{\Sp}(W))$, so $\St_{\Sp}(k^{2g}) = \St(\Sp_{2g})$.  

An analysis similar to the one we did for $\GL_{n+1}$ in Example \ref{example:resolutiongl} shows that the resolution $\bR_{\bullet}$
of $\St(\Sp_{2g})$ from Proposition \ref{proposition:resolution} has the following description.  For $0 \leq i \leq g-1$, the term $\bR_i$
is the direct sum of terms of the following two forms:
\begin{itemize}
\item $\St(V_1) \otimes \cdots \St(V_{i+1})$, where $U = V_1 \oplus \cdots \oplus V_{i+1}$ is a Lagrangian, i.e., a maximal isotropic
subspace of $k^{2g}$.  Necessarily $\dim(U) = g$.
\item $\St(V_1) \otimes \cdots \St(V_{i+1}) \otimes \St_{Sp}(W)$, where:
\begin{itemize}
\item $W$ is a nonzero proper symplectic subspace of $k^{2g}$; and
\item $U = V_1 \oplus \cdots \oplus V_{i+1}$ is a Lagrangian in $W^{\perp}$.
\end{itemize}
\end{itemize}
Here we emphasize that the $\St(V_j)$ terms are the Steinberg representations of $\GL(V_j)$, not of symplectic groups.
\end{remark}

\begin{proof}[{Proof of Proposition \ref{proposition:resolution}}]
Since $\cT(\bG)$ is an $(n-1)$-dimensional $(n-2)$-connected complex and $\St(\bG) = \RH_{n-2}(\Tits(\bG))$, we have 
an exact sequence
\begin{equation}
\label{eqn:initialresolution}
0 \rightarrow \St(\bG) \rightarrow \CC_{n-1}(\cT(\bG)) \rightarrow \cdots \rightarrow \CC_0(\cT(\bG)) \rightarrow \Z \rightarrow 0.
\end{equation}
Here the final map $\CC_0(\cT(\bG)) \rightarrow \Z$ is the augmentation.  The term $\CC_i(\cT(\bG))$ is a free $\Z$-module with basis 
the parabolic subgroups
of $\bG$ of semisimple rank $n-1-i$, which are are permuted by the conjugation action of $\bG(k)$.
Every such parabolic subgroup of $\bG$ is conjugate to a unique
standard parabolic subgroup of the form $\bP_{\Delta}$ for some $\Delta \in \cL_i(\bG)$.  Moreover,
a parabolic subgroup $\bP$ of $\bG$ satisfies $N_{\bG}(\bP) = \bP$.  It follows that 
\[\CC_{i}(\cT(\bG)) \cong \bigoplus_{\Delta \in \cL_i(\bG)} \Z[\bG(k)/\bP_{\Delta}(k)] \quad \text{for $0 \leq i \leq n-1$}.\]
Now tensor \eqref{eqn:initialresolution} with $\St(\bG)$:
\[0 \rightarrow \St(\bG)^{\otimes 2} \rightarrow \CC_{n-1}(\cT(\bG)) \otimes \St(\bG) \rightarrow 
\cdots \rightarrow \CC_0(\cT(\bG)) \otimes \St(\bG) \rightarrow \St(\bG) \rightarrow 0.\]
Since $\St(\bG)$ is a free $\Z$-module, this chain complex is also exact.  By the above, we have
\[\CC_{i}(\cT(\bG)) \otimes \St(\bG) \cong \bigoplus_{\Delta \in \cL_i(\bG)} \Z[\bG(k)/\bP_{\Delta}(k)] \otimes \St(\bG) \quad \text{for $0 \leq i \leq n-1$}.\]
For $\Delta \in \cL_i(\bG)$, we have
\[\Z[\bG(k)/\bP_{\Delta}(k)] \otimes \St(\bG) \cong \Ind_{\bP_{\Delta}(k)}^{\bG(k)} \Res^{\bG(k)}_{\bP_{\Delta}(k)} \St(\bG).\]
By Theorem \ref{theorem:reedermap},
\[\Res^{\bG(k)}_{\bP_{\Delta}(k)} \St(\bG) \cong \Ind_{\bL_{\Delta}(k)}^{\bP_{\Delta}(k)} \St(\bL_{\Delta}).\]
Combining all of this, we see that the above exact sequence has all the properties claimed in the proposition.
\end{proof}

\subsection{Spectral sequence}

The following spectral sequence will play a key role in our proof:

\begin{corollary}
\label{corollary:spectralsequence}
Let $\bG$ be a reductive group of semisimple rank $n$.
There is then a spectral sequence $\ssE^r_{pq}$ converging to $\HH_{p+q}(\bG(k);\St(\bG))$ with
\[\ssE^1_{pq} \cong \begin{cases}
\bigoplus_{\Delta \in \cL_p(\bG)} \HH_q(\bL_{\Delta}(k);\St(\bL_{\Delta})) & \text{if $0 \leq p \leq n-1$} \\
\HH_q(\bG(k);\St(\bG)^{\otimes 2})                                                                & \text{if $p = n$},\\
0                                                                                                      & \text{otherwise}.
\end{cases}\]
\end{corollary}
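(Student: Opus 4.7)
The plan is to apply the standard hyperhomology spectral sequence to the resolution constructed in Proposition \ref{proposition:resolution}. Let $P_\bullet \to \Z$ be a projective resolution of the trivial $\Z[\bG(k)]$-module, and form the first-quadrant double complex
\[
C_{p,q} = P_q \otimes_{\Z[\bG(k)]} \bR_p,
\]
with horizontal differential induced by $\bR_\bullet \to \St(\bG)$ and vertical differential induced by $P_\bullet \to \Z$. Both differentials are $\bG(k)$-equivariant (or, after tensoring, well-defined over $\Z$), so we obtain two spectral sequences converging to the homology of the total complex.

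First I would compute the spectral sequence obtained by taking horizontal homology first. Since each $P_q$ is free and hence flat over $\Z[\bG(k)]$, the functor $P_q \otimes_{\Z[\bG(k)]} (-)$ preserves the exactness of $\bR_\bullet \to \St(\bG) \to 0$. Hence the rows of $C_{\bullet,\bullet}$ have homology concentrated in column $p=0$, where it equals $P_q \otimes_{\Z[\bG(k)]} \St(\bG)$. This spectral sequence therefore collapses and identifies the homology of the total complex with $\HH_{p+q}(\bG(k);\St(\bG))$, which gives the desired abutment.

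Next I would compute the other spectral sequence by first taking vertical homology, which by definition produces
\[
E^1_{p,q} = \HH_q(\bG(k);\bR_p).
\]
For $0 \leq p \leq n-1$ the module $\bR_p$ is a direct sum of induced modules $\Ind_{\bL_\Delta(k)}^{\bG(k)} \St(\bL_\Delta)$ indexed by $\Delta \in \cL_p(\bG)$. Shapiro's lemma applied to each summand yields
\[
\HH_q\bigl(\bG(k);\,\Ind_{\bL_\Delta(k)}^{\bG(k)} \St(\bL_\Delta)\bigr) \;\cong\; \HH_q(\bL_\Delta(k);\St(\bL_\Delta)),
\]
and so the $p$-th column is $\bigoplus_{\Delta \in \cL_p(\bG)} \HH_q(\bL_\Delta(k);\St(\bL_\Delta))$. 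For $p=n$ we instead have $\bR_n = \St(\bG)^{\otimes 2}$, contributing the column $\HH_q(\bG(k);\St(\bG)^{\otimes 2})$. For $p$ outside $[0,n]$ the complex $\bR_\bullet$ is zero.

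There is really no serious obstacle here: this is a routine hyperhomology computation, and the only points that require care are verifying flatness (to get collapse in the first spectral sequence) and the applicability of Shapiro's lemma for the induced summands of $\bR_p$ (which is immediate because each $\bR_p$ is constructed as a direct sum of such induced modules). Combining the two analyses yields the spectral sequence with the stated $E^1$ page converging to $\HH_{p+q}(\bG(k);\St(\bG))$.
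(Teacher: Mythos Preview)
Your proof is correct and takes essentially the same approach as the paper: both use the hyperhomology spectral sequence attached to the resolution $\bR_\bullet \to \St(\bG)$ and then apply Shapiro's lemma to the induced summands. The only difference is that the paper simply asserts the spectral sequence $\HH_q(\bG(k);\bR_p) \Rightarrow \HH_{p+q}(\bG(k);\St(\bG))$ as standard, whereas you spell out the double complex $P_q \otimes_{\Z[\bG(k)]} \bR_p$ and the collapse of the other spectral sequence explicitly.
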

\begin{proof}
Let $\bR_{\bullet} \rightarrow \St(\bG)$ be the resolution of $\Z[\bG(k)]$-modules given by
Proposition \ref{proposition:resolution}.  As is standard,\footnote{Note that if the $\bR_q$ were all projective, this would
reduce to the usual recipe for computing $\HH_{\bullet}(\bG(k);\St(\bG))$ from a projective resolution.} there is a spectral sequence of the form
\[\ssE^1_{pq} \cong \HH_q(\bG(k);\bR_p) \Rightarrow \HH_{p+q}(\bG(k);\St(\bG)).\]
We can apply the description of $\bR_p$ from Proposition \ref{proposition:resolution} to see that
\[\ssE^1_{pq} \cong \begin{cases}
\bigoplus_{\Delta \in \cL_p(\bG)} \HH_q(\bG(k);\Ind_{\bL_{\Delta}(k)}^{\bG(k)} \St(\bL_{\Delta})) & \text{if $0 \leq p \leq n-1$} \\
\HH_q(\bG(k);\St(\bG)^{\otimes 2})                                                                & \text{if $p = n$},\\
0                                                                                                      & \text{otherwise}.
\end{cases}\]
For $\Delta \in \cL_p(\bG)$, Shapiro's lemma implies that
\[\HH_q(\bG(k);\Ind_{\bL_{\Delta}(k)}^{\bG(k)} \St(\bL_{\Delta})) \cong \HH_q(\bL_{\Delta}(k);\St(\bL_{\Delta})).\]
The corollary follows.
\end{proof}

\section{Semisimple rank two}
\label{section:rank2}

As a first application of the spectral sequence from Corollary \ref{corollary:spectralsequence}, we prove
the following lemma.  Later we will only use it when $\bPhik(\bG) = \dA_2$, but making that assumption
would not simplify the proof.

\begin{lemma}
\label{lemma:rank2}
Let $\bG$ be a reductive group of semisimple rank $n=2$.
Then the map
\begin{equation}
\label{eqn:toprovecaseii}
\bigoplus_{\Delta \in \cL_0(\bG)} \HH_1(\bL_{\Delta}(k);\St(\bL_{\Delta})) \rightarrow \HH_1(\bG(k);\St(\bG))
\end{equation}
is surjective.
\end{lemma}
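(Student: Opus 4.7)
The plan is to feed the setting into the spectral sequence of Corollary~\ref{corollary:spectralsequence}. With $n=2$ the $E^1$-page is concentrated in columns $p=0,1,2$; the bottom row in column $0$ is $E^1_{0,q}=\bigoplus_{\Delta\in\cL_0(\bG)}\HH_q(\bL_\Delta(k);\St(\bL_\Delta))$, and the map in the statement is exactly the edge map $E^1_{0,1}\to\HH_1(\bG(k);\St(\bG))$. Its surjectivity amounts to the vanishing of $E^\infty_{p,1-p}$ for every $p\ge 1$; since $E^\infty_{2,-1}=0$ automatically, the sole obstruction is $E^\infty_{1,0}$.

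First I would pin down $E^1_{1,0}$ and cut down to a single differential. For $\Delta=\emptyset$ the Levi $\bL_\emptyset=Z_{\bG}(\bT)$ is anisotropic, so $\St(\bL_\emptyset)=\Z$ and $E^1_{1,0}=\HH_0(Z_{\bG}(\bT)(k);\Z)=\Z$. Lemma~\ref{lemma:h0} gives $E^1_{0,0}=0$ (each summand there comes from a Levi of semisimple rank $1$), so $d^1_{1,0}=0$. For $r\ge 2$ any differential hitting or leaving $(1,0)$ has either source in column $p\ge 3>n$ or target in $q$-degree $<0$; both are zero. Hence $E^\infty_{1,0}=E^1_{1,0}/\mathrm{im}(d^1_{2,0})$, and it suffices to exhibit an element of $\HH_0(\bG(k);\St(\bG)^{\otimes 2})$ whose image under $d^1_{2,0}$ is a generator of $\Z$.

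For the computation I would unwind the map using the alternative description $\bR_1\cong\Z[\fP_{\min}]\otimes\St(\bG)$, whose $\bG(k)$-coinvariants become $\St(\bG)_{\bB(k)}$ by Shapiro. By \S\ref{section:steinbergbasis} the quotient $\Phi\colon\St(\bG)\to\St(\bG)_{\bB(k)}=\Z$ is the augmentation $\bbA_u\mapsto 1$ for $u\in\bU(k)$; since each such $\bbA_u$ has $\bB$-coefficient $1$ in $\Z[\fP_{\min}]$, the functional $\Phi$ agrees with ``extract the $\bB$-coefficient'', so
\[
\Phi(\bbA_g)\;=\;\sum_{w^*\in W:\,g\tw^*\in\bB(k)}(-1)^{w^*}\quad(g\in\bG(k)).
\]
Tracing the inclusion $\St(\bG)\hookrightarrow\CC_{n-1}(\Tits(\bG))$ through the Shapiro identification yields the explicit formula
\[
d^1_{2,0}\bigl([\bbA_1\otimes\xi]\bigr)\;=\;\sum_{w\in W}(-1)^w\,\Phi(\tw^{-1}\xi)\quad\in\;\Z,
\]
which is well-defined since different lifts of $w$ differ by an element of $Z_{\bG}(\bT)(k)\subset\bB(k)$ and so act trivially on $\bB(k)$-coinvariants.

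The key step is then to feed in Corollary~\ref{corollary:findu}: choose $u\in\bU(k)$ so that $\tw_1 u\tw_2\in\bB(k)$ forces $w_1=w_2=1$, and apply the displayed formula with $\xi=\bbA_u$. For $w=1$ we get $\Phi(\bbA_u)=1$; for $w\ne 1$ the hypothesis on $u$ (with $w_1=w^{-1}\ne 1$) says that $\tw^{-1}u\,\tw^*\notin\bB(k)$ for \emph{every} $w^*\in W$, so the sum defining $\Phi(\bbA_{\tw^{-1}u})$ is empty and equal to $0$. Hence $d^1_{2,0}([\bbA_1\otimes\bbA_u])=1$, proving surjectivity and killing $E^\infty_{1,0}$. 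The trickiest part will be the bookkeeping: matching the two presentations of $\bR_1$, tracking how the $\bG(k)$-coinvariants translate under Shapiro, and confirming that $\Phi$ really is the $\bB$-coefficient functional. Once these identifications are in hand, the actual vanishing is a one-line application of Corollary~\ref{corollary:findu}.
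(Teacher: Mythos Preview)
Your proposal is correct and follows essentially the same approach as the paper: both reduce to showing that $d^1_{2,0}\colon \ssE^1_{2,0}\to \ssE^1_{1,0}\cong\Z$ is surjective, identify $\ssE^1_{1,0}$ with $\St(\bG)_{\bB(k)}$ via Shapiro (your direct identification $\HH_0(Z_{\bG}(\bT)(k);\Z)=\Z$ amounts to the same thing), recognize the coinvariants functional as ``take the $\bB$-coefficient,'' and then feed in the element $[\bbA_1\otimes\bbA_u]$ with $u$ from Corollary~\ref{corollary:findu} to hit a generator. The paper's calculation writes $\sum_w(-1)^w\,\tw\cdot\bbA_u$ and then relabels $w\mapsto w^{-1}$, arriving at exactly your formula $\sum_w(-1)^w\Phi(\tw^{-1}\bbA_u)$.
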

\begin{proof}
Recall that for $p \geq 0$ the set $\cL_p(\bG)$ is the collection of subsets $\Delta \subset \bDeltak(\bG)$ of
simple roots with $|\Delta| = n-1-p = 1-p$.
Corollary \ref{corollary:spectralsequence} gives a spectral sequence converging to $\HH_{p+q}(\bG(k);\St(\bG))$ with
\[\ssE^1_{pq} \cong \begin{cases}
\bigoplus_{\Delta \in \cL_p(\bG)} \HH_q(\bL_{\Delta}(k);\St(\bL_{\Delta})) & \text{if $0 \leq p \leq 1$} \\
\HH_q(\bG(k);\St(\bG)^{\otimes 2})                                         & \text{if $p = 2$},\\
0                                                                          & \text{otherwise}.
\end{cases}\]
In particular,
\[\ssE^1_{01} = \bigoplus_{\Delta \in \cL_0(\bG)} \HH_1(\bL_{\Delta}(k);\St(\bL_{\Delta})).\]
Our goal is to prove that this surjects onto $\HH_1(\bG(k);\St(\bG))$.
The part of the $\ssE^1$-page of the spectral sequence needed to compute $\HH_1(\bG(k);\St(\bG))$ is
\begin{center}
\begin{tblr}{|ccccc}
$\ssE^1_{01}$ & $\leftarrow$  & $\ssE^1_{11}$ &                                   &            \\
$\ssE^1_{00}$        & $\leftarrow$  & $\ssE^1_{10}$ & $\stackrel{\partial}{\leftarrow}$ & $\ssE^1_{20}$ \\
\hline
\end{tblr}
\end{center}
To prove that $\ssE^1_{01}$ surjects onto $\HH_1(\bG(k);\St(\bG))$, it is enough to prove that
the labeled differential $\partial$
is surjective.\footnote{Note that Lemma \ref{lemma:h0} implies that $\ssE^1_{00} = 0$, so this surjectivity
is the only way to kill the $\ssE^1_{10}$-term.}  We divide this into four steps.

\begin{step}{1}
\label{step:basecase2.1}
We give a concrete description of $\partial\colon \ssE^1_{20} \rightarrow \ssE^1_{10}$.
\end{step}

The spectral sequence in Corollary \ref{corollary:spectralsequence} is constructed using the resolution of $\St(\bG)$ from
Proposition \ref{proposition:resolution}.  Ignoring the identifications made in that proposition, that
resolution is obtained in two steps:
\begin{itemize}
\item Start with the augmented chain complex for the Tits building $\Tits(\bG)$.  In our case, since
the semisimple rank of $\bG$ is $2$ the complex $\Tits(\bG)$ is $1$-dimensional, so this chain complex
is
\[\CC_1(\Tits(\bG)) \rightarrow \CC_0(\Tits(\bG)) \rightarrow \Z \rightarrow 0.\]
Just like in \S \ref{section:structurebuilding}, we identify $\CC_1(\Tits(\bG))$ with $\Z[\fP_{\min}]$, where
$\fP_{\min}$ is the set of minimal parabolic subgroups of $\bG$.  The kernel of $\CC_1(\Tits(\bG)) \rightarrow \CC_0(\Tits(\bG))$
is $\St(\bG)$, so we can extend the above to an exact sequence
\[0 \rightarrow \St(\bG) \stackrel{\iota}{\rightarrow} \Z[\fP_{\min}] \rightarrow \CC_0(\Tits(\bG)) \rightarrow \Z \rightarrow 0.\]
\item Tensor this with $\St(\bG)$ to get a resolution
\[0 \longrightarrow \St(\bG)^{\otimes 2} \stackrel{\iota \otimes \text{id}}{\longrightarrow} \Z[\fP_{\min}] \otimes \St(\bG) \longrightarrow \CC_0(\Tits(\bG)) \otimes \St(\bG) \longrightarrow \St(\bG) \rightarrow 0.\]
\end{itemize}
We then have
\begin{align*}
\ssE^1_{20} &= \HH_0(\bG(k);\St(\bG)^{\otimes 2}) = \left(\St(\bG)^{\otimes 2}\right)_{\bG(k)},\\
\ssE^1_{10} &= \HH_0(\bG(k);\Z[\fP_{\min}] \otimes \St(\bG)) = \left(\Z[\fP_{\min}] \otimes \St(\bG)\right)_{\bG(k)},
\end{align*}
and the differential $\partial\colon \ssE^1_{20} \rightarrow \ssE^1_{10}$ is exactly the map on these coinvariants induced by the
map $\iota \otimes \text{id} \colon \St(\bG)^{\otimes 2} \rightarrow \Z[\fP_{\min}] \otimes \St(\bG)$.

\begin{step}{2}
\label{step:basecase2.2}
We construct an element $\theta \in \ssE^1_{20}$ and give a concrete description of $\partial(\theta) \in \ssE^1_{10}$.
\end{step}

As in \S \ref{section:basicgroups}, let $\bB$ be a minimal parabolic subgroup of $\bG$, let $\bT$ be a maximal split torus
in $\bB$, and let $\bU$ be the unipotent radical of $\bB$.  We then have the relative Weyl group $W = N_{\bG}(\bT) / Z_{\bG}(\bT)$.  For each $w \in W$, pick a
representative $\tw \in N_{\bG}(\bT)(k)$.  Apply Corollary \ref{corollary:findu} to find some $u \in \bU(k)$ with the following property:
\begin{equation}
\label{eqn:uprop}
\text{the only $w_1,w_2 \in W$ with $\tw_1 u \tw_2 \in \bB(k)$ is $w_1 = w_2 = 1$}.
\end{equation}
Recall from \S \ref{section:apartments} that for $g \in \bG(k)$ we have the apartment class $\bbA_g \in \St(\bG)$, which
is defined via the formula
\[\bbA_{g} = \sum_{w \in W} (-1)^w g \tw \cdot \bB \in \St(\bG) \subset \Z[\fP_{\min}].\]
The element
\[\theta \in \ssE^1_{20} = \left(\St(\bG)^{\otimes 2}\right)_{\bG(k)}\]
we will use is the image in the coinvariants of
\[\bbA_{\text{id}} \otimes \bbA_u \in \St(\bG)^{\otimes 2}.\]
For $x \in \Z[\fP_{\min}] \otimes \St(\bG)$, denote by
\[[x] \in \ssE^1_{10} = \left(\Z[\fP_{\min}] \otimes \St(\bG)\right)_{\bG(k)}\]
its image in the coinvariants.  The image $\partial(\theta) \in \ssE^1_{10}$ is
$[\iota(\bbA_{\text{id}}) \otimes \bbA_u]$.  We expand this out as follows:
\begin{align*}
[\iota(\bbA_{\text{id}}) \otimes \bbA_u] &= [\left(\sum_{w \in W} (-1)^w \tw \cdot \bB\right) \otimes \bbA_u]
                                          = [\sum_{w \in W} (-1)^w \tw \left(\bB \otimes \tw^{-1} \cdot \bbA_u\right)] \\
                                         &= [\sum_{w \in W} (-1)^w \left(\bB \otimes \tw^{-1} \cdot \bbA_u\right)]
                                          = [\bB \otimes \sum_{w \in W} (-1)^w \tw \cdot \bbA_u].
\end{align*}
Here the third $=$ comes from the fact that we are working in the $\bG(k)$-coinvariants and the fourth from the fact that $(-1)^w = (-1)^{w^{-1}}$.

\begin{step}{3}
\label{step:basecase2.3}
We prove that $\ssE^1_{10} \cong \Z$ and then give a way to calculate the image of certain elements of
$\ssE^1_{10}$ under the isomorphism $\ssE^1_{10} \cong \Z$.
\end{step}

By definition, the minimal parabolic $\bB$ equals the standard parabolic
$\bP_{\emptyset}$ associated to $\emptyset \subset \bDeltak(\bG)$.  Every
parabolic subgroup is conjugate to a unique standard parabolic subgroup,
so every minimal parabolic subgroup is conjugate to $\bB$.  Moreover,
as is the case for every parabolic subgroup we have $N_{\bG}(\bB) = \bB$.  It follows that
$\Z[\fP_{\min}] \cong \Z[\bG(k)/\bB(k)]$.  This implies that
\begin{equation}
\label{eqn:e1id.1}
\ssE^1_{10} = \left(\Z[\fP_{\min}] \otimes \St(\bG)\right)_{\bG(k)} \cong \left(\Z[\bG(k)/\bB(k)] \otimes \St(\bG)\right)_{\bG(k)} \cong \St(\bG)_{\bB(k)}.
\end{equation}
Recall from \S \ref{section:steinbergbasis} that $\St(\bG) \cong \Z[\bU(k)]$.  Since $\bB(k)$ acts transitively
on $\bU(k)$, we conclude that
\begin{equation}
\label{eqn:e1id.2}
\ssE^1_{10} \cong \Z[\bU(k)]_{\bB(k)} \cong \Z.
\end{equation}
This isomorphism is induced by the augmentation $\epsilon\colon \Z[\bU(k)] \rightarrow \Z$.
For $\theta \in \Z[\fP_{\min}] \otimes \St(\bG)$, write $[\theta]$ for the image of $\theta$ in $\ssE^1_{10} \cong \Z$ under the
identifications \eqref{eqn:e1id.1} and \eqref{eqn:e1id.2}.

Consider $x \in \St(\bG)$.  We claim that $[\bB \otimes x] \in \Z$ is the
$\bB$-coordinate of $x$ when we express $x$ as an element of $\Z[\fP_{\min}]$.  
As we discussed in \S \ref{section:steinbergbasis}, the abelian group $\St(\bG)$ is
generated by apartment classes $\bbA_u$ with $u \in \bU(k)$.  By linearity, it
is therefore enough to prove the claim for $x = \bbA_u$ with $u \in \bU(k)$.
Examining \eqref{eqn:e1id.1} and \eqref{eqn:e1id.2}, what we must prove is that the
image $\theta \in \Z[\bU(k)]$ of $x = \bbA_u$ has $\epsilon(\theta)$ equal
to the $\bB$-coordinate of $x = \bbA_u$.  To see this, note that:
\begin{itemize}
\item $x = \bbA_u$ goes to $\theta = u$ in $\Z[\bU(k)]$, so $\epsilon(\theta) = \epsilon(u) = 1$; and
\item as was noted in \S \ref{section:steinbergbasis}, the $\bB$-coordinate
of $x = \bbA_u$ is also $1$.
\end{itemize}

\begin{step}{4}
\label{step:basecase2.4}
For the element $\theta$ from Step \ref{step:basecase2.2},
we prove that $\partial(\theta)$ generates $\ssE^1_{10} \cong \Z$.  This will imply that $\partial$ is surjective, completing the proof.
\end{step}

Combining the isomorphism $\ssE^1_{10} \cong \Z$ from Step \ref{step:basecase2.3} with what we did in Step \ref{step:basecase2.2},
we see that what we must prove
is that the $\bB$-coordinate of
\[x = \sum_{w \in W} (-1)^w \tw \cdot \bbA_u \in \St(\bG) \subset \Z[\fP_{\min}]\]
is $1$.  Expanding out $\bbA_u$, we see that
\[x = \sum_{w_1 \in W} (-1)^{w_1} \tw_1 \cdot \sum_{w_2 \in W} (-1)^{w_2} u \tw_2 \cdot \bB = \sum_{w_1,w_2 \in W} (-1)^{w_1 w_2} \tw_1 u \tw_2 \cdot \bB.\]
The only elements $g \in \bG(k)$ with $g \cdot \bB = \bB$ are $g \in \bB(k)$, and
the defining feature of
$u$ from \eqref{eqn:uprop} in Step \ref{step:basecase2.2} is that the only $w_1,w_2 \in W$ with $\tw_1 u \tw_2 \in \bB(k)$ are $w_1=w_2 = 1$.  The
fact that the $\bB$-coordinate of $x$ is $1$ follows.
\end{proof}

\section{Reducible root systems}
\label{section:reducible}

Let $\bG$ be a reductive group.  Recall that our goal is to prove Theorem \ref{maintheorem:fields},
which says that $\HH_i(\bG(k);\St(\bG))=0$ for $i$ in a range depending on the relative root
system $\bPhik(\bG)$.  A root system is irreducible when it cannot be decomposed as a direct product,
or equivalently if its Dynkin diagram is connected.  Even if you only care about $\bG$ with $\bPhik(\bG)$ irreducible,
reducible root systems will arise when you study Levi factors of parabolic subgroups.  This section
gives tools for handling reducible root systems.

\subsection{Semisimplicity}

We start with a preliminary reduction.  For a reductive group $\bG$, its
derived subgroup $\bGder$ is\footnote{Just like the definition of a reductive group, the
definition of a semisimple group is not particularly enlightening and we refer
the reader to standard sources for it, e.g., \cite{BorelBook, MilneBook}.  The key example
to keep in mind is that $\GL_{n+1}$ is reductive and its derived subgroup $\SL_{n+1}$
is semisimple.} semisimple \cite[Proposition 19.21]{MilneBook}.  The inclusion $\bGder \hookrightarrow \bG$ induces
a bijection between parabolic subgroups, and thus isomorphisms $\Tits(\bG) \cong \Tits(\bGder)$ and
$\Res^{\bG(k)}_{\bGder(k)} \St(\bG) \cong \St(\bGder)$.
The following lemma will allow us to reduce many questions
to the semisimple case:

\begin{lemma}
\label{lemma:reducetosemisimple}
Let $\bG$ be a reductive group and $b \geq -1$.  Assume that
$\HH_i(\bGder(k);\St(\bGder)) = 0$ for $i \leq b$.  Then $\HH_i(\bG(k);\St(\bG)) = 0$ for $i \leq b$ and the map
\[\HH_{b+1}(\bGder(k);\St(\bGder)) \longrightarrow \HH_{b+1}(\bG(k);\St(\bG))\]
is a surjection.  
\end{lemma}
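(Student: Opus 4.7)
The plan is to exploit the normality of $\bGder$ in $\bG$ via the Lyndon--Hochschild--Serre spectral sequence. Since $\bGder$ is a normal (closed) subgroup of $\bG$, the group $\bGder(k)$ is a normal subgroup of $\bG(k)$; let $Q = \bG(k)/\bGder(k)$ be the quotient. The remarks just before the lemma identify $\Res^{\bG(k)}_{\bGder(k)} \St(\bG) \cong \St(\bGder)$, which will be the crucial compatibility.

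With this set up, I will run the LHS spectral sequence for the extension $1 \to \bGder(k) \to \bG(k) \to Q \to 1$ with coefficients in the $\bG(k)$-module $\St(\bG)$:
\[
E^2_{pq} = \HH_p(Q;\,\HH_q(\bGder(k);\St(\bG))) \Longrightarrow \HH_{p+q}(\bG(k);\St(\bG)).
\]
Using the restriction isomorphism, $\HH_q(\bGder(k);\St(\bG)) = \HH_q(\bGder(k);\St(\bGder))$, which by hypothesis vanishes for $q \leq b$. Hence $E^2_{pq} = 0$ for all $q \leq b$. For any total degree $n \leq b$, every antidiagonal entry $E^2_{p,n-p}$ has $q = n-p \leq b$ and therefore vanishes, so $\HH_n(\bG(k);\St(\bG)) = 0$ for $n \leq b$, yielding the first assertion.

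For the surjectivity statement, I will pin down the entry $(0,b+1)$. On the antidiagonal with $p+q = b+1$, only $(0,b+1)$ can be nonzero, since any $p \geq 1$ forces $q \leq b$. Moreover, the differentials $d_r$ into and out of $(0,b+1)$ originate from or land at positions with second coordinate $\leq b$ (namely $(r,b+2-r)$ and $(-r,b+r)$ respectively), all of which are zero on the $E^2$-page. Thus $E^\infty_{0,b+1} = E^2_{0,b+1} = \HH_{b+1}(\bGder(k);\St(\bGder))_Q$, and the filtration collapses to give $\HH_{b+1}(\bG(k);\St(\bG)) = E^\infty_{0,b+1}$. The standard edge map of the LHS spectral sequence factors the map induced by the inclusion $\bGder(k) \hookrightarrow \bG(k)$ as
\[
\HH_{b+1}(\bGder(k);\St(\bGder)) \twoheadrightarrow \HH_{b+1}(\bGder(k);\St(\bGder))_Q \xrightarrow{\ \cong\ } \HH_{b+1}(\bG(k);\St(\bG)),
\]
which is surjective, as required.

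The only point requiring care, and hence the mildly subtle step, is verifying that the edge map of the LHS spectral sequence really coincides with the map induced by the inclusion $\bGder(k) \hookrightarrow \bG(k)$ together with the restriction isomorphism $\St(\bGder) \cong \Res^{\bG(k)}_{\bGder(k)}\St(\bG)$. This is a standard functoriality property of the spectral sequence associated to an extension, so no new work is needed beyond citing it.
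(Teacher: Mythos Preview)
Your proof is correct and follows essentially the same approach as the paper: both run the Lyndon--Hochschild--Serre spectral sequence for the extension $1 \to \bGder(k) \to \bG(k) \to Q \to 1$ with coefficients in $\St(\bG)$, use the hypothesis to kill all rows $q \leq b$, and then read off the result from the surviving term $E^2_{0,b+1} = \HH_{b+1}(\bGder(k);\St(\bGder))_Q$. One small wording slip: the outgoing differential from $(0,b+1)$ lands at $(-r,b+r)$, which vanishes because the first coordinate is negative, not because the second coordinate is $\leq b$; your parenthetical and conclusion are nonetheless correct.
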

\begin{proof}
We have a short exact sequence
\[1 \longrightarrow \bGder(k) \longrightarrow \bG(k) \longrightarrow A \longrightarrow 1\]
with $A$ abelian.  The associated Hochschild--Serre
spectral sequence with coefficients in $\St(\bG)$ is of the form
\[\ssE^2_{pq} = \HH_p(A;\HH_q(\bGder(k);\St(\bGder))) \Rightarrow \HH_{p+q}(\bG(k);\St(\bG)).\]
Our vanishing assumption implies that $\ssE^2_{pq} = 0$ for $q \leq b$, so
$\HH_i(\bG(k);\St(\bG)) = 0$ for $i \leq b$ and
$\HH_{b+1}(\bG(k);\St(\bG)) = \ssE^2_{0,b+1}$.  We deduce that
\[\HH_{b+1}(\bG(k);\St(\bG)) = \HH_0(A;\HH_{b+1}(\bGder(k);\St(\bGder))) = \HH_{b+1}(\bGder(k);\St(\bGder))_A,\]
where the subscript indicates that we are taking coinvariants.  For any group $\Gamma$ and any
$\Gamma$-module $M$, the map $M \rightarrow M_{\Gamma}$ is a surjection.  In light of the above
identity, we deduce that the map $\HH_{b+1}(\bGder(k);\St(\bGder)) \rightarrow \HH_{b+1}(\bG(k);\St(\bG))$
is a surjection, as desired.  
\end{proof}

\subsection{Products}

For a semisimple group $\bG$ with $\bPhik(\bG)$ reducible, the following lemma will allow us to reduce questions about
$\HH_i(\bG;\St(\bG))$ to questions about a product of semisimple groups with irreducible relative root systems:

\begin{lemma}
\label{lemma:reducetoproduct}
Let $\bG$ be a semisimple group.  Write
\[\bPhik(\bG) = \bPhi_1 \times \cdots \times \bPhi_m\]
with each $\bPhi_i$ irreducible.  There then exists a semisimple group $\bH$ equipped with
a surjection $\bH \rightarrow \bG$ such that:
\begin{itemize}
\item[(i)] the surjection $\bH \rightarrow \bG$ induces a bijection between parabolic subgroups, and thus
isomorphisms $\Tits(\bH) \cong \Tits(\bG)$ and $\St(\bH) \cong \Res^{\bG(k)}_{\bH(k)} \St(\bG)$; and
\item[(ii)] $\bH = \bH_1 \times \cdots \times \bH_m$ with $\bPhik(\bH_j) = \bPhi_j$ for $1 \leq j \leq m$.
\end{itemize}
If in addition for some $b \geq -1$ we have $\HH_i(\bG(k);\St(\bG))=0$ for $i \leq b$, then:
\begin{itemize}
\item[(iii)] the map $\HH_{b+1}(\bH(k);\St(\bH)) \rightarrow \HH_{b+1}(\bG(k);\St(\bG))$ is an isomorphism.
\end{itemize}
\end{lemma}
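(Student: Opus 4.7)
The plan is to take $\bH := \bG_1 \times \cdots \times \bG_m$, where $\bG_1, \ldots, \bG_m$ are the minimal connected normal subgroups of the semisimple group $\bG$ --- equivalently, the connected semisimple subgroups whose relative root systems are the irreducible factors $\bPhi_j$ of $\bPhik(\bG)$. Since $\bG$ is an almost direct product of these subgroups, the multiplication map $\mu\colon \bH \to \bG$ is a central isogeny with finite kernel $K \subset Z(\bH)$. Setting $\bH_j := \bG_j$ gives (ii) directly. For (i), the finite central subgroup $K$ lies in every maximal torus of $\bH$ and hence in every parabolic subgroup of $\bH$, so $\mu$ induces an inclusion-preserving bijection on parabolic subgroups, which upgrades to a simplicial isomorphism $\Tits(\bH) \cong \Tits(\bG)$ compatible with the $\bH(k)$-action pulled back from $\bG(k)$; the identification $\St(\bH) \cong \Res^{\bG(k)}_{\bH(k)} \St(\bG)$ follows.

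For (iii) I would factor the induced map $\mu_k\colon \bH(k) \to \bG(k)$ as $\bH(k) \twoheadrightarrow G' \hookrightarrow \bG(k)$ with $G' := \mu_k(\bH(k))$. From the long exact sequence of Galois cohomology for $1 \to K \to \bH \to \bG \to 1$, the kernel $K(k)$ is finite and central in $\bH(k)$ and acts trivially on $M := \St(\bG)$, while the cokernel $Q := \bG(k)/G'$ embeds into $H^1(k, K)$ and is therefore abelian (since $K$ is commutative). This produces two Hochschild--Serre spectral sequences,
\[
\ssE^2_{pq} = \HH_p(G'; \HH_q(K(k); M)) \Rightarrow \HH_{p+q}(\bH(k); M)
\]
for the central extension $1 \to K(k) \to \bH(k) \to G' \to 1$, and
\[
\widetilde{\ssE}^2_{pq} = \HH_p(Q; \HH_q(G'; M)) \Rightarrow \HH_{p+q}(\bG(k); M)
\]
for the normal inclusion $1 \to G' \to \bG(k) \to Q \to 1$, straddling the map we want to analyze.

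The main obstacle will be extracting (iii) from these spectral sequences. My plan is to proceed by induction on the semisimple rank of $\bG$, assuming the lemma already holds for all proper standard Levi subgroups, and then comparing the spectral sequences of Corollary~\ref{corollary:spectralsequence} for $\bG$ and for $\bH$ term-by-term. On the $\bH$-side, standard Levi factors are products of Levi factors of the $\bH_j$, and their Steinberg homology decomposes via K\"unneth; combined with the inductive hypothesis applied factor-by-factor (plus Lemma~\ref{lemma:reducetosemisimple} to pass between semisimple and reductive Levi factors), this identifies each $\ssE^1_{pq}$-term with $0 \le p \le n-1$ on the $\bG$-side with its counterpart on the $\bH$-side in the degrees that matter. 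The vanishing hypothesis $\HH_i(\bG(k); M) = 0$ for $i \le b$ is then leveraged --- via the two Hochschild--Serre spectral sequences above --- to control the top column $\ssE^1_{n, \ast}$, arising from the $\St^{\otimes 2}$-term at the head of the resolution of Proposition~\ref{proposition:resolution}; this is the only column where the inductive hypothesis does not apply directly. Showing that this top-column contribution matches on both sides in total degree $b+1$ is the technical heart of the argument and the step I expect to be hardest.
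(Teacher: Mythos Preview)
Your construction of $\bH$ and the verification of (i) and (ii) match the paper's, with one small omission: the minimal connected normal subgroups of a semisimple $\bG$ need not be in bijection with the irreducible factors of $\bPhik(\bG)$, since some of them may be anisotropic. The paper handles this by taking all of the almost-simple factors $\bG_1,\dots,\bG_r$, noting that exactly $m\le r$ of them have positive semisimple rank, and absorbing the anisotropic ones into $\bH_m$.

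For (iii), your plan is far more complicated than necessary, and the complication hides a genuine gap. The paper's argument is a single Hochschild--Serre spectral sequence for the central extension $1\to C\to\bH(k)\to\bG(k)\to 1$. The key point you are missing is Lemma~\ref{lemma:reductionlemma}: since $C$ acts trivially on $\St(\bG)$ and $\St(\bG)$ is free abelian, one has $\HH_q(C;\St(\bG))\cong\St(\bG;\HH_q(C))$, and the hypothesis $\HH_i(\bG(k);\St(\bG))=0$ for $i\le b$ upgrades via Lemma~\ref{lemma:reductionlemma} to $\HH_p(\bG(k);\St(\bG;\HH_q(C)))=0$ for all $p\le b$ and all $q$. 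Thus $\ssE^2_{pq}=0$ for $p\le b$, and the only surviving term in total degree $b+1$ is $\ssE^2_{b+1,0}=\HH_{b+1}(\bG(k);\St(\bG))$. No induction on rank, no comparison of the resolution spectral sequences, and no analysis of the $\St^{\otimes 2}$ column is needed.

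Your proposed induction via Corollary~\ref{corollary:spectralsequence} does not work as stated: the inductive hypothesis (part (iii) for smaller Levi subgroups) gives an isomorphism only in one specific degree and only under a vanishing hypothesis that is not part of the present lemma's assumptions, so it cannot identify the full $\ssE^1$-pages. And the ``technical heart'' you flag---matching the $\St^{\otimes 2}$ columns---is exactly the problem you are trying to solve, just one degree higher in the filtration; the induction gains nothing.

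One legitimate point in your write-up: you are right to worry that $\mu_k\colon\bH(k)\to\bG(k)$ need not be surjective on $k$-points (the paper glosses over this). But the fix is still the one-spectral-sequence argument, not induction: factor $\mu_k$ through its image $G'$ as you do, run the paper's collapse argument for $1\to K(k)\to\bH(k)\to G'\to 1$, and then handle $G'\lhd\bG(k)$ with the abelian quotient $Q$ exactly as in Lemma~\ref{lemma:reducetosemisimple}. The trick with Lemma~\ref{lemma:reductionlemma} is what makes both steps collapse.
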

\begin{proof}
Let $\bG_1,\ldots,\bG_r$ be the minimal elements among the connected normal subgroups of $\bG$ that have strictly
positive dimension.  By \cite[Theorem 22.10]{BorelBook}, the following hold:
\begin{itemize}
\item Letting $\bH = \bG_1 \times \cdots \times \bG_r$, the map $\bH \rightarrow \bG$ is surjective with finite central kernel.
\item Exactly $m \leq r$ of the $\bG_j$ have positive semisimple rank.  Moreover, we can order
the $\bG_j$ such that $\bPhik(\bG_j) = \bPhi_j$ for $1 \leq j \leq m$.
\end{itemize}
That the map $\bH \rightarrow \bG$ satisfies (i) is immediate, and for the decomposition $\bH = \bH_1 \times \cdots \times \bH_m$ from (ii)
we can take
\[\bH_j = \begin{cases}
\bG_j & \text{if $1 \leq j \leq m-1$},\\
\bG_m \times \bG_{m+1} \times \cdots \times\bG_r & \text{if $j = m$}.
\end{cases}\]
Now assume that for some $b \geq -1$ we have $\HH_i(\bG(k);\St(\bG))=0$ for $i \leq b$.  We must prove (iii).
By construction, we have a central extension
\[1 \longrightarrow C \longrightarrow \bH(k) \longrightarrow \bG(k) \longrightarrow 1.\]
Since the map $\bH \rightarrow \bG$ induces an isomorphism $\St(\bH) \cong \Res^{\bG(k)}_{\bH(k)} \St(\bG)$, the group
$C$ acts trivially on $\St(\bH) \cong \Res^{\bG(k)}_{\bH(k)} \St(\bG)$.  Consider the
associated Hochschild--Serre spectral sequence with coefficients in $\St(\bG)$.
Since $\St(\bG)$ is a free abelian group on which $C$ acts trivially, this spectral sequence
has
\begin{align*}
\ssE^2_{pq} &= \HH_p(\bG(k);\HH_q(C;\St(\bG))) \\
            &= \HH_p(\bG(k);\HH_q(C) \otimes \St(\bG)) \\
            &= \HH_p(\bG(k);\St(\bG;\HH_q(C))).
\end{align*}
By our vanishing assumption and Lemma \ref{lemma:reductionlemma}, we have $\ssE^2_{pq} = 0$ for $p \leq b$.  We conclude
that
\[\HH_{b+1}(\bH(k);\St(\bH)) = E^2_{b+1,0} = \HH_{b+1}(\bG(k);\St(\bG;\HH_0(C))) = \HH_{b+1}(\bG(k);\St(\bG)),\]
as desired.
\end{proof}

To use Lemma \ref{lemma:reducetoproduct}, we need to know the Steinberg representation of a product:

\begin{lemma}
\label{lemma:steinbergproduct}
Let $\bG_1$ and $\bG_2$ be reductive groups and let $\bbF$ be a commutative ring.  Then
$\St(\bG_1 \times \bG_2;\bbF) \cong \St(\bG_1;\bbF) \otimes \St(\bG_2;\bbF)$.
\end{lemma}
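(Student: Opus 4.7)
The plan is to identify the Tits building of a product with the join of the Tits buildings and then apply the Künneth-type formula for joins. The argument is standard but requires a bit of care at the edge cases where one factor is anisotropic.

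First I would verify that the parabolic subgroups of $\bG_1 \times \bG_2$ are exactly the products $\bP_1 \times \bP_2$ with $\bP_i$ a parabolic subgroup of $\bG_i$, and that $\bP_1 \times \bP_2$ is proper precisely when at least one of $\bP_1, \bP_2$ is proper. The simplex inclusions match: $\bP_1 \times \bP_2 \subset \bP_1' \times \bP_2'$ iff $\bP_1 \subset \bP_1'$ and $\bP_2 \subset \bP_2'$. Under the identification of simplices of the join $X \ast Y$ with pairs $(\sigma,\tau)$ where $\sigma$ is a simplex of $X$ or the ``empty simplex'' (and similarly for $\tau$), not both empty, this gives an isomorphism of simplicial complexes
\[
\Tits(\bG_1 \times \bG_2) \cong \Tits(\bG_1) \ast \Tits(\bG_2).
\]
The conventions match when one factor is anisotropic: if $\bG_2$ has semisimple rank $0$ then $\Tits(\bG_2) = \emptyset$, and $X \ast \emptyset = X$, which is consistent with the fact that parabolics of $\bG_1 \times \bG_2$ are then forced to have $\bP_2 = \bG_2$.

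Next I would apply the standard homological formula for joins: for any simplicial complexes $X$ and $Y$ there is a short exact sequence computing $\RH_*(X \ast Y;\bbF)$ in terms of $\RH_*(X;\bbF) \otimes \RH_*(Y;\bbF)$ and a Tor correction. By the Solomon--Tits theorem, each $\Tits(\bG_i)$ is homotopy equivalent to a wedge of spheres, so $\RH_*(\Tits(\bG_i);\bbF)$ is a free $\bbF$-module concentrated in the top degree $n_i - 1$, where $n_i$ is the semisimple rank of $\bG_i$. The Tor term therefore vanishes, and only the tensor in top degree survives, giving
\[
\RH_{n_1+n_2-1}(\Tits(\bG_1) \ast \Tits(\bG_2);\bbF) \cong \RH_{n_1-1}(\Tits(\bG_1);\bbF) \otimes_{\bbF} \RH_{n_2-1}(\Tits(\bG_2);\bbF).
\]
Since the semisimple rank of $\bG_1 \times \bG_2$ is $n_1+n_2$, the left-hand side is $\St(\bG_1 \times \bG_2;\bbF)$ and the right-hand side is $\St(\bG_1;\bbF) \otimes_{\bbF} \St(\bG_2;\bbF)$.

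For the anisotropic edge cases, I would check the degenerate formula directly using the convention $\St(\bG;\bbF) = \RH_{-1}(\emptyset;\bbF) = \bbF$ from Remark \ref{remark:anisotropicst}: if $\bG_2$ is anisotropic then $\Tits(\bG_1 \times \bG_2) = \Tits(\bG_1)$, and the desired isomorphism reduces to $\St(\bG_1;\bbF) \cong \St(\bG_1;\bbF) \otimes_{\bbF} \bbF$. The only genuine technical point to double-check is the statement of the Künneth formula for the \emph{reduced} homology of the join in the appropriate degree; this is the main (mild) obstacle, but it is a well-known consequence of the relative Künneth theorem applied to the pair $(X \ast Y, X \sqcup Y)$, using that simplicial joins are semi-simplicial products up to the cone quotient.
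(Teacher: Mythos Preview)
Your proposal is correct and takes essentially the same approach as the paper: identify $\Tits(\bG_1 \times \bG_2)$ with the join $\Tits(\bG_1) \ast \Tits(\bG_2)$, then use the K\"unneth-type formula for reduced homology of joins together with Solomon--Tits. The paper's proof is terser---it asserts the join identification and the homology formula without the parabolic-subgroup verification, the explicit Tor discussion, or the anisotropic edge-case check---but the content is the same.
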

\begin{proof}
We have $\Tits(\bG_1 \times \bG_2) = \Tits(\bG_1) \ast \Tits(\bG_2)$, where $\ast$ means join.
If $n_i$ is the semisimple rank of $\bG_i$, then the Solomon--Tits theorem says that $\Tits(\bG_i)$
is $n_i-2$ connected and $\St(\bG_i;\bbF) = \RH_{n_i-1}(\Tits(\bG_i);\bbF)$.  The
semisimple rank of $\bG_1 \times \bG_2$ is $n_1+n_2$, so 
\begin{align*}
\St(\bG_1 \times \bG_2;\bbF) &= \RH_{(n_1+n_2)-1}(\Tits(\bG_1) \ast \Tits(\bG_2);\bbF) \\
                         &= \RH_{n_1-1}(\Tits(\bG_1);\bbF) \otimes \RH_{n_2-1}(\Tits(\bG_2);\bbF) = \St(\bG_1;\bbF) \otimes \St(\bG_2;\bbF).\qedhere
\end{align*}
\end{proof}

\subsection{Reducible root systems and vanishing}

Recall that our goal is to prove that for a reductive group $\bG$ we have
$\HH_i(\bG(k);\St(\bG))=0$ for $i$ in some range.  The following result
will allow us to reduce this to $\bG$ with $\bPhik(\bG)$ irreducible:

\begin{lemma}[Reducible vanishing]
\label{lemma:reduciblevanishing}
Let $\bPhi_1,\ldots,\bPhi_m$ be nontrivial irreducible root systems.  For each
$1 \leq j \leq m$, assume that there is some $b_j \geq -1$ such that the following holds:\noeqref{vanishing1}
\begin{equation}
\tag{$\varheartsuit$}\label{vanishing1}
\parbox{35em}{If $\bH_j$ is a reductive group with $\bPhik(\bH_j) = \bPhi_j$, then $\HH_i(\bH_j(k);\St(\bH_j)) = 0$ for $i \leq b_j$.}
\end{equation}
Then for all reductive groups $\bG$ with
$\bPhik(\bG) = \bPhi_1 \times \cdots \times \bPhi_m$,
we have $\HH_i(\bG(k);\St(\bG)) = 0$ for $i \leq (m-1) + b_1 + \cdots + b_m$.
\end{lemma}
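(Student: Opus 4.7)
The plan is to combine the two reduction lemmas with a Künneth argument. First, by Lemma \ref{lemma:reducetosemisimple} applied to $\bGder$, which is semisimple with the same relative root system as $\bG$, it suffices to prove the conclusion when $\bG$ is semisimple. So henceforth I assume $\bG$ is semisimple with $\bPhik(\bG) = \bPhi_1 \times \cdots \times \bPhi_m$, and invoke Lemma \ref{lemma:reducetoproduct}(i)--(ii) to obtain a semisimple product $\bH = \bH_1 \times \cdots \times \bH_m$ with $\bPhik(\bH_j) = \bPhi_j$ and a surjection $\bH \twoheadrightarrow \bG$ inducing an isomorphism $\St(\bH) \cong \Res^{\bG(k)}_{\bH(k)} \St(\bG)$.

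Next, I will compute $\HH_\bullet(\bH(k); \St(\bH))$ via the Künneth formula. Since $\bH(k) = \prod_j \bH_j(k)$, $\St(\bH) \cong \bigotimes_j \St(\bH_j)$ by Lemma \ref{lemma:steinbergproduct}, and each $\St(\bH_j)$ is a free abelian group, iterated Künneth expresses $\HH_n(\bH(k); \St(\bH))$ as a sum of tensor products and $\Tor$-terms built from the groups $\HH_{n_j}(\bH_j(k); \St(\bH_j))$. Hypothesis \eqref{vanishing1}, applied to each $\bH_j$, kills every such factor whenever $n_j \leq b_j$, so any nontrivial Künneth summand requires $n_j \geq b_j + 1$ for every $j$, forcing $n \geq m + \sum_j b_j$. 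Writing $B = (m-1) + b_1 + \cdots + b_m$, this yields $\HH_n(\bH(k); \St(\bH)) = 0$ for $n \leq B$.

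Finally, I will bootstrap from $\bH$ up to $\bG$ by induction on $j \in \{-1, 0, \ldots, B\}$, showing $\HH_j(\bG(k); \St(\bG)) = 0$. The base case $j = -1$ is trivial. For the inductive step, Lemma \ref{lemma:reducetoproduct}(iii) applied with $b = j - 1$ (whose hypothesis is exactly the inductive assumption) produces an isomorphism $\HH_j(\bH(k); \St(\bH)) \cong \HH_j(\bG(k); \St(\bG))$, and the left-hand side vanishes by the previous paragraph. The main issue here is really just the order of operations: one must first descend to the semisimple case, then pass to the product cover, then run Künneth, and only afterwards lift the vanishing via the edge-map isomorphism of (iii). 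The direction of that isomorphism is exactly what is needed for the induction to propagate degree by degree, so I expect no serious obstacle beyond this bookkeeping.
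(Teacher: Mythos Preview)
Your proposal is correct and follows essentially the same approach as the paper: reduce to the semisimple case via Lemma~\ref{lemma:reducetosemisimple}, pass to the product cover $\bH$ via Lemma~\ref{lemma:reducetoproduct}, compute $\HH_\bullet(\bH(k);\St(\bH))$ by K\"unneth using hypothesis~\eqref{vanishing1}, and then bootstrap the vanishing from $\bH$ back to $\bG$ one degree at a time using part~(iii). The only organizational difference is that the paper runs a double induction (outer on $m$, inner on the degree $b$, with the K\"unneth step done for a two-factor splitting $\bA\times\bB$ at each stage), whereas you fix $\bH$ once, do the full iterated K\"unneth for all $m$ factors in one shot, and then induct on the degree $j$ for your fixed $\bG$; both orderings are valid and yield the same bound.
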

\begin{proof}
By Lemma \ref{lemma:reducetosemisimple}, it is enough to prove this for $\bG$ semisimple.
The proof will be by induction on $m$.  The base case $m=1$ follows immediately from
\eqref{vanishing1}, so assume that $m \geq 2$ and that the lemma is true whenever $m$ is smaller.
For all $b \leq (m-1) + b_1 + \cdots + b_m$, we will prove that:\noeqref{spadeb}
\begin{equation*}
\tag{$\mathbf{\spadesuit}_{\Bind}$}\label{spadeb}
\parbox{35em}{\raggedright For all semisimple groups $\bG$ with $\bPhik(\bG) = \bPhi_1 \times \cdots \times \bPhi_m$,
we have $\HH_i(\bG(k);\St(\bG))=0$ for $i \leq b$.}
\end{equation*}
The proof will be by induction on $b$.  The base case $b=-1$ is trivial, so
assume that $-1 \leq b < (m-1) + b_1 + \cdots + b_m$ and that {\renewcommand\Bind{b}\eqref{spadeb}}
holds.  We will prove that {\renewcommand\Bind{b+1}\eqref{spadeb}} holds.  Let $\bG$ be a semisimple
group with $\bPhik(\bG) = \bPhi_1 \times \cdots \times \bPhi_m$.  Since {\renewcommand\Bind{b}\eqref{spadeb}} holds,
we have $\HH_i(\bG(k);\St(\bG)) = 0$ for $i \leq b$.  To prove {\renewcommand\Bind{b+1}\eqref{spadeb}},
we must prove that $\HH_{b+1}(\bG(k);\St(\bG)) = 0$.
Applying Lemma \ref{lemma:reducetoproduct},
we see that there exists a semisimple group $\bH$ equipped with
a surjection $\bH \rightarrow \bG$ such that:\footnote{We get (iii) because of {\renewcommand\Bind{b}\eqref{spadeb}}.}
\begin{itemize}
\item[(i)] the surjection $\bH \rightarrow \bG$ induces a bijection between parabolic subgroups, and thus
isomorphisms $\Tits(\bH) \cong \Tits(\bG)$ and $\St(\bH) \cong \Res^{\bG(k)}_{\bH(k)} \St(\bG)$; and
\item[(ii)] $\bH = \bH_1 \times \cdots \times \bH_m$ with $\bPhik(\bH_j) = \bPhi_j$ for $1 \leq j \leq m$; and
\item[(iii)] the map $\HH_{b+1}(\bH(k);\St(\bH)) \rightarrow \HH_{b+1}(\bG(k);\St(\bG))$ is an isomorphism.
\end{itemize}
By (iii), to prove that $\HH_{b+1}(\bG(k);\St(\bG)) = 0$ it is enough to prove
that $\HH_{b+1}(\bH(k);\St(\bH)) = 0$.

As notation, set $\bA = \bH_1 \times \cdots \times \bH_{m-1}$ and
$\bB = \bH_m$, so $\bH = \bA \times \bB$.  By Lemma \ref{lemma:steinbergproduct}, we have
$\St(\bA \times \bB) = \St(\bA) \otimes \St(\bB)$.
Since the abelian groups underlying $\St(\bA)$ and $\St(\bB)$ are free abelian, the K\"{u}nneth formula applies and
shows that $\HH_i(\bA(k) \times \bB(k);\St(\bA \times \bB))$ fits
into a short exact sequence with the following kernel and cokernel:
\begin{itemize}
\item $\bigoplus_{i_1 + i_2 = i} \HH_{i_1}(\bA(k);\St(\bA)) \otimes \HH_{i_2}(\bB(k);\St(\bB))$.
\item $\bigoplus_{i_1 + i_2 = i-1} \Tor(\HH_{i_1}(\bA(k);\St(\bA)),\HH_{i_2}(\bB(k);\St(\bB)))$.
\end{itemize}
Recall that we are inducting on $m$.  By this induction hypothesis and \eqref{vanishing1}, we have 
\begin{itemize}
\item $\HH_{i_1}(\bA(k);\St(\bA)) = 0$ for $i_1 \leq (m-2) + b_1 + \cdots + b_{m-1}$; and
\item $\HH_{i_2}(\bB(k);\St(\bB)) = 0$ for $i_2 \leq b_m$.
\end{itemize}
Note that if
\[i_1 + i_2 \leq ((m-2) + b_1 + \cdots + b_{m-1}) + b_m + 1 = (m-1) + b_1 + \cdots + b_m,\]
then either
\[i_1 \leq (m-2) + b_1 + \cdots + b_{m-1} \quad \text{or} \quad i_2 \leq b_m.\]
It follows that $\HH_i(\bA(k) \times \bB(k);\St(\bA \times \bB)) = 0$
for $i \leq (m-1) + b_1 + \cdots + b_m$.  In particular,\footnote{We are inducting on $b$
so we can reduce to a group that splits as a product of groups with
relative root systems $\bPhi_j$.  If our group $\bG$ was already a product, then there would be no need to induct on $b$.}
this holds for $i = b+1$, as desired.
\end{proof}

\subsection{Reducible root systems and surjectivity}

Our proof that for some $b$ we have $\HH_i(\bG(k);\St(\bG)) = 0$ for $i \leq b$ will be by induction, and
to make the induction work we will have to also prove a surjectivity statement
for $\HH_{b+1}$.  The following will allow us to reduce this surjectivity
statement to $\bG$ with $\bPhik(\bG)$ irreducible:

\begin{lemma}[Reducible surjectivity]
\label{lemma:reduciblesurjectivity}
Let $\bPhi_1,\ldots,\bPhi_m$ be nontrivial irreducible root systems.  For each
$1 \leq j \leq m$, assume that there is some $b_j \geq -1$ such that the following holds:\noeqref{surjectivity1}
\begin{equation} 
\tag{$\varheartsuit$}\label{surjectivity1}
\parbox{35em}{If $\bH_j$ is a reductive group with $\bPhik(\bH_j) = \bPhi_j$, then $\HH_i(\bH_j(k);\St(\bH_j)) = 0$ for $i \leq b_j$.}
\end{equation}
Additionally, for some $1 \leq j_0 \leq m$ assume there exists a subset $\Delta'$ of the set of simple roots
of $\bPhi_{j_0}$ such that the following holds:\noeqref{surjectivity2}
\begin{equation} 
\tag{$\varheartsuit\varheartsuit$}\label{surjectivity2}
\parbox{35em}{If $\bH_{j_0}$ is a reductive group with $\bPhik(\bH_{j_0}) = \bPhi_{j_0}$ and $\bL^{\bH_{j_0}}_{\Delta'}$ is the corresponding
standard Levi subgroup of $\bH$, then the map 
$\HH_{b_{j_0}+1}(\bL^{\bH_{j_0}}_{\Delta'}(k);\St(\bL^{\bH_{j_0}}_{\Delta'})) \rightarrow \HH_{b_{j_0}+1}(\bH_{j_0}(k);\St(\bH_{j_0}))$
is surjective.}
\end{equation}
Let $\bG$ be a reductive group with
$\bPhik(\bG) = \bPhi_1 \times \cdots \times \bPhi_m$.
Let 
\[\oDelta' = \Delta' \sqcup \bigsqcup_{\substack{1 \leq j \leq m \\ j \neq j_0}} \bDeltak(\bPhi_j),\]
and let $\bL^{\bG}_{\oDelta'}$ be the corresponding
standard Levi subgroup of $\bG$.  Then for
\[b = (m-1) + b_1 + \cdots + b_m\]
the map $\HH_{b+1}(\bL^{\bG}_{\oDelta'}(k);\St(\bL^{\bG}_{\Delta'})) \rightarrow \HH_{b+1}(\bG(k);\St(\bG))$
is surjective.
\end{lemma}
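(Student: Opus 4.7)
The plan is to mirror the two-stage reduction used in the proof of Lemma \ref{lemma:reduciblevanishing}, and to conclude with a Künneth-summand argument that identifies the correct target of the Reeder-induced map.

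First, I would reduce to the case $\bG$ semisimple. By hypothesis \ref{surjectivity1} and Lemma \ref{lemma:reduciblevanishing}, we have $\HH_i(\bGder(k);\St(\bGder)) = 0$ for $i \leq b$, so Lemma \ref{lemma:reducetosemisimple} supplies a surjection $\HH_{b+1}(\bGder(k);\St(\bGder)) \twoheadrightarrow \HH_{b+1}(\bG(k);\St(\bG))$. Standard parabolics of $\bG$ and $\bGder$ are in bijection, the Levi subgroups satisfy $\bL^{\bGder}_{\oDelta'} = \bL^{\bG}_{\oDelta'} \cap \bGder$, and the two Steinberg representations agree under restriction, so the map from the Levi fits into a commutative square with the above surjection as its right-hand column. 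A diagram chase then shows that surjectivity in the semisimple case implies it in general.

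Second, assuming $\bG$ semisimple, I would apply Lemma \ref{lemma:reducetoproduct} to obtain a semisimple group $\bH = \bH_1 \times \cdots \times \bH_m$ with $\bPhik(\bH_j) = \bPhi_j$ and a surjection $\bH \to \bG$ inducing a bijection of parabolics. The vanishing assumption needed for Lemma \ref{lemma:reducetoproduct}(iii) is supplied by Lemma \ref{lemma:reduciblevanishing}, giving an isomorphism $\HH_{b+1}(\bH(k);\St(\bH)) \cong \HH_{b+1}(\bG(k);\St(\bG))$. Because the root system of $\bH$ is literally the product of the $\bPhi_j$, the standard Levi of $\bH$ corresponding to $\oDelta'$ decomposes as $\bL^{\bH_{j_0}}_{\Delta'} \times \prod_{j \neq j_0} \bH_j$, and the Reeder map on the Steinberg representation is the tensor of the Reeder map in the $j_0$-th factor with identities on the remaining factors. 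A second commutative-square argument reduces the surjectivity to $\bH$ in place of $\bG$.

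The heart of the argument is the iterated Künneth formula. Each $\St(\bH_j)$ is a free abelian group, so $\HH_n(\bH(k);\St(\bH))$ can be assembled from tensor products and $\Tor$'s of the $\HH_\ast(\bH_j(k);\St(\bH_j))$. By hypothesis \ref{surjectivity1}, the latter vanish below degree $b_j+1$, and a direct count shows that at total degree $n = b+1 = m + \sum_j b_j$ the only surviving term is the pure-tensor summand where each factor sits in its lowest nonvanishing degree; all $\Tor$ corrections fall in strictly lower total degree and vanish. Consequently
\[
\HH_{b+1}(\bH(k);\St(\bH)) \;=\; \bigotimes_{j=1}^m \HH_{b_j+1}(\bH_j(k);\St(\bH_j)).
\]
Applying Künneth to the product $\bL^{\bH_{j_0}}_{\Delta'} \times \prod_{j \neq j_0} \bH_j$ by splitting off the $j_0$-th factor, and reusing the same Künneth-plus-vanishing calculation on the remaining $m-1$ factors, the source of the Reeder map contains the direct summand
\[
\HH_{b_{j_0}+1}\bigl(\bL^{\bH_{j_0}}_{\Delta'}(k);\St(\bL^{\bH_{j_0}}_{\Delta'})\bigr) \;\otimes\; \bigotimes_{j \neq j_0} \HH_{b_j+1}(\bH_j(k);\St(\bH_j)),
\]
and the map into $\HH_{b+1}(\bH(k);\St(\bH))$ restricts on this summand to $f \otimes \operatorname{id}$, where $f$ is the surjection provided by hypothesis \ref{surjectivity2}. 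Surjectivity of $f \otimes \operatorname{id}$ onto the target tensor product is then automatic, finishing the proof.

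The main obstacle is the bookkeeping in the Künneth step: one must check that the particular tensor-product summand singled out on the source side genuinely maps into the target via $f \otimes \operatorname{id}$, and that no auxiliary $\Tor$ corrections or lower-$p$ Künneth summands from the Levi-product interfere. A secondary check is required at each reduction step to confirm that the standard Levi attached to $\oDelta'$ behaves functorially under passage to $\bGder$ and under the central isogeny $\bH \to \bG$, so that the relevant commutative squares really commute and the Reeder construction is compatible with the product decomposition.
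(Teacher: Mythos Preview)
Your proposal is correct and follows essentially the same route as the paper: reduce to the derived subgroup via Lemma~\ref{lemma:reducetosemisimple} (using Lemma~\ref{lemma:reduciblevanishing} for the needed vanishing), pass to a product $\bH = \bH_1 \times \cdots \times \bH_m$ via Lemma~\ref{lemma:reducetoproduct}, set up the commutative square relating the Levi maps for $\bH$ and $\bG$, and then use K\"unneth plus the vanishing hypotheses to identify $\HH_{b+1}(\bH(k);\St(\bH))$ as the pure tensor product $\bigotimes_j \HH_{b_j+1}(\bH_j(k);\St(\bH_j))$, on which the Levi map becomes $f \otimes \mathrm{id}$ with $f$ surjective by \eqref{surjectivity2}. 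The paper carries out exactly these steps in the same order; your cautionary remarks about checking compatibility of Levi subgroups under the reductions and about Tor corrections in K\"unneth are precisely the routine verifications the paper makes along the way.
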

\begin{proof}
Lemma \ref{lemma:reduciblevanishing} (reducible vanishing) implies that $\HH_i(\bGder(k);\St(\bGder)) = 0$ for $i \leq b$.  We can therefore
apply Lemma \ref{lemma:reducetosemisimple} and see that the map
\begin{equation}
\label{eqn:derivedsurjection}
\HH_{b+1}(\bGder;\St(\bGder)) \rightarrow \HH_{b+1}(\bG;\St(\bG))
\end{equation}
is surjective.  We can also apply Lemma \ref{lemma:reducetoproduct} to find a semisimple group
$\bH$ equipped with a surjection $\bH \rightarrow \bGder$ such that:
\begin{itemize}
\item[(i)] the surjection $\bH \rightarrow \bGder$ induces a bijection between parabolic subgroups, and thus
isomorphisms $\Tits(\bH) \cong \Tits(\bGder)$ and $\St(\bH) \cong \Res^{\bGder(k)}_{\bH(k)} \St(\bGder)$; and
\item[(ii)] $\bH = \bH_1 \times \cdots \times \bH_m$ with $\bPhik(\bH_j) = \bPhi_j$ for all $1 \leq j \leq m$; and
\item[(iii)] the map $\HH_{b+1}(\bH(k);\St(\bH)) \rightarrow \HH_{b+1}(\bGder(k);\St(\bGder))$ is an isomorphism.
\end{itemize}
In particular, combining (iii) and the fact that \eqref{eqn:derivedsurjection} is surjective, we see that
the map
\[\HH_{b+1}(\bH_1(k) \times \cdots \times \bH_m(k);\St(\bH_1 \times \cdots \times \bH_m)) \rightarrow \HH_{b+1}(\bG;\St(\bG))\]
is surjective.

Let $\bL_{\oDelta'}^{\bH}$ and $\bL_{\Delta'}^{\bH_{j_0}}$ be the standard Levi subgroups of $\bH$ and $\bH_{j_0}$ corresponding to
the simple roots $\oDelta'$ and $\Delta'$.  We therefore have
\[\bL_{\oDelta'}^{\bH} = \bH_1 \times \cdots \times \bL_{\Delta'}^{\bH_{j_0}} \times \cdots \times \bH_m.\]
Examining the proof of Lemma \ref{lemma:reducetoproduct}, we see that $\bL_{\oDelta'}^{\bH}$ is precisely
is precisely the group obtained by applying Lemma \ref{lemma:reducetoproduct} to the
derived subgroup of $\bL^{\bG}_{\oDelta'}$, though in this case we do not have conclusion (iii) of Lemma \ref{lemma:reducetoproduct}.  
We therefore have a commutative diagram
\[\begin{tikzcd}[column sep=small]
\HH_{b+1}(\bL_{\oDelta'}^{\bH}(k);\St(\bL_{\oDelta'}^{\bH})) \arrow{r} \arrow{d}{\tf} & \HH_{b+1}(\bL^{\bG}_{\oDelta'}(k);\St(\bL^{\bG}_{\oDelta'})) \arrow{d}{f} \\
\HH_{b+1}(\bH(k);\St(\bH))                                 \arrow[two heads]{r}     & \HH_{b+1}(\bG(k);\St(\bG))
\end{tikzcd}\]
From this, we see that to prove that $f$ is surjective, it is enough to prove that $\tf$ is surjective.

In light of \eqref{surjectivity1}, we can use the K\"{u}nneth formula just like we did in the
proof of Lemma \ref{lemma:reduciblevanishing} (reducible vanishing) to see that
\[\HH_{b+1}(\bH(k);\St(\bH)) = \bigotimes_{j=1}^m \HH_{b_j+1}(\bH_j(k);\St(\bH_j)).\]
We thus see that it is enough to prove that the map
\begin{align*}
&\left(\bigotimes_{j=1}^{j_0-1} \HH_{b_j+1}(\bH_j(k);\St(\bH_j))\right) \otimes \HH_{b_{j_0}+1}(\bL_{\Delta'}^{\bH_{j_0}}(k);\St(\bL_{\Delta'}^{\bH_{j_0}})) \\
&\quad\quad\otimes \left(\bigotimes_{j=j_0+1}^{m} \HH_{b_j+1}(\bH_j(k);\St(\bH_j))\right)
\rightarrow \bigotimes_{j=1}^m \HH_{b_j+1}(\bH_j(k);\St(\bH_j))
\end{align*}
is surjective.  Since the maps on all but one tensor factor are the identity, this is equivalent
to the surjectivity of the map
\[\HH_{b_{j_0}+1}(\bL_{\Delta'}^{\bH_{j_0}}(k);\St(\bL_{\Delta'}^{\bH_{j_0}})) \rightarrow \HH_{b_{j_0}+1}(\bH_{j_0}(k);\St(\bH_{j_0}))\]
on the remaining tensor factor, which is exactly \eqref{surjectivity2}. 
\end{proof}

\section{Reduction to irreducible case}
\label{section:reductionirreducible}

In this section, we reduce Theorem \ref{maintheorem:fields} to the case of irreducible root systems.
Recall that for a reductive group $\bG$ 
we defined the bound $\bb(\bPhik(\bG))$ for our vanishing range in Table \ref{table:bounds} and equation
\eqref{eqn:reducibleb}.  We will recall the formula for $\bb(\bPhik(\bG))$ for
various $\bPhik(\bG)$ as it is needed.

\subsection{Irreducible cases}

The heart of our proof is verifying Theorem \ref{maintheorem:fields}
for groups $\bG$ whose relative root system $\bPhik(\bG)$ is one
of the non-exceptional types $\{\dA_n,\dB_n,\dC_n,\dBC_n,\dD_n\}$:

\begin{numberedtheorem}{maintheorem:fields}{1}[{Type $\dA$}]
\label{theorem:typean}
Let $\bG$ be a reductive group with $\bPhik(\bG) = \dA_n$ for some $n \geq 1$.  Then
$\HH_i(\bG(k);\St(\bG)) = 0$ for $i \leq \bb(\bPhik(\bG)) = \lfloor (n-1)/2 \rfloor$.
\end{numberedtheorem}

\begin{numberedtheorem}{maintheorem:fields}{2}[{Type $\dB/\dC/\dBC$}]
\label{theorem:typebcn}
Let $\bG$ be a reductive group with $\bPhik(\bG) \in \{\dB_n,\dC_n,\dBC_n\}$
for some\footnote{The root systems $\dB_n$ and $\dC_n$ and $\dBC_n$ are only defined for $n \geq 2$.} $n \geq 2$.  Then
$\HH_i(\bG(k);\St(\bG)) = 0$ for $i \leq \bb(\bPhik(\bG)) = \lfloor (n-2)/2 \rfloor$.
\end{numberedtheorem}

\begin{numberedtheorem}{maintheorem:fields}{3}[{Type $\dD$}]
\label{theorem:typedn}
Let $\bG$ be a reductive group with $\bPhik(\bG) = \dD_n$
for some\footnote{The root system $\dD_n$ is only defined for $n \geq 4$.} $n \geq 4$.  Then
$\HH_i(\bG(k);\St(\bG)) = 0$ for $i \leq \bb(\bPhik(\bG)) = \lfloor (n-3)/2 \rfloor$
\end{numberedtheorem}

We will prove these three theorems in Parts \ref{part:an}, \ref{part:bcn}, and \ref{part:dn}.

\subsection{General case}

Before starting our work on Theorems \ref{theorem:typean}, \ref{theorem:typebcn}, and \ref{theorem:typedn}, we show that
they imply the general case:

\theoremstyle{plain}
\newtheorem*{maintheorem:fields}{Theorem \ref{maintheorem:fields}}
\begin{maintheorem:fields}
Let $\bG$ be a reductive group and
$\bbF$ be a commutative ring.  Then we have $\HH_i(\bG(k);\St(\bG;\bbF))=0$ for
$i \leq \bb(\bPhik(\bG))$.
\end{maintheorem:fields}
\begin{proof}[Proof of Theorem \ref{maintheorem:fields}, assuming Theorems \ref{theorem:typean}, \ref{theorem:typebcn}, and \ref{theorem:typedn}]
By Lemma \ref{lemma:reductionlemma}, we can assume that $\bbF = \Z$, and thus omit $\bbF$ from our notation.
If $\bPhik(\bG)$ is irreducible, then it either lies in $\{\dA_n,\dB_n,\dC_n,\dBC_n,\dD_n\}$ or is one
of the exceptional root systems $\{\dG_2, \dF_4, \dE_6, \dE_7, \dE_8\}$.  If $\bPhik(\bG)$ is non-exceptional,
then the result follows from Theorems \ref{theorem:typean}, \ref{theorem:typebcn}, and \ref{theorem:typedn}.  If it
is exceptional, then by definition we have $\bb(\bPhik(\bG)) = 0$, so the theorem asserts that $\HH_0(\bG(k);\St(\bG)) = 0$.
This is exactly the content of Lemma \ref{lemma:h0}.

It remains to handle the case where $\bPhik(\bG)$ is reducible.  For some $m \geq 2$, we can write
\[\bPhik(\bG) = \bPhi_1 \times \cdots \times \bPhi_m\] 
with each $\bPhi_i$ irreducible.  By definition, we have
\[\bb(\bPhik(\bG)) = (m-1) + \sum_{j=1}^m \bb(\bPhi_j).\]
Since we have already proved our theorem for groups with irreducible relative root systems,
Lemma \ref{lemma:reduciblevanishing} applies and shows that $\HH_i(\bG(k);\St(\bG;\bbF))=0$ for
$i \leq \bb(\bPhik(\bG))$.
\end{proof}

\part{Vanishing in type \texorpdfstring{$\dA$}{A} (Theorem \ref{theorem:typean})}
\label{part:an}

This part of the paper proves our vanishing result in type $\dA$ (Theorem \ref{theorem:typean}).
For our induction, we prove a stronger result (Theorem \ref{theorem:strongan})
that incorporates a surjectivity statement.  We state
this in \S \ref{section:strongan}.  After the preliminary \S \ref{section:levivanishan} about
Levi subgroups, in \S \ref{section:ssvanishan} we show that our 
inductive hypotheses gives a vanishing region in the spectral sequence
from Corollary \ref{corollary:spectralsequence}.  As we show in \S \ref{section:proofan}, this
almost implies Theorem \ref{theorem:strongan}.  The only missing ingredients are three computations of
differentials in our spectral sequence, which are in \S \ref{section:differentialan}.

\section{Vanishing and surjectivity (type \texorpdfstring{$\dA$}{A})}
\label{section:strongan}

In this section, we first introduce some notation for the standard Levi factors
of groups of type $\dA_n$.  We then state a stronger version of Theorem \ref{theorem:typean} and prove
it for ranks at most $2$.

\subsection{Levi factor notation}
\label{section:levian}

Let $\bG$ be a reductive group with $\bPhik(\bG) = \dA_n$.
Let $\bDelta = \bDeltak(\bG)$ be the set of simple roots of $\bPhik(\bG) = \dA_n$.  Number the
elements of $\bDelta$ from left to right as in the usual Dynkin diagram:\\
\centerline{\psfig{file=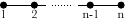,scale=2}}
For $1 \leq j_1,\ldots,j_{\ell} \leq n$, let $\bDelta[j_1,\ldots,j_{\ell}]$ be the result of removing
the simple roots labeled $j_1,\ldots,j_{\ell}$ from $\bDelta$.  We thus have a standard Levi
subgroup $\bL_{\bDelta[j_1,\ldots,j_{\ell}]}$ of $\bG$.

\begin{example}
We have $\bPhik(\bL_{\bDelta[1]}) = \bPhik(\bL_{\bDelta[n]}) = \dA_{n-1}$, while for
$2 \leq j \leq n-1$ we have $\bPhik(\bL_{\bDelta[j]}) = \dA_{j-1} \times \dA_{n-j}$.  In general,
for distinct $1 \leq j_1,\ldots,j_{\ell} \leq n$ we have
\[\bPhik(\bL_{\bDelta[j_1, \ldots, j_{\ell}]}) = \dA_{n_1} \times \cdots \times \dA_{n_m} \quad \text{with $n_1 + \cdots + n_m + \ell = n$}.\qedhere\]
\end{example}

We have a Reeder map (cf.\ \S \ref{section:reedermap}) of the form $\St(\bL_{\bDelta[j_1, \ldots, j_{\ell}]}) \rightarrow \St(\bG)$, and
thus maps $\HH_i(\bL_{\bDelta[j_1, \ldots, j_{\ell}]}(k);\St(\bL_{\bDelta[j_1, \ldots, j_{\ell}]})) \rightarrow \HH_i(\bG(k);\St(\bG))$.

\subsection{Strong vanishing}

The main result we will prove in this part of the paper is:

\begin{primedtheorem}{theorem:typean}
\label{theorem:strongan}
Let $\bG$ be a reductive group with $\bPhik(\bG) = \dA_n$.  Then:
\begin{itemize}
\item $\HH_i(\bG(k);\St(\bG)) = 0$ for $i \leq \lfloor (n-1)/2 \rfloor$; and
\item letting $\bDelta = \bDeltak(\bG)$, the maps
\begin{align*}
&\HH_i(\bL_{\bDelta[1]}(k);\St(\bL_{\bDelta[1]})) \rightarrow \HH_i(\bG(k);\St(\bG)) \quad \text{and} \\
&\HH_i(\bL_{\bDelta[n]}(k);\St(\bL_{\bDelta[n]})) \rightarrow \HH_i(\bG(k);\St(\bG))
\end{align*}
are both surjective for $i \leq \lfloor n/2 \rfloor$.
\end{itemize}
\end{primedtheorem}

This strengthens Theorem \ref{theorem:typean} by adding the indicated surjectivity statement.  This surjectivity
statement will be used to understand differentials in the spectral sequence from Corollary \ref{corollary:spectralsequence}.
To connect Theorem \ref{theorem:strongan}
to what we have already proven, we show that the spectral sequence argument from \S \ref{section:rank2} implies
Theorem \ref{theorem:strongan} for $n \leq 2$.  This requires a lemma.

\begin{lemma}
\label{lemma:anconjugate}
Let $\bG$ be a reductive group with $\bPhik(\bG) = \dA_n$ for some $n \geq 2$.  Then the images
of the two maps
\begin{align*}
&\HH_i(\bL_{\bDelta[1]}(k);\St(\bL_{\bDelta[1]})) \rightarrow \HH_i(\bG(k);\St(\bG)) \quad \text{and} \\
&\HH_i(\bL_{\bDelta[n]}(k);\St(\bL_{\bDelta[n]})) \rightarrow \HH_i(\bG(k);\St(\bG))
\end{align*}
are the same.
\end{lemma}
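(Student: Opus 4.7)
The plan is to apply Lemma \ref{lemma:leviconjugate} and exploit the standard fact that inner automorphisms of $\bG(k)$ act trivially on $\HH_i(\bG(k);-)$ with any module coefficients. Since $\bPhik(\bL_{\bDelta[1]}) = \bPhik(\bL_{\bDelta[n]}) = \dA_{n-1}$, that lemma furnishes an element $g \in \bG(k)$ with $g\bL_{\bDelta[1]}(k)g^{-1} = \bL_{\bDelta[n]}(k)$. I will factor each of the two maps in the statement as the Reeder pushforward on coefficients followed by corestriction:
\[
\HH_i(\bL_{\bDelta[j]}(k); \St(\bL_{\bDelta[j]})) \xrightarrow{(r_j)_{\ast}} \HH_i(\bL_{\bDelta[j]}(k); \St(\bG)) \longrightarrow \HH_i(\bG(k); \St(\bG))
\]
for $j \in \{1,n\}$, where $r_j$ denotes the Reeder map for the standard parabolic $\bP_{\bDelta[j]}$ with Levi $\bL_{\bDelta[j]}$.

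Conjugation by $g$ induces an isomorphism $c_g\colon \HH_i(\bL_{\bDelta[1]}(k); \St(\bG)) \to \HH_i(\bL_{\bDelta[n]}(k); \St(\bG))$ that fits into a commutative square with the two corestrictions and the identity arrow on $\HH_i(\bG(k); \St(\bG))$; this identity is the standard consequence of $g$ being inner in $\bG(k)$. From this, one reads off that the two corestriction maps have equal image in $\HH_i(\bG(k); \St(\bG))$, so the task reduces to checking that $c_g$ sends the image of $(r_1)_{\ast}$ onto the image of $(r_n)_{\ast}$. Unwinding the definitions, the image of $r_1$ in $\St(\bG)$ is sent by $g$ to $g \cdot r_1(\St(\bL_{\bDelta[1]}))$, which is the image of a Reeder map for $\bL_{\bDelta[n]}$ defined relative to the conjugated parabolic $g\bP_{\bDelta[1]}(k)g^{-1}$.

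As the footnote to Lemma \ref{lemma:leviconjugate} warns, $g\bP_{\bDelta[1]}(k)g^{-1}$ need not equal $\bP_{\bDelta[n]}(k)$: it contains $\bL_{\bDelta[n]}$ as a Levi factor but may instead be the opposite parabolic. This ambiguity is the main (and essentially only) subtle step. I will resolve it by showing that the map $\HH_i(\bL_{\bDelta[n]}(k); \St(\bL_{\bDelta[n]})) \to \HH_i(\bG(k); \St(\bG))$ is independent of the choice of parabolic with Levi $\bL_{\bDelta[n]}$ used to define the Reeder map: any two such Reeder embeddings of $\St(\bL_{\bDelta[n]})$ in $\St(\bG)$ are related by conjugation by an element of $N_{\bG}(\bL_{\bDelta[n]})(k) \subset \bG(k)$ (a representative of the nontrivial Weyl coset exchanging the two opposite parabolics with Levi $\bL_{\bDelta[n]}$), and the action of such an element on the target $\HH_i(\bG(k); \St(\bG))$ is trivial by a second application of the inner-automorphism principle. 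Combining all of this yields the claimed equality of images.
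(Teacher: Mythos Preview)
Your overall strategy—conjugate the Levi subgroups via Lemma~\ref{lemma:leviconjugate} and use that inner automorphisms of $\bG(k)$ act trivially on $\HH_i(\bG(k);\St(\bG))$—is exactly the paper's approach, and you are right that the paper's terse proof glosses over the question of which parabolic defines the Reeder map after conjugation. But your resolution of that subtlety has a genuine gap.

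You assert that there is an element $h\in N_{\bG}(\bL_{\bDelta[n]})(k)$ exchanging the two parabolics with Levi $\bL_{\bDelta[n]}$. In type $\dA_n$ with $n\ge 2$ this is false. Take $\bG=\GL_{n+1}$ and $\bL=\bL_{\bDelta[n]}=\GL_n\times\GL_1$; the two parabolics with this Levi are the stabilizer of the hyperplane $\Span{e_1,\ldots,e_n}$ and the stabilizer of the line $\Span{e_{n+1}}$. Any $h$ normalizing $\bL$ must preserve each factor separately (the factors have different dimensions), hence preserve both the hyperplane and the line, so $h\in\bL(k)$ and $h\bP h^{-1}=\bP$. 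Equivalently, $N_W(W_{\bL})/W_{\bL}$ is trivial here, so the ``nontrivial Weyl coset'' you invoke does not exist. Indeed the two parabolics are not conjugate in $\bG(k)$ at all, since they stabilize subspaces of different dimensions. (Your argument would work in types where $-1$ lies in the Weyl group, but $\dA_n$ for $n\ge 2$ is precisely where it does not.)

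The correct way to close the gap is not to conjugate the parabolics but to check that the Reeder map $\St(\bL)\to\St(\bG)$ depends only on the Levi $\bL$, at worst up to a global sign, which does not affect the image. One sees this from the description on apartment classes: an apartment of $\bL$ is determined by a maximal split torus $\bT'\subset\bL$, which is also maximal split in $\bG$, and the Reeder map sends it to the corresponding apartment of $\bG$; no choice of parabolic enters. For $\bG=\GL_{n+1}$ this is exactly the Steinberg multiplication $\Apart{L_1,\ldots,L_n}\mapsto\Apart{L_1,\ldots,L_n,\Span{e_{n+1}}}$ (cf.\ \S\ref{section:steinbergmultiplication}), visibly independent of whether one uses $\bP_{\bDelta[n]}$ or its opposite. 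Once this is established, your commutative square with $f_1$ on top and $f_n$ on the bottom commutes up to sign, and the equality of images follows.
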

\begin{proof}
Lemma \ref{lemma:leviconjugate} says that $\bL_{\bDelta[1]}(k)$ and $\bL_{\bDelta[n]}(k)$ are conjugate
subgroups of $\bG(k)$.  This conjugation matches up the parabolic subgroups of $\bL_{\bDelta[1]}$ and $\bL_{\bDelta[n]}$, and
thus induces an isomorphism from $\St(\bL_{\bDelta[1]})$ to $\St(\bL_{\bDelta[n]})$.  
The lemma now follows from the fact that inner automorphisms act trivially on
group homology.
\end{proof}

\begin{lemma}
\label{lemma:typeanstrongrank2}
Theorem \textnormal{\ref{theorem:strongan}} holds for $n \leq 2$.
\end{lemma}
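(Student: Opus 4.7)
The plan is to reduce to the two cases $n=1$ and $n=2$ separately, using the vanishing already proved plus the rank-2 surjectivity result and the conjugacy lemma. In both cases the vanishing part of Theorem \ref{theorem:strongan} is $\HH_i(\bG(k);\St(\bG))=0$ for $i\le\lfloor(n-1)/2\rfloor$, which only requires $i=0$ (for both $n=1$ and $n=2$) and hence is immediate from Lemma \ref{lemma:h0}.

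For the surjectivity statement when $n=1$, the bound is $i\le\lfloor 1/2\rfloor=0$, so we only need surjectivity in degree $0$; the target $\HH_0(\bG(k);\St(\bG))$ is already $0$ by Lemma \ref{lemma:h0}, so there is nothing to check.

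For $n=2$, the surjectivity bound is $i\le\lfloor 2/2\rfloor=1$. The $i=0$ case is again trivial for the same reason, so the content is in degree $i=1$. Here I would invoke Lemma \ref{lemma:rank2}, which says that the map
\[
\bigoplus_{\Delta\in\cL_0(\bG)}\HH_1(\bL_{\Delta}(k);\St(\bL_{\Delta}))\longrightarrow \HH_1(\bG(k);\St(\bG))
\]
is surjective. Since $n=2$, the set $\cL_0(\bG)$ consists of the subsets of $\bDelta$ of size $n-1=1$, which are precisely $\bDelta[1]=\{\alpha_2\}$ and $\bDelta[2]=\{\alpha_1\}$. Thus the direct sum in Lemma \ref{lemma:rank2} is exactly
\[
\HH_1(\bL_{\bDelta[1]}(k);\St(\bL_{\bDelta[1]}))\oplus\HH_1(\bL_{\bDelta[2]}(k);\St(\bL_{\bDelta[2]})).
\]

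The final ingredient is Lemma \ref{lemma:anconjugate}, which asserts that the two summands have the same image in $\HH_1(\bG(k);\St(\bG))$. Combined with the surjectivity of the sum, each individual map $\HH_1(\bL_{\bDelta[j]}(k);\St(\bL_{\bDelta[j]}))\rightarrow\HH_1(\bG(k);\St(\bG))$ for $j\in\{1,2\}$ is therefore surjective, completing the proof. No step is really an obstacle: the work has all been done, and this lemma is a bookkeeping consequence of Lemmas \ref{lemma:h0}, \ref{lemma:rank2}, and \ref{lemma:anconjugate}.
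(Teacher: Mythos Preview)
Your proof is correct and follows essentially the same approach as the paper: both use Lemma~\ref{lemma:h0} for the vanishing (and the $i=0$ surjectivity), then for $n=2$ combine Lemma~\ref{lemma:rank2} with Lemma~\ref{lemma:anconjugate} to deduce that each individual Levi map is surjective in degree~$1$. The only trivial omission is the case $n=0$, where Theorem~\ref{theorem:strongan} asserts nothing.
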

\begin{proof}
Let $\bG$ be a reductive group with $\bPhik(\bG) = \dA_n$ for some $n \leq 2$.  
For $n=0$, Theorem \ref{theorem:strongan} asserts nothing.
For $n = 1$, since a map to $0$ is surjective Theorem \ref{theorem:strongan} only asserts that
$\HH_0(\bG(k);\St(\bG)) = 0$ when $\bPhik(\bG) = \dA_1$, which follows from Lemma \ref{lemma:h0}.

For $n=2$, Theorem \ref{theorem:strongan} asserts 
that $\HH_0(\bG(k);\St(\bG)) = 0$, which follows
from Lemma \ref{lemma:h0}.  Theorem \ref{theorem:strongan} also asserts a surjectivity statement for $\HH_0$ and $\HH_1$.
The surjectivity statement for $\HH_0$ is vacuous since $\HH_0(\bG(k);\St(\bG)) = 0$, 
so the only nontrivial thing to prove is surjectivity for $\HH_1$.  Recall that
$\cL_p(\bG)$ consists of all subsets\footnote{Before we used the notation $\Delta$ instead of $R$,
but we use $R$ here to avoid confusion between the boldface $\bDelta$ and the non-boldface $\Delta$.} $R$ of $\bDelta$ 
with $|R| = n-p-1$.  Lemma \ref{lemma:rank2} says that
\begin{equation}
\label{eqn:typeanstrongrank2key}
\bigoplus_{R \in \cL_0(\bG)} \HH_1(\bL_{R}(k);\St(\bL_{R})) = \HH_1(\bL_{\bDelta[1]}(k);\St(\bL_{\bDelta[1]})) \oplus \HH_1(\bL_{\bDelta[2]}(k);\St(\bL_{\bDelta[2]}))
\end{equation}
surjects onto $\HH_1(\bG(k);\St(\bG))$.  Lemma \ref{lemma:anconjugate} says that both terms of \eqref{eqn:typeanstrongrank2key} have
the same image in $\HH_1(\bG(k);\St(\bG))$.  This implies that both surject onto $\HH_1(\bG(k);\St(\bG))$, as desired.
\end{proof}

Because of Lemma \ref{lemma:typeanstrongrank2}, for the rest of this part of the paper we can assume that $n \geq 3$.  We will also
assume as an inductive hypothesis that we have already proved Theorem \ref{theorem:strongan} in smaller ranks.  For this, we make the following
definition:

\begin{definition}
\label{definition:hypothesisan}
For $r \geq 0$, the {\em $r$-surjectivity and vanishing hypothesis in type $\dA$} is as follows.
Let $\bG$ be a reductive group with $\bPhik(\bG) = \dA_n$ for some $n \leq r$.  Then:
\begin{itemize}
\item $\HH_i(\bG(k);\St(\bG)) = 0$ for $i \leq \lfloor (n-1)/2 \rfloor$; and
\item letting $\bDelta = \bDeltak(\bG)$, the maps
\begin{align*}
&\HH_i(\bL_{\bDelta[1]}(k);\St(\bL_{\bDelta[1]})) \rightarrow \HH_i(\bG(k);\St(\bG)) \quad \text{and} \\
&\HH_i(\bL_{\bDelta[n]}(k);\St(\bL_{\bDelta[n]})) \rightarrow \HH_i(\bG(k);\St(\bG))
\end{align*}
are both surjective for $i \leq \lfloor n/2 \rfloor$.\qedhere
\end{itemize}
\end{definition} 

\section{Vanishing and surjectivity for Levi subgroups (type \texorpdfstring{$\dA$}{A})}
\label{section:levivanishan}

In this section, we show how to use the $r$-surjectivity and vanishing hypothesis
in type $\dA$ to analyze the homology of standard Levi subgroups.  Our main result
is as follows.  Its statement uses the ordering on the simple roots of 
of $\dA_{n_{j_0}}$ discussed in \S \ref{section:levian}.

\begin{lemma}[Levi vanishing and surjectivity]
\label{lemma:levivanishan}
Assume the $(n-1)$-surjectivity and vanishing hypothesis in type $\dA$ (Definition \ref{definition:hypothesisan}).
Let $\bG$ be a reductive group with $\bPhik(\bG) = \dA_n$.  Let $\Delta \subset \bDeltak(\bG)$ be a set
of simple roots with $\Delta \neq \bDeltak(\bG)$.  Write
\[\bPhik(\bL_{\Delta}) = \dA_{n_1} \times \cdots \times \dA_{n_m}.\]
Set
\[b = \bb(\bPhik(\bL_{\Delta})) = (m-1) + \lfloor (n_1-1)/2 \rfloor + \cdots + \lfloor(n_m-1)/2 \rfloor.\]
Then the following hold:
\begin{itemize}
\item[(i)] We have $\HH_i(\bL_{\Delta}(k);\St(\bL_{\Delta})) = 0$ for $i \leq b$.
\item[(ii)] For some $1 \leq j_0 \leq m$, assume that $n_{j_0}$ is even and nonzero.  Let $\Delta' \subset \Delta$
be the set of simple roots obtained by removing either the first or last simple root from $\dA_{n_{j_0}}$, so
\[\bPhik(\bL_{\Delta'}) = \dA_{n_1} \times \cdots \times \dA_{n_{j_0}-1} \times \cdots \times \dA_{n_m}.\]
Then the map $\HH_{b+1}(\bL_{\Delta'}(k);\St(\bL_{\Delta'})) \rightarrow \HH_{b+1}(\bL_{\Delta}(k);\St(\bG))$
is surjective.
\end{itemize}
\end{lemma}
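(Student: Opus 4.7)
The strategy is to apply the two reducibility lemmas of Section \ref{section:reducible} to the reductive group $\bL_\Delta$, with the role of the irreducible factors played by $\dA_{n_1},\ldots,\dA_{n_m}$. The case $\Delta = \emptyset$ (so $m=0$ and $b=-1$) makes (i) vacuous and (ii) contentless, so we may assume $\Delta \neq \emptyset$. Since $\Delta \subsetneq \bDeltak(\bG)$ we have $\sum_j n_j = |\Delta| \leq n-1$, so every $n_j$ satisfies $1 \leq n_j \leq n-1$, and the $(n-1)$-surjectivity and vanishing hypothesis in type $\dA$ is available on every factor.

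For part (i), I invoke Lemma \ref{lemma:reduciblevanishing} with $\bPhi_j = \dA_{n_j}$ and $b_j = \lfloor (n_j-1)/2 \rfloor$. The hypothesis of that lemma is precisely the vanishing clause of Definition \ref{definition:hypothesisan} applied to each $\dA_{n_j}$, and its conclusion gives $\HH_i(\bL_\Delta(k);\St(\bL_\Delta)) = 0$ for
\[i \leq (m-1) + \sum_{j=1}^m \lfloor (n_j-1)/2 \rfloor = b.\]

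For part (ii), I invoke Lemma \ref{lemma:reduciblesurjectivity} with $\bL_\Delta$ playing the role of $\bG$, the same $b_j$, and the distinguished index $j_0$; the subset of $\bDeltak(\bPhi_{j_0})$ called $\Delta'$ in that lemma is taken to be the result of deleting either the first or last node of the Dynkin diagram of $\dA_{n_{j_0}}$. The vanishing hypothesis has already been verified. For the additional surjectivity hypothesis on the factor $\dA_{n_{j_0}}$, I use that $n_{j_0}$ is even and positive, so
\[b_{j_0}+1 = \lfloor (n_{j_0}-1)/2 \rfloor + 1 = n_{j_0}/2 = \lfloor n_{j_0}/2 \rfloor,\]
and the removed node is exactly $\bDelta[1]$ or $\bDelta[n_{j_0}]$ in the notation of Section \ref{section:levian}. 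Hence the required surjectivity at degree $b_{j_0}+1$ is precisely the second clause of Definition \ref{definition:hypothesisan} at degree $\lfloor n_{j_0}/2 \rfloor$. Lemma \ref{lemma:reduciblesurjectivity} then yields a surjection onto $\HH_{b+1}(\bL_\Delta(k);\St(\bL_\Delta))$ out of $\HH_{b+1}$ of the standard Levi of $\bL_\Delta$ associated to $\oDelta' = \Delta' \sqcup \bigsqcup_{j \neq j_0} \bDeltak(\bPhi_j)$.

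It remains only to identify this Levi of $\bL_\Delta$ with the group $\bL_{\Delta'}$ of the statement. This is immediate from the centralizer definition recalled in Section \ref{section:standardlevi}: for any $\Delta'' \subseteq \Delta = \bDeltak(\bL_\Delta)$, both $\bL^{\bL_\Delta}_{\Delta''}$ and $\bL^{\bG}_{\Delta''}$ equal $Z_{\bG}\bigl((\bigcap_{\alpha \in \Delta''} \ker \alpha)^0\bigr)$, and our $\oDelta' \subset \Delta$ is exactly the set $\Delta'$ of the lemma statement. I do not anticipate a genuine obstacle: the proof is essentially bookkeeping around the reducibility machinery of Section \ref{section:reducible}, and the only delicate point is the parity calculation aligning $b_{j_0}+1$ with $\lfloor n_{j_0}/2 \rfloor$, which is exactly what the evenness assumption in (ii) is designed to guarantee.
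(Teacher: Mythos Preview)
Your proof is correct and follows essentially the same approach as the paper: reduce both (i) and (ii) to Lemmas \ref{lemma:reduciblevanishing} and \ref{lemma:reduciblesurjectivity} by noting that $n_j \leq n-1$ makes the $(n-1)$-hypothesis applicable on every factor. You have spelled out a few details the paper leaves implicit---the parity calculation $b_{j_0}+1 = \lfloor n_{j_0}/2 \rfloor$ and the identification $\bL^{\bL_\Delta}_{\oDelta'} = \bL^{\bG}_{\Delta'}$---but the argument is otherwise identical.
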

\begin{proof}
Since $\Delta \neq \bDeltak(\bG)$, we have $n_j \leq n-1$ for $1 \leq j \leq m$.  The
$(n-1)$-surjectivity and vanishing hypothesis in type $\dA$ thus applies to all reductive
groups $\bH_j$ with $\bPhik(\bH_j) = \dA_{n_j}$.  This gives the hypothesis \eqref{vanishing1} in
Lemma \ref{lemma:reduciblevanishing} (reducible vanishing).  Applying Lemma \ref{lemma:reduciblevanishing},
we deduce (i).  Similarly, for $\Delta'$ as in (ii) it gives the hypotheses \eqref{surjectivity1} and \eqref{surjectivity2}
in Lemma \ref{lemma:reduciblesurjectivity} (reducible surjectivity).  Applying Lemma \ref{lemma:reduciblesurjectivity},
we deduce (ii).
\end{proof}

\section{Vanishing region (type \texorpdfstring{$\dA$}{A})}
\label{section:ssvanishan}

Let $\bG$ be a reductive group with $\bPhik(\bG) = \bA_n$ for some $n \geq 3$.
Corollary \ref{corollary:spectralsequence} gives a spectral sequence $\ssE^r_{pq}$ converging
to $\HH_{p+q}(\bG(k);\St(\bG))$ with
\[\ssE^1_{pq} \cong \begin{cases}
\bigoplus_{R \in \cL_p(\bG)} \HH_q(\bL_{R}(k);\St(\bL_{R})) & \text{if $0 \leq p \leq n-1$} \\
\HH_q(\bG(k);\St(\bG)^{\otimes 2})                          & \text{if $p = n$},\\
0                                                           & \text{otherwise}.
\end{cases}\]
The following lemma shows that our inductive hypothesis
implies that many terms of this spectral sequence vanish.

\begin{lemma}
\label{lemma:vanishan}
Let $\bG$ be a reductive group with $\bPhik(\bG) = \bA_n$ for some $n \geq 3$.
Assume the $(n-1)$-surjectivity and vanishing hypothesis in type $\dA$ (Definition \ref{definition:hypothesisan}).
Let $\ssE^1_{pq}$ be the spectral
sequence from Corollary \ref{corollary:spectralsequence}.  Then the following hold:
\begin{itemize}
\item For $n = 2d+1$ with $d \geq 1$, we have $\ssE^1_{pq} = 0$ for $p+q \leq d$ except for possibly
$\ssE^1_{0d}$.
\item For $n = 2d$ with $d \geq 2$, we have $\ssE^1_{pq} = 0$ for $p+q \leq d$ except for possibly
$\ssE^1_{0d}$ and $\ssE^1_{1,d-1}$.
\end{itemize}
\end{lemma}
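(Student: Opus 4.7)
The plan is to apply Lemma \ref{lemma:levivanishan}(i) termwise to each summand of $\ssE^1_{pq}$ and then minimize the resulting vanishing range over the index set $\cL_p(\bG)$. Since $\bPhik(\bG) = \dA_n$, any $R \in \cL_p(\bG)$ with $0 \le p \le n-2$ is a nonempty proper subset of $\bDeltak(\bG)$ with $\bPhik(\bL_R) = \dA_{n_1} \times \cdots \times \dA_{n_m}$ and $n_1 + \cdots + n_m = N := n-1-p \ge 1$. Every factor $\dA_{n_j}$ has rank $n_j < n$, so the $(n-1)$-surjectivity and vanishing hypothesis puts it in the domain of Lemma \ref{lemma:levivanishan}(i), giving
\[
\HH_q(\bL_R(k); \St(\bL_R)) = 0 \quad \text{for } q \le b(\bL_R) := (m-1) + \sum_{j=1}^m \lfloor (n_j-1)/2 \rfloor.
\]
Since $\ssE^1_{pq}$ is the direct sum of these groups over $R$, it vanishes in the range $q \le \min_{R} b(\bL_R)$.

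Next I would establish the uniform lower bound $b(\bL_R) \ge \lfloor (n-2-p)/2 \rfloor$. From the elementary inequality $\lfloor (n_j - 1)/2 \rfloor \ge (n_j - 2)/2$ one gets
\[
b(\bL_R) \ge (m-1) + \tfrac{1}{2}(N - 2m) = \tfrac{N}{2} - 1,
\]
and since the left side is an integer this rounds up to $b(\bL_R) \ge \lfloor (N-1)/2 \rfloor = \lfloor (n-2-p)/2 \rfloor$, independent of the particular composition $(n_1,\ldots,n_m)$. Thus $\ssE^1_{pq} = 0$ whenever $0 \le p \le n-2$ and $q \le \lfloor (n-2-p)/2 \rfloor$.

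The final step is a parity check. Any $(p,q)$ inside $p+q \le d$ with $0 \le p \le n-2$ for which $\ssE^1_{pq}$ could be nonzero must satisfy
\[
d - p \ge q > \lfloor (n-2-p)/2 \rfloor.
\]
A direct case analysis on the parities of $p$ and $n$ shows that when $n = 2d+1$ this forces $p = 0$, yielding the sole potential exception $(0,d)$; and when $n = 2d$ it forces $p \in \{0,1\}$, yielding the exceptions $(0,d)$ and $(1,d-1)$. The remaining columns $p \in \{n-1, n\}$ — the anisotropic Levi $\bL_{\emptyset}$, to which Lemma \ref{lemma:levivanishan} does not apply, and the $\St(\bG)^{\otimes 2}$ column — lie entirely outside the strip, since $p \ge n - 1 > d$ under the hypotheses $n = 2d+1$ with $d \ge 1$ or $n = 2d$ with $d \ge 2$.

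The only real obstacle is checking the parity inequalities cleanly in the last step; the rest is a direct application of the inductive hypothesis via Lemma \ref{lemma:levivanishan}(i) together with one elementary floor-function bound.
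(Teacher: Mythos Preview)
Your proof is correct and follows essentially the same route as the paper: apply Lemma~\ref{lemma:levivanishan}(i) to each summand, bound $b(\bL_R)$ below by $\lfloor (n-p-2)/2\rfloor$, and then run a parity case analysis. The only cosmetic difference is that the paper obtains the lower bound via the inequality $1+\lfloor a/2\rfloor+\lfloor b/2\rfloor\ge\lfloor (a+b+1)/2\rfloor$ of Lemma~\ref{lemma:floorinequality} applied iteratively, whereas you use $\lfloor (n_j-1)/2\rfloor\ge (n_j-2)/2$ and round; both yield the same bound, and your dismissal of the columns $p\in\{n-1,n\}$ as lying outside the strip is exactly what the paper does implicitly (minor quibble: Lemma~\ref{lemma:levivanishan} does formally apply to $\bL_{\emptyset}$, it just gives the useless bound $b=-1$).
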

\begin{proof}
Our goal is to prove a vanishing result for $\ssE^1_{pq}$.  The terms in question all have $p \leq \lfloor n/2 \rfloor$,
so they all satisfy\footnote{In fact, since $n \geq 3$ they even satisfy $p \leq n-2$.} $p \leq n-1$ and are therefore of the form
\[\ssE^1_{pq} = \bigoplus_{R \in \cL_p(\bG)} \HH_q(\bL_{R}(k);\St(\bL_{R})).\]
Consider $R \in \cL_p(\bG)$.  We will prove that our assumptions imply
that $\HH_q(\bL_R(k);\St(\bL_R)) = 0$ for the $p$ and $q$ where the lemma claims that
$\ssE^1_{pq} = 0$.

Since $R$ is obtained by deleting $p+1$ simple roots from $\bDelta$, we have
\[\bPhik(\bL_R) = \dA_{n_1} \times \cdots \times \dA_{n_m} \quad \text{with $n_1+\cdots+n_m+p+1 = n$}.\]
Lemma \ref{lemma:levivanishan} (Levi vanishing and surjectivity) implies 
that $\HH_q(\bL_R(k);\St(\bL_R)) = 0$ for
\[q \leq (m-1) + \lfloor (n_1-1)/2 \rfloor + \cdots + \lfloor (n_m-1)/2 \rfloor.\]
For $a,b \in \Z$, Lemma \ref{lemma:floorinequality} below implies that
$1+ \lfloor a/2 \rfloor + \lfloor b/2 \rfloor \geq \lfloor (a+b+1)/2 \rfloor$.
Applying this repeatedly, we deduce that
\[\bb(\dA_{n_1} \times \cdots \times \dA_{n_m}) \geq \left\lfloor (n_1+\cdots+n_m-1)/2 \right\rfloor = \left\lfloor (n-p-2)/2 \right\rfloor.\]
It follows that
\[\ssE^1_{pq} = 0 \quad \text{for $p \leq n-1$ and $q \leq \lfloor (n-p-2)/2 \rfloor$}.\]
For $p \leq n-1$, we deduce that $\ssE^1_{pq} = 0$ for
\[p+q \leq p+ \lfloor (n-p-2)/2 \rfloor = \lfloor (n+p-2)/2 \rfloor.\]
We now separate the cases where $n$ is odd and even:
\begin{itemize}
\item If $n = 2d+1$ is odd, then $\ssE^1_{pq} = 0$ for $p \leq n-1$ such that
\[p+q \leq \lfloor (n+p-2)/2 \rfloor = \lfloor (2d+p-1)/2 \rfloor = d + \lfloor (p-1)/2 \rfloor.\]
In particular, $\ssE^1_{pq} = 0$ for $p+q \leq d$ except for possibly
$\ssE^1_{0d}$.
\item If $n = 2d$ is even, then $\ssE^1_{pq} = 0$ for $p \leq n-1$ such that
\[p+q \leq \lfloor (n+p-2)/2 \rfloor = \lfloor (2d+p-2)/2 \rfloor = d - 1 + \lfloor p/2 \rfloor.\]
In particular, $\ssE^1_{pq} = 0$ for $p+q \leq d$ except for possibly
$\ssE^1_{0d}$ and $\ssE^1_{1,d-1}$.\qedhere
\end{itemize}
\end{proof}

The above proof used:

\begin{lemma}
\label{lemma:floorinequality}
For $a,b,d \in \Z$ with $d \geq 2$, we have $1+\lfloor a/d \rfloor + \lfloor b/d \rfloor \geq \lfloor (a+b+1)/d \rfloor$.
\end{lemma}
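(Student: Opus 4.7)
The plan is to reduce to a bound on the fractional remainders via the division algorithm. First I would write $a = d q_a + r_a$ and $b = d q_b + r_b$ with $0 \leq r_a, r_b \leq d-1$, so that $\lfloor a/d \rfloor = q_a$ and $\lfloor b/d \rfloor = q_b$. Then
\[
\left\lfloor \tfrac{a+b+1}{d} \right\rfloor = q_a + q_b + \left\lfloor \tfrac{r_a + r_b + 1}{d} \right\rfloor,
\]
so the inequality to be proved is equivalent to $\lfloor (r_a + r_b + 1)/d \rfloor \leq 1$.

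This follows immediately from the bound $0 \leq r_a + r_b + 1 \leq 2d-1 < 2d$, which uses only that each $r_i \leq d-1$. Note the hypothesis $d \geq 2$ is not actually needed for this step (the statement also holds for $d=1$, where both sides equal $a+b+1$), so the inequality is in fact a formal consequence of the division algorithm.

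There is no real obstacle here; the only thing to be careful about is handling negative $a$ or $b$, but since $\lfloor \cdot \rfloor$ combined with the convention $0 \leq r_i < d$ gives a well-defined decomposition for arbitrary integers, the computation above goes through verbatim. I would present the proof in one short paragraph.
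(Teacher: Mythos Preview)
Your proof is correct and is essentially identical to the paper's own argument: both write $a$ and $b$ via the division algorithm with remainders $0 \leq r_i \leq d-1$ and then bound $\lfloor (r_1+r_2+1)/d \rfloor \leq \lfloor (2d-1)/d \rfloor = 1$. Your additional remarks about $d \geq 2$ being unnecessary and about negative integers are accurate but not in the paper's version.
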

\begin{proof}
Write $a = q_1 d + r_1$ and $b = q_2 d + r_2$ with $0 \leq r_i \leq d-1$.  We then have
\[\lfloor (a+b+1)/d \rfloor = q_1+q_2+\lfloor (r_1+r_2+1)/d \rfloor \leq q_1+q_2  + \lfloor (2d-1)/d \rfloor = \lfloor a/d \rfloor + \lfloor b/d \rfloor + 1.\qedhere\]
\end{proof}

\section{Remaining tasks (type \texorpdfstring{$\dA$}{A})}
\label{section:proofan}

Lemma \ref{lemma:vanishan} implies many cases of Theorem \ref{theorem:strongan}.  To prove the
remaining cases, we need to compute some differentials in our spectral sequence.  We now explain
the structure of the argument, postponing three calculations to the next 
section.  Recall that Theorem \ref{theorem:strongan} is:

\theoremstyle{plain}
\newtheorem*{theorem:strongan}{Theorem \ref{theorem:strongan}}
\begin{theorem:strongan}
Let $\bG$ be a reductive group with $\bPhik(\bG) = \dA_n$.  Then:
\begin{itemize}
\item $\HH_i(\bG(k);\St(\bG)) = 0$ for $i \leq \lfloor (n-1)/2 \rfloor$; and
\item letting $\bDelta = \bDeltak(\bG)$, the maps
\begin{align*}
&\HH_i(\bL_{\bDelta[1]}(k);\St(\bL_{\bDelta[1]})) \rightarrow \HH_i(\bG(k);\St(\bG)) \quad \text{and} \\
&\HH_i(\bL_{\bDelta[n]}(k);\St(\bL_{\bDelta[n]})) \rightarrow \HH_i(\bG(k);\St(\bG))
\end{align*}
are both surjective for $i \leq \lfloor n/2 \rfloor$.
\end{itemize}
\end{theorem:strongan}
\begin{proof}
The proof is by induction
on $n$.  We proved the base cases $n \leq 2$ in Lemma \ref{lemma:typeanstrongrank2},
so we can assume that $n \geq 3$ and that the result is true for smaller ranks, i.e., that
the $(n-1)$-surjectivity and vanishing hypothesis in type $\dA$ holds.   

Corollary \ref{corollary:spectralsequence} gives a spectral sequence $\ssE^r_{pq}$ converging
to $\HH_{p+q}(\bG(k);\St(\bG))$, and Lemma \ref{lemma:vanishan} implies that
$\ssE^1_{pq} = 0$ for $p+q \leq \lfloor n/2 \rfloor - 1$.  This implies that
$\HH_i(\bG(k);\St(\bG)) = 0$ for $i \leq \lfloor n/2 \rfloor - 1$.
Since our surjectivity claim is trivial when the target is $0$, all that remains to prove
are the following two claims:

\begin{claim}{1}
Assume that $n = 2d+1$ with $d \geq 1$.  Then $\HH_d(\bG(k);\St(\bG)) = 0$.
\end{claim}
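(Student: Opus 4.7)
The plan is to use the spectral sequence $\ssE^r_{pq}$ of Corollary~\ref{corollary:spectralsequence} converging to $\HH_{p+q}(\bG(k);\St(\bG))$. By Lemma~\ref{lemma:vanishan}, for $n = 2d+1$ every $\ssE^1_{pq}$ with $p+q \leq d$ vanishes except possibly $\ssE^1_{0,d}$. Since $\ssE^r_{0,d}$ has no outgoing differentials, it therefore suffices to show that the incoming differential $d_1 \colon \ssE^1_{1,d} \to \ssE^1_{0,d}$ is surjective; this will force $\ssE^2_{0,d} = 0$ and, because all other potentially surviving contributions to $\HH_d(\bG(k); \St(\bG))$ already vanish on the $\ssE^1$-page, give the claim.

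To set this up I first identify which summands of $\ssE^1_{0,d} = \bigoplus_{j=1}^{n} \HH_d(\bL_{\bDelta[j]}(k); \St(\bL_{\bDelta[j]}))$ can be nonzero. For $1 < j < n$ we have $\bPhik(\bL_{\bDelta[j]}) = \dA_{j-1} \times \dA_{n-j}$ with $(j-1)+(n-j) = 2d$, so the two ranks have matching parity; Lemma~\ref{lemma:levivanishan}(i) gives vanishing bound $d$ when both are odd and $d-1$ when both are even, so only odd $j \in \{1, 3, \ldots, n\}$ can contribute. For each such $j$, I choose $\Delta' \in \cL_1(\bG)$ by setting $\Delta' = \bDelta[1,2]$ if $j = 1$ and $\Delta' = \bDelta[j-1, j]$ if $j \geq 3$. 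In each case $\Delta'$ is obtained from $\bDelta[j]$ by removing the first or last simple root of an even-rank $\dA$-component of $\bPhik(\bL_{\bDelta[j]})$, so Lemma~\ref{lemma:levivanishan}(ii) yields a surjection $\HH_d(\bL_{\Delta'}(k); \St(\bL_{\Delta'})) \twoheadrightarrow \HH_d(\bL_{\bDelta[j]}(k); \St(\bL_{\bDelta[j]}))$. Crucially, the other index $j''$ with $\Delta' \subsetneq \bDelta[j'']$ is always even (it equals $2$ if $j = 1$ and $j-1$ if $j \geq 3$), so by the parity analysis above the corresponding summand $\HH_d(\bL_{\bDelta[j'']}(k); \St(\bL_{\bDelta[j'']}))$ vanishes.

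The remaining, and main, step is to identify the matrix entry of $d_1$ from the $\Delta'$-summand of $\ssE^1_{1,d}$ into the $\bDelta[j]$-summand of $\ssE^1_{0,d}$ with (a sign of) the surjection provided by Lemma~\ref{lemma:levivanishan}(ii). This should be a direct chase through the construction of $\bR_\bullet$ in Proposition~\ref{proposition:resolution}: after the Reeder identification, the boundary of $\Tits(\bG)$ translates, on the summand indexed by $\Delta'$, into the signed sum over one-element enlargements $\Delta' \subsetneq \Delta''$ of the Reeder map $\St(\bL_{\Delta'}) \to \St(\bL_{\Delta''})$ combined with the group inclusion $\bL_{\Delta'}(k) \hookrightarrow \bL_{\Delta''}(k)$. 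This identification is one of the differential computations deferred to \S\ref{section:differentialan}. Once it is in hand, the fact that distinct odd $j$'s use distinct $\Delta'$'s together with the observation that the unwanted face contribution from each $\Delta'$ lands in an already-vanishing summand means that the surjective contributions in $\ssE^1_{0,d}$ are independent; hence $d_1$ is surjective, completing the proof. The main obstacle is therefore exactly this differential identification, together with the bookkeeping of signs needed to rule out any cancellation.
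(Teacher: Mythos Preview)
Your proposal is correct and follows essentially the same approach as the paper: use Lemma~\ref{lemma:vanishan} to isolate $\ssE^1_{0d}$ as the only possibly nonzero term on the $p+q=d$ diagonal, then show the differential $\ssE^1_{1d}\to\ssE^1_{0d}$ is surjective. The paper packages this surjectivity as Lemma~\ref{lemma:differential2an}, whose proof is exactly your argument: show $M[j]=0$ for even $j$, and for each odd $j$ use the summand $M[j-1,j]$ (or $M[1,2]$ when $j=1$) whose other face lands in a vanishing $M[j'']$.

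The one point you flag as an obstacle --- identifying the matrix entry of $d_1$ with the Reeder-induced map --- is treated in the paper as ``immediate from the definitions'' (see the proof of Lemma~\ref{lemma:differential1an}). Your worry about sign cancellation is moot precisely because, for each chosen $\Delta'$, the only nonzero component of the image is the target $M[j]$; the other component lies in $M[j'']=0$, so the sign on it is irrelevant and no cancellation can occur.
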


In this case, Lemma \ref{lemma:vanishan} says that the only potentially nonzero
term $\ssE^1_{pq}$ in our spectral sequence with $p+q = d$ is $\ssE^1_{0d}$.  We will prove in
Lemma \ref{lemma:differential2an} below that the differential $\ssE^1_{1d} \rightarrow \ssE^1_{0d}$
is surjective, so $\ssE^2_{0d} = 0$.  This implies that $\HH_d(\bG(k);\St(\bG)) = 0$, as desired.

\begin{claim}{2}
Assume that $n = 2d$ with $d \geq 2$.  Then the maps
\begin{align*}
&\HH_d(\bL_{\bDelta[1]}(k);\St(\bL_{\bDelta[1]})) \rightarrow \HH_d(\bG(k);\St(\bG)) \quad \text{and} \\
&\HH_d(\bL_{\bDelta[n]}(k);\St(\bL_{\bDelta[n]})) \rightarrow \HH_d(\bG(k);\St(\bG))
\end{align*}
are both surjective.
\end{claim}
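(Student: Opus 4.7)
The plan is to analyze the spectral sequence $\ssE^r_{pq}$ from Corollary \ref{corollary:spectralsequence} converging to $\HH_{p+q}(\bG(k); \St(\bG))$ together with Lemma \ref{lemma:vanishan}, which for $n=2d$ says the only $\ssE^1_{pq}$ with $p+q=d$ that can be nonzero are $\ssE^1_{0,d}$ and $\ssE^1_{1,d-1}$. In particular $\ssE^1_{0,d-1}=0$, so the outgoing $d^1$-differential from $\ssE^1_{1,d-1}$ vanishes automatically, and $\ssE^\infty_{p,d-p}=0$ for all $p \geq 2$. What remains to control is $\ssE^\infty_{1,d-1}$ and to show that the single summand $\HH_d(\bL_{\bDelta[1]})$ of $\ssE^1_{0,d}$ surjects onto $\ssE^\infty_{0,d}$.

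First I would show $\ssE^\infty_{1,d-1}=0$ by verifying that the incoming $d^1$-differential $\ssE^1_{2,d-1} \to \ssE^1_{1,d-1}$ is surjective; this is a Reeder/apartment-class computation of the same flavor as Lemma \ref{lemma:rank2} and Lemma \ref{lemma:differential2an}, deferred to \S\ref{section:differentialan}. Granted this, we have $\HH_d(\bG(k); \St(\bG)) \cong \ssE^\infty_{0,d}$, which is a quotient of $\ssE^1_{0,d} = \bigoplus_{j=1}^{n} \HH_d(\bL_{\bDelta[j]}(k); \St(\bL_{\bDelta[j]}))$ by the image of $d^1: \ssE^1_{1,d} \to \ssE^1_{0,d}$ and possibly further higher differentials, which only shrink it further. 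The next step is to show that the $d^1$-image identifies every summand $\HH_d(\bL_{\bDelta[j]})$ with $\HH_d(\bL_{\bDelta[1]})$ in this quotient. Indeed, $\ssE^1_{1,d} = \bigoplus_{i<j} \HH_d(\bL_{\bDelta[i,j]})$ and its $d^1$-differential sends each such summand to a difference of Reeder-induced images in $\HH_d(\bL_{\bDelta[i]})$ and $\HH_d(\bL_{\bDelta[j]})$. A further differential computation, carried out in \S\ref{section:differentialan}, will show that for consecutive pairs $\{i,i+1\}$ these differences suffice to identify the images of $\HH_d(\bL_{\bDelta[i]})$ and $\HH_d(\bL_{\bDelta[i+1]})$ in $\ssE^\infty_{0,d}$. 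Chaining these identifications along the Dynkin diagram and combining with Lemma \ref{lemma:anconjugate} (which forces the endpoint summands $\bDelta[1]$ and $\bDelta[n]$ to already have the same image in $\HH_d(\bG(k); \St(\bG))$) then delivers the desired surjectivity from $\bDelta[1]$.

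The main obstacle is the two differential computations, especially the second. Both will follow the template of Lemma \ref{lemma:rank2}: choose $u \in \bU(k)$ using Corollary \ref{corollary:findu}, build explicit apartment classes in the Steinberg representations of the relevant Levi subgroups, and read off coefficients in $\Z[\fP_{\min}]$ after tensoring with $\St$. The delicate point is that naive Reeder-surjectivity between neighboring Levis is only guaranteed by the inductive hypothesis in degree $d-1$ rather than $d$, so the identification of neighboring summands must be promoted by a direct apartment-class argument at degree $d$ rather than by pure invocation of Definition \ref{definition:hypothesisan}. Once these differential lemmas are in place, the surjectivity from $\bDelta[n]$ follows immediately from Lemma \ref{lemma:anconjugate}, completing the proof.
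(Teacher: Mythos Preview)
Your high-level outline matches the paper's: use the spectral sequence from Corollary~\ref{corollary:spectralsequence}, kill $\ssE^2_{1,d-1}$ by showing $d^1\colon \ssE^1_{2,d-1}\to\ssE^1_{1,d-1}$ is surjective (this is Lemma~\ref{lemma:differential3an}), and show the summand $M[1]\oplus M[n]$ of $\ssE^1_{0d}$ surjects onto the cokernel of $d^1\colon \ssE^1_{1d}\to\ssE^1_{0d}$ (this is Lemma~\ref{lemma:differential1an}); then invoke Lemma~\ref{lemma:anconjugate}. So the architecture is right.

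The gap is in your proposed \emph{method} for the two differential computations. You suggest they ``follow the template of Lemma~\ref{lemma:rank2}'' via explicit apartment classes and Corollary~\ref{corollary:findu}. That template is an $\HH_0$ (coinvariants) calculation; promoting it to control $\HH_d$ and $\HH_{d-1}$ of Levi subgroups would require something entirely new, and the paper does not attempt this. Instead, both Lemma~\ref{lemma:differential1an} and Lemma~\ref{lemma:differential3an} are proved purely from the \emph{inductive surjectivity hypothesis}, packaged as Lemma~\ref{lemma:levivanishan}(ii). The key point you are missing is that this hypothesis \emph{does} give surjectivity in degree $d$ (not only $d-1$): for $\bL_{\bDelta[j]}$ one has $\bb(\dA_{j-1}\times\dA_{2d-j})=d-1$, so Lemma~\ref{lemma:levivanishan}(ii) yields surjectivity of $M[\Delta']\to M[j]$ in degree exactly $d$, \emph{provided} the factor from which you delete a root has even subscript.

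This parity constraint is also why your ``chain along consecutive pairs $\{j,j+1\}$'' plan fails as stated. For $j$ odd, neither component $M[j,j+1]\to M[j]$ nor $M[j,j+1]\to M[j+1]$ is forced surjective (the relevant subscripts $2d-j$ and $j$ are both odd). The paper sidesteps this by using \emph{non-consecutive} pairs: for $j$ odd with $1<j<2d$ it uses the summand $M[1,j]$ (the $\dA_{j-1}$ factor has even positive subscript, so $M[1,j]\to M[j]$ surjects, identifying $M[j]$ with a subspace of $M[1]$); for $j$ even it uses $M[j,j+1]$ (the $\dA_{2d-j}$ factor has even positive subscript, so $M[j,j+1]\to M[j]$ surjects, reducing to the odd case $j+1$). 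This handles all $1<j<2d$, leaving $M[1]\oplus M[2d]$ as the claimed cokernel generators. The same parity bookkeeping, one level deeper, drives Lemma~\ref{lemma:differential3an}. No apartment-class computation is needed.
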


Lemma \ref{lemma:anconjugate} says that these maps have the same image, so it is enough to prove that
\begin{equation}
\label{eqn:sumoftwoan}
\HH_d(\bL_{\bDelta[1]}(k);\St(\bL_{\bDelta[1]})) \oplus \HH_d(\bL_{\bDelta[n]}(k);\St(\bL_{\bDelta[n]}))
\end{equation}
surjects onto $\HH_d(\bG(k);\St(\bG))$.  
Lemma \ref{lemma:vanishan} says that the only potentially nonzero
terms $\ssE^1_{pq}$ in our spectral sequence with $p+q = d$ are $\ssE^1_{0d}$ and $\ssE^1_{1,d-1}$.  We
will prove in Lemma \ref{lemma:differential3an} below that the differential $\ssE^1_{2,d-1} \rightarrow \ssE^1_{1,d-1}$
is surjective, so $\ssE^2_{1,d-1} = 0$.  We will also prove in Lemma \ref{lemma:differential1an}
below that the summand \eqref{eqn:sumoftwoan} of
\[\ssE^1_{0d} = \bigoplus_{R \in \cL_0(\bG)} \HH_d(\bL_{R}(k);\St(\bL_{R})) = \bigoplus_{j=1}^{n} \HH_d(\bL_{\bDelta[j]}(k);\St(\bL_{\bDelta[j]}))\]
surjects onto the cokernel of the differential $\ssE^1_{1d} \rightarrow \ssE^1_{0d}$.
It follows that $\ssE^2_{0d}$ is a quotient of \eqref{eqn:sumoftwoan}.  Since $\ssE^2_{0d}$ is the
only potentially nonzero term of the form $\ssE^2_{pq}$ with $p+q = d$, it follows that \eqref{eqn:sumoftwoan}
surjects onto $\HH_d(\bG(k);\St(\bG))$, as desired.
\end{proof}

\section{Differentials (type \texorpdfstring{$\dA$}{A})}
\label{section:differentialan}

This final section of this part of the paper
determines the images of three differentials whose calculations were needed in the previous section.

\subsection{Differentials, I (type \texorpdfstring{$\dA$}{A})}
\label{section:differential1an}

Our first differential calculation is:

\begin{lemma}
\label{lemma:differential1an}
Let $\bG$ be a reductive group with $\bPhik(\bG) = \bA_{2d}$ for some $d \geq 2$.  
Assume the $(2d-1)$-surjectivity and vanishing hypothesis in type $\dA$ (Definition \ref{definition:hypothesisan}).
Let $\ssE^1_{pq}$ be the spectral
sequence from Corollary \ref{corollary:spectralsequence}.  Then the summand
\[\HH_d(\bL_{\bDelta[1]}(k);\St(\bL_{\bDelta[1]})) \oplus \HH_d(\bL_{\bDelta[2d]}(k);\St(\bL_{\bDelta[2d]}))\]
of $\ssE^1_{0d}$ surjects onto the cokernel of the 
differential $\ssE^1_{1d} \rightarrow \ssE^1_{0d}$.
\end{lemma}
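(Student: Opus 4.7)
For $1 \leq j \leq 2d$ write $A_j = \HH_d(\bL_{\bDelta[j]}(k); \St(\bL_{\bDelta[j]}))$, and for $1 \leq j_1 < j_2 \leq 2d$ write $B_{j_1,j_2} = \HH_d(\bL_{\bDelta[j_1,j_2]}(k); \St(\bL_{\bDelta[j_1,j_2]}))$, so that $\ssE^1_{0d} = \bigoplus_j A_j$ and $\ssE^1_{1d} = \bigoplus_{j_1<j_2} B_{j_1,j_2}$. The plan is to show, for each $j$ with $2 \leq j \leq 2d-1$, that every class in $A_j$ equals, modulo the image of the differential $\ssE^1_{1d}\to\ssE^1_{0d}$, a class coming from $A_1$ or $A_{2d}$.

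The first step is to unwind the differential $\ssE^1_{1d} \to \ssE^1_{0d}$ using Proposition \ref{proposition:resolution}. Since the resolution is obtained from the chain complex of $\Tits(\bG)$ tensored with $\St(\bG)$, the restriction of this differential to the summand $B_{j_1,j_2}$ is an alternating sum of two maps into $A_{j_1}$ and $A_{j_2}$, each induced by the Reeder map attached to the inclusion of standard Levi subgroups $\bL_{\bDelta[j_1,j_2]}\subset\bL_{\bDelta[j_i]}$ (corresponding to the face inclusion of standard parabolic subgroups $\bP_{\bDelta[j_i]}\subset\bP_{\bDelta[j_1,j_2]}$). Identifying this face-component with the Reeder map used in Lemma \ref{lemma:levivanishan} is the one small technical point, but it is a direct consequence of the construction in Proposition \ref{proposition:resolution}.

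The second step is to apply Lemma \ref{lemma:levivanishan}(ii) to the Levi $\bL_{\bDelta[j]}$, which has $\bPhik(\bL_{\bDelta[j]}) = \dA_{j-1}\times\dA_{2d-j}$. A routine floor computation (the same one used in the proof of Lemma \ref{lemma:vanishan}) gives $b = d-1$ in either parity, so Lemma \ref{lemma:levivanishan}(ii) produces a surjection onto $A_j$ from a Levi obtained by erasing one additional end-root. If $j$ is odd, then $j-1\geq 2$ is even and nonzero, and removing the first simple root $\alpha_1$ of the factor $\dA_{j-1}$ yields $\Delta' = \bDelta[1,j]$, giving a surjection $B_{1,j}\twoheadrightarrow A_j$. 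If $j$ is even, then $2d-j\geq 2$ is even and nonzero, and removing the last simple root $\alpha_{2d}$ of the factor $\dA_{2d-j}$ yields $\Delta' = \bDelta[j,2d]$, giving a surjection $B_{j,2d}\twoheadrightarrow A_j$. Crucially, in each case the surjection is given by the Reeder map, which by Step 1 is precisely the face-component of the spectral sequence differential.

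Finally, given $\alpha \in A_j$, lift via the appropriate Reeder surjection to $\beta\in B_{1,j}$ (if $j$ is odd) or $\beta\in B_{j,2d}$ (if $j$ is even). Then $d(\beta) \in A_1\oplus A_j$ (respectively $A_j\oplus A_{2d}$) has $\pm\alpha$ as its $A_j$-coordinate and some class $\alpha'$ in $A_1$ (respectively $A_{2d}$) as its other coordinate. Hence $\alpha \equiv \mp\alpha' \pmod{\Image(d)}$, showing that $\alpha$ lies in the image of $A_1\oplus A_{2d}$ in the cokernel. As this holds for every $j\in\{2,\ldots,2d-1\}$, the summand $A_1 \oplus A_{2d}$ surjects onto $\coker(\ssE^1_{1d}\to\ssE^1_{0d})$, as required. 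The main obstacle is the bookkeeping in Step 1 identifying the sign-alternated Reeder maps as the face-component of the differential; everything else is an application of the inductive hypothesis.
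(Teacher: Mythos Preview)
Your proof is correct and follows essentially the same strategy as the paper: split by the parity of $j$ and use Lemma \ref{lemma:levivanishan}(ii) to produce a surjection from an appropriate $B_{j_1,j_2}$ onto $A_j$, so that modulo the image of the differential $A_j$ is absorbed into an endpoint summand. The only difference is in the even case: the paper removes root $j{+}1$ (the first root of the $\dA_{2d-j}$ factor), identifying $M[j]$ with a subspace of $M[j{+}1]$ and then chaining through the odd case to land in $M[1]$; you instead remove root $2d$ (the last root of that factor), landing directly in $A_{2d}$. Your variant is one step shorter; the paper's variant shows the slightly stronger fact that $M[1]$ alone already surjects onto the cokernel.
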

\begin{proof}
As notation, for $1 \leq j_1,\ldots,j_{\ell} \leq 2d$ let 
\[M[j_1, \ldots, j_{\ell}] = \HH_d(\bL_{\bDelta[j_1, \ldots, j_{\ell}]}(k);\St(\bL_{\bDelta[j_1, \ldots, j_{\ell}]})).\]
We have
\[\ssE^1_{0d} = \bigoplus_{1 \leq j \leq 2d} M[j] \quad \text{and} \quad 
\ssE^1_{1d} = \bigoplus_{1 \leq j_1 < j_2 \leq 2d} M[j_1, j_2].\]
Consider some $1 < j < 2d$.  We must prove that when we quotient $\ssE^1_{0d}$ by the image of the differential
$\ssE^1_{1d} \rightarrow \ssE^1_{0d}$, the summand
$M[j]$ of $\ssE^1_{0d}$ is identified with a subspace of
$M[1] \oplus M[2d]$.  In fact, we will show that it is identified with a subspace of $M[1]$.  Our argument
will use the fact that $j<2d$, so it does not show that $M[2d]$ is identified with a subspace of $M[1]$.  
Note that
\begin{align}
\bPhik(\bL_{\bDelta[j]}) &= \dA_{j-1} \times \dA_{2d-j} \label{eqn:differential1an.1} \\
\bb(\bPhik(\bL_{\bDelta[j]})) &= 1 + \lfloor (j-2)/2 \rfloor + \lfloor (2d-j-1)/2 \rfloor = d-1. \label{eqn:differential1an.2}
\end{align}
There are two cases.

The first is that $j$ is odd.  Recall that by assumption $j>1$.
Let $f\colon M[1,j] \rightarrow M[j]$ and $g\colon M[1,j] \rightarrow M[1]$ be the obvious maps.
It is then immediate from the definitions that the differential 
$\ssE^1_{1d} \rightarrow \ssE^1_{0d}$
takes the summand $M[1,j]$ of $\ssE^1_{1d}$ to $\ssE^1_{0d}$ via the map
\[\begin{tikzcd}[column sep=large]
M[1,j] \arrow{r}{f \oplus (-g)} & M[j] \oplus M[1] \arrow[hook]{r} & \ssE^1_{0d}.
\end{tikzcd}\]
Since the $\dA_{j-1}$-factor in \eqref{eqn:differential1an.1} has $j-1$ 
even and positive, we can use Lemma \ref{lemma:levivanishan} (Levi vanishing and surjectivity)
to see that $f$ is surjective.  Here we are using the fact that $\bb(\bPhik(\bL_{\bDelta[j]}))+1 = d$; cf.\ \eqref{eqn:differential1an.2}.  
Thus when we quotient
$\ssE^1_{0d}$ by the image of the differential, $M[j]$ is identified with a subspace
of $M[1]$, as desired.

Now assume that $j$ is even.  Recall that by assumption $j<2d$.
Let $f'\colon M[j,j+1] \rightarrow M[j+1]$ and $g'\colon M[j,j+1] \rightarrow M[j]$ be the obvious maps.
Just like above, the differential $\ssE^1_{1d} \rightarrow \ssE^1_{0d}$
takes the summand $M[j,j+1]$ of $\ssE^1_{1d}$ to $\ssE^1_{0d}$ via the map
\[\begin{tikzcd}[column sep=large]
M[j,j+1] \arrow{r}{f' \oplus (-g')} & M[j+1] \oplus M[j] \arrow[hook]{r} &  \ssE^1_{0d}.
\end{tikzcd}\]
Since the $\dA_{2d-j}$-factor in \eqref{eqn:differential1an.1} has $2d-j$ even and positive,
we can use Lemma \ref{lemma:levivanishan} (Levi vanishing and surjectivity) 
to see that $g'$ is surjective.  Again, we are using the fact that 
$\bb(\bPhik(\bL_{\bDelta[j]}))+1 = d$; cf.\ \eqref{eqn:differential1an.2}.  Thus when we
quotient $\ssE^1_{0d}$ by the image of the differential, $M[j]$ is identified with
a subspace of $M[j+1]$.  Since $j+1$ is odd, the previous paragraph shows that
this quotient identifies $M[j+1]$ with a subspace of $M[1]$, completing the proof.
\end{proof}

\subsection{Differentials, II (type \texorpdfstring{$\dA$}{A})}
\label{section:differential2an}

Our second differential calculation is:

\begin{lemma}
\label{lemma:differential2an}
Let $\bG$ be a reductive group with $\bPhik(\bG) = \bA_{2d+1}$ for some $d \geq 1$.  
Assume the $2d$-surjectivity and vanishing hypothesis in type $\dA$ (Definition \ref{definition:hypothesisan}).
Let $\ssE^1_{pq}$ be the spectral
sequence from Corollary \ref{corollary:spectralsequence}.  Then the
differential $\ssE^1_{1d} \rightarrow \ssE^1_{0d}$ is surjective.
\end{lemma}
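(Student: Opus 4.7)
The plan is to mimic the case analysis of Lemma \ref{lemma:differential1an}, exhibiting for each potentially nonzero summand of $\ssE^1_{0d}$ an element of $\ssE^1_{1d}$ whose image under the differential lands inside that summand and surjects onto it. As notation, write $M[j_1, \ldots, j_\ell] := \HH_d(\bL_{\bDelta[j_1, \ldots, j_\ell]}(k); \St(\bL_{\bDelta[j_1, \ldots, j_\ell]}))$, so that
\[\ssE^1_{0d} = \bigoplus_{j=1}^{2d+1} M[j] \quad \text{and} \quad \ssE^1_{1d} = \bigoplus_{1 \leq j_1 < j_2 \leq 2d+1} M[j_1, j_2].\]

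First I would record which $M[j]$ vanish. Since $\bPhik(\bL_{\bDelta[j]}) = \dA_{j-1} \times \dA_{2d+1-j}$ (with $\dA_0$ trivial), Lemma \ref{lemma:levivanishan}(i) produces the bound $\bb = d$ whenever $j$ is even (both factors have odd positive rank), so $M[j] = 0$ in that case; for $j$ odd the bound is only $d-1$, and $M[j]$ may be nonzero.

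Next, for each odd $j$ I would pair $j$ with an adjacent even index: for $j = 1$ the pair $(1, 2)$, for odd $j$ with $3 \leq j \leq 2d-1$ the pair $(j-1, j)$, and for $j = 2d+1$ the symmetric pair $(2d, 2d+1)$. In each of these cases the companion summand $M[\text{even}]$ vanishes, so the component of the differential landing in $M[j_1] \oplus M[j_2]$ reduces (up to sign) to a single Reeder map $M[j_1, j_2] \to M[j]$. Surjectivity is then a direct application of Lemma \ref{lemma:levivanishan}(ii): for $(1, 2)$, the root system of $\bL_{\bDelta[1]}$ is $\dA_{2d}$ and we are removing its first simple root; for $(j-1, j)$ with $j$ odd, the root system $\dA_{j-1} \times \dA_{2d+1-j}$ of $\bL_{\bDelta[j]}$ has even positive-rank first factor and we remove its last simple root; and $(2d, 2d+1)$ is symmetric to $(1, 2)$. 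These three cases exhibit every nonzero summand of $\ssE^1_{0d}$ as lying in the image of the differential, giving the desired surjectivity.

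There is no substantive obstacle here: the work is packaged in Lemma \ref{lemma:levivanishan}(ii) (Levi surjectivity), which is already in place, and the remainder is combinatorial bookkeeping about the Dynkin diagram $\dA_{2d+1}$ after removing one or two vertices. Compared with Lemma \ref{lemma:differential1an}, the argument is actually cleaner: there we had to show each $M[j]$ is identified with a subspace of a specified summand after taking the cokernel, whereas here the odd/even pattern lets every nonzero $M[j]$ be killed outright by a single two-index summand of $\ssE^1_{1d}$.
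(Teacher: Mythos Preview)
Your proposal is correct and follows essentially the same approach as the paper: both first show $M[j]=0$ for $j$ even via the bound in Lemma~\ref{lemma:levivanishan}(i), then kill each nonzero $M[j]$ (odd $j$) using a single summand $M[j-1,j]$ (or $M[1,2]$ for $j=1$) together with the surjectivity from Lemma~\ref{lemma:levivanishan}(ii). The only cosmetic difference is that you single out $j=2d+1$ as a separate case symmetric to $j=1$, whereas the paper folds it into the general ``odd $j\neq 1$'' case using $M[j-1,j]=M[2d,2d+1]$.
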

\begin{proof}
As notation, for $1 \leq j_1,\ldots,j_{\ell} \leq 2d+1$ let
\[M[j_1, \ldots, j_{\ell}] = \HH_d(\bL_{\bDelta[j_1, \ldots, j_{\ell}]}(k);\St(\bL_{\bDelta[j_1, \ldots, j_{\ell}]})).\]
We have
\[\ssE^1_{0d} = \bigoplus_{1 \leq j \leq 2d+1} M[j] \quad \text{and} \quad 
\ssE^1_{1d} = \bigoplus_{1 \leq j_1 < j_2 \leq 2d+1} M[j_1, j_2].\]
Consider some $1 \leq j \leq 2d+1$.  We must prove that when we quotient $\ssE^1_{0d}$ by the image of the differential
$\ssE^1_{1d} \rightarrow \ssE^1_{0d}$, the summand $M[j]$ is killed.
The first step is to show that many $M[j]$ already vanish:

\begin{claim}{1}
\label{claim:differential2an.1}
For $1 \leq j \leq 2d+1$ with $j$ even, we have $M[j] = 0$.
\end{claim}
\begin{proof}[Proof of claim]
Write $j = 2e$, so $\bPhik(\bL_{\bDelta[2e]}) = \dA_{2e-1} \times \dA_{2d+1-2e}$.  
Lemma \ref{lemma:levivanishan} (Levi vanishing and surjectivity) implies that
$\HH_i(\bL_{\bDelta[2e]}(k);\St(\bL_{\bDelta[2e]})) = 0$ for
\[i \leq 1 + \lfloor (2e-2)/2 \rfloor + \lfloor (2d-2e)/2 \rfloor = 1 + (e-1) + (d-e) = d.\]
In particular, $M[2e] = \HH_d(\bL_{\bDelta[2e]}(k);\St(\bL_{\bDelta[2e]})) = 0$.
\end{proof}

Now consider $1 \leq j \leq 2d+1$ with $j$ odd.  In light of Claim \ref{claim:differential2an.1}, it is enough
to prove that $M[j]$ is killed when we quotient $\ssE^1_{0d}$ by the image of the differential
$\ssE^1_{1d} \rightarrow \ssE^1_{0d}$.  Assume first that $j \neq 1$, so
\begin{equation}
\label{eqn:differential2an.1}
\bPhik(\bL_{\bDelta[j]})      = \begin{cases}
\dA_{j-1} \times \dA_{2d-j+1} & \text{if $1 < j < 2d+1$},\\
\dA_{2d}                      & \text{if $j = 2d+1$}.
\end{cases}
\end{equation}
Since $j$ is odd, in both cases we have
\begin{equation}
\label{eqn:differential2an.11}
\bb(\bPhik(\bL_{\bDelta[j]})) = d-1.
\end{equation}
We will show that
$M[j]$ is killed by the image of the summand $M[j-1,j]$ of $\ssE^1_{1d}$.  
On the summand $M[j-1,j]$, the differential is the map
\[\begin{tikzcd}
M[j-1,j] \arrow{r} & M[j] \oplus M[j-1] \arrow[hook]{r} & \ssE^1_{0d}.
\end{tikzcd}\]
Claim \ref{claim:differential2an.1} says that $M[j-1] = 0$, so to show that this differential kills
$M[j]$ it is enough to prove that $M[j-1,j] \rightarrow M[j]$ is surjective.  Since
the $\dA_{j-1}$ in \eqref{eqn:differential2an.1} has $j-1$ even and positive, this follows
from Lemma \ref{lemma:levivanishan} (Levi vanishing and surjectivity).  Here we are using
the fact that $\bb(\bPhik(\bL_{\bDelta[j]}))+1 = d$; cf.\ \eqref{eqn:differential2an.11}.

It remains to deal with the case $j = 1$.  Note that
\begin{align}
\bPhik(\bL_{\bDelta[1]}) &= \dA_{2d}, \label{eqn:differential2an.2} \\
\bb(\bPhik(\bL_{\bDelta[1]})) &= \lfloor (2d-1)/2 \rfloor = d-1. \label{eqn:differential2an.21}
\end{align}
In this case, we will use the summand $M[1,2]$ of $\ssE^1_{1d}$.  Just like
above, on this summand this differential takes the form
\[\begin{tikzcd}
M[1,2] \arrow{r} & M[2] \oplus M[1] \arrow[hook]{r} & \ssE^1_{0d}.
\end{tikzcd}\]
Claim \ref{claim:differential2an.1} says that $M[2] = 0$, and since the $\dA_{2d}$ in
\eqref{eqn:differential2an.2} has $2d$ even and positive Lemma \ref{lemma:levivanishan} (Levi vanishing and surjectivity) shows that
the map $M[1,2] \rightarrow M[1]$ is surjective.  
Here we are using
the fact that $\bb(\bPhik(\bL_{\bDelta[1]}))+1 = d$; cf.\ \eqref{eqn:differential2an.21}.
The lemma follows.
\end{proof}

\subsection{Differentials, III (type \texorpdfstring{$\dA$}{A})}
\label{section:differential3an}

Our final differential calculation is:

\begin{lemma}
\label{lemma:differential3an}
Let $\bG$ be a reductive group with $\bPhik(\bG) = \bA_{2d}$ for some $d \geq 2$.
Assume the $(2d-1)$-surjectivity and vanishing hypothesis in type $\dA$ (Definition \ref{definition:hypothesisan}).
Let $\ssE^1_{pq}$ be the spectral
sequence from Corollary \ref{corollary:spectralsequence}.  Then the differential
$\ssE^1_{2,d-1} \rightarrow \ssE^1_{1,d-1}$ is surjective.
\end{lemma}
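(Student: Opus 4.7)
The plan is to extend the strategy of Lemma \ref{lemma:differential2an}: determine which summands of $\ssE^1_{1,d-1}$ vanish for free by Lemma \ref{lemma:levivanishan}(i), and for each surviving summand produce a summand of $\ssE^1_{2,d-1}$ mapping to it via a surjection from Lemma \ref{lemma:levivanishan}(ii) whose other two face components land in summands that either vanish or have already been handled. As in the previous proofs, set $M[j_1,\ldots,j_\ell] = \HH_{d-1}(\bL_{\bDelta[j_1,\ldots,j_\ell]}(k);\St(\bL_{\bDelta[j_1,\ldots,j_\ell]}))$, so that $\ssE^1_{1,d-1} = \bigoplus_{j_1 < j_2} M[j_1, j_2]$ and $\ssE^1_{2,d-1} = \bigoplus_{j_1 < j_2 < j_3} M[j_1, j_2, j_3]$.

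The first step is a parity classification. Removing $j_1 < j_2$ from $\bDelta$ decomposes the relative root system of $\bL_{\bDelta[j_1, j_2]}$ into a product of $\dA$-factors with total rank $2d-2$, and a straightforward check of the formula for $\bb$ shows $\bb \geq d-1$ (so $M[j_1, j_2] = 0$ by Lemma \ref{lemma:levivanishan}(i)) unless every factor has even rank. The pairs $(j_1, j_2)$ for which this fails and $M[j_1, j_2]$ may be nonzero fall into four families:
(A) $1 < j_1 < j_2 < 2d$ with $j_1$ odd and $j_2$ even;
(B) $j_1 = 1$ and $j_2 \in \{2, 4, \ldots, 2d-2\}$;
(C) $j_2 = 2d$ and $j_1 \in \{3, 5, \ldots, 2d-1\}$;
(D) the single pair $(1, 2d)$.

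For each pair in families (A) and (B), I would take $j_3 = j_2+1$ (odd, at most $2d-1$), and in (C) take $j_3 = j_1-1$ (even, at least $2$). In each case one checks that adding $j_3$ shaves one end root off a nonzero even-rank $\dA$-factor of $\bPhik(\bL_{\bDelta[j_1, j_2]})$, so the face map $M[j_1, j_2, j_3] \to M[j_1, j_2]$ is surjective by Lemma \ref{lemma:levivanishan}(ii); meanwhile, the other two faces of the triple correspond to Levis whose relative root system is a product of odd-rank $\dA$-factors, so their $\bb$ equals $d-1$ and the corresponding summands of $\ssE^1_{1,d-1}$ vanish by Lemma \ref{lemma:levivanishan}(i).

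The only subtle point — and the main obstacle — is family (D), where any third index produces at least one other face that is not forced to vanish. For $(1, 2d)$ I would take $j_3 = 2$: the face $\{2, 2d\}$ decomposes as $\dA_1 \times \dA_{2d-3}$ (both odd), hence zero; the face $\{1, 2\}$ belongs to family (B) and therefore already lies in the image of the differential by the previous step; and $M[1, 2, 2d] \to M[1, 2d]$ is surjective because it removes the first root of the single even factor $\dA_{2d-2}$ of $\bPhik(\bL_{\bDelta[1, 2d]})$, triggering Lemma \ref{lemma:levivanishan}(ii). Processing families (A), (B), (C) before (D) then places every generator of $\ssE^1_{1,d-1}$ in the image of the differential, yielding surjectivity.
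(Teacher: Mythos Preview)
Your proof is correct and follows essentially the same route as the paper: identify via the parity/$\bb$ computation that $M[j_1,j_2]$ survives only when $j_1$ is odd and $j_2$ is even, then kill each survivor using a triple that adds an adjacent index (your (A),(B) use $j_2{+}1$, your (C) uses $j_1{-}1$, your (D) uses $2$), exactly matching the paper's case split into ``$j_2<2d$'', ``$j_1>1$'', and $(1,2d)$.  One small expository slip: in family (A) the face $\{j_1,j_2{+}1\}$ has factor ranks $j_1{-}1$ (even), $j_2{-}j_1$ (odd), $2d{-}j_2{-}1$ (odd), so it is not ``a product of odd-rank $\dA$-factors'' as you wrote---but it still fails your own criterion of having all even factors, hence $\bb=d-1$ and the summand vanishes as needed.
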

\begin{proof}
As notation, for $1 \leq j_1,\ldots,j_{\ell} \leq 2d$ let
\[M[j_1, \ldots, j_{\ell}] = \HH_{d-1}(\bL_{\bDelta[j_1, \ldots, j_{\ell}]}(k);\St(\bL_{\bDelta[j_1, \ldots, j_{\ell}]})).\]
We have
\[\ssE^1_{1,d-1} = \bigoplus_{1 \leq j_1 < j_2 \leq 2d} M[j_1, j_2] \quad \text{and} \quad 
\ssE^1_{2,d-1} = \bigoplus_{1 \leq j_1 < j_2 < j_3 \leq 2d} M[j_1, j_2, j_3].\]
Consider some $1 \leq j_1 < j_2 \leq 2d$.  We must prove that when we quotient $\ssE^1_{1,d-1}$ by the image of the differential
$\ssE^1_{2,d-1} \rightarrow \ssE^1_{1,d-1}$, the summand $M[j_1, j_2]$ is killed.
The first step is to show that many $M[j_1, j_2]$ already vanish:

\begin{claim}{1}
\label{claim:differential3an.1}
For $1 \leq j_1 < j_2 \leq 2d$ with either $j_1$ even or $j_2$ odd, we have $M[j_1, j_2] = 0$.
\end{claim}
\begin{proof}[Proof of claim]
To simplify our notation, we will let $\dA_0 = \{0\} \subset \R^0$, regarded as a trivial root system of
rank $0$.  For a product
$\dA_{n_1} \times \cdots \times \dA_{n_m}$ with $n_j \geq 0$, the bound
\[(m-1) + \lfloor (n_1-1)/2 \rfloor + \cdots + \lfloor (n_m-1)/2 \rfloor\]
for homology vanishing from Lemma \ref{lemma:levivanishan} (Levi vanishing and surjectivity)
is still correct since any $n_j$ with $n_j = 0$ contribute $+1$ to $(m-1)$ and $-1$ to the rest of the sum.

Now observe that
$\bPhik(\bL_{\bDelta[j_1,j_2]}) = \dA_{j_1-1} \times \dA_{j_2-j_1-1} \times \dA_{2d-j_2}$.
By Lemma \ref{lemma:levivanishan} (Levi vanishing and surjectivity),
we have
$\HH_i(\bL_{\bDelta[j_1,j_2]}(k);\St(\bL_{\bDelta[j_1,j_2]})) = 0$ for
\begin{align*}
i \leq &2 + \lfloor (j_1-2)/2 \rfloor + \lfloor (j_2-j_1-2)/2 \rfloor + \lfloor (2d-j_2-1)/2 \rfloor \\
  =    &d + \lfloor j_1/2 \rfloor + \lfloor (j_2-j_1)/2 \rfloor + \lfloor (-j_2-1)/2 \rfloor.
\end{align*}
It follows that $M[j_1,j_2] = 0$ if the right hand side is at least $d-1$, i.e., if
\begin{equation}
\label{eqn:differential3an.1.toprove}
\lfloor j_1/2 \rfloor + \lfloor (j_2-j_1)/2 \rfloor + \lfloor (-j_2-1)/2 \rfloor \geq -1.
\end{equation}
The left hand side of \eqref{eqn:differential3an.1.toprove} is $-1$ if either $j_1$ is even or $j_2$ is odd, and is $-2$ otherwise.
The claim follows.
\end{proof}

Now consider $1 \leq j_1 < j_2 \leq 2d$ with $j_1$ odd and $j_2$ even.  In light of Claim \ref{claim:differential3an.1}, it is enough
to prove that $M[j_1, j_2]$ is killed when we quotient $\ssE^1_{1,d-1}$ by the image of the differential
$\ssE^1_{2,d-1} \rightarrow \ssE^1_{1,d-1}$.  Assume first that $j_1 > 1$, so
\begin{equation} 
\label{eqn:differential3an.1}
\bPhik(\bL_{\bDelta[j_1, j_2]}) =
\begin{cases}
\dA_{j_1-1} \times \dA_{j_2-j_1-1} \times \dA_{2d-j_2} & \text{if $j_2>j_1+1$, $j_2 < 2d$},\\
\dA_{j_1-1} \times \dA_{2d-j_1-1}                     & \text{if $j_2>j_1+1$, $j_2 = 2d$},\\
\dA_{j_1-1} \times \dA_{2d-j_1-1}                        & \text{if $j_2=j_1+1$, $j_2 < 2d$},\\
\dA_{2d-2}                                            & \text{if $j_2=j_1+1$, $j_2=2d$}.
\end{cases}
\end{equation}
In all four cases, since $j_1$ is odd and $j_2$ is even we have
\begin{equation}
\label{eqn:differential3an.11}
\bb(\bPhik(\bL_{\bDelta[j_1,j_2]})) = d-2.
\end{equation}
We will show that
$M[j_1, j_2]$ is killed by the image of the summand $M[j_1-1,j_1,j_2]$ of $\ssE^1_{2,d-1}$. 
On the summand $M[j_1-1,j_1,j_2]$, the differential is the map
\[\begin{tikzcd}
M[j_1-1,j_1,j_2] \arrow{r} & M[j_1,j_2] \oplus M[j_1-1,j_2] \oplus M[j_1-1,j_1] \arrow[hook]{r} & \ssE^1_{1,d-1}.
\end{tikzcd}\]
Since $j_1-1$ is even, 
Claim \ref{claim:differential3an.1} says that $M[j_1-1,j_2] = M[j_1-1,j_1] = 0$, so to show that this differential kills
$M[j_1, j_2]$ it is enough to prove that $M[j_1-1,j_1,j_2] \rightarrow M[j_1,j_2]$ is surjective.  Since the first $\dA$-term
in all four cases of \eqref{eqn:differential3an.1} has a subscript that is even and positive, this follows
from Lemma \ref{lemma:levivanishan} (Levi vanishing and surjectivity).  Here we are using the fact
that $\bb(\bPhik(\bL_{\bDelta[j_1,j_2]})) + 1 = d-1$; cf.\ \eqref{eqn:differential3an.11}.

Assume next that $j_2 < 2d$.  We remind the reader that $j_1$ is odd and $j_2$ is even.  We have
\begin{equation} 
\label{eqn:differential3an.2}
\bPhik(\bL_{\bDelta[j_1, j_2]}) =
\begin{cases}
\dA_{j_1-1} \times \dA_{j_2-j_1-1} \times \dA_{2d-j_2} & \text{if $j_1>1$, $j_2 > j_1+1$},\\
\dA_{j_2-2} \times \dA_{2d-j_2}                    & \text{if $j_1=1$, $j_2 > j_1+1$},\\
\dA_{j_2-2} \times \dA_{2d-j_2}                        & \text{if $j_1>1$, $j_2 = j_1+1$},\\
\dA_{2d-2}                                           & \text{if $j_1=1$, $j_2 = j_1+1$}.
\end{cases}
\end{equation}
In all four cases, since $j_1$ is odd and $j_2$ is even we have
\begin{equation}
\label{eqn:differential3an.21}
\bb(\bPhik(\bL_{\bDelta[j_1,j_2]})) = d-2.
\end{equation}
We will show that
$M[j_1, j_2]$ is killed by the image of the summand $M[j_1,j_2,j_2+1]$ of $\ssE^1_{2,d-1}$. 
On the summand $M[j_1,j_2,j_2+1]$, the differential is the map
\[\begin{tikzcd}
M[j_1,j_2,j_2+1] \arrow{r} & M[j_2,j_2+1] \oplus M[j_1,j_2+1] \oplus M[j_1,j_2] \arrow[hook]{r} & \ssE^1_{1,d-1}.
\end{tikzcd}\]
Since $j_2+1$ is odd, 
Claim \ref{claim:differential3an.1} says that $M[j_2,j_2+1] = M[j_1,j_2+1] = 0$, so to show that this differential kills
$M[j_1, j_2]$ it is enough to prove that $M[j_1,j_2,j_2+1] \rightarrow M[j_1,j_2]$ is surjective.  Since the last $\dA$-term 
in all our cases of \eqref{eqn:differential3an.2} has a subscript that is even and positive, this follows
from Lemma \ref{lemma:levivanishan} (Levi vanishing and surjectivity).
Here we are using the fact
that $\bb(\bPhik(\bL_{\bDelta[j_1,j_2]})) + 1 = d-1$; cf.\ \eqref{eqn:differential3an.21}.

The only case not covered by the previous two paragraphs is $j_1 = 1$ and $j_2 = 2d$.  We have
\begin{align} 
\bPhik(\bL_{\bDelta[1, 2d]}) &= \dA_{2d-2}, \label{eqn:differential3an.3}\\
\bb(\bPhik(\bL_{\bDelta[1, 2d]})) &= \lfloor (2d-3)/2 \rfloor = d-2. \label{eqn:differential3an.31}
\end{align}
In this case, we will use the summand $M[1,2,2d]$ of $\ssE^1_{2,d-1}$, which makes
sense since $d \geq 2$ by assumption.  Just like
above, on this summand the differential takes the form
\[\begin{tikzcd}
M[1,2,2d] \arrow{r} & M[2,2d] \oplus M[1,2d] \oplus M[1,2] \arrow[hook]{r} & \ssE^1_{1,d-1}.
\end{tikzcd}\]
Claim \ref{claim:differential3an.1} says that $M[2,2d] = 0$, and the previous paragraph
implies that $M[1,2]$ dies in the cokernel of the differential.  Finally, since the $\dA_{2d-2}$ in
\eqref{eqn:differential3an.3} has $2d-2$ even and positive Lemma \ref{lemma:levivanishan} (Levi vanishing and surjectivity) shows that
the map $M[1,2,2d] \rightarrow M[1,2d]$ is surjective.  
Here we are using the fact
that $\bb(\bPhik(\bL_{\bDelta[j_1,j_2]})) + 1 = d-1$; cf.\ \eqref{eqn:differential3an.31}.
It follows that $M[1,2d]$ dies in the cokernel
of the differential, as desired.
\end{proof}

\part{Vanishing in types \texorpdfstring{$\dB$}{B} and \texorpdfstring{$\dC$}{C} and \texorpdfstring{$\dBC$}{BC} (Theorem \ref{theorem:typebcn})}
\label{part:bcn}

This part of the paper is devoted to Theorem \ref{theorem:typebcn}, 
which is our vanishing result in types $\dB$ and $\dC$ and $\dBC$.  The proofs follow
the same outline as those for type $\dA$ in Part \ref{part:an}. 

\section{Vanishing and surjectivity (types \texorpdfstring{$\dB$}{B} and \texorpdfstring{$\dC$}{C} and \texorpdfstring{$\dBC$}{BC})}
\label{section:strongbcn}

In this section, we first introduce some notation for the standard Levi factors
of groups of type $\dB_n$ and $\dC_n$ and $\dBC_n$.  We then state a stronger version of Theorem \ref{theorem:typebcn} and prove
it in rank $2$.

\subsection{Levi factor notation}
\label{section:levibcn}

Let $\bG$ be a reductive group with $\bPhik(\bG) \in \{\dB_n, \dC_n, \dBC_n\}$ for some $n \geq 2$.  
We introduce the following convention:

\begin{convention}
Throughout this part of the paper, we will write $\dX_n$ with $\dX$ meaning either $\dB$ or $\dC$
or $\dBC$.  These root systems are only defined for $n \geq 2$, but to allow
uniform statements we will define $\dX_1 = \dA_1$.
\end{convention}

Let $\bDelta = \bDeltak(\bG)$ be the set of simple roots of $\bPhik(\bG)$.  Number the
elements of $\bDelta$ from left to right as in the usual Dynkin diagram; for instance,
if $\bPhik(\bG) = \dB_n$ we have:\\
\centerline{\psfig{file=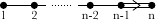,scale=2}}
For $1 \leq j_1,\ldots,j_{\ell} \leq n$, let $\bDelta[j_1, \ldots, j_{\ell}]$ be the result of removing
the simple roots labeled $j_1,\ldots,j_{\ell}$ from $\bDelta$.  We thus have a standard Levi
subgroup $\bL_{\bDelta[j_1, \ldots, j_{\ell}]}$ of $\bG$.

\begin{example}
We have $\bPhik(\bL_{\bDelta[1]}) = \dX_{n-1}$ and
$\bPhik(\bL_{\bDelta[n]}) = \dA_{n-1}$, while for
$2 \leq j \leq n-1$ we have $\bPhik(\bL_{\bDelta[j]}) = \dA_{j-1} \times \dX_{n-j}$.  In general,
for distinct $1 \leq j_1,\ldots,j_{\ell} \leq n$ we have either
\begin{align*}
&\bPhik(\bL_{\bDelta[j_1, \ldots, j_{\ell}]}) = \dA_{n_1} \times \cdots \times \dA_{n_{m-1}} \times \dX_{n_m} \quad \text{or} \\
&\bPhik(\bL_{\bDelta[j_1, \ldots, j_{\ell}]}) = \dA_{n_1} \times \cdots \times \dA_{n_{m}}
\end{align*}
with $n_1 + \cdots + n_m + \ell = n$.
\end{example}

We have a Reeder map (cf.\ \S \ref{section:reedermap}) of the form $\St(\bL_{\bDelta[j_1, \ldots, j_{\ell}]}) \rightarrow \St(\bG)$, and
thus maps $\HH_i(\bL_{\bDelta[j_1, \ldots, j_{\ell}]}(k);\St(\bL_{\bDelta[j_1, \ldots, j_{\ell}]})) \rightarrow \HH_i(\bG(k);\St(\bG))$.

\subsection{Strong vanishing}

The main result we will prove in this part of the paper is:

\begin{primedtheorem}{theorem:typebcn}
\label{theorem:strongbcn}
Let $\bG$ be a reductive group with $\bPhik(\bG) = \dX_n$ for some $\dX \in \{\dB, \dC, \dBC\}$ and $n \geq 2$.  Then:
\begin{itemize}
\item $\HH_i(\bG(k);\St(\bG)) = 0$ for $i \leq \lfloor (n-2)/2 \rfloor$; and
\item letting $\bDelta = \bDeltak(\bG)$, the map
$\HH_i(\bL_{\bDelta[1]}(k);\St(\bL_{\bDelta[1]})) \rightarrow \HH_i(\bG(k);\St(\bG))$
is surjective for $i \leq \lfloor (n-1)/2 \rfloor$.
\end{itemize}
\end{primedtheorem}

This strengthens Theorem \ref{theorem:typebcn} 
by adding the indicated surjectivity statement.
To avoid degenerate cases, we prove:

\begin{lemma}
\label{lemma:typebcnstrongrank2}
Theorem \textnormal{\ref{theorem:strongbcn}} holds for $n = 2$.
\end{lemma}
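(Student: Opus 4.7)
The plan is straightforward because for $n=2$ both bounds in the statement collapse to degree zero. Specifically, $\lfloor (n-2)/2 \rfloor = 0$ and $\lfloor (n-1)/2 \rfloor = 0$, so the theorem reduces to the single assertion that $\HH_0(\bG(k);\St(\bG))=0$ together with the assertion that $\HH_0(\bL_{\bDelta[1]}(k);\St(\bL_{\bDelta[1]})) \to \HH_0(\bG(k);\St(\bG))$ is surjective.

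First I would observe that since $\bPhik(\bG) = \dX_2$ with $\dX \in \{\dB,\dC,\dBC\}$, the semisimple rank of $\bG$ is $n=2 \geq 1$, so Lemma \ref{lemma:h0} directly yields $\HH_0(\bG(k);\St(\bG)) = 0$. With the target already zero, the surjectivity statement is automatic: any map to the trivial group is a surjection.

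In other words, there is no real obstacle here; the rank-two base case of Theorem \ref{theorem:strongbcn} is subsumed by the general vanishing for $\HH_0$ already established in Lemma \ref{lemma:h0}. The substantive surjectivity input that will be needed for the inductive step in larger ranks (analogous to how Lemma \ref{lemma:rank2} was used in Lemma \ref{lemma:typeanstrongrank2} for the type $\dA$ base case) does not enter here because the bound $\lfloor (n-1)/2 \rfloor$ has not yet reached a degree where $\HH_i$ can be nonzero. The genuine work will therefore begin at $n=3$ (and higher), where the spectral sequence of Corollary \ref{corollary:spectralsequence} must be analyzed by combining a Levi vanishing/surjectivity result in types $\dB$, $\dC$, $\dBC$ (analogous to Lemma \ref{lemma:levivanishan}) with differential computations paralleling those in \S\ref{section:differentialan}.
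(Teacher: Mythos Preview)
Your argument is correct and matches the paper's proof essentially verbatim: both observe that for $n=2$ the bounds collapse to degree $0$, so the vanishing follows from Lemma~\ref{lemma:h0} and the surjectivity is automatic since the target is zero. The additional commentary about what happens for $n\geq 3$ is accurate context but not part of the proof itself.
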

\begin{proof}
Let $\bG$ be a reductive group with $\bPhik(\bG) = \dX_2$ for some $\dX \in \{\dB, \dC, \dBC\}$.
Since a map to $0$ is surjective, for $\bG$ Theorem \ref{theorem:strongbcn} only asserts that
$\HH_0(\bG(k);\St(\bG)) = 0$, which follows from Lemma \ref{lemma:h0}.
\end{proof}

Because of Lemma \ref{lemma:typebcnstrongrank2}, for the rest of this part of the paper we can assume that $n \geq 3$.  We will also
assume as an inductive hypothesis that we have already proved Theorem \ref{theorem:strongbcn} in smaller ranks.  For this, we make the following
definition:

\begin{definition}
\label{definition:hypothesisbcn}
For $r \geq 2$, the {\em $r$-surjectivity and vanishing hypothesis in type $\dBC$} is as follows.
Let $\bG$ be a reductive group with $\bPhik(\bG) = \dX_n$ for some $\dX \in \{\dB, \dC, \dBC\}$ and $2 \leq n \leq r$.  Then:
\begin{itemize}
\item $\HH_i(\bG(k);\St(\bG)) = 0$ for $i \leq \lfloor (n-2)/2 \rfloor$; and
\item letting $\bDelta = \bDeltak(\bG)$, the map
$\HH_i(\bL_{\bDelta[1]}(k);\St(\bL_{\bDelta[1]})) \rightarrow \HH_i(\bG(k);\St(\bG))$
is surjective for $i \leq \lfloor (n-1)/2 \rfloor$.\qedhere
\end{itemize}
\end{definition}

\begin{remark}
\label{remark:alreadyanbcn}
We have already proven Theorem \ref{theorem:strongan}, so we also have available to us vanishing and surjectivity results
in type $\dA$.
\end{remark}

\section{Vanishing and surjectivity for Levi subgroups (types \texorpdfstring{$\dB$}{B} and \texorpdfstring{$\dC$}{C} and \texorpdfstring{$\dBC$}{BC})}
\label{section:levivanishbcn}

In this section, we show how to use the $r$-surjectivity and vanishing hypothesis
in type $\dBC$ to analyze the homology of standard Levi subgroups.  Our main result
is as follows.  Its statement uses the ordering on the simple roots of
of $\dA_{n_{j_0}}$ and $\dX_{n_{j_0}}$ for $\dX \in \{\dB, \dC, \dBC\}$ discussed in \S \ref{section:levian} and \S \ref{section:levibcn}.

\begin{lemma}[Levi vanishing and surjectivity]
\label{lemma:levivanishbcn}
For some $n \geq 3$, assume the $(n-1)$-surjectivity and vanishing hypothesis in type $\dBC$ (Definition \ref{definition:hypothesisbcn}).
Let $\bG$ be a reductive group with $\bPhik(\bG) = \dX_n$ for some $\dX \in \{\dB, \dC, \dBC\}$.  Let $\Delta \subset \bDeltak(\bG)$ be a set
of simple roots with $\Delta \neq \bDeltak(\bG)$.  Write
\[\bPhik(\bL_{\Delta}) = \dA_{n_1} \times \cdots \times \dZ_{n_m} \quad \text{with $\dZ \in \{\dA,\dX\}$}.\]
We thus have $n_1,\ldots,n_{m-1} \geq 1$, and if $\dZ = \dX$ then $n_m \geq 2$ while if $\dZ = \dA$ then $n_m \geq 1$.
Set $b = \bb(\bPhik(\bL_{\Delta}))$, so
\[b = 
\begin{cases}
(m-1) + \lfloor (n_1-1)/2 \rfloor + \cdots + \lfloor(n_{m-1}-1)/2 \rfloor +  \lfloor(n_m-1)/2 \rfloor & \text{if $\dZ = \dA$},\\
(m-1) + \lfloor (n_1-1)/2 \rfloor + \cdots + \lfloor(n_{m-1}-1)/2 \rfloor +  \lfloor(n_m-2)/2 \rfloor & \text{if $\dZ = \dX$}.
\end{cases}\]
Then the following hold:
\begin{itemize}
\item[(i)] We have $\HH_i(\bL_{\Delta}(k);\St(\bL_{\Delta})) = 0$ for $i \leq b$.
\item[(ii)] For some $1 \leq j_0 \leq n$, assume one of the following:
\begin{itemize}
\item $n_{j_0}$ is even and nonzero, $1 \leq j_0 \leq m-1$, and $\Delta' \subset \Delta$
is the set of simple roots obtained by removing either the first or last simple root from $\dA_{n_{j_0}}$.
\item $n_{j_0}$ is even and nonzero, $j_0 = m$ and $\dZ = \dA$, and $\Delta' \subset \Delta$
is the set of simple roots obtained by removing either the first or last simple root from $\dA_{n_{n_m}}$.
\item $n_{j_0}$ is odd, $j_0 = m$ and $\dZ = \dX$, and $\Delta' \subset \Delta$ is the set
of simple roots obtained by removing the first simple root from $\dZ_{n_m}$.
\end{itemize}
Then the map $\HH_{b+1}(\bL_{\Delta'}(k);\St(\bL_{\Delta'})) \rightarrow \HH_{b+1}(\bL_{\Delta}(k);\St(\bG))$
is surjective.
\end{itemize}
\end{lemma}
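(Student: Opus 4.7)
The plan is to mirror the proof of Lemma \ref{lemma:levivanishan}, combining the already-established Theorem \ref{theorem:strongan} (type $\dA$) with the inductive hypothesis (Definition \ref{definition:hypothesisbcn}) so that the reducible-case machinery of \S \ref{section:reducible} applies.

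First I will observe that since $\Delta \neq \bDeltak(\bG)$, each $\dA$-factor satisfies $n_j \leq n-1$ and, when $\dZ = \dX$, the remaining factor satisfies $n_m \leq n-1$. Thus for every $\dA_{n_j}$ appearing in $\bPhik(\bL_{\Delta})$, the strong form Theorem \ref{theorem:strongan} of type $\dA$ supplies a constant $b_j = \lfloor (n_j-1)/2 \rfloor = \bb(\dA_{n_j})$ for which the hypothesis \eqref{vanishing1} of Lemma \ref{lemma:reduciblevanishing} holds. Similarly, when $\dZ = \dX$, the $(n-1)$-surjectivity and vanishing hypothesis in type $\dBC$ supplies $b_m = \lfloor (n_m-2)/2 \rfloor = \bb(\dX_{n_m})$, again giving \eqref{vanishing1} for this factor. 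Plugging these into Lemma \ref{lemma:reduciblevanishing} yields (i) since the output bound is exactly $\bb(\bPhik(\bL_{\Delta}))$ as defined by \eqref{eqn:reducibleb}.

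For (ii), I will verify that in each of the three listed cases, the chosen factor $j_0$ satisfies the surjectivity hypothesis \eqref{surjectivity2} of Lemma \ref{lemma:reduciblesurjectivity} at degree $b_{j_0}+1$, using the $\Delta'$ prescribed. In the first two cases ($\dA_{n_{j_0}}$ with $n_{j_0}$ even and nonzero), one has $b_{j_0}+1 = n_{j_0}/2 = \lfloor n_{j_0}/2 \rfloor$, which is exactly the surjectivity range in Theorem \ref{theorem:strongan} after removing the first or last simple root. In the third case ($\dX_{n_m}$ with $n_m$ odd), one has $b_m+1 = (n_m-1)/2 = \lfloor (n_m-1)/2 \rfloor$, which is exactly the surjectivity range in Definition \ref{definition:hypothesisbcn} after removing the first simple root. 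With \eqref{surjectivity1} for each factor and \eqref{surjectivity2} for factor $j_0$ established, Lemma \ref{lemma:reduciblesurjectivity} then gives (ii).

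The only genuine content beyond bookkeeping is checking the parity conditions in (ii): these conditions are forced precisely because the surjectivity ranges in Theorem \ref{theorem:strongan} and Definition \ref{definition:hypothesisbcn} exceed the vanishing ranges by $1$ only when the relevant parity allows. I do not expect any real obstacle, since the whole argument is a direct application of the two reducible-case lemmas once the inputs are assembled.
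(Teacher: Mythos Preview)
Your proposal is correct and follows essentially the same approach as the paper's own proof: assemble the inputs \eqref{vanishing1} and \eqref{surjectivity1}, \eqref{surjectivity2} from Theorem \ref{theorem:strongan} for the $\dA$-factors and from the $(n-1)$-surjectivity and vanishing hypothesis in type $\dBC$ for the $\dX$-factor, then feed them into Lemmas \ref{lemma:reduciblevanishing} and \ref{lemma:reduciblesurjectivity}. Your added verification of the parity conditions in (ii) simply makes explicit what the paper leaves to the reader.
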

\begin{proof}
Since $\Delta \neq \bDeltak(\bG)$, we have $n_j \leq n-1$ for $1 \leq j \leq m$.  The
$(n-1)$-surjectivity and vanishing hypothesis in type $\dBC$ thus applies to all reductive
groups $\bH_m$ with $\bPhik(\bH_m) = \dX_{n_m}$.  Theorem \ref{theorem:strongan} also gives a vanishing and surjectivity
result for all reductive groups $\bH_j$ with $\bPhik(\bH_j) = \dA_{n_j}$.  This gives the hypothesis \eqref{vanishing1} in
Lemma \ref{lemma:reduciblevanishing} (reducible vanishing).  Applying Lemma \ref{lemma:reduciblevanishing},
we deduce (i).  Similarly, for $\Delta'$ as in (ii) it gives the hypotheses \eqref{surjectivity1} and \eqref{surjectivity2}
in Lemma \ref{lemma:reduciblesurjectivity} (reducible surjectivity).  Applying Lemma \ref{lemma:reduciblesurjectivity},
we deduce (ii).
\end{proof}

\section{Vanishing region (types \texorpdfstring{$\dB$}{B} and \texorpdfstring{$\dC$}{C} and \texorpdfstring{$\dBC$}{BC})}
\label{section:ssvanishbcn}

Let $\bG$ be a reductive group with
$\bPhik(\bG) = \dX_n$ for some $\dX \in \{\dB_n,\dC_n,\dBC_n\}$ and $n \geq 3$.
Corollary \ref{corollary:spectralsequence} gives a spectral sequence $\ssE^r_{pq}$ converging
to $\HH_{p+q}(\bG(k);\St(\bG))$ with
\[\ssE^1_{pq} \cong \begin{cases}
\bigoplus_{R \in \cL_p(\bG)} \HH_q(\bL_{R}(k);\St(\bL_{R})) & \text{if $0 \leq p \leq n-1$} \\
\HH_q(\bG(k);\St(\bG)^{\otimes 2})                          & \text{if $p = n$},\\
0                                                           & \text{otherwise}.
\end{cases}\]
The following lemma shows that our inductive hypothesis
implies that many terms of this spectral sequence vanish.

\begin{lemma}
\label{lemma:vanishbcn}
Let $\bG$ be a reductive group with 
$\bPhik(\bG) = \dX_n$ for some $\dX \in \{\dB_n,\dC_n,\dBC_n\}$ and $n \geq 3$.
Assume the $(n-1)$-surjectivity and vanishing hypothesis in type $\dBC$ (Definition \ref{definition:hypothesisbcn}).
Let $\ssE^1_{pq}$ be the spectral
sequence from Corollary \ref{corollary:spectralsequence}.  Then the following hold:
\begin{itemize}
\item For $n = 2d+2$ with $d \geq 1$, we have $\ssE^1_{pq} = 0$ for $p+q \leq d$ except for possibly
$\ssE^1_{0d}$. 
\item For $n = 2d+1$ with $d \geq 1$, we have $\ssE^1_{pq} = 0$ for $p+q \leq d$ except for possibly
$\ssE^1_{0d}$ and $\ssE^1_{1,d-1}$.  For $n=3$ (so $d = 1$), we also have $\ssE^1_{1,d-1}=0$.
\end{itemize}
\end{lemma}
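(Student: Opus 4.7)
The plan is to mirror Lemma \ref{lemma:vanishan} and bound each summand of $\ssE^1_{pq}$ via Lemma \ref{lemma:levivanishbcn}. Since the relevant $(p,q)$ satisfy $p\le d\le n-1$, we have $\ssE^1_{pq}=\bigoplus_{R\in\cL_p(\bG)}\HH_q(\bL_R(k);\St(\bL_R))$. For each such $R$, the relative root system of $\bL_R$ has the form $\bPhik(\bL_R)=\dA_{n_1}\times\cdots\times\dZ_{n_m}$ with $\dZ\in\{\dA,\dX\}$ and $n_1+\cdots+n_m=n-p-1$, and by Lemma \ref{lemma:levivanishbcn}(i) it suffices to verify $q\le\bb(\bPhik(\bL_R))$ for each nonexceptional $(p,q)$ in our range.

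For the pure $\dA$ case ($\dZ=\dA$), the proof of Lemma \ref{lemma:vanishan} applies verbatim via repeated use of Lemma \ref{lemma:floorinequality} to give $\bb(\bPhik(\bL_R))\ge\lfloor(n-p-2)/2\rfloor$. For $n=2d+2$ this equals $d-\lceil p/2\rceil\ge d-p$, covering the full range $p+q\le d$; for $n=2d+1$ it equals $d-1-\lfloor p/2\rfloor$, covering $p\ge 1$ and matching the exception $(0,d)$. In the mixed case ($\dZ=\dX$, so $n_m\ge 2$), $\bb$ differs from its $\dA$-version only in the last summand: a direct calculation shows $\lfloor(n_m-2)/2\rfloor$ equals $\lfloor(n_m-1)/2\rfloor$ when $n_m$ is even and equals $\lfloor(n_m-1)/2\rfloor-1$ when $n_m$ is odd. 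Thus $\bb\ge\lfloor(n-p-2)/2\rfloor-\varepsilon$ with $\varepsilon\in\{0,1\}$; the $\varepsilon=0$ case reduces to the pure $\dA$ analysis, and the $\varepsilon=1$ case handles $p\ge 2$ via the explicit inequality $d-p\le\lfloor(n-p-2)/2\rfloor-1$, valid for both parities of $n$.

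The main obstacle will be the mixed-odd case ($\varepsilon=1$) at $p\in\{0,1\}$, where the naive bound is off by one. Here I will exploit a parity constraint: since $n_m$ is odd and $n_1+\cdots+n_m=n-p-1$, the number of odd entries among $n_1,\ldots,n_{m-1}$ is forced modulo $2$, and combined with $n_j\ge 1$, $n_m\ge 3$, and the combinatorial restriction that removing $p+1$ nodes from a connected Dynkin diagram yields at most $p+2$ components, only finitely many shapes arise. Enumerating them at $p=0$ forces $m=2$ with both $n_1$ and $n_m$ odd, giving $\bb=d-1$ in both $n=2d+2$ and $n=2d+1$ and hence killing $(0,d-1)$. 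At $p=1$ with $n=2d+2$ the possibilities are $m=2$ (both odd) and $m=3$ (one even and one odd among the $\dA$-factors), and explicit calculation gives $\bb=d-1$ in every case, killing $\ssE^1_{1,d-1}$. At $p=1$ with $n=2d+1$ the analogous enumeration only produces $\bb\ge d-2$ in some cases, which is precisely why $\ssE^1_{1,d-1}$ must be allowed as an exception. Finally, for $n=3$ (so $d=1$) the mixed-odd case at $p=1$ would require $n_1+\cdots+n_m=1$ with $n_m\ge 3$, which is impossible; so every $R\in\cL_1(\bG)$ is pure $\dA_1$, yielding $\bb=0$ and the extra vanishing of $\ssE^1_{1,0}$.
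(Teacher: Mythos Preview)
Your overall strategy matches the paper's, but there is a genuine computational gap and an unnecessary complication.

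\textbf{The gap.} Your claim that the inequality $d-p\le\lfloor(n-p-2)/2\rfloor-1$ holds for all $p\ge 2$ and both parities of $n$ is false. Take $n=2d+1$ and $p=2$: the right-hand side is $\lfloor(2d-3)/2\rfloor-1=d-3$, while the left-hand side is $d-2$. So your $\varepsilon=1$ bound does not cover the summand at $(p,q)=(2,d-2)$ in the odd-$n$ case, and your subsequent case analysis only treats $p\in\{0,1\}$. The lemma does claim vanishing at $(2,d-2)$, so this is a real hole.

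\textbf{The fix, and how the paper does it.} The detour through $\varepsilon\in\{0,1\}$ and the parity enumeration is the source of the trouble. In the mixed case you wrote $\lfloor(n_m-2)/2\rfloor=\lfloor(n_m-1)/2\rfloor-\varepsilon$ and then applied the type-$\dA$ floor estimate; subtracting $1$ \emph{after} collapsing the floors loses information. Instead, apply Lemma~\ref{lemma:floorinequality} directly to the mixed sum
\[
(m-1)+\lfloor(n_1-1)/2\rfloor+\cdots+\lfloor(n_{m-1}-1)/2\rfloor+\lfloor(n_m-2)/2\rfloor,
\]
folding the last term in along with the others. This gives the uniform bound $\bb\ge\lfloor(n_1+\cdots+n_m-2)/2\rfloor=\lfloor(n-p-3)/2\rfloor$, which is strictly better than your $\lfloor(n-p-2)/2\rfloor-1$ whenever $n-p$ is odd. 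With this bound, $\ssE^1_{pq}=0$ for $p+q\le\lfloor(n+p-3)/2\rfloor$, and the two cases $n=2d+2$ and $n=2d+1$ fall out exactly as in Lemma~\ref{lemma:vanishan} with $n$ shifted by one---no parity enumeration of the $n_j$ is needed at all. The paper's proof is literally two bullet points and a remark.

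\textbf{Minor point.} For $n=3$, the paper observes that $\ssE^1_{1,0}$ is a direct sum of $\HH_0$-groups, which vanish by Lemma~\ref{lemma:h0}; this is quicker than your Dynkin-diagram argument, though yours is also correct.
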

\begin{proof}
This is identical to the proof of Lemma \ref{lemma:vanishan} in type $\dA$.  The only difference is
that the standard Levi subgroups can be of two types:
\begin{itemize}
\item $\dA_{n_1} \times \cdots \times \dA_{n_m}$, where the vanishing range for $\HH_i$ given by Theorem \ref{theorem:strongan} is
\[i \leq (m-1) + \lfloor (n_1-1)/2 \rfloor + \cdots + \lfloor (n_{m}-1)/2 \rfloor.\]
Repeatedly applying the inequality $1+\lfloor a/2 \rfloor + \lfloor b/2 \rfloor \geq \lfloor (a+b+1)/2 \rfloor$ from Lemma \ref{lemma:floorinequality},
the right hand side is at least
\begin{equation}
\label{eqn:vanishbcn.1}
\lfloor (n_1 + \cdots + n_m -1)/2 \rfloor.
\end{equation}
\item $\dA_{n_1} \times \cdots \times \dA_{n_{m-1}} \times \dX_{n_m}$, where the vanishing range for $\HH_i$
given by Lemma \ref{lemma:levivanishbcn} (Levi vanishing and surjectivity) is
\begin{equation}
\label{eqn:vanishbcn.21}
i \leq (m-1) + \lfloor (n_1-1)/2 \rfloor + \cdots + \lfloor (n_{m-1}-1)/2 \rfloor + \lfloor (n_m - 2)/2 \rfloor.
\end{equation}
Repeatedly applying the inequality $1+\lfloor a/2 \rfloor + \lfloor b/2 \rfloor \geq \lfloor (a+b+1)/2 \rfloor$ from Lemma \ref{lemma:floorinequality},
the right hand side is at least
\begin{equation}
\label{eqn:vanishbcn.2}
\lfloor (n_1 + \cdots + n_m -2)/2 \rfloor.
\end{equation}
\end{itemize}
The slightly worse vanishing range in \eqref{eqn:vanishbcn.2} (as opposed to \eqref{eqn:vanishbcn.1}, which is the same
vanishing range we found in the proof of Lemma \ref{lemma:vanishan}) accounts for the slightly worse range in the statement
of the lemma.  Finally, the fact that $\ssE^1_{1,d-1} = 0$ for $d=1$ follows from the fact
that $\ssE^1_{1,0}$ is a direct sum of $\HH_0$-groups, and these all vanish by Lemma \ref{lemma:h0}.
\end{proof}

\begin{remark}
\label{remark:needoffset1}
For later use, note that we were lucky that there was only one term of the form $\lfloor (n_j -2)/2 \rfloor$ on the right
hand side of \eqref{eqn:vanishbcn.21}.  If there were more than one, then the vanishing range would not be strong
enough for the lemma.  For instance, the bound
\[i \leq (m-1) + \lfloor (n_1-2)/2 \rfloor + \cdots + \lfloor (n_m-2)/2 \rfloor\]
would not be good enough.  Indeed, if all the $n_j$ were odd then the right hand side equals
\[\lfloor (n_1+\cdots+n_m-m-1)/2 \rfloor,\]
which is too small to give anything like Lemma \ref{lemma:vanishbcn}.
\end{remark}

\section{Remaining tasks (types \texorpdfstring{$\dB$}{B} and \texorpdfstring{$\dC$}{C} and \texorpdfstring{$\dBC$}{BC})} 
\label{section:proofbcn}

Lemma \ref{lemma:vanishbcn} implies many cases of Theorem \ref{theorem:strongbcn}.  To prove the
remaining cases, we need to compute some differentials in our spectral sequence.  We now explain
the structure of the argument, postponing three calculations to the 
next section.  Recall that Theorem \ref{theorem:strongbcn} is:

\theoremstyle{plain}
\newtheorem*{theorem:strongbcn}{Theorem \ref{theorem:strongbcn}}
\begin{theorem:strongbcn}
Let $\bG$ be a reductive group with $\bPhik(\bG) = \dX_n$ for some $\dX \in \{\dB, \dC, \dBC\}$ and $n \geq 2$.  Then:
\begin{itemize}
\item $\HH_i(\bG(k);\St(\bG)) = 0$ for $i \leq \lfloor (n-2)/2 \rfloor$; and
\item letting $\bDelta = \bDeltak(\bG)$, the map
$\HH_i(\bL_{\bDelta[1]}(k);\St(\bL_{\bDelta[1]})) \rightarrow \HH_i(\bG(k);\St(\bG))$
is surjective for $i \leq \lfloor (n-1)/2 \rfloor$.
\end{itemize}
\end{theorem:strongbcn}
\begin{proof}
The proof is by induction
on $n$.  We proved the base case $n = 2$ in Lemma \ref{lemma:typebcnstrongrank2},
so we can assume that $n \geq 3$ and that the result is true for smaller ranks, i.e., that
the $(n-1)$-surjectivity and vanishing hypothesis in type $\dBC$ holds.

Corollary \ref{corollary:spectralsequence} gives a spectral sequence $\ssE^r_{pq}$ converging
to $\HH_{p+q}(\bG(k);\St(\bG))$, and Lemma \ref{lemma:vanishbcn} implies that
$\ssE^1_{pq} = 0$ for $p+q \leq \lfloor (n-1)/2 \rfloor - 1$.  This implies that
$\HH_i(\bG(k);\St(\bG)) = 0$ for $i \leq \lfloor (n-1)/2 \rfloor - 1$.
Since our surjectivity claim is trivial when the target is $0$, all that remains to prove
are the following two claims:

\begin{claim}{1}
Assume that $n = 2d+2$ with $d \geq 1$.  Then $\HH_d(\bG(k);\St(\bG)) = 0$.
\end{claim}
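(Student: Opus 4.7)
The plan is to imitate Claim 1 in the proof of Theorem \ref{theorem:strongan} (cf.\ Lemma \ref{lemma:differential2an}): by Lemma \ref{lemma:vanishbcn}, the only term of the form $\ssE^1_{pq}$ with $p+q = d$ that is not already known to vanish is $\ssE^1_{0d}$, so it suffices to prove that the $d^1$-differential
\[
\partial\colon \ssE^1_{1d} \longrightarrow \ssE^1_{0d}
\]
is surjective, from which $\ssE^2_{0d} = 0$ and hence $\HH_d(\bG(k);\St(\bG)) = 0$ will follow. I will abbreviate $M[j_1,\ldots,j_\ell] = \HH_d(\bL_{\bDelta[j_1,\ldots,j_\ell]}(k);\St(\bL_{\bDelta[j_1,\ldots,j_\ell]}))$, so that $\ssE^1_{0d} = \bigoplus_{j=1}^{2d+2} M[j]$ and $\ssE^1_{1d} = \bigoplus_{1 \le j_1 < j_2 \le 2d+2} M[j_1,j_2]$, and the differential restricted to the summand $M[j_1,j_2]$ lands in $M[j_2] \oplus M[j_1]$ (with appropriate signs) via the two Reeder-induced maps.

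The first step is a direct vanishing computation: I claim that $M[j] = 0$ whenever $j$ is even. Indeed, for such $j$ we have $\bPhik(\bL_{\bDelta[j]}) = \dA_{j-1} \times \dX_{2d+2-j}$ (with the conventions $\bPhik(\bL_{\bDelta[1]}) = \dX_{2d+1}$ and $\bPhik(\bL_{\bDelta[2d+2]}) = \dA_{2d+1}$ at the endpoints), and plugging into Lemma \ref{lemma:levivanishbcn}(i) gives a vanishing range of at least $d$ in these cases. So only the odd-index summands $M[j]$ could be nonzero, and for those it suffices to show each one dies in the cokernel of $\partial$.

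For odd $j$ with $3 \le j \le 2d+1$, I would kill $M[j]$ using the summand $M[j-1,j]$. The differential on this summand has image in $M[j] \oplus M[j-1]$, and the second factor $M[j-1]$ vanishes by the previous step. Since $\bPhik(\bL_{\bDelta[j-1,j]}) = \dA_{j-2} \times \dX_{2d+2-j}$ and adding back the root $j-1$ extends the $\dA$-factor from $\dA_{j-2}$ to $\dA_{j-1}$ with $j-1$ even and positive, Lemma \ref{lemma:levivanishbcn}(ii) (option 1) applies and the map $M[j-1,j] \to M[j]$ is surjective. For $j = 1$, I will instead use $M[1,2]$: here $M[2] = 0$ by the vanishing step, and the map $M[1,2] \to M[1]$ corresponds to adding the root $2$ to $\bPhik(\bL_{\bDelta[1,2]}) = \dX_{2d}$ to recover $\dX_{2d+1} = \bPhik(\bL_{\bDelta[1]})$, which is option 3 of Lemma \ref{lemma:levivanishbcn}(ii) (with $n_{j_0} = 2d+1$ odd and $\dZ = \dX$), so again surjectivity holds. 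The case $j = 2d+1$ is already covered by the first mechanism (using $M[2d,2d+1]$).

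The main obstacle is really just the careful bookkeeping of which Levi factors land on the bound $\bb = d-1$ so that Lemma \ref{lemma:levivanishbcn}(ii) applies with the correct parity, but this is straightforward given the Dynkin-diagram labeling in \S\ref{section:levibcn}. I would package the argument as a separate lemma (analogous to Lemma \ref{lemma:differential2an}) asserting surjectivity of $\ssE^1_{1d} \to \ssE^1_{0d}$, and then invoke it to conclude $\HH_d(\bG(k);\St(\bG)) = 0$.
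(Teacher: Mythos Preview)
Your proposal is essentially the paper's own proof: it defers to what the paper packages as Lemma~\ref{lemma:differential2bcn}, and your outline of that lemma (kill the even-indexed $M[j]$ directly, then for odd $j\neq 1$ use $M[j-1,j]$ with the $\dA_{j-1}$ factor, and for $j=1$ use $M[1,2]$ with the $\dX_{2d+1}$ factor) matches the paper step for step.

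There is one bookkeeping slip. For $j=2d+1$ you claim the ``first mechanism'' via $M[2d,2d+1]$ applies, but $\bPhik(\bL_{\bDelta[2d+1]}) = \dA_{2d}\times\dA_1$ has $\bb = 1 + \lfloor(2d-1)/2\rfloor + 0 = d$, not $d-1$, so Lemma~\ref{lemma:levivanishbcn}(ii) gives surjectivity only in degree $d+1$. This does not break the proof, because the same computation shows $M[2d+1]=0$ outright via part~(i); the paper simply records this and excludes $j=2d+1$ from the surjectivity step. Your remark that the ``careful bookkeeping \ldots\ is straightforward'' is exactly where this case needs to be singled out.
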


In this case, Lemma \ref{lemma:vanishbcn} says that the only potentially nonzero
term $\ssE^1_{pq}$ in our spectral sequence with $p+q = d$ is $\ssE^1_{0d}$.  We will prove in
Lemma \ref{lemma:differential2bcn} below that the differential $\ssE^1_{1d} \rightarrow \ssE^1_{0d}$
is surjective, so $\ssE^2_{0d} = 0$.  This implies that $\HH_d(\bG(k);\St(\bG)) = 0$, as desired.

\begin{claim}{2}
Assume that $n = 2d+1$ with $d \geq 1$.  Then the map
\[\HH_d(\bL_{\bDelta[1]}(k);\St(\bL_{\bDelta[1]})) \rightarrow \HH_d(\bG(k);\St(\bG))\]
is surjective.
\end{claim}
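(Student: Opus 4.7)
The plan is to follow the template of Claim 2 in the proof of Theorem \ref{theorem:strongan}. By Lemma \ref{lemma:vanishbcn}, the only potentially nonzero terms at total degree $d$ on the $\ssE^1$-page of the spectral sequence of Corollary \ref{corollary:spectralsequence} are $\ssE^1_{0d}$ and $\ssE^1_{1,d-1}$, with the latter automatically zero when $d=1$. Writing $M[j_1,\ldots,j_\ell] = \HH_q(\bL_{\bDelta[j_1,\ldots,j_\ell]}(k); \St(\bL_{\bDelta[j_1,\ldots,j_\ell]}))$ for the appropriate $q$, the goal reduces to two differential statements analogous to Lemmas \ref{lemma:differential3an} and \ref{lemma:differential1an}: (A) the differential $\ssE^1_{2,d-1}\to\ssE^1_{1,d-1}$ is surjective, and (B) the summand $M[1]$ of $\ssE^1_{0d}=\bigoplus_{j=1}^{2d+1}M[j]$ surjects onto the cokernel of $\ssE^1_{1d}\to\ssE^1_{0d}$. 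Together these force $\HH_d(\bG(k);\St(\bG))$ to be a quotient of $M[1] = \HH_d(\bL_{\bDelta[1]}(k);\St(\bL_{\bDelta[1]}))$, as required. Note that, in contrast to the type $\dA$ analogue, here we only have one source of surjectivity ($M[1]$), since $\bL_{\bDelta[n]}$ has relative root system $\dA_{2d}$ rather than being of the same type as $\bG$.

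For (B), I would show that each summand $M[j]$ with $2\leq j\leq 2d+1$ dies in the cokernel modulo $M[1]$ by choosing an appropriate $M[j_1,j_2]$ from $\ssE^1_{1d}$ whose differential surjects onto $M[j]$ with its other projection already handled. For $j=2d+1=n$, use $M[1,n]$: passing from $\bPhik(\bL_{\bDelta[n]})=\dA_{2d}$ to $\bPhik(\bL_{\bDelta[1,n]})=\dA_{2d-1}$ removes the first root of an $\dA_{2d}$-factor with even positive subscript, yielding surjectivity via Lemma \ref{lemma:levivanishbcn}(ii). For $j$ odd with $3\leq j\leq 2d-1$, use $M[1,j]$, removing the first root from the even-index $\dA_{j-1}$-factor of $\bPhik(\bL_{\bDelta[j]})=\dA_{j-1}\times\dX_{2d+1-j}$. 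For $j$ even with $2\leq j\leq 2d$, use $M[j,j+1]$: removing $\alpha_{j+1}$ corresponds to removing the first root of the odd-index $\dX_{2d+1-j}$-factor, giving surjectivity onto $M[j]$ via the third case of Lemma \ref{lemma:levivanishbcn}(ii); the other projection $M[j+1]$ falls into a previously treated case (either $j+1$ is odd in $[3,2d-1]$ or equals $n$), so a downward induction on $j$ assembles these.

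For (A), I plan a more intricate but structurally similar argument. A parity computation using Lemma \ref{lemma:levivanishbcn}(i) applied to $\bPhik(\bL_{\bDelta[j_1,j_2]})=\dA_{a-1}\times\dA_{b-1}\times\dX_c$ with $(a,b,c)=(j_1,\,j_2-j_1,\,2d+1-j_2)$ summing to $2d+1$ shows that $M[j_1,j_2]$ at degree $d-1$ already vanishes unless $a,b,c$ are all odd (the only parity class in which $\bb(\bPhik(\bL_{\bDelta[j_1,j_2]}))=d-2$); the alternative parity class gives $\bb=d-1$ and hence vanishing. For each surviving all-odd configuration, at least one factor admits the parity required by Lemma \ref{lemma:levivanishbcn}(ii): either $a\geq 3$ or $b\geq 3$ yields an even positive $\dA$-index usable in cases 1--2, or $c\geq 3$ yields an odd $\dX$-index usable in case 3. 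An appropriately chosen summand $M[j_1,j_2,j_3]$ of $\ssE^1_{2,d-1}$ then provides a surjection onto $M[j_1,j_2]$, and the two other projections of its differential are killed by earlier iterations of the induction. The main obstacle will be the bookkeeping in (A): edge cases ($j_1=1$, $j_2=j_1+1$, $j_2=n$, or $c=1$ collapsing $\dX_1$ to $\dA_1$) require separate treatment, and one must fix the inductive order on $(j_1,j_2)$ so that the off-diagonal summands $M[j_1,j_3]$ and $M[j_2,j_3]$ appearing alongside $M[j_1,j_2]$ in the image from $M[j_1,j_2,j_3]$ have already been handled. The asymmetric role of the $\dX$-factor, treated via case 3 rather than the more uniform cases 1--2, adds further complexity compared to Lemma \ref{lemma:differential3an}.
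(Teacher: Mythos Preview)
Your proposal is correct and follows essentially the same route as the paper, which packages your (A) and (B) as Lemmas~\ref{lemma:differential3bcn} and~\ref{lemma:differential1bcn} respectively. The paper streamlines two details: in (B) it observes $M[2d]=0$ directly (since $\bb(\dA_{2d-1}\times\dA_1)=d$) rather than using $M[2d,2d+1]$, where your invocation of case~3 of Lemma~\ref{lemma:levivanishbcn}(ii) would fail because $\dX_1=\dA_1$; and in (A) it always chooses $M[j_1,j_2,j_2+1]$ (the $\dX$-factor has odd index $\geq 3$ whenever $j_2\leq 2d-2$), so both off-diagonal projections land in terms with $j_2+1$ odd and hence vanish outright, eliminating any need for an inductive order.
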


Lemma \ref{lemma:vanishbcn} says that the only potentially nonzero
terms $\ssE^1_{pq}$ in our spectral sequence with $p+q = d$ are $\ssE^1_{0d}$ and $\ssE^1_{1,d-1}$.  Lemma \ref{lemma:vanishbcn}
also says that $\ssE^1_{1,d-1}=0$ if $d = 1$, and we
will prove in Lemma \ref{lemma:differential3bcn} below that the differential $\ssE^1_{2,d-1} \rightarrow \ssE^1_{1,d-1}$
is surjective for $d \geq 2$.  It follows that in all cases $\ssE^2_{1,d-1} = 0$.  We will also prove in Lemma \ref{lemma:differential1bcn}
below that the summand $\HH_d(\bL_{\bDelta[1]}(k);\St(\bL_{\bDelta[1]}))$ of
\[\ssE^1_{0d} = \bigoplus_{R \in \cL_0(\bG)} \HH_d(\bL_{R}(k);\St(\bL_{R})) = \bigoplus_{j=1}^{n} \HH_d(\bL_{\bDelta[j]}(k);\St(\bL_{\bDelta[j]}))\]
surjects onto the cokernel of the differential $\ssE^1_{1d} \rightarrow \ssE^1_{0d}$.
It follows that $\ssE^2_{0d}$ is a quotient of $\HH_d(\bL_{\bDelta[1]}(k);\St(\bL_{\bDelta[1]}))$.   Since $\ssE^2_{0d}$ is the
only potentially nonzero term of the form $\ssE^2_{pq}$ with $p+q = d$, it follows that $\HH_d(\bL_{\bDelta[1]}(k);\St(\bL_{\bDelta[1]}))$
surjects onto $\HH_d(\bG(k);\St(\bG))$, as desired.
\end{proof}

\section{Differentials (types \texorpdfstring{$\dB$}{B} and \texorpdfstring{$\dC$}{C} and \texorpdfstring{$\dBC$}{BC})}
\label{section:differentialbcn}
    
This final section of this part of the paper
determines the images of three differentials whose calculations were needed in the previous section.

\subsection{Differentials, I (types \texorpdfstring{$\dB$}{B} and \texorpdfstring{$\dC$}{C} and \texorpdfstring{$\dBC$}{BC})}
\label{section:differential1bcn}

Our first differential calculation is:

\begin{lemma}
\label{lemma:differential1bcn}
Let $\bG$ be a reductive group with $\bPhik(\bG) = \dX_{2d+1}$ for some $\dX \in \{\dB_n,\dC_n,\dBC_n\}$ and $d \geq 1$.
Assume the $2d$-surjectivity and vanishing hypothesis in type $\dBC$ (Definition \ref{definition:hypothesisbcn}).
Let $\ssE^1_{pq}$ be the spectral
sequence from Corollary \ref{corollary:spectralsequence}.  Then the summand $\HH_d(\bL_{\bDelta[1]}(k);\St(\bL_{\bDelta[1]}))$
of $\ssE^1_{0d}$ surjects onto the cokernel of the
differential $\ssE^1_{1d} \rightarrow \ssE^1_{0d}$. 
\end{lemma}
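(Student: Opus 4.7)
The plan is to mirror the argument of Lemma \ref{lemma:differential1an} in type $\dA$, with modifications to account for the broken left–right symmetry of the $\dX_n$ Dynkin diagram. Write
\[M[j_1,\ldots,j_\ell] = \HH_d(\bL_{\bDelta[j_1,\ldots,j_\ell]}(k);\St(\bL_{\bDelta[j_1,\ldots,j_\ell]})),\]
so $\ssE^1_{0d} = \bigoplus_{j=1}^{2d+1} M[j]$ and $\ssE^1_{1d} = \bigoplus_{j_1 < j_2} M[j_1,j_2]$. The goal is to show that for every $j \geq 2$, the image of $M[j]$ in the cokernel of $\ssE^1_{1d} \to \ssE^1_{0d}$ lies inside the image of $M[1]$.

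First I would record the bounds from Lemma \ref{lemma:levivanishbcn}(i) on each vertex. Using $\bPhik(\bL_{\bDelta[1]}) = \dX_{2d}$, $\bPhik(\bL_{\bDelta[2d+1]}) = \dA_{2d}$, and $\bPhik(\bL_{\bDelta[j]}) = \dA_{j-1} \times \dX_{2d+1-j}$ for $2 \leq j \leq 2d$ (with the convention $\dX_1 = \dA_1$), a direct parity computation gives $\bb(\bPhik(\bL_{\bDelta[j]})) = d-1$ in every case except $j = 2d$, where the product $\dA_{2d-1} \times \dA_1$ yields $\bb = d$ and hence $M[2d] = 0$ automatically. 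Thus for each surviving $j \neq 1, 2d$, the group $M[j]$ sits in the first potentially nonvanishing degree, which is exactly where the surjectivity clause Lemma \ref{lemma:levivanishbcn}(ii) becomes applicable.

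The two absorptions are then as follows. For $j$ odd with $j \geq 3$ (in particular for $j = 2d+1$), the $\dA_{j-1}$-factor (respectively $\dA_{2d}$ when $j = 2d+1$) has even positive subscript, so the ``even $\dA$'' case of Lemma \ref{lemma:levivanishbcn}(ii) makes the restriction $M[1,j] \to M[j]$ surjective; verbatim as in Lemma \ref{lemma:differential1an}, the formula for the differential on $M[1,j]$ identifies $M[j]$ in the cokernel with a subspace of $M[1]$. For $j$ even with $2 \leq j \leq 2d-2$, the rightmost factor $\dX_{2d+1-j}$ has odd subscript, so the ``odd $\dX$'' case of Lemma \ref{lemma:levivanishbcn}(ii) makes $M[j,j+1] \to M[j]$ surjective; the same argument identifies $M[j]$ in the cokernel with a subspace of $M[j+1]$, which is odd, at least $3$, and therefore already reduced to $M[1]$ by the previous sentence.

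The main obstacle, compared to the type $\dA$ argument, is exactly the asymmetry: unlike type $\dA$, we cannot simultaneously absorb into both endpoints because the multiple edge at the right end prevents a symmetric left-endpoint reduction from the right. What makes the one-sided absorption go through is precisely the ``odd $\dX$'' clause of Lemma \ref{lemma:levivanishbcn}(ii), which is supplied by the surjectivity part of the inductive hypothesis in type $\dBC$; it is what lets each even-indexed $M[j]$ be pushed inward by one position before being closed out on the $\dA$-side. Apart from this structural point, the argument is routine parity bookkeeping.
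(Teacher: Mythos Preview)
Your proposal is correct and follows essentially the same argument as the paper's proof. The only cosmetic difference is that the paper splits your ``$j$ odd, $j\geq 3$'' case into two subcases (odd $j\neq 2d+1$ and $j=2d+1$) before merging them, whereas you handle them uniformly; the underlying use of $M[1,j]$ for odd $j$, $M[j,j+1]$ for even $j\leq 2d-2$, and the observation $M[2d]=0$ are identical.
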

\begin{proof}
As notation, for $1 \leq j_1,\ldots,j_{\ell} \leq 2d+1$ let
\[M[j_1, \ldots, j_{\ell}] = \HH_d(\bL_{\bDelta[j_1, \ldots, j_{\ell}]}(k);\St(\bL_{\bDelta[j_1, \ldots, j_{\ell}]})).\]
We have
\[\ssE^1_{0d} = \bigoplus_{1 \leq j \leq 2d+1} M[j] \quad \text{and} \quad 
\ssE^1_{1d} = \bigoplus_{1 \leq j_1 < j_2 \leq 2d+1} M[j_1, j_2].\]
Consider some $1 < j \leq 2d+1$.  We must prove that when we quotient $\ssE^1_{0d}$ by the image of the differential
$\ssE^1_{1d} \rightarrow \ssE^1_{0d}$, the summand
$M[j]$ of $\ssE^1_{0d}$ is identified with a subspace of
$M[1]$.  Note that\noeqref{eqn:differential1bcn.1}\noeqref{eqn:differential1bcn.2}
\begin{align}
\bPhik(\bL_{\bDelta[j]}) &= 
\begin{cases} 
\dA_{j-1} \times \dX_{2d+1-j} & \text{if $j \leq 2d-1$},\\
\dA_{2d-1} \times \dA_1       & \text{if $j = 2d$}, \\
\dA_{2d}                      & \text{if $j = 2d+1$}, 
\end{cases} \label{eqn:differential1bcn.1} \\
\bb(\bPhik(\bL_{\bDelta[j]})) &= 
\begin{cases} 
1 + \lfloor (j-2)/2 \rfloor + \lfloor (2d-j-1)/2 \rfloor = d-1 & \text{if $j \leq 2d-1$}, \\ 
1 + \lfloor (2d-2)/2 \rfloor + \lfloor (1-1)/2 \rfloor   = d   & \text{if $j=2d$},\\
\lfloor (2d-1)/2 \rfloor = d-1                                 & \text{if $j=2d+1$}. 
\end{cases} \label{eqn:differential1bcn.2}
\end{align}
It follows that $M[2d]=0$, so we do not need to deal with that case.  There are three other cases.

The first case is $j$ odd and $j \neq 2d+1$.  Recall that by assumption $j>1$.
The differential $\ssE^1_{1d} \rightarrow \ssE^1_{0d}$
takes the summand $M[1,j]$ of $\ssE^1_{1d}$ to $\ssE^1_{0d}$ via the map
\[\begin{tikzcd}
M[1,j] \arrow{r} & M[j] \oplus M[1] \arrow[hook]{r} & \ssE^1_{0d}.
\end{tikzcd}\]
Since the $\dA_{j-1}$-factor in \eqref{eqn:differential1bcn.1} has $j-1$
even and positive, we can use Lemma \ref{lemma:levivanishbcn} (Levi vanishing and surjectivity)
to see that the map $M[1,j] \rightarrow M[j]$ is surjective.  Here we are using the fact that $\bb(\bPhik(\bL_{\bDelta[j]}))+1 = d$; cf.\ \eqref{eqn:differential1bcn.2}.
Thus when we quotient
$\ssE^1_{0d}$ by the image of the differential, $M[j]$ is identified with a subspace
of $M[1]$, as desired.

The second case is $j=2d+1$.  The differential $\ssE^1_{1d} \rightarrow \ssE^1_{0d}$
takes the summand $M[1,2d+1]$ of $\ssE^1_{1d}$ to $\ssE^1_{0d}$ via the map
\[\begin{tikzcd}
M[1,2d+1] \arrow{r} & M[2d+1] \oplus M[1] \arrow[hook]{r} &\ssE^1_{0d}.
\end{tikzcd}\]
Since the $\dA_{2d}$ in \eqref{eqn:differential1bcn.1} has $2d$
even and positive, we can use
Lemma \ref{lemma:levivanishbcn} (Levi vanishing and surjectivity)
to see that the map $M[1,2d+1] \rightarrow M[2d+1]$ is surjective.  Here we are using the fact that $\bb(\bPhik(\bL_{\bDelta[j]}))+1 = d$; cf.\ \eqref{eqn:differential1bcn.2}.
Thus when we quotient
$\ssE^1_{0d}$ by the image of the differential, $M[2d+1]$ is identified with a subspace
of $M[1]$, as desired.

The third case is $j$ even and $j \neq 2d$.
The differential $\ssE^1_{1d} \rightarrow \ssE^1_{0d}$
takes the summand $M[j,j+1]$ of $\ssE^1_{1d}$ to $\ssE^1_{0d}$ via the map
\[\begin{tikzcd}
M[j,j+1] \arrow{r} & M[j+1] \oplus M[j] \arrow[hook]{r} &  \ssE^1_{0d}.
\end{tikzcd}\]
Since the $\dX_{2d+1-j}$-factor in \eqref{eqn:differential1bcn.1} has $2d+1-j$ odd,
we can use Lemma \ref{lemma:levivanishbcn} (Levi vanishing and surjectivity)
to see that the map $M[j,j+1] \rightarrow M[j]$ is surjective.  Again, we are using the fact that
$\bb(\bPhik(\bL_{\bDelta[j]}))+1 = d$; cf.\ \eqref{eqn:differential1bcn.2}.  Thus when we
quotient $\ssE^1_{0d}$ by the image of the differential, $M[j]$ is identified with
a subspace of $M[j+1]$.  Since $j+1$ is odd, the previous paragraph shows that
this quotient identifies $M[j+1]$ with a subspace of $M[1]$, as desired.
\end{proof}

\subsection{Differentials, II (types \texorpdfstring{$\dB$}{B} and \texorpdfstring{$\dC$}{C} and \texorpdfstring{$\dBC$}{BC})}
\label{section:differential2bcn}

Our second differential calculation is:

\begin{lemma}
\label{lemma:differential2bcn}
Let $\bG$ be a reductive group with
$\bPhik(\bG) = \dX_{2d+2}$ for some $\dX \in \{\dB_n,\dC_n,\dBC_n\}$ and $d \geq 1$.
Assume the $(2d+1)$-surjectivity and vanishing hypothesis in type $\dBC$ (Definition \ref{definition:hypothesisbcn}).
Let $\ssE^1_{pq}$ be the spectral
sequence from Corollary \ref{corollary:spectralsequence}.  Then the
differential $\ssE^1_{1d} \rightarrow \ssE^1_{0d}$ is surjective.
\end{lemma}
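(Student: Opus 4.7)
The plan is to follow the strategy of Lemma \ref{lemma:differential2an}, using the shorthand
\[
M[j_1,\ldots,j_\ell] := \HH_d(\bL_{\bDelta[j_1,\ldots,j_\ell]}(k); \St(\bL_{\bDelta[j_1,\ldots,j_\ell]})),
\]
so that $\ssE^1_{0d} = \bigoplus_{1 \leq j \leq 2d+2} M[j]$ and $\ssE^1_{1d} = \bigoplus_{1 \leq j_1 < j_2 \leq 2d+2} M[j_1, j_2]$. To prove surjectivity of the differential it will suffice to show, for each $j$, that $M[j]$ either vanishes outright or is killed in the cokernel by the image of some $M[j_1, j_2]$.

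First I would establish a vanishing claim for ``most'' $M[j]$, analogous to Claim 1 of Lemma \ref{lemma:differential2an}. The Levi root systems in play are $\bPhik(\bL_{\bDelta[j]}) = \dA_{j-1} \times \dX_{2d+2-j}$ for $1 \leq j \leq 2d$ (using the convention $\dX_1 = \dA_1$), together with $\dA_{2d} \times \dA_1$ for $j = 2d+1$ and $\dA_{2d+1}$ for $j = 2d+2$. A short parity calculation shows that $\bb(\bPhik(\bL_{\bDelta[j]})) \geq d$ except in the cases $j = 1$ and $j$ odd with $3 \leq j \leq 2d-1$, where the bound drops to $d-1$. Outside these exceptions Lemma \ref{lemma:levivanishbcn}(i) yields $M[j] = 0$.

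The second step is to kill the remaining $M[j]$. For odd $j$ with $3 \leq j \leq 2d-1$, I would use the summand $M[j-1, j]$ of $\ssE^1_{1d}$: its companion $M[j-1]$ vanishes (since $j-1$ is even in the range $[2, 2d-2]$), and the map $M[j-1, j] \to M[j]$ is surjective by Lemma \ref{lemma:levivanishbcn}(ii) applied to the even nonzero rank $\dA_{j-1}$ block in $\bPhik(\bL_{\bDelta[j]})$, from which one removes the last simple root. For $j = 1$, I would use $M[1, 2]$: the companion $M[2]$ has already been shown to vanish, and the surjectivity of $M[1, 2] \to M[1]$ comes from the \emph{third} bullet of Lemma \ref{lemma:levivanishbcn}(ii), since $\bPhik(\bL_{\bDelta[1]}) = \dX_{2d+1}$ has odd rank and the passage to $\bL_{\bDelta[1,2]}$ removes the first simple root of this $\dX$ factor.

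There is no substantive obstacle; the argument closely mirrors the type $\dA$ case. The one point of novelty is the treatment of $j = 1$, where the necessary surjectivity is supplied by the third (odd-rank $\dX$-factor) bullet of Lemma \ref{lemma:levivanishbcn}(ii) rather than the $\dA$-factor bullets. That $2d+1$ is odd is precisely what makes this bullet applicable; indeed, this appears to be exactly why Lemma \ref{lemma:levivanishbcn}(ii) was formulated to include it.
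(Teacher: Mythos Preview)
Your proposal is correct and follows essentially the same approach as the paper's proof: the same vanishing claim (the paper phrases it as ``$M[j]=0$ for $j$ even'' and separately notes $M[2d+1]=0$, while you phrase it as listing the surviving exceptions, but the content is identical), the same use of $M[j-1,j]$ to kill $M[j]$ for odd $3\leq j\leq 2d-1$ via the even $\dA_{j-1}$ factor, and the same use of $M[1,2]$ with the odd-$\dX$-factor bullet of Lemma~\ref{lemma:levivanishbcn}(ii) to kill $M[1]$.
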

\begin{proof}
As notation, for $1 \leq j_1,\ldots,j_{\ell} \leq 2d+2$ let
\[M[j_1, \ldots, j_{\ell}] = \HH_d(\bL_{\bDelta[j_1, \ldots, j_{\ell}]}(k);\St(\bL_{\bDelta[j_1, \ldots, j_{\ell}]})).\]
We have
\[\ssE^1_{0d} = \bigoplus_{1 \leq j \leq 2d+2} M[j] \quad \text{and} \quad 
\ssE^1_{1d} = \bigoplus_{1 \leq j_1 < j_2 \leq 2d+2} M[j_1, j_2].\]
Consider some $1 \leq j \leq 2d+2$.  We must prove that when we quotient $\ssE^1_{0d}$ by the image of the differential
$\ssE^1_{1d} \rightarrow \ssE^1_{0d}$, the summand $M[j]$ is killed.
The first step is to show that many $M[j]$ already vanish:

\begin{claim}{1}
\label{claim:differential2bcn.1}
For $1 \leq j \leq 2d+2$ with $j$ even, we have $M[j] = 0$.
\end{claim}
\begin{proof}[Proof of claim]
Write $j = 2e$, so
\[\bPhik(\bL_{\bDelta[2e]}) = \begin{cases}
\dA_{2e-1} \times \dX_{2d+2-2e} & \text{if $2e \neq 2d+2$}, \\
\dA_{2d+1}                      & \text{if $2e = 2d+2$}.
\end{cases}\]
Lemma \ref{lemma:levivanishbcn} (Levi vanishing and surjectivity) implies that
$\HH_i(\bL_{\bDelta[2e]}(k);\St(\bL_{\bDelta[2e]})) = 0$ for
\[i \leq 
\begin{cases}
1 + \lfloor (2e-2)/2 \rfloor + \lfloor (2d-2e)/2 \rfloor = 1 + (e-1) + (d-e) = d & \text{if $2e \neq 2d+2$}, \\
\lfloor 2d/2 \rfloor = d & \text{if $2e = 2d+2$}.
\end{cases}\]
In particular, $M[2e] = \HH_d(\bL_{\bDelta[2e]}(k);\St(\bL_{\bDelta[2e]})) = 0$.
\end{proof}

Now consider $1 \leq j \leq 2d+2$ with $j$ odd.  In light of Claim \ref{claim:differential2bcn.1}, it is enough
to prove that $M[j]$ is killed when we quotient $\ssE^1_{0d}$ by the image of the differential
$\ssE^1_{1d} \rightarrow \ssE^1_{0d}$.  Since $j$ is odd, we have
\begin{align}
\bPhik(\bL_{\bDelta[j]}) &= \begin{cases}
\dA_{j-1} \times \dX_{2d+2-j} & \text{if $j \neq 1,2d+1$},\\
\dX_{2d+1}                    & \text{if $j=1$},\\
\dA_{2d} \times \dA_1        & \text{if $j=2d+1$},
\end{cases} \label{eqn:differential2bcn.1} \\
\bb(\bPhik(\bL_{\bDelta[j]})) &= \begin{cases}
1+\lfloor (j-2)/2 \rfloor + \lfloor (2d-j)/2 \rfloor = d-1 & \text{if $j \neq 1,2d+1$},\\
\lfloor (2d-2)/2 \rfloor = d-1 & \text{if $j=1$},\\
1+\lfloor (2d-1)/2 \rfloor + \lfloor (1-1)/2 \rfloor = d & \text{if $j=2d+1$}. \label{eqn:differential2bcn.2}
\end{cases}
\end{align}
It follows that $M[2d+1]=0$, so we do not need to deal with that case.  There are two other cases.

The first case is $j \neq 1$.  Remember that $j$ is odd.  
On the summand $M[j-1,j]$, the differential is the map
\[\begin{tikzcd}
M[j-1,j] \arrow{r} & M[j] \oplus M[j-1] \arrow[hook]{r} & \ssE^1_{0d}.
\end{tikzcd}\]
Claim \ref{claim:differential2bcn.1} says that $M[j-1] = 0$, so to show that this differential kills
$M[j]$ it is enough to prove that $M[j-1,j] \rightarrow M[j]$ is surjective.  Since
the $\dA_{j-1}$ in \eqref{eqn:differential2bcn.1} has $j-1$ even and positive, this follows
from Lemma \ref{lemma:levivanishbcn} (Levi vanishing and surjectivity).  Here we are using
the fact that $\bb(\bPhik(\bL_{\bDelta[j]}))+1 = d$; cf.\ \eqref{eqn:differential2bcn.2}.

The second case is $j=1$.  On the summand $M[1,2]$, the differential is the map
\[\begin{tikzcd}[column sep=large]
M[1,2] \arrow{r} & M[2] \oplus M[1] \arrow[hook]{r} & \ssE^1_{0d}.
\end{tikzcd}\]
Claim \ref{claim:differential2bcn.1} says that $M[2] = 0$, and since the $\dX_{2d+1}$ in
\eqref{eqn:differential2bcn.2} has $2d+1$ odd Lemma \ref{lemma:levivanishbcn} (Levi vanishing and surjectivity) shows that
the map $M[1,2] \rightarrow M[1]$ is surjective.
Here we are using
the fact that $\bb(\bPhik(\bL_{\bDelta[1]}))+1 = d$; cf.\ \eqref{eqn:differential2bcn.2}.
The lemma follows.
\end{proof}

\subsection{Differentials, III (types \texorpdfstring{$\dB$}{B} and \texorpdfstring{$\dC$}{C} and \texorpdfstring{$\dBC$}{BC})}
\label{section:differential3bcn}

Our final differential calculation is:

\begin{lemma}
\label{lemma:differential3bcn}
Let $\bG$ be a reductive group with
$\bPhik(\bG) = \dX_{2d+1}$ for some $\dX \in \{\dB_n,\dC_n,\dBC_n\}$ and $d \geq 2$.
Assume the $2d$-surjectivity and vanishing hypothesis in type $\dBC$ (Definition \ref{definition:hypothesisbcn}).
Let $\ssE^1_{pq}$ be the spectral
sequence from Corollary \ref{corollary:spectralsequence}.  Then the differential
$\ssE^1_{2,d-1} \rightarrow \ssE^1_{1,d-1}$ is surjective.
\end{lemma}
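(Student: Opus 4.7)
The plan is to parallel the proof of Lemma \ref{lemma:differential3an}.  Set
\[M[j_1, \ldots, j_\ell] = \HH_{d-1}(\bL_{\bDelta[j_1, \ldots, j_\ell]}(k); \St(\bL_{\bDelta[j_1, \ldots, j_\ell]})),\]
so that $\ssE^1_{1,d-1} = \bigoplus_{1 \leq j_1 < j_2 \leq 2d+1} M[j_1, j_2]$ and $\ssE^1_{2,d-1}$ decomposes analogously.

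First I would establish (as an analog of Claim 1 in Lemma \ref{lemma:differential3an}) that $M[j_1, j_2] = 0$ unless $j_1$ is odd, $j_2$ is even, and $j_2 \leq 2d - 2$.  For a generic pair with $1 < j_1$, $j_1 + 1 < j_2 < 2d$, the Levi has relative root system $\dA_{j_1 - 1} \times \dA_{j_2 - j_1 - 1} \times \dX_{2d + 1 - j_2}$; setting $a = j_1$, $b = j_2 - j_1$, $c = 2d + 1 - j_2$ (so $a + b + c = 2d+1$, each $\geq 2$), Lemma \ref{lemma:levivanishbcn}(i) gives the vanishing bound $\lfloor a/2 \rfloor + \lfloor b/2 \rfloor + \lfloor c/2 \rfloor - 1$.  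Since $a+b+c$ is odd, this is $\geq d-1$ unless all three are odd, which forces $j_1$ odd and $j_2$ even.  The boundary configurations---$j_2 \in \{2d, 2d+1\}$ (where the rightmost factor collapses to $\dA_1$ or disappears), $j_2 = j_1 + 1$ (middle $\dA$-factor missing), or $j_1 = 1$ (left $\dA$-factor missing)---all yield the same dichotomy by direct computation, with $j_2 \geq 2d$ always landing on the vanishing side.

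Next, for each ``bad'' pair $(j_1, j_2)$ with $j_1$ odd, $j_2$ even, and $j_2 \leq 2d-2$, I would exhibit a triple whose image under the differential kills $M[j_1, j_2]$ in the cokernel, with all other components of the image vanishing by the first step.  If $j_1 > 1$, take $M[j_1 - 1, j_1, j_2]$: its image in $\ssE^1_{1,d-1}$ is contained in $M[j_1, j_2] \oplus M[j_1 - 1, j_2] \oplus M[j_1 - 1, j_1]$, and the latter two summands vanish by the first step because $j_1 - 1$ is even.  Surjectivity of the projection onto $M[j_1, j_2]$ follows from Lemma \ref{lemma:levivanishbcn}(ii) applied to the leftmost factor $\dA_{j_1-1}$ of $\bPhik(\bL_{\bDelta[j_1, j_2]})$, which has even positive rank.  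If $j_1 = 1$, take instead $M[1, j_2, j_2+1]$ (valid since $j_2 + 1 \leq 2d - 1$): the other two components vanish because $j_2 + 1$ is odd, and surjectivity onto $M[1, j_2]$ follows from Lemma \ref{lemma:levivanishbcn}(ii) applied to the rightmost $\dX$-factor $\dX_{2d + 1 - j_2}$, which has odd rank.

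The main obstacle will be the bookkeeping: I must check, across all the degenerate configurations of $\bPhik(\bL_{\bDelta[j_1, j_2]})$, that $\bb(\bPhik(\bL_{\bDelta[j_1, j_2]})) + 1 = d - 1$ so that Lemma \ref{lemma:levivanishbcn}(ii) applies at the correct homological degree, and that the triples I propose actually make sense.  The tightest case is $(j_1, j_2) = (1, 2)$, where $\bPhik(\bL_{\bDelta[1,2]}) = \dX_{2d-1}$ and the chosen triple is $(1,2,3)$ with $\bPhik(\bL_{\bDelta[1,2,3]}) = \dX_{2d-2}$; this is precisely where the hypothesis $d \geq 2$ is needed, since it ensures both that the triple $(1,2,3)$ exists and that $\dX_{2d-2}$ is a genuine $\dX$-type root system (rank $\geq 2$).
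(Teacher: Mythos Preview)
Your proposal is correct and follows essentially the same approach as the paper: establish the analogue of Claim~1 via Lemma~\ref{lemma:levivanishbcn}(i), then kill each remaining $M[j_1,j_2]$ (with $j_1$ odd, $j_2$ even, $j_2\leq 2d-2$) using a suitable triple and Lemma~\ref{lemma:levivanishbcn}(ii). The only cosmetic difference is that the paper uses the single recipe $M[j_1,j_2,j_2+1]$ for \emph{all} such pairs (the $\dX$-factor always has odd rank, and $j_2+1\leq 2d-1$ is automatic), whereas you split into $M[j_1-1,j_1,j_2]$ for $j_1>1$ and $M[1,j_2,j_2+1]$ for $j_1=1$; both choices work.
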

\begin{proof}
As notation, for $1 \leq j_1,\ldots,j_{\ell} \leq 2d+1$ let
\[M[j_1, \ldots, j_{\ell}] = \HH_{d-1}(\bL_{\bDelta[j_1, \ldots, j_{\ell}]}(k);\St(\bL_{\bDelta[j_1, \ldots, j_{\ell}]})).\]
We have
\[\ssE^1_{1,d-1} = \bigoplus_{1 \leq j_1 < j_2 \leq 2d+1} M[j_1, j_2] \quad \text{and} \quad 
\ssE^1_{2,d-1} = \bigoplus_{1 \leq j_1 < j_2 < j_3 \leq 2d+1} M[j_1, j_2, j_3].\]
Consider some $1 \leq j_1 < j_2 \leq 2d+1$.  We must prove that when we quotient $\ssE^1_{1,d-1}$ by the image of the differential
$\ssE^1_{2,d-1} \rightarrow \ssE^1_{1,d-1}$, the summand $M[j_1, j_2]$ is killed.
The first step is to show that many $M[j_1, j_2]$ already vanish:

\begin{claim}{1}
\label{claim:differential3bcn.1}
If $1 \leq j_1 < j_2 \leq 2d+1$ with either $j_1$ even or $j_2$ odd or $j_2 = 2d$, then $M[j_1, j_2] = 0$.
\end{claim}
\begin{proof}[Proof of claim]
To simplify our notation, we will let $\dA_0 = \{0\} \subset \R^0$, regarded as a trivial root system of
rank $0$.  This does not affect the bounds for homology vanishing in Lemma \ref{lemma:levivanishbcn} (Levi vanishing and surjectivity).
With this convention,
\[\bPhik(\bL_{\bDelta[j_1,j_2]}) = \begin{cases}
\dA_{j_1-1} \times \dA_{j_2-j_1-1} \times \dX_{2d+1-j_2} & \text{if $j_2 \neq 2d,2d+1$}, \\
\dA_{j_1-1} \times \dA_{2d-j_1-1} \times \dA_1           & \text{if $j_2 = 2d$},\\
\dA_{j_1-1} \times \dA_{2d-j_1}                          & \text{if $j_2 = 2d+1$}.
\end{cases}\]
If $j_2 = 2d$ and $j_1$ is arbitrary, then Lemma \ref{lemma:levivanishbcn} (Levi vanishing and surjectivity)
implies that $\HH_i(\bL_{\bDelta[j_1,j_2]}(k);\St(\bL_{\bDelta[j_1,j_2]})) = 0$ for
\[i \leq 2+\lfloor (j_1-2)/2 \rfloor + \lfloor (2d-j_1-2)/2 \rfloor + \lfloor (1-1)/2 \rfloor = d + \lfloor j_1/2 \rfloor + \lfloor -j_1/2 \rfloor.\]
The right hand side is at least $d-1$, so $M[j_1,j_2] = 0$.  If $j_2 = 2d+1$ and $j_1$ is arbitrary, then Lemma \ref{lemma:levivanishbcn} (Levi vanishing and surjectivity) implies that
$\HH_i(\bL_{\bDelta[j_1,j_2]}(k);\St(\bL_{\bDelta[j_1,j_2]})) = 0$ for
\[i \leq 1 + \lfloor (j_1-2)/2 \rfloor + \lfloor (2d-j_1-1)/2 \rfloor = d + \lfloor j_1/2 \rfloor + \lfloor (-j_1-1)/2 \rfloor = d-1,\]
so $M[j_1,j_2] = 0$.  Finally, if $j_2 \neq 2d,2d+1$ then Lemma \ref{lemma:levivanishbcn} (Levi vanishing and surjectivity) implies that
$\HH_i(\bL_{\bDelta[j_1,j_2]}(k);\St(\bL_{\bDelta[j_1,j_2]})) = 0$ for
\begin{align*}
i \leq &2 + \lfloor (j_1-2)/2 \rfloor + \lfloor (j_2-j_1-2)/2 \rfloor + \lfloor (2d-j_2-1)/2 \rfloor \\
  =    &d + \lfloor j_1/2 \rfloor + \lfloor (j_2-j_1)/2 \rfloor + \lfloor (-j_2-1)/2 \rfloor.
\end{align*}
It follows that $M[j_1,j_2] = 0$ if the right hand side is at least $d-1$, i.e., if
\begin{equation}
\label{eqn:differential3bcn.1.toprove}
\lfloor j_1/2 \rfloor + \lfloor (j_2-j_1)/2 \rfloor + \lfloor (-j_2-1)/2 \rfloor \geq -1.
\end{equation}
The left hand side of \eqref{eqn:differential3bcn.1.toprove} is $-1$ if either $j_1$ is even or $j_2$ is odd, and is $-2$ otherwise.
The claim follows.
\end{proof}

Now consider $1 \leq j_1 < j_2 \leq 2d-2$ with $j_1$ odd and $j_2$ even.  In light of Claim \ref{claim:differential3bcn.1}, it is enough
to prove that $M[j_1, j_2]$ is killed when we quotient $\ssE^1_{1,d-1}$ by the image of the differential
$\ssE^1_{2,d-1} \rightarrow \ssE^1_{1,d-1}$.  We have
\begin{equation}
\label{eqn:differential3bcn.1}
\bPhik(\bL_{\bDelta[j_1, j_2]}) =
\begin{cases}
\dA_{j_1-1} \times \dA_{j_2-j_1-1} \times \dX_{2d+1-j_2} & \text{if $j_1 > 1$ and $j_2>j_1+1$},\\
\dA_{j_2-2} \times \dX_{2d+1-j_2}                        & \text{if $j_1 = 1$ and $j_2>j_1+1$},\\
\dA_{j_1-1} \times \dX_{2d-j_1}                          & \text{if $j_1 > 1$ and $j_2=j_1+1$},\\
\dX_{2d-1}                                               & \text{if $j_1=1$ and $j_2 = j_1+1$}.
\end{cases}
\end{equation}
In all four cases, since $j_1$ is odd and $j_2$ is even we have
\begin{equation}
\label{eqn:differential3bcn.11}
\bb(\bPhik(\bL_{\bDelta[j_1,j_2]})) = d-2.
\end{equation}
On the summand $M[j_1,j_2,j_2+1]$, the differential is the map
\[\begin{tikzcd}
M[j_1,j_2,j_2+1] \arrow{r} & M[j_2,j_2+1] \oplus M[j_1,j_2+1] \oplus M[j_1,j_2] \arrow[hook]{r} & \ssE^1_{1,d-1}.
\end{tikzcd}\]
Since $j_2+1$ is odd, Claim \ref{claim:differential3bcn.1} says that $M[j_2,j_2+1] = M[j_1,j_2+1] = 0$, so to show that this differential kills
$M[j_1, j_2]$ it is enough to prove that $M[j_1,j_2,j_2+1] \rightarrow M[j_1,j_2]$ is surjective.  Since the $\dX$-term that appears in all
four cases of \eqref{eqn:differential3an.1} has an odd subscript, this follows
from Lemma \ref{lemma:levivanishbcn} (Levi vanishing and surjectivity).  Here we are using the fact
that $\bb(\bPhik(\bL_{\bDelta[j_1,j_2]})) + 1 = d-1$; cf.\ \eqref{eqn:differential3bcn.11}.
\end{proof}

\part{Vanishing in type \texorpdfstring{$\dD$}{D} (Theorem \ref{theorem:typedn})}
\label{part:dn}

This part of the paper is devoted to Theorem \ref{theorem:typedn},
which is our vanishing result in type $\dD$.  The proofs follow
the same outline as those for type $\dA$ in Part \ref{part:an}.

\section{Vanishing and surjectivity (type \texorpdfstring{$\dD$}{D})}
\label{section:strongdn}

In this section, we first introduce some notation for the standard Levi factors
of groups of type $\dD_n$.  We then state a stronger version of Theorem \ref{theorem:typedn}.

\subsection{Levi factor notation}
\label{section:levidn}

Let $\bG$ be a reductive group with $\bPhik(\bG) = \dD_n$ for
some $n \geq 4$.  We introduce the following convention:

\begin{convention}
\label{convention:smalld}
Usually $\dD_n$ is only defined for $n \geq 4$, but to allow
uniform statements we will define $\dD_3 = \dA_3$.  We do not
define $\dD_n$ for $n \leq 2$.
\end{convention}

Let $\bDelta = \bDeltak(\bG)$ be the set of simple roots of $\bPhik(\bG)$.  Number the
elements of $\bDelta$ as follows:\\ 
\centerline{\psfig{file=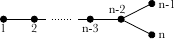,scale=2}}
For $1 \leq j_1,\ldots,j_{\ell} \leq n$, let $\bDelta[j_1, \ldots, j_{\ell}]$ be the result of removing
the simple roots labeled $j_1,\ldots,j_{\ell}$ from $\bDelta$.  We thus have a standard Levi
subgroup $\bL_{\bDelta[j_1, \ldots, j_{\ell}]}$ of $\bG$.

\begin{example}
\label{example:levidn}
We have $\bPhik(\bL_{\bDelta[1]}) = \dD_{n-1}$ and
\[\bPhik(\bL_{\bDelta[n]}) = \bPhik(\bL_{\bDelta[n-1]}) = \dA_{n-1}\]
and $\bPhik(\bL_{\bDelta[n-2]}) = \dA_{n-3} \times \dA_1 \times \dA_1$, while for
$2 \leq j \leq n-3$ we have $\bPhik(\bL_{\bDelta[j]}) = \dA_{j-1} \times \dD_{n-j}$.  In general,
for distinct $1 \leq j_1,\ldots,j_{\ell} \leq n$ we have either
\begin{align*}
&\bPhik(\bL_{\bDelta[j_1, \ldots, j_{\ell}]}) = \dA_{n_1} \times \cdots \times \dA_{n_{m-1}} \times \dD_{n_m} \quad \text{or} \\
&\bPhik(\bL_{\bDelta[j_1, \ldots, j_{\ell}]}) = \dA_{n_1} \times \cdots \times \dA_{n_{m}}
\end{align*}
with $n_1 + \cdots + n_m + \ell = n$.
\end{example}

We have a Reeder map (cf.\ \S \ref{section:reedermap}) of the form $\St(\bL_{\bDelta[j_1, \ldots, j_{\ell}]}) \rightarrow \St(\bG)$, and
thus maps $\HH_i(\bL_{\bDelta[j_1, \ldots, j_{\ell}]}(k);\St(\bL_{\bDelta[j_1, \ldots, j_{\ell}]})) \rightarrow \HH_i(\bG(k);\St(\bG))$.

\subsection{Strong vanishing}

The main result we will prove in this part of the paper is:

\begin{primedtheorem}{theorem:typedn}
\label{theorem:strongdn}
Let $\bG$ be a reductive group with $\bPhik(\bG) = \dD_n$ for some $n \geq 4$.  Then:
\begin{itemize}
\item $\HH_i(\bG(k);\St(\bG)) = 0$ for $i \leq \lfloor (n-3)/2 \rfloor$; and
\item letting $\bDelta = \bDeltak(\bG)$, the map
$\HH_i(\bL_{\bDelta[1]}(k);\St(\bL_{\bDelta[1]})) \rightarrow \HH_i(\bG(k);\St(\bG))$
is surjective for $i \leq \lfloor (n-2)/2 \rfloor$.
\end{itemize}
\end{primedtheorem}

This strengthens Theorem \ref{theorem:typedn} 
 by adding the indicated surjectivity statement. 
Unlike in other types, we do not need to handle the easiest case (that is, $n=4$) separately.  We will
therefore continue to assume that $n \geq 4$.  We will also
assume as an inductive hypothesis that we have already proved Theorem \ref{theorem:strongdn} in smaller ranks.  For this, we make the following
definition:

\begin{definition}
\label{definition:hypothesisdn}
For $r \geq 3$, the {\em $r$-surjectivity and vanishing hypothesis in type $\dD$} is as follows.
If $r \leq 3$, then it is trivial.\footnote{Groups of type $\dD_3 = \dA_3$ are covered by Theorem \ref{theorem:strongan},
so there is no need to assume anything about them.}  Otherwise, if $r \geq 4$ then
let $\bG$ be a reductive group with $\bPhik(\bG) = \dD_n$ for some $4 \leq n \leq r$.  Then:
\begin{itemize}
\item $\HH_i(\bG(k);\St(\bG)) = 0$ for $i \leq \lfloor (n-3)/2 \rfloor$; and
\item letting $\bDelta = \bDeltak(\bG)$, the map
$\HH_i(\bL_{\bDelta[1]}(k);\St(\bL_{\bDelta[1]})) \rightarrow \HH_i(\bG(k);\St(\bG))$
is surjective for $i \leq \lfloor (n-2)/2 \rfloor$.\qedhere
\end{itemize}
\end{definition}

\begin{remark}
\label{remark:alreadyandn}
We have already proven Theorem \ref{theorem:strongan}, so we also have available to us vanishing and surjectivity results
in type $\dA$.  We have also already proven Theorem \ref{theorem:strongbcn} about types $\dB$ and $\dC$ and $\dBC$, but
we will not need this.
\end{remark}

\section{Vanishing and surjectivity for Levi subgroups (type \texorpdfstring{$\dD$}{D})}
\label{section:levivanishdn}

In this section, we show how to use the $r$-surjectivity and vanishing hypothesis
in type $\dD$ to analyze the homology of standard Levi subgroups.  Our main result
is as follows.  Its statement uses the ordering on the simple roots of
of $\dA_{n_{j_0}}$ and $\dD_{n_{j_0}}$ discussed in \S \ref{section:levian} and \S \ref{section:levidn}.

\begin{lemma}[Levi vanishing and surjectivity]
\label{lemma:levivanishdn}
For some $n \geq 4$, assume the $(n-1)$-surjectivity and vanishing hypothesis in type $\dD$ (Definition \ref{definition:hypothesisdn}).
Let $\bG$ be a reductive group with $\bPhik(\bG) = \dD_n$.  Let $\Delta \subset \bDeltak(\bG)$ be a set
of simple roots with $\Delta \neq \bDeltak(\bG)$.  Write
\[\bPhik(\bL_{\Delta}) = \dA_{n_1} \times \cdots \times \dZ_{n_m} \quad \text{with $\dZ \in \{\dA,\dD\}$}.\]
We thus have $n_1,\ldots,n_{m-1} \geq 1$, and if $\dZ = \dD$ then $n_m \geq 4$ while if $\dZ = \dA$ then $n_m \geq 1$.
Set $b = \bb(\bPhik(\bL_{\Delta}))$, so
\[b = 
\begin{cases}
(m-1) + \lfloor (n_1-1)/2 \rfloor + \cdots + \lfloor(n_{m-1}-1)/2 \rfloor +  \lfloor(n_m-1)/2 \rfloor & \text{if $\dZ = \dA$},\\
(m-1) + \lfloor (n_1-1)/2 \rfloor + \cdots + \lfloor(n_{m-1}-1)/2 \rfloor +  \lfloor(n_m-3)/2 \rfloor & \text{if $\dZ = \dD$}.
\end{cases}\]
Then the following hold:
\begin{itemize}
\item[(i)] We have $\HH_i(\bL_{\Delta}(k);\St(\bL_{\Delta})) = 0$ for $i \leq b$.
\item[(ii)] For some $1 \leq j_0 \leq n$, assume one of the following:
\begin{itemize}
\item $n_{j_0}$ is even and nonzero, $1 \leq j_0 \leq m-1$, and $\Delta' \subset \Delta$
is the set of simple roots obtained by removing either the first or last simple root from $\dA_{n_{j_0}}$.
\item $n_{j_0}$ is even and nonzero, $j_0 = m$ and $\dZ = \dA$, and $\Delta' \subset \Delta$
is the set of simple roots obtained by removing either the first or last simple root from $\dA_{n_{n_m}}$.
\item $n_{j_0}$ is even and nonzero, $j_0 = m$ and $\dZ = \dD$, and $\Delta' \subset \Delta$ is the set
of simple roots obtained by removing the first simple root from $\dZ_{n_m}$.
\end{itemize}
Then the map $\HH_{b+1}(\bL_{\Delta'}(k);\St(\bL_{\Delta'})) \rightarrow \HH_{b+1}(\bL_{\Delta}(k);\St(\bG))$
is surjective.
\end{itemize}
\end{lemma}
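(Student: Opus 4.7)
The plan is to prove Lemma \ref{lemma:levivanishdn} in exactly the same style as the corresponding Lemma \ref{lemma:levivanishan} for type $\dA$ and Lemma \ref{lemma:levivanishbcn} for types $\dB/\dC/\dBC$: verify the hypotheses of the two general lemmas \ref{lemma:reduciblevanishing} (reducible vanishing) and \ref{lemma:reduciblesurjectivity} (reducible surjectivity), and then invoke those lemmas. The inputs are Theorem \ref{theorem:strongan} (already proved) for the $\dA_{n_j}$ factors, together with the $(n-1)$-surjectivity and vanishing hypothesis in type $\dD$ (Definition \ref{definition:hypothesisdn}) for the $\dD_{n_m}$ factor (if present).

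First I would observe that since $\Delta \neq \bDeltak(\bG)$, every factor of $\bPhik(\bL_\Delta)$ has rank $n_j \leq n-1$, so the inductive hypothesis applies to any reductive group with relative root system $\dD_{n_m}$. Combined with Theorem \ref{theorem:strongan} applied to the $\dA_{n_j}$ factors, this gives the hypothesis \eqref{vanishing1} of Lemma \ref{lemma:reduciblevanishing} with $b_j = \bb(\dA_{n_j}) = \lfloor (n_j-1)/2 \rfloor$ for $1 \leq j \leq m-1$ (and $j=m$ when $\dZ = \dA$), and $b_m = \bb(\dD_{n_m}) = \lfloor (n_m-3)/2 \rfloor$ when $\dZ = \dD$. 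Applying Lemma \ref{lemma:reduciblevanishing} and noting that $(m-1) + \sum b_j$ is exactly the stated $b$, we get part (i).

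For part (ii), I would check that for each of the three cases the chosen $\Delta'$ produces the surjectivity hypothesis \eqref{surjectivity2} of Lemma \ref{lemma:reduciblesurjectivity} at index $j_0$. In the first two cases, $\Delta'$ is obtained by removing the first or last simple root from an $\dA_{n_{j_0}}$ factor with $n_{j_0}$ even and nonzero; Theorem \ref{theorem:strongan} then provides surjectivity in degree $\lfloor n_{j_0}/2 \rfloor = b_{j_0}+1$, matching the hypothesis. In the third case, $\Delta'$ is obtained by removing the first simple root from the $\dD_{n_m}$ factor with $n_m$ even and $\geq 4$; the inductive hypothesis then provides surjectivity in degree $\lfloor (n_m-2)/2 \rfloor = (n_m-2)/2$, which equals $b_m+1$ exactly because $n_m$ is even (this is where the parity assumption is essential, since for odd $n_m$ the inductive hypothesis stops one degree short of $b_m+1$). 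Applying Lemma \ref{lemma:reduciblesurjectivity} then yields the required surjection.

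The proof is entirely mechanical once one has the reducible vanishing and surjectivity lemmas, so there is no real obstacle; the only care needed is in the bookkeeping to verify that the parity and range assumptions in each case line up precisely with the inductive hypothesis and with Theorem \ref{theorem:strongan}, and that the inductive hypothesis (which only concerns removing the first simple root of $\dD_{n_m}$, not the last) suffices for the third case of (ii).
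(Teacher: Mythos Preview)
Your proposal is correct and follows essentially the same approach as the paper: verify the hypotheses of Lemmas \ref{lemma:reduciblevanishing} and \ref{lemma:reduciblesurjectivity} using Theorem \ref{theorem:strongan} for the $\dA$-factors and the $(n-1)$-surjectivity and vanishing hypothesis in type $\dD$ for the $\dD$-factor, then apply those lemmas. You supply slightly more detail than the paper does on the parity bookkeeping (checking that $b_{j_0}+1$ matches the surjectivity degree in each case), but the argument is the same.
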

\begin{proof}
Since $\Delta \neq \bDeltak(\bG)$, we have $n_j \leq n-1$ for $1 \leq j \leq m$.  The
$(n-1)$-surjectivity and vanishing hypothesis in type $\dD$ thus applies to all reductive
groups $\bH_m$ with $\bPhik(\bH_m) = \dD_{n_m}$.  Theorem \ref{theorem:strongan} also gives a vanishing and surjectivity
result for all reductive groups $\bH_j$ with $\bPhik(\bH_j) = \dA_{n_j}$.  This gives the hypothesis \eqref{vanishing1} in
Lemma \ref{lemma:reduciblevanishing} (reducible vanishing).  Applying Lemma \ref{lemma:reduciblevanishing},
we deduce (i).  Similarly, for $\Delta'$ as in (ii) it gives the hypotheses \eqref{surjectivity1} and \eqref{surjectivity2}
in Lemma \ref{lemma:reduciblesurjectivity} (reducible surjectivity).  Applying Lemma \ref{lemma:reduciblesurjectivity},
we deduce (ii).
\end{proof}

\section{Vanishing region (type \texorpdfstring{$\dD$}{D})}
\label{section:ssvanishdn}

Let $\bG$ be a reductive group with $\bPhik(\bG) = \bD_n$ for some $n \geq 4$.
Corollary \ref{corollary:spectralsequence} gives a spectral sequence $\ssE^r_{pq}$ converging
to $\HH_{p+q}(\bG(k);\St(\bG))$ with
\[\ssE^1_{pq} \cong \begin{cases}
\bigoplus_{R \in \cL_p(\bG)} \HH_q(\bL_{R}(k);\St(\bL_{R})) & \text{if $0 \leq p \leq n-1$} \\
\HH_q(\bG(k);\St(\bG)^{\otimes 2})                          & \text{if $p = n$},\\
0                                                           & \text{otherwise}.
\end{cases}\]
The following lemma shows that our inductive hypothesis
implies that many terms of this spectral sequence vanish.

\begin{lemma}
\label{lemma:vanishdn}
Let $\bG$ be a reductive group with $\bPhik(\bG) = \bD_n$ for some $n \geq 4$.
Assume the $(n-1)$-surjectivity and vanishing hypothesis in type $\dD$ (Definition \ref{definition:hypothesisdn}).
Let $\ssE^1_{pq}$ be the spectral sequence from Corollary \ref{corollary:spectralsequence}.  Then the following hold:
\begin{itemize}
\item For $n = 2d+3$ with $d \geq 1$, we have $\ssE^1_{pq} = 0$ for $p+q \leq d$ except for possibly
$\ssE^1_{0d}$.
\item For $n = 2d+2$ with $d \geq 1$, we have $\ssE^1_{pq} = 0$ for $p+q \leq d$ except for possibly
$\ssE^1_{0d}$ and $\ssE^1_{1,d-1}$.  For $d=1$, we also have $\ssE^1_{1,d-1} = 0$.
\end{itemize}
\end{lemma}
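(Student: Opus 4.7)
The plan is to imitate the proof of Lemma \ref{lemma:vanishbcn} in type $\dBC$, with Lemma \ref{lemma:levivanishdn} playing the role of Lemma \ref{lemma:levivanishbcn}. All the terms we must analyze satisfy $p \leq \lfloor (n+1)/2 \rfloor \leq n-1$, so they take the form
\[
\ssE^1_{pq} \;=\; \bigoplus_{R \in \cL_p(\bG)} \HH_q(\bL_R(k);\St(\bL_R)).
\]
For each $R \in \cL_p(\bG)$, Example \ref{example:levidn} records that $\bPhik(\bL_R)$ is either a pure product $\dA_{n_1} \times \cdots \times \dA_{n_m}$ or a product $\dA_{n_1} \times \cdots \times \dA_{n_{m-1}} \times \dD_{n_m}$, and in either case $n_1 + \cdots + n_m = n-p-1$. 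I fix $R$ and show $\HH_q(\bL_R(k);\St(\bL_R)) = 0$ in the required range.

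First I would dispatch the pure type-$\dA$ case: Theorem \ref{theorem:strongan} combined with Lemma \ref{lemma:reduciblevanishing} gives vanishing for $q \leq (m-1) + \sum_j \lfloor (n_j-1)/2 \rfloor$, and iterating Lemma \ref{lemma:floorinequality} simplifies this to $q \leq \lfloor (n-p-2)/2 \rfloor$. In the mixed case, Lemma \ref{lemma:levivanishdn}(i) gives vanishing for $q \leq (m-1) + \sum_{j<m} \lfloor (n_j-1)/2 \rfloor + \lfloor (n_m-3)/2 \rfloor$; applying Lemma \ref{lemma:floorinequality} to the pair $(n_{m-1}-1, n_m-3)$ and then iterating as before simplifies this to $q \leq \lfloor (n-p-4)/2 \rfloor$. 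Since the mixed-case bound is the weaker of the two, it controls the vanishing of $\ssE^1_{pq}$.

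Next I would read off the claimed vanishing ranges from the bound $q \leq \lfloor (n-p-4)/2 \rfloor$. For $n = 2d+3$, this becomes $q \leq \lfloor (2d-1-p)/2 \rfloor$; a short case analysis on the parity of $p$ shows that $\ssE^1_{pq} = 0$ for $p+q \leq d$ and $p \geq 1$, while for $p=0$ the only exception is $\ssE^1_{0,d}$. For $n = 2d+2$, the bound becomes $q \leq \lfloor (2d-2-p)/2 \rfloor$; the same case analysis yields $\ssE^1_{pq} = 0$ for $p+q \leq d$ and $p \geq 2$, with $\ssE^1_{0,d}$ and $\ssE^1_{1,d-1}$ as the only potential exceptions. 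Finally, for the extra claim when $d=1$ (i.e.\ $n=4$), the term $\ssE^1_{1,0}$ is a direct sum of groups $\HH_0(\bL_R(k);\St(\bL_R))$ with $\bL_R$ of semisimple rank $n-2 = 2 \geq 1$, each of which vanishes by Lemma \ref{lemma:h0}.

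The main obstacle, though modest, is the bookkeeping for the mixed case. As Remark \ref{remark:needoffset1} warned in type $\dBC$, the argument would collapse if several factors each contributed a bound weaker than the $\dA$-type $\lfloor (n_j-1)/2 \rfloor$. Here we are rescued by Example \ref{example:levidn}, which guarantees that at most one $\dD$ factor ever appears in a standard Levi of a type-$\dD$ group, so the single deficit of $2$ between $\lfloor (n_m-3)/2 \rfloor$ and $\lfloor (n_m-1)/2 \rfloor$ exactly accounts for the transition from the type-$\dA$ bound $\lfloor (n-p-2)/2 \rfloor$ to the type-$\dD$ bound $\lfloor (n-p-4)/2 \rfloor$.
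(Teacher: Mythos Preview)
Your proposal is correct and follows essentially the same approach as the paper: both split the standard Levi subgroups into pure type-$\dA$ and mixed $\dA/\dD$ cases, apply the appropriate vanishing bound (via Lemma \ref{lemma:levivanishdn} or Theorem \ref{theorem:strongan}), iterate Lemma \ref{lemma:floorinequality} to get $q \leq \lfloor (n-p-4)/2 \rfloor$ in the worst case, and handle $\ssE^1_{1,0}$ for $n=4$ via Lemma \ref{lemma:h0}. The paper's own proof is terser (it simply says the argument is identical to Lemma \ref{lemma:vanishan}), but your more explicit write-up traces the same logic, and your closing observation about Remark \ref{remark:needoffset1} correctly identifies why the single $\dD$ factor is what makes the bookkeeping work.
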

\begin{proof}
This is identical to the proof of Lemma \ref{lemma:vanishan} in type $\dA$.  The only difference is
that the standard Levi subgroups can be of two types:
\begin{itemize}
\item $\dA_{n_1} \times \cdots \times \dA_{n_m}$, where the vanishing range for $\HH_i$ given by Theorem \ref{theorem:strongan} is
\[i \leq (m-1) + \lfloor (n_1-1)/2 \rfloor + \cdots + \lfloor (n_{m}-1)/2 \rfloor.\]
\item $\dA_{n_1} \times \cdots \times \dA_{n_{m-1}} \times \dD_{n_m}$, where the vanishing range for $\HH_i$
given by Lemma \ref{lemma:levivanishdn} (Levi vanishing and surjectivity) is
\[i \leq (m-1) + \lfloor (n_1-1)/2 \rfloor + \cdots + \lfloor (n_{m-1}-1)/2 \rfloor + \lfloor (n_m - 3)/2 \rfloor.\]
\end{itemize}
The slightly worse vanishing range in the second case accounts for the slightly worse range in the statement
of the lemma.  Finally, the fact that $\ssE^1_{1,d-1} = 0$ for $d=1$ follows from the fact
that $\ssE^1_{1,0}$ is a direct sum of $\HH_0$-groups, and these all vanish by Lemma \ref{lemma:h0}.
\end{proof}

\section{Remaining tasks (type \texorpdfstring{$\dD$}{D})}
\label{section:proofdn}

Lemma \ref{lemma:vanishdn} implies many cases of Theorem \ref{theorem:strongdn}.  To prove the
remaining cases, we need to compute some differentials in our spectral sequence.  We now explain
the structure of the argument, postponing three calculations to the  
next section.  Recall that Theorem \ref{theorem:strongdn} is:

\theoremstyle{plain}
\newtheorem*{theorem:strongdn}{Theorem \ref{theorem:strongdn}}
\begin{theorem:strongdn}
Let $\bG$ be a reductive group with $\bPhik(\bG) = \dD_n$ for some $n \geq 4$.  Then:
\begin{itemize}
\item $\HH_i(\bG(k);\St(\bG)) = 0$ for $i \leq \lfloor (n-3)/2 \rfloor$; and
\item letting $\bDelta = \bDeltak(\bG)$, the map
$\HH_i(\bL_{\bDelta[1]}(k);\St(\bL_{\bDelta[1]})) \rightarrow \HH_i(\bG(k);\St(\bG))$
is surjective for $i \leq \lfloor (n-2)/2 \rfloor$.
\end{itemize}
\end{theorem:strongdn}
\begin{proof}
The proof is by induction on $n$.  We will prove the base case $n=4$ in exactly the same
way we will prove the inductive step, so assume that $n \geq 4$ and that we have
proved the result in smaller ranks, i.e., that
the $(n-1)$-surjectivity and vanishing hypothesis in type $\dD$ holds.  This assumption
is vacuous if $n=4$.

Corollary \ref{corollary:spectralsequence} gives a spectral sequence $\ssE^r_{pq}$ converging
to $\HH_{p+q}(\bG(k);\St(\bG))$, and Lemma \ref{lemma:vanishdn} implies that
$\ssE^1_{pq} = 0$ for $p+q \leq \lfloor (n-2)/2 \rfloor - 1$.  This implies that
$\HH_i(\bG(k);\St(\bG)) = 0$ for $i \leq \lfloor (n-2)/2 \rfloor - 1$.
Since our surjectivity claim is trivial when the target is $0$, all that remains to prove
are the following two claims:

\begin{claim}{1}
Assume that $n = 2d+3$ with $d \geq 1$.  Then $\HH_d(\bG(k);\St(\bG)) = 0$.
\end{claim}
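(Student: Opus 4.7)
The plan is to adapt the spectral sequence argument used for the analogous claims in types $\dA$ (Lemma \ref{lemma:differential2an}) and $\dBC$ (Lemma \ref{lemma:differential2bcn}). By Lemma \ref{lemma:vanishdn}, the only potentially nonzero entry on the line $p+q = d$ of the $\ssE^1$-page is $\ssE^1_{0d}$, so it suffices to prove that the differential $\ssE^1_{1d} \to \ssE^1_{0d}$ is surjective. Writing $M[j_1,\ldots,j_\ell] = \HH_d(\bL_{\bDelta[j_1,\ldots,j_\ell]}(k);\St(\bL_{\bDelta[j_1,\ldots,j_\ell]}))$, this amounts to showing every summand $M[j]$ of $\ssE^1_{0d}$ is killed in the cokernel of the differential, for $1 \leq j \leq n = 2d+3$.

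The first step is a bookkeeping exercise: use Lemma \ref{lemma:levivanishdn}(i) to read off from the type of $\bPhik(\bL_{\bDelta[j]})$ which $M[j]$ already vanish. Referring to the list in Example \ref{example:levidn} and computing $\bb$ case by case, one finds that $M[n-1] = M[n] = 0$ (since $\bPhik = \dA_{n-1} = \dA_{2d+2}$ gives bound $d$), $M[n-2] = 0$ (since $\bPhik = \dA_{n-3} \times \dA_1 \times \dA_1$ gives bound $d+1$), and more importantly $M[j] = 0$ for every even $j$ with $2 \leq j \leq n-3$, because then $\bPhik(\bL_{\bDelta[j]}) = \dA_{j-1} \times \dD_{n-j}$ with $j-1$ odd and $n-j$ odd, producing bound $1 + (j-2)/2 + (2d-j)/2 = d$. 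The $M[j]$ that remain potentially nonzero are therefore indexed by odd $j$ in $\{1,3,\ldots,2d-1\}$.

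The second step is to kill these remaining $M[j]$ using the surjectivity part of Lemma \ref{lemma:levivanishdn}(ii) applied to a carefully chosen neighbor. For $j = 1$, pair with the summand $M[1,2]$: the differential sends it to $M[2] \oplus M[1]$, and since $\bPhik(\bL_{\bDelta[1]}) = \dD_{n-1} = \dD_{2d+2}$ has even rank $n_m = 2d+2$, the third bullet of Lemma \ref{lemma:levivanishdn}(ii) gives surjectivity of $M[1,2] \to M[1]$; combined with $M[2] = 0$ from the previous step, this kills $M[1]$. For odd $j$ with $3 \leq j \leq 2d-1$, pair with $M[j-1,j]$: since $\bPhik(\bL_{\bDelta[j]}) = \dA_{j-1} \times \dD_{n-j}$ with the $\dA$-factor of even rank $j-1$, the first bullet of Lemma \ref{lemma:levivanishdn}(ii) yields surjectivity of $M[j-1,j] \to M[j]$ (removing the last simple root of $\dA_{j-1}$ corresponds to deleting vertex $j-1$ from the Dynkin diagram), and since $j-1$ is even $M[j-1] = 0$ from the previous step, so $M[j]$ dies in the cokernel.

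The main obstacle is really in the indexing and verification: one must check that in each pairing the combinatorial manipulation of deleting an extra simple root from the Dynkin diagram of $\bL_{\bDelta[j]}$ genuinely matches one of the three configurations allowed by Lemma \ref{lemma:levivanishdn}(ii), that $n-j \geq 4$ so the $\dD_{n-j}$-factor makes sense (which holds since $j \leq 2d-1$), and that the bound $b = \bb(\bPhik(\bL_{\bDelta[j]})) = d-1$ indeed makes $b+1 = d$ the degree where the Levi surjectivity applies. Once these routine checks are in place, the argument is structurally identical to the odd cases in types $\dA$ and $\dBC$, and no further spectral sequence calculation is required.
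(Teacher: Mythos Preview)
Your proposal is correct and follows essentially the same approach as the paper. The paper's proof of this claim simply invokes Lemma~\ref{lemma:vanishdn} and then defers to Lemma~\ref{lemma:differential2dn}, whose proof in turn says that $M[j]=0$ for $2d \leq j \leq 2d+3$ and that the remaining $M[j]$ with $1 \leq j \leq 2d-1$ are killed exactly as in the type~$\dA$ argument (Lemma~\ref{lemma:differential2an}); you have spelled out these same steps explicitly.

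One small imprecision worth noting: in your vanishing step you include $j = n-3 = 2d$ in the range where $\bPhik(\bL_{\bDelta[j]}) = \dA_{j-1}\times\dD_{n-j}$ and compute the bound using the $\dD$-formula from Lemma~\ref{lemma:levivanishdn}. But $n-j = 3$ there, and by Convention~\ref{convention:smalld} one has $\dD_3 = \dA_3$, so Lemma~\ref{lemma:levivanishdn} as stated requires $\dZ = \dA$ (it demands $n_m \geq 4$ for $\dZ = \dD$). Using the $\dA$-formula gives bound $d+1$ rather than $d$, so your conclusion $M[2d]=0$ is still correct (with room to spare), and indeed the paper handles this boundary case $j = n-3$ among the ``last four'' indices treated via type~$\dA$ Levis.
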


In this case, Lemma \ref{lemma:vanishdn} says that the only potentially nonzero
term $\ssE^1_{pq}$ in our spectral sequence with $p+q = d$ is $\ssE^1_{0d}$.  We will prove in
Lemma \ref{lemma:differential2dn} below that the differential $\ssE^1_{1d} \rightarrow \ssE^1_{0d}$
is surjective, so $\ssE^2_{0d} = 0$.  This implies that $\HH_d(\bG(k);\St(\bG)) = 0$, as desired.

\begin{claim}{2}
Assume that $n = 2d+2$ with $d \geq 1$.  Then the map
\[\HH_d(\bL_{\bDelta[1]}(k);\St(\bL_{\bDelta[1]})) \rightarrow \HH_d(\bG(k);\St(\bG))\]
is surjective.
\end{claim}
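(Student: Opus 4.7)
The plan is to mirror Claim 2 from the type $\dBC$ proof (see Theorem \ref{theorem:strongbcn}).  By Lemma \ref{lemma:vanishdn}, the only potentially nonzero $\ssE^1_{pq}$ with $p+q = d$ are $\ssE^1_{0d}$ and $\ssE^1_{1,d-1}$, and $\ssE^1_{1,d-1}$ already vanishes when $d=1$.  So it suffices to establish two differential lemmas directly analogous to Lemmas \ref{lemma:differential1bcn} and \ref{lemma:differential3bcn}: for $d \geq 2$ the differential $\ssE^1_{2,d-1} \to \ssE^1_{1,d-1}$ is surjective (killing $\ssE^2_{1,d-1}$), and the summand $M[1] := \HH_d(\bL_{\bDelta[1]}(k);\St(\bL_{\bDelta[1]}))$ of $\ssE^1_{0d}$ surjects onto the cokernel of $\ssE^1_{1d}\to\ssE^1_{0d}$.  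Granting these, $\ssE^2_{0d}$ is a quotient of $M[1]$ and no other $\ssE^2_{pq}$ with $p+q=d$ survives, so $\HH_d(\bG(k);\St(\bG))$ is a quotient of $M[1]$, as required.

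For the $\ssE^1_{1d}\to\ssE^1_{0d}$ statement, write $M[j_1,\ldots,j_\ell]$ for the $\HH_d$-summand indexed by $\bDelta[j_1,\ldots,j_\ell]$.  First, I would apply Lemma \ref{lemma:levivanishdn} to see that $M[j]=0$ outright for $j \in \{2d, 2d+1, 2d+2\}$: the Levi factors are $\dA_{2d-1}\times\dA_1\times\dA_1$ and $\dA_{2d+1}$, with vanishing bounds $d+1$ and $d$ respectively.  For the remaining range $2 \leq j \leq 2d-1$, where $\bPhik(\bL_{\bDelta[j]}) = \dA_{j-1}\times\dD_{2d+2-j}$, I would kill $M[j]$ modulo $M[1]$ by the same two-step recipe as in Lemma \ref{lemma:differential1bcn}.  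For $j$ odd, use the differential $M[1,j]\to M[j]\oplus M[1]$: surjectivity of $M[1,j]\to M[j]$ follows from the first bullet of Lemma \ref{lemma:levivanishdn}(ii) applied to the even positive factor $\dA_{j-1}$, so $M[j]$ is identified with a subspace of $M[1]$ in the cokernel.  For $j$ even with $2 \leq j \leq 2d-2$, use $M[j,j+1]\to M[j+1]\oplus M[j]$: surjectivity of $M[j,j+1]\to M[j]$ follows from the third bullet of Lemma \ref{lemma:levivanishdn}(ii) applied to the factor $\dD_{2d+2-j}$ (which is even and of rank at least $4$), and $M[j+1]$ sits inside $M[1]$ by the previous case since $j+1$ is odd.

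For the surjectivity of $\ssE^1_{2,d-1}\to\ssE^1_{1,d-1}$ when $d \geq 2$, I would follow Lemma \ref{lemma:differential3bcn}.  Let $N[\cdots]$ denote the corresponding $\HH_{d-1}$-summands.  A floor-function calculation using Lemmas \ref{lemma:levivanishdn} and \ref{lemma:floorinequality} shows that $N[j_1,j_2] = 0$ whenever $j_1$ is even, $j_2$ is odd, or $\{j_1, j_2\}$ meets the branching region $\{2d, 2d+1, 2d+2\}$; in the last of these cases, the appearance of an extra $\dA_1$ factor improves the vanishing bound past $d-1$.  For the residual configuration (so $j_1$ odd, $j_2$ even, both at most $2d-1$), I would kill $N[j_1,j_2]$ using the summand $N[j_1, j_2, j_2+1]$ of $\ssE^1_{2,d-1}$: the other two targets $N[j_2, j_2+1]$ and $N[j_1, j_2+1]$ vanish because $j_2+1$ is odd, and surjectivity of $N[j_1, j_2, j_2+1] \to N[j_1, j_2]$ comes from the third bullet of Lemma \ref{lemma:levivanishdn}(ii) applied to the $\dD$-factor of $\bPhik(\bL_{\bDelta[j_1, j_2]})$.

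The main obstacle is the combinatorial bookkeeping around the branching node of the $\dD_n$ Dynkin diagram.  Whenever one of the deleted roots lies in $\{n-2, n-1, n\}$, the expected Levi decomposition $\dA_* \times \dD_*$ degenerates into $\dA_* \times \dA_1 \times \dA_1$ or a single $\dA_*$, and each exceptional shape must be checked by hand against Lemma \ref{lemma:levivanishdn}.  In every such case the rearrangement strictly improves the vanishing bound, so the exceptional $M[j]$ and $N[j_1, j_2]$ die outright and the combinatorial skeleton of the argument reduces to the type $\dBC$ pattern of Lemmas \ref{lemma:differential1bcn} and \ref{lemma:differential3bcn}.
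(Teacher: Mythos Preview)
Your proposal is correct and follows essentially the same route as the paper: the paper defers to Lemma~\ref{lemma:differential1dn} and Lemma~\ref{lemma:differential3dn}, whose proofs are exactly the two differential computations you outline, and the paper likewise notes that the residual $N[j_1,j_2]$ in the second lemma can all be killed via $N[j_1,j_2,j_2+1]$ alone (no separate extremal case, unlike type~$\dA$).

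One small bookkeeping point: you put $j=2d-1$ in the ``generic'' range with Levi $\dA_{j-1}\times\dD_{2d+2-j}$, but here $\dD_3=\dA_3$ by Convention~\ref{convention:smalld}, which bumps $\hbb$ up to $d$ rather than $d-1$.  Consequently Lemma~\ref{lemma:levivanishdn}(ii) gives surjectivity only in degree $d+1$, so your $M[1,2d-1]\to M[2d-1]$ argument does not literally apply in degree~$d$.  This does no harm, because the same bound shows $M[2d-1]=0$ outright; the paper simply records $M[j]=0$ for all $j\in\{2d-1,2d,2d+1,2d+2\}$ at the outset and restricts the generic argument to $2\le j\le 2d-2$.
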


Lemma \ref{lemma:vanishdn} says that the only potentially nonzero
terms $\ssE^1_{pq}$ in our spectral sequence with $p+q = d$ are $\ssE^1_{0d}$ and $\ssE^1_{1,d-1}$.  Lemma \ref{lemma:vanishdn}
also says that $\ssE^1_{1,d-1}=0$ if $d = 1$, and we
will prove in Lemma \ref{lemma:differential3dn} below that the differential $\ssE^1_{2,d-1} \rightarrow \ssE^1_{1,d-1}$
is surjective for $d \geq 2$.  It follows that in all cases $\ssE^2_{1,d-1} = 0$.  We will also prove in Lemma \ref{lemma:differential1dn}
below that the summand $\HH_d(\bL_{\bDelta[1]}(k);\St(\bL_{\bDelta[1]}))$ of
\[\ssE^1_{0d} = \bigoplus_{R \in \cL_0(\bG)} \HH_d(\bL_{R}(k);\St(\bL_{R})) = \bigoplus_{j=1}^{n} \HH_d(\bL_{\bDelta[j]}(k);\St(\bL_{\bDelta[j]}))\]
surjects onto the cokernel of the differential $\ssE^1_{1d} \rightarrow \ssE^1_{0d}$.
It follows that $\ssE^2_{0d}$ is a quotient of $\HH_d(\bL_{\bDelta[1]}(k);\St(\bL_{\bDelta[1]}))$.  Since $\ssE^2_{0d}$ is the
only potentially nonzero term of the form $\ssE^2_{pq}$ with $p+q = d$, it follows that $\HH_d(\bL_{\bDelta[1]}(k);\St(\bL_{\bDelta[1]}))$
surjects onto $\HH_d(\bG(k);\St(\bG))$, as desired.
\end{proof}

\section{Differentials (type \texorpdfstring{$\dD$}{D})} 
\label{section:differentialdn}

This final section of this part of the paper
determines the images of three differentials whose calculations were needed in the previous section.

\subsection{Differentials, I (type \texorpdfstring{$\dD$}{D})}
\label{section:differential1dn}

Our first differential calculation is:

\begin{lemma}
\label{lemma:differential1dn}
Let $\bG$ be a reductive group with $\bPhik(\bG) = \dD_{2d+2}$ for some $d \geq 1$.
Assume the $(2d+1)$-surjectivity and vanishing hypothesis in type $\dD$ (Definition \ref{definition:hypothesisdn}).
Let $\ssE^1_{pq}$ be the spectral
sequence from Corollary \ref{corollary:spectralsequence}.  Then the summand $\HH_d(\bL_{\bDelta[1]}(k);\St(\bL_{\bDelta[1]}))$
of $\ssE^1_{0d}$ surjects onto the cokernel of the
differential $\ssE^1_{1d} \rightarrow \ssE^1_{0d}$.
\end{lemma}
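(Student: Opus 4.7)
The plan is to follow the template of Lemma \ref{lemma:differential1bcn} (the analogous type-$\dBC$ result), adapting it to the Dynkin diagram of $\dD_{2d+2}$. Writing $M[j_1,\ldots,j_\ell] = \HH_d(\bL_{\bDelta[j_1,\ldots,j_\ell]}(k);\St(\bL_{\bDelta[j_1,\ldots,j_\ell]}))$, the task is to show that for each $j\in\{2,\ldots,2d+2\}$, the summand $M[j]$ of $\ssE^1_{0d}$ dies in the cokernel of $\ssE^1_{1d}\to\ssE^1_{0d}$, either outright or by being identified modulo the differential with a subspace of $M[1]$.

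First I would dispose of the summands near the branching. A direct computation of $\bPhik(\bL_{\bDelta[j]})$ for $j\in\{2d-1,2d,2d+1,2d+2\}$ produces root systems whose $\bb$-value from Lemma \ref{lemma:levivanishdn}(i) is at least $d$ (using the convention $\dD_3=\dA_3$ for $j=2d-1$), so $M[j]=0$ for these $j$. This leaves only the summands $M[j]$ with $2\leq j\leq 2d-2$.

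Then I would split by parity. For $j$ odd with $3\leq j\leq 2d-3$ we have $\bPhik(\bL_{\bDelta[j]})=\dA_{j-1}\times\dD_{2d+2-j}$ with $j-1$ even and positive, so the first bullet of Lemma \ref{lemma:levivanishdn}(ii) makes $M[1,j]\to M[j]$ surjective; the corresponding differential identifies $M[j]$ with a subspace of $M[1]$ in the cokernel. For $j$ even with $2\leq j\leq 2d-2$ the factor $\dD_{2d+2-j}$ has even rank $\geq 4$, and removing root $j+1$ corresponds to removing the first simple root of this factor, so the third bullet of Lemma \ref{lemma:levivanishdn}(ii) makes $M[j,j+1]\to M[j]$ surjective, identifying $M[j]$ with a subspace of $M[j+1]$ in the cokernel. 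When $j\leq 2d-4$, the index $j+1$ falls into the odd range treated above and $M[j]$ ultimately lands in $M[1]$; when $j=2d-2$, the first step already gives $M[j+1]=M[2d-1]=0$, so $M[j]$ dies outright.

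I do not expect a substantive obstacle: the argument closely mirrors the $\dBC$ proof, with the only notable novelty being that the endpoint $j=2d-2$ of the even case is closed off by the vanishing $M[2d-1]=0$ rather than by a reduction to the odd case. The main thing to verify carefully is that the hypothesis $n_m\geq 4$ in the $\dD$-bullet of Lemma \ref{lemma:levivanishdn}(ii) holds precisely for $j\leq 2d-2$, which is exactly why the even range terminates where it does.
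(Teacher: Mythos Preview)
Your proposal is correct and follows essentially the same route as the paper. The paper first proves $M[j]=0$ for $2d-1\le j\le 2d+2$ exactly as you suggest, and for the remaining $j$ it simply says ``the proof for these cases is identical to the proof of Lemma \ref{lemma:differential1an} in type $\dA$,'' which unpacks to precisely your parity split using Lemma \ref{lemma:levivanishdn}(ii); your explicit treatment of the endpoint $j=2d-2$ (where $M[j]$ is identified with $M[2d-1]=0$) is a correct detail that the paper's phrasing ``$1<j<2d-2$'' glosses over.
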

\begin{proof}
As notation, for $1 \leq j_1,\ldots,j_{\ell} \leq 2d+2$ let
\[M[j_1, \ldots, j_{\ell}] = \HH_d(\bL_{\bDelta[j_1, \ldots, j_{\ell}]}(k);\St(\bL_{\bDelta[j_1, \ldots, j_{\ell}]})).\]
We have
\[\ssE^1_{0d} = \bigoplus_{1 \leq j \leq 2d+2} M[j] \quad \text{and} \quad 
\ssE^1_{1d} = \bigoplus_{1 \leq j_1 < j_2 \leq 2d+2} M[j_1, j_2].\]
Consider some $1 < j \leq 2d+2$.  We must prove that when we quotient $\ssE^1_{0d}$ by the image of the differential
$\ssE^1_{1d} \rightarrow \ssE^1_{0d}$, the summand
$M[j]$ of $\ssE^1_{0d}$ is identified with a subspace of
$M[1]$.

We first prove that $M[j] = 0$ for $2d-1 \leq j \leq 2d+2$.  As in Example \ref{example:levidn}, we have
\begin{align*}
\bPhik(\bL_{\bDelta[2d-1]}) &= \dA_{(2d+2)-4} \times \dA_3 = \dA_{2d-2} \times \dA_3,\\
\bPhik(\bL_{\bDelta[2d]})   &= \dA_{(2d+2)-3} \times \dA_1 \times \dA_1 = \dA_{2d-1} \times \dA_1 \times \dA_1,\\
\bPhik(\bL_{\bDelta[2d+1]}) = \bPhik(\bL_{\bDelta[2d+2]}) &= \dA_{(2d+2)-1} = \dA_{2d+1}.
\end{align*}
In all of these cases, we can appeal to Theorem \ref{theorem:strongan} to see that
$\HH_i(\bL_{\bDelta[j]}(k);\St(\bL_{\bDelta[j]})) = 0$ for $i \leq d$.  Indeed, the bound
given by that theorem in the three cases above are
\begin{align*}
&1 + \lfloor (2d-3)/2 \rfloor + \lfloor (3-1)/2 \rfloor = d,\\ 
&2 + \lfloor (2d-2)/2 \rfloor + \lfloor (1-1)/2 \rfloor + \lfloor (1-1)/2 \rfloor = d+1,\\
&\lfloor 2d/2 \rfloor = d.
\end{align*}
The case $i=d$ shows that $M[j] = 0$, as desired.

It remains to prove that for $1 < j < 2d-2$, the summand $M[j]$ of $\ssE^1_{0d}$ is identified with a subspace of
$M[1]$ when you quotient by the image of the differential $\ssE^1_{1d} \rightarrow \ssE^1_{0d}$.  In these
cases, we have
\[\bPhik(\bL_{\bDelta[j]}) = \dA_{j-1} \times \dD_{2d+2-j}.\]
The proof for these cases is identical to the proof of Lemma \ref{lemma:differential1an} in type $\dA$.
\end{proof}

\subsection{Differentials, II (type \texorpdfstring{$\dD$}{D})}
\label{section:differential2dn}

Our second differential calculation is:

\begin{lemma}
\label{lemma:differential2dn}
Let $\bG$ be a reductive group with
$\bPhik(\bG) = \bD_{2d+3}$ for $d \geq 1$.
Assume the $(2d+2)$-surjectivity and vanishing hypothesis in type $\dD$ (Definition \ref{definition:hypothesisdn}).
Let $\ssE^1_{pq}$ be the spectral
sequence from Corollary \ref{corollary:spectralsequence}.  Then the
differential $\ssE^1_{1d} \rightarrow \ssE^1_{0d}$ is surjective.
\end{lemma}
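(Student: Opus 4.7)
The plan is to mirror the structure of Lemmas \ref{lemma:differential2an} and \ref{lemma:differential2bcn}. As notation, for $1 \leq j_1, \ldots, j_{\ell} \leq 2d+3$ I will write
\[M[j_1, \ldots, j_{\ell}] = \HH_d(\bL_{\bDelta[j_1, \ldots, j_{\ell}]}(k); \St(\bL_{\bDelta[j_1, \ldots, j_{\ell}]})),\]
so that $\ssE^1_{0d} = \bigoplus_{j=1}^{2d+3} M[j]$ and $\ssE^1_{1d} = \bigoplus_{1 \leq j_1 < j_2 \leq 2d+3} M[j_1, j_2]$. The goal is then to show that every $M[j]$ lies in the image of the differential $\ssE^1_{1d} \to \ssE^1_{0d}$.

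My first step will be to show that many $M[j]$ already vanish by applying Lemma \ref{lemma:levivanishdn}(i). A case analysis based on the description of $\bPhik(\bL_{\bDelta[j]})$ from Example \ref{example:levidn} gives:
\begin{itemize}
\item for $j$ even with $2 \leq j \leq 2d$, one has $\bPhik(\bL_{\bDelta[j]}) = \dA_{j-1} \times \dD_{2d+3-j}$, and the vanishing bound is $1 + (j-2)/2 + (2d-j)/2 = d$, so $M[j] = 0$;
\item for $j = 2d+1$ one has $\bPhik(\bL_{\bDelta[2d+1]}) = \dA_{2d} \times \dA_1 \times \dA_1$, with vanishing bound $d+1$, so $M[2d+1] = 0$;
\item for $j \in \{2d+2, 2d+3\}$ one has $\bPhik(\bL_{\bDelta[j]}) = \dA_{2d+2}$, with vanishing bound $d$, so $M[j] = 0$.
\end{itemize}
This will leave only $j = 1$ and the odd $j$ in $\{3, 5, \ldots, 2d-1\}$ as potentially non-vanishing summands of $\ssE^1_{0d}$ (an empty collection for $d=1$).

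The second step will be to kill these remaining summands using differentials. For $j = 1$, I will use the summand $M[1,2]$ of $\ssE^1_{1d}$, whose image in $\ssE^1_{0d}$ lies in $M[1] \oplus M[2]$. Since $M[2] = 0$ by the first step, it suffices to show that $M[1,2] \to M[1]$ is surjective. This will follow from the $\dZ = \dD$ case of Lemma \ref{lemma:levivanishdn}(ii) applied to $\bPhik(\bL_{\bDelta[1]}) = \dD_{2d+2}$: removing the first simple root yields $\bPhik(\bL_{\bDelta[1,2]}) = \dD_{2d+1}$, and since $n_m = 2d+2$ is even and nonzero, and $\bb(\dD_{2d+2}) + 1 = d$ matches the degree of interest, the hypotheses are satisfied. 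Analogously, for odd $j \in \{3, \ldots, 2d-1\}$ I will use the summand $M[j-1, j]$: since $j-1$ is even, $M[j-1] = 0$, and it suffices to prove $M[j-1, j] \to M[j]$ is surjective. Here $\bPhik(\bL_{\bDelta[j]}) = \dA_{j-1} \times \dD_{2d+3-j}$, and removing the first simple root from the $\dA_{j-1}$-factor gives $\bPhik(\bL_{\bDelta[j-1,j]}) = \dA_{j-2} \times \dD_{2d+3-j}$; this is exactly the first case of Lemma \ref{lemma:levivanishdn}(ii), applicable because $j-1$ is even and positive and $\bb(\bPhik(\bL_{\bDelta[j]})) + 1 = d$.

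There is no real conceptual obstacle here, as the argument is essentially parallel to Lemma \ref{lemma:differential2bcn}; the main thing that needs care is the arithmetic bookkeeping to verify that the vanishing bounds from Lemma \ref{lemma:levivanishdn}(i) land at the correct value $d-1$ on the non-vanishing summands and at $\geq d$ on the vanishing ones, and that the parity hypotheses required by Lemma \ref{lemma:levivanishdn}(ii) are met in every case. The analysis is slightly simpler than in type $\dBC$ because the low-rank boundary behavior of $\dD_n$ is captured by Convention \ref{convention:smalld} rather than by a separate root-system type, so there are no extra cases at small $j$.
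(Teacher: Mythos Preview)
Your proposal is correct and takes essentially the same approach as the paper; the paper's proof is simply terser, first observing that $M[j]=0$ for $2d\le j\le 2d+3$ as in Lemma~\ref{lemma:differential1dn} and then declaring that the remaining cases $1\le j\le 2d-1$ (where $\bPhik(\bL_{\bDelta[j]})=\dA_{j-1}\times\dD_{2d+3-j}$) are handled exactly as in Lemma~\ref{lemma:differential2an}. One harmless wrinkle: at $j=2d$ you are really looking at $\dA_{2d-1}\times\dD_3=\dA_{2d-1}\times\dA_3$, so the vanishing bound is $d+1$ rather than $d$, and in your last paragraph $\bDelta[j-1,j]$ is obtained by removing the \emph{last} simple root of $\dA_{j-1}$, not the first---both slips are covered by Lemma~\ref{lemma:levivanishdn} and do not affect the argument.
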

\begin{proof}
As notation, for $1 \leq j_1,\ldots,j_{\ell} \leq 2d+3$ let
\[M[j_1, \ldots, j_{\ell}] = \HH_d(\bL_{\bDelta[j_1, \ldots, j_{\ell}]}(k);\St(\bL_{\bDelta[j_1, \ldots, j_{\ell}]})).\]
We have
\[\ssE^1_{0d} = \bigoplus_{1 \leq j \leq 2d+3} M[j] \quad \text{and} \quad 
\ssE^1_{1d} = \bigoplus_{1 \leq j_1 < j_2 \leq 2d+3} M[j_1, j_2].\]
Consider some $1 \leq j \leq 2d+3$.  We must prove that when we quotient $\ssE^1_{0d}$ by the image of the differential
$\ssE^1_{1d} \rightarrow \ssE^1_{0d}$, the summand $M[j]$ is killed.

Just like in the proof of Lemma \ref{lemma:differential1dn} above, we have
$M[j] = 0$ for $2d \leq j \leq 2d+3$.  We must therefore only deal with $M[j]$ for
$1 \leq j \leq 2d-1$.  For these $j$, we have
\[\bPhik(\bL_{\bDelta[j]}) = \dA_{j-1} \times \dD_{2d+3-j}.\]
The proof that these $M[j]$ are killed when we quotient by the image of the differential is identical to the proof of Lemma \ref{lemma:differential2an} in type $\dA$.
\end{proof}

\subsection{Differentials, III (type \texorpdfstring{$\dD$}{D})}
\label{section:differential3dn}

Our final differential calculation is:

\begin{lemma}
\label{lemma:differential3dn}
Let $\bG$ be a reductive group with
$\bPhik(\bG) = \bD_{2d+2}$ for some $d \geq 2$.
Assume the $(2d+1)$-surjectivity and vanishing hypothesis in type $\dD$ (Definition \ref{definition:hypothesisdn}).
Let $\ssE^1_{pq}$ be the spectral
sequence from Corollary \ref{corollary:spectralsequence}.  Then the differential
$\ssE^1_{2,d-1} \rightarrow \ssE^1_{1,d-1}$ is surjective.
\end{lemma}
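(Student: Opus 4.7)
The plan is to adapt the proof of Lemma \ref{lemma:differential3bcn} in type $\dBC$, which applies essentially verbatim because the numerology matches. For $\bPhik(\bG)=\dD_{2d+2}$ and a Levi $\bL_{\bDelta[j_1,j_2]}$ with $1\leq j_1<j_2\leq 2d-1$, whose relative root system is $\dA_{j_1-1}\times\dA_{j_2-j_1-1}\times\dD_{2d+2-j_2}$, the vanishing bound supplied by Lemma \ref{lemma:levivanishdn} is
\[ 2+\lfloor(j_1-2)/2\rfloor+\lfloor(j_2-j_1-2)/2\rfloor+\lfloor(2d-j_2-1)/2\rfloor, \]
identical to the bound appearing in the $\dBC_{2d+1}$-case. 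Moreover, the parity of the rank of the $\dD$-factor (namely $2d+2-j_2$, even when $j_2$ is even) is exactly the parity needed to invoke the third bullet of Lemma \ref{lemma:levivanishdn}(ii).

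\textbf{Steps.} Writing $M[j_1,\ldots,j_\ell]=\HH_{d-1}(\bL_{\bDelta[j_1,\ldots,j_\ell]}(k);\St(\bL_{\bDelta[j_1,\ldots,j_\ell]}))$, I would first establish the analogue of Claim 1 of Lemma \ref{lemma:differential3bcn}: $M[j_1,j_2]=0$ unless $j_1$ is odd, $j_2$ is even, and $j_2\leq 2d-1$. For $j_2\leq 2d-1$ this follows from the parity count above; for $j_2\in\{2d,2d+1,2d+2\}$, Example \ref{example:levidn} describes $\bPhik(\bL_{\bDelta[j_1,j_2]})$ as a product of $\dA$-factors (with an extra $\dA_1\times\dA_1$ factor when $j_2=2d$), and a direct computation with the bounds from Theorem \ref{theorem:strongan} shows the vanishing range exceeds $d-1$ regardless of the parity of $j_1$. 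Second, for each surviving pair $(j_1,j_2)$, I would kill $M[j_1,j_2]$ using the triple $(j_1,j_2,j_2+1)$: on the summand $M[j_1,j_2,j_2+1]$ the differential lands in $M[j_2,j_2+1]\oplus M[j_1,j_2+1]\oplus M[j_1,j_2]$, whose first two components vanish by the claim (since $j_2+1$ is odd), and the projection to $M[j_1,j_2]$ is surjective by Lemma \ref{lemma:levivanishdn}(ii) applied to the first simple root of the even-rank $\dD$-factor.

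\textbf{Main obstacle.} The hardest step will be the boundary case $j_2=2d-2$, where the Levi $\bL_{\bDelta[j_1,j_2,j_2+1]}$ acquires a $\dD_{2d+1-j_2}=\dD_3=\dA_3$ factor under Convention \ref{convention:smalld}; the required surjectivity must then be extracted from Theorem \ref{theorem:strongan} rather than from the $\dD$-clause of Lemma \ref{lemma:levivanishdn}, but the proof of the latter already routes through the former, so this is a matter of bookkeeping rather than a new input. A secondary check is that the inductive hypothesis (Definition \ref{definition:hypothesisdn} at level $2d+1$) covers every $\dD$-factor arising above, which holds because deleting any nonempty set of nodes from $\dD_{2d+2}$ leaves $\dD$-factors of rank at most $2d+1$.
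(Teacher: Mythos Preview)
Your approach is correct and essentially the same as the paper's. The paper cites Lemma~\ref{lemma:differential3an} (type $\dA$) as the template rather than Lemma~\ref{lemma:differential3bcn} (type $\dBC$), but in the $\dD_{2d+2}$ setting both templates collapse to the same argument: once the tail cases $j_2\in\{2d-1,2d,2d+1,2d+2\}$ are shown to vanish (the paper's Claim~1), every surviving pair $(j_1,j_2)$ has $j_2\le 2d-2$, so the triple $(j_1,j_2,j_2+1)$ is always available and the extremal case that complicates Lemma~\ref{lemma:differential3an} never arises---which is exactly the paper's remark that ``it is even easier'' here.

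One minor point: the ``main obstacle'' you flag at $j_2=2d-2$ is not actually an obstacle. The surjectivity you need is that of $M[j_1,2d-2,2d-1]\to M[j_1,2d-2]$, and this is governed by the $\dD$-factor of $\bL_{\bDelta[j_1,2d-2]}$, which is $\dD_4$ with $n_m=4$ even. The third bullet of Lemma~\ref{lemma:levivanishdn}(ii) applies directly; the fact that the \emph{source} Levi $\bL_{\bDelta[j_1,2d-2,2d-1]}$ has a $\dD_3=\dA_3$ factor is irrelevant to that lemma's hypotheses.
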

\begin{proof}
As notation, for $1 \leq j_1,\ldots,j_{\ell} \leq 2d+2$ let
\[M[j_1, \ldots, j_{\ell}] = \HH_{d-1}(\bL_{\bDelta[j_1, \ldots, j_{\ell}]}(k);\St(\bL_{\bDelta[j_1, \ldots, j_{\ell}]})).\]
We have
\[\ssE^1_{1,d-1} = \bigoplus_{1 \leq j_1 < j_2 \leq 2d+2} M[j_1, j_2] \quad \text{and} \quad 
\ssE^1_{2,d-1} = \bigoplus_{1 \leq j_1 < j_2 < j_3 \leq 2d+2} M[j_1, j_2, j_3].\]
Consider some $1 \leq j_1 < j_2 \leq 2d+2$.  We must prove that when we quotient $\ssE^1_{1,d-1}$ by the image of the differential
$\ssE^1_{2,d-1} \rightarrow \ssE^1_{1,d-1}$, the summand $M[j_1, j_2]$ is killed.
We first prove:

\begin{claim}{1}
\label{claim:differential3dn.1}
If $1 \leq j_1 < j_2 \leq 2d+2$ are such that $2d-1 \leq j_2 \leq 2d+2$, then $M[j_1, j_2] = 0$.
\end{claim}
\begin{proof}[Proof of claim]
In this case,
\[\bPhik(\bL_{\bDelta[j_1, j_2]}) = \dA_{n_1} \times \cdots \times \dA_{n_m} \quad \text{with $n_1 + \cdots + n_m = 2d$}.\]
Theorem \ref{theorem:strongan} therefore says that $\HH_i(\bL_{\bDelta[j_1, j_2]}(k);\St(\bL_{\bDelta[j_1, j_2]})) = 0$
for
\[i \leq (m-1) + \lfloor (n_1-1)/2 \rfloor + \cdots + \lfloor (n_m-1)/2 \rfloor.\]
Lemma \ref{lemma:floorinequality} says that for $a,b \in \Z$ we have
$1+ \lfloor a/2 \rfloor + \lfloor b/2 \rfloor \geq \lfloor (a+b+1)/2 \rfloor$.  Applying this repeatedly,
we see that our vanishing range is at least
$\lfloor (2d-1)/2 \rfloor = d-1$.
We deduce that $M[j_1, j_2] = \HH_{d-1}(\bL_{\bDelta[j_1, j_2]}(k);\St(\bL_{\bDelta[j_1, j_2]})) = 0$.
\end{proof}

In light of Claim \ref{claim:differential3dn.1}, we must show that for $1 \leq j_1 < j_2 \leq 2d-2$, the
summand $M[j_1, j_2]$ is killed by the differential $\ssE^1_{2,d-1} \rightarrow \ssE^1_{1,d-1}$.  Using
the convention that $\dA_0$ is the empty root system, for $1 \leq j_1 < j_2 \leq 2d-2$ we have
\[\bPhik(\bL_{\bDelta[j_1, j_2]}) = \dA_{j_1-1} \times \dA_{j_2-j_1-1} \times \dD_{2d+2-j_2}.\]
The proof is identical to the proof of Lemma \ref{lemma:differential3an} in type $\dA$; in fact, it is even
easier than Lemma \ref{lemma:differential3an} since the extremal case $M[1,2d-2]$ can be treated exactly
the same as the case $M[1,j_2]$ with $1<j_2<2d-2$ even.
\end{proof}

\part{Integral vanishing (Theorem \ref{maintheorem:doublecomvanish})}
\label{part:integral}

We now generalize our vanishing theorems to 
the groups\footnote{We use $\SL_{n+1}$ and $\GL_{n+1}$ since
they have type $\dA_n$ and we want our numerology to match Part \ref{part:an}.}
$\SL_{n+1}(\Z)$ and $\GL_{n+1}(\Z)$, proving Theorem \ref{maintheorem:doublecomvanish}.
This requires a conjectural partial resolution of their Steinberg representations, which 
we discuss in \S \ref{section:partialresolution}.  The existence
of this conjectural resolution is equivalent to the high connectivity of the double
Tits building.  We discuss some results about reducible
Levi subgroups in \S \ref{section:integrallevi} and state a stronger version of
Theorem \ref{maintheorem:doublecomvanish} in \S \ref{section:strongintegral}.  The
rest of this part is devoted to its proof, which follows the outline 
of our proof in type $\dA$ from Part \ref{part:an}.  

\begin{convention}
Throughout this part, $\SL_{n+1}$ and $\GL_{n+1}$ are always taken to be defined over the field $\Q$.  
\end{convention}

\section{Partial resolution of integral Steinberg representation}
\label{section:partialresolution}

The key technical tool underlying our proof of Theorem \ref{maintheorem:fields} was 
the spectral sequence from Corollary \ref{corollary:spectralsequence}, which comes
from the resolution of the Steinberg representation given by Proposition \ref{proposition:resolution}.
This section explains a conjectural integral refinement of this.

\subsection{Notation}

We start by introducing some notation we will use throughout this part of the paper.  The
Tits buildings of $\SL_{n+1}$ and $\GL_{n+1}$ are isomorphic, and we will denote them by
$\Tits(\Q^{n+1})$.  More generally, for a nonzero finite-dimensional $\Q$-vector space $V$ we will
write $\Tits(V)$ for the complex of flags of nonzero proper subspaces of $V$.

Let $d = \dim(V) \geq 1$.  The Solomon--Tits theorem says that $\Tits(V)$ is homotopy equivalent to a wedge
of $(d-2)$-dimensional spheres.  For a commutative ring $\bbF$, we let
$\St(V;\bbF) = \RH_{d-2}(\Tits(V);\bbF)$.  For $\bbF = \Z$, we omit
$\bbF$ from our notation and write $\St(V) = \RH_{d-2}(\Tits(V))$.  The
groups $\GL(V)$ and $\SL(V)$ act on $\St(V;\bbF)$.  With this 
notation, $\St(\Q^{n+1};\bbF)$ is the Steinberg representation of $\GL_{n+1}(\Q)$ and
$\SL_{n+1}(\Q)$.

Now let $W$ be a finite-rank free $\Z$-module.  Write
$\Tits(W)$ for the complex of flags of nonzero proper direct summands of $W$.
Direct summands of $W$ are in bijection with subspaces of $W \otimes \Q$, so
$\Tits(W) = \Tits(W \otimes \Q)$.  Letting $r = \rank(W)$, we define
$\St(W;\bbF) = \RH_{r-2}(\Tits(W);\bbF)$ and $\St(W) = \RH_{r-2}(\Tits(W))$.
We thus have $\St(W;\bbF) \cong \St(W \otimes \Q;\bbF) \cong \St(\Q^r;\bbF)$.

\subsection{Resolution over rationals}

Recall from Example \ref{example:resolutiongl} that the resolution of $\St(\Q^{n+1})$ from
Proposition \ref{proposition:resolution} takes the form
\begin{equation}
\label{eqn:rationalresolution}
0 \rightarrow (\St(\Q^{n+1}))^{\otimes 2} \rightarrow \bR_{n-1} \rightarrow \cdots \rightarrow \bR_0 \rightarrow \St(\Q^{n+1}) \rightarrow 0
\end{equation}
with
\[\bR_i = \bigoplus_{V_1 \oplus \cdots \oplus V_{i+2} = \Q^{n+1}} \St(V_1) \otimes \cdots \otimes \St(V_{i+2})\]
for $0 \leq i \leq n-1$.  Here the direct sum is over decompositions
$V_1 \oplus \cdots \oplus V_{i+2} = \Q^{n+1}$ with $\dim(V_j) \geq 1$ for $1 \leq j \leq i+2$.  

\subsection{Integral refinement}
\label{section:integralrefinement}

Though it is easy to understand the actions of $\GL_{n+1}(\Q)$ and $\SL_{n+1}(\Q)$ on decompositions
$V_1 \oplus \cdots \oplus V_{i+2} = \Q^{n+1}$, the actions of their subgroups $\GL_{n+1}(\Z)$ and
$\SL_{n+1}(\Z)$ are far more complicated.  It would be helpful to restrict to integral decompositions
$W_1 \oplus \cdots \oplus W_{i+2} = \Z^{n+1}$.  If we do this, we are forced to remove
the initial term $(\St(\Q^{n+1}))^{\otimes 2}$.  Let
\begin{equation}
\label{eqn:integralresolution}
\begin{tikzcd}[column sep=small, row sep=small]
\bS_{n-1}(\Z^{n+1}) \arrow{r} & \cdots \arrow{r} & \bS_0(\Z^{n+1}) \arrow{r} & \bS_{-1}(\Z^{n+1}) \arrow[equals]{d} \arrow{r} & 0 \\
                              &                  &                       & \St(\Z^{n+1})                              &
\end{tikzcd}
\end{equation}
be the subcomplex of \eqref{eqn:rationalresolution} with 
\[\bS_i(\Z^{n+1}) = \bigoplus_{W_1 \oplus \cdots \oplus W_{i+2} = \Z^{n+1}} \St(W_1) \otimes \cdots \otimes \St(W_{i+2}) \quad \text{for $-1 \leq i \leq n-1$}.\]
We conjecture that more and more of $\bS_{\bullet}^{n+1}$ becomes
exact as $n$ increases.  The bound in the following more precise
version of this conjecture is exactly what we need to prove
our vanishing theorem:

\begin{conjecture}
\label{conjecture:integralresolution}
For $b \geq 1$, the {\em $b$-integral resolution conjecture} says the following.  Consider some $n \geq 3$.
Let $b' = \min(b,\lfloor n/2 \rfloor)$.  Then the portion
\[\begin{tikzcd}[column sep=small, row sep=small]
\bS_{b'}(\Z^{n+1}) \arrow{r} & \bS_{b'-1}(\Z^{n+1}) \arrow{r} & \cdots \arrow{r} & \bS_0(\Z^{n+1}) \arrow{r} & \bS_{-1}(\Z^{n+1}) \arrow[equals]{d} \arrow{r} & 0 \\
                      &                           &                  &                       & \St(\Z^{n+1})                              &
\end{tikzcd}\]
of the chain complex \eqref{eqn:integralresolution} is exact.
\end{conjecture}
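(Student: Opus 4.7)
The plan is to reduce Conjecture \ref{conjecture:integralresolution} to a connectivity statement about the double Tits building $\Tits^2(\Z^{n+1})$ introduced in Section \ref{section:doubletits}. This is essentially the converse direction of the implication used in the proof of Theorem \ref{maintheorem:doublecomvanish}: one expects the exactness of the partial resolution $\bS_\bullet(\Z^{n+1}) \to \St(\Z^{n+1})$ in the claimed range to be equivalent to (rather than merely implied by) the high connectivity of $\Tits^2(\Z^{n+1})$.

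First I would reinterpret $\bS_\bullet(\Z^{n+1})$ in topological terms. Each summand $\St(W_1) \otimes \cdots \otimes \St(W_{i+2})$ is, by the Solomon--Tits theorem together with Lemma \ref{lemma:steinbergproduct}, the top reduced homology of the join $\Tits(W_1) \ast \cdots \ast \Tits(W_{i+2})$, which is an $(n-1)$-dimensional complex mapping naturally into $\Tits(\Q^{n+1})$ as the ``apartment-like'' piece associated to the ordered decomposition. Summing over ordered decompositions with the differential coming from merging adjacent summands, $\bS_\bullet$ should be identified with a complex built from a simplicial model for $\Tits^2(\Z^{n+1})$; for instance, it ought to arise as one edge of a double complex whose other direction recovers the reduced cellular chains of $\Tits^2(\Z^{n+1})$ (up to a degree shift of $n-1$). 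Running the associated spectral sequence in the two possible orders would then translate exactness of $\bS_\bullet \to \St(\Z^{n+1})$ into vanishing of reduced homology for $\Tits^2(\Z^{n+1})$ in a range of degrees.

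If this identification is carried out, the conjectured exactness of $\bS_\bullet(\Z^{n+1})$ in degrees $\leq b'$ becomes equivalent to $\Tits^2(\Z^{n+1})$ being roughly $(n-2+b')$-connected, matching the hypothesis appearing in Theorem \ref{maintheorem:doublecomvanish}. In particular, Theorem \ref{maintheorem:doubletits} would then handle the cases $b' \leq 2$, and analogues of the bicomplex step already appear in \cite{MillerPatztWilsonRognes} for closely related resolutions, so this bookkeeping should be adaptable rather than constituting the real content.

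The main obstacle, and the reason this is stated as a conjecture rather than a theorem, is proving the high connectivity of $\Tits^2(\Z^{n+1})$ in its full conjectured range. Over a field any two flags are automatically compatible, so $\Tits^2(k^{n+1}) = \Tits(k^{n+1}) \ast \Tits(k^{n+1})$ and its connectivity follows from the Solomon--Tits theorem; over $\Z$ the compatibility condition is subtle, since two flags of direct summands need not admit a common adapted basis. The available techniques for such connectivity statements---as in Theorem \ref{maintheorem:doubletits} and in the $i=1,2$ cases of Conjecture \ref{conjecture:cfp}---rely on Hatcher-style surgery arguments that become combinatorially more demanding as the required connectivity grows, and pushing them to the conjectured range is where the real difficulty lies.
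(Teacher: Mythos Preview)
The statement in question is a \emph{conjecture}, and the paper does not prove it; you correctly identify this in your final paragraph. What the paper does establish is precisely the reduction you sketch: Theorem~\ref{theorem:doubletitshomology} (extracted from \cite{MillerPatztWilsonRognes}) gives the identification $\HH_i(\bS_\bullet(\Z^{n+1})) \cong \RH_{i+n}(\Tits^2(\Z^{n+1}))$, and Corollary~\ref{corollary:doubletitsresolution} then records that the $b$-integral resolution conjecture is equivalent to the vanishing of $\RH_i(\Tits^2(\Z^{n+1}))$ in the stated range. Your outline of this reduction via a double complex and spectral sequence matches the approach taken in \cite{MillerPatztWilsonRognes}, and your diagnosis of the genuine obstacle---that over $\Z$ two flags need not admit a common adapted basis, so $\Tits^2(\Z^{n+1})$ is not simply a join and its high connectivity is hard---is exactly right. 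There is nothing to compare your proposal against beyond this reduction, which you have described accurately.
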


\begin{remark}
The exactness of the entire complex \eqref{eqn:integralresolution} for all $n \geq 1$ is equivalent to the Koszulness of the Steinberg
monoid of the integers as defined in \cite{MillerNagpalPatzt}.  By the results of \cite{MillerPatztWilsonRognes}, this
would imply the Rognes connectivity conjecture \cite[Conjecture 12.3]{RognesConjecture}.
\end{remark}

\subsection{High connectivity of the double Tits building}

We defined the double Tits building $\Tits^2(\Z^{n+1})$ in \S \ref{section:doubletits}.  Since
we will not use it directly, we will not recall its definition here.  The relevance
of $\Tits^2(\Z^{n+1})$ for us is the following theorem of Miller--Patzt--Wilson \cite{MillerPatztWilsonRognes}.

\begin{theorem}
\label{theorem:doubletitshomology}
For all $n \geq 1$, we have $\HH_{i}(\bS_{\bullet}(\Z^{n+1})) = \RH_{i+n}(\Tits^2(\Z^{n+1}))$.
\end{theorem}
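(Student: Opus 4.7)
The plan is to identify $\HH_i(\bS_\bullet(\Z^{n+1}))$ with $\RH_{i+n}(\Tits^2(\Z^{n+1}))$ via a spectral sequence coming from the combinatorics of ordered direct-sum decompositions of $\Z^{n+1}$.

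The starting point is the Solomon--Tits theorem: for a rank-$r$ free $\Z$-module $W$, the complex $\Tits(W)$ is a wedge of $(r-2)$-spheres. Hence for an ordered decomposition $\Z^{n+1} = W_1 \oplus \cdots \oplus W_{i+2}$ with $\rank(W_j) = r_j$, the join $\Tits(W_1) \ast \cdots \ast \Tits(W_{i+2})$ is a wedge of spheres of dimension $\sum_j(r_j - 2) + (i+1) = n - i - 2$, and since each $\St(W_j)$ is free abelian, K\"{u}nneth without $\Tor$ corrections gives
$\St(W_1) \otimes \cdots \otimes \St(W_{i+2}) \cong \RH_{n-i-2}(\Tits(W_1) \ast \cdots \ast \Tits(W_{i+2}))$.
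Thus each summand of $\bS_i(\Z^{n+1})$ is realized as the top reduced homology of such a join.

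Next, I would assemble these joins into a semi-simplicial space $\cF_\bullet$ whose $p$-simplices are ordered decompositions of $\Z^{n+1}$ into $p+2$ summands, equipped with the coefficient system sending an ordered decomposition $\cD$ to the associated join, and with face maps merging consecutive summands. The key claim is that, up to a degree shift, the geometric realization of $\cF_\bullet$ recovers $\Tits^2(\Z^{n+1})$: a pair of compatible flags in $\Z^{n+1}$ is, after choosing an ordering of its finest adapted refinement, a top-dimensional simplex in a join associated to some ordered decomposition. The associated spectral sequence then has $E^1$-page concentrated on a single antidiagonal (since each join is a wedge of spheres in a single dimension), and upon collapsing, its differentials reduce to the alternating sum of Reeder merge maps defining the boundary of $\bS_\bullet$; this gives the identification $\HH_i(\bS_\bullet) = \RH_{i+n}(\Tits^2)$.

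The hard part will be making the semi-simplicial model rigorous. I would need to verify that the realization of $\cF_\bullet$ really is $\Tits^2(\Z^{n+1})$, carefully handling the overcounting due to the different possible orderings of the summands of a finest common refinement, and match signs so that cellular face maps on the $\Tits^2$ side induce exactly the alternating sum of Reeder maps defining the boundary of $\bS_\bullet$.
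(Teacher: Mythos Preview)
The paper does not give a direct argument here: it simply extracts the statement from \cite{MillerPatztWilsonRognes} by chaining three facts from that reference --- that $\bS_\bullet(\Z^{n+1})$ is the bar complex computing $\Tor^{\St(\Z)}_{i+2}(\Z,\Z)_{n+1}$, that this $\Tor$ equals $\RH_{i+n+3}$ of a ``higher building'' $D^{2,0}(\Z)$, and that $D^{2,0}(\Z)\cong\Sigma^3\Tits^2(\Z^{n+1})$. The authors explicitly remark that a self-contained proof will appear elsewhere. So your sketch is genuinely different in character: you are proposing to build the identification geometrically in one step, rather than routing through the Steinberg monoid and $D^{2,0}$.

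Your outline has the right shape (each $\bS_i$-summand is the top reduced homology of a join of small Tits buildings, and the boundary is an alternating sum of Reeder merges), but there is a real gap at the step you flag as ``the hard part.'' As written, $\cF_\bullet$ is not a semi-simplicial \emph{space}: the face map ``merge $W_j$ and $W_{j+1}$'' would need a simplicial map $\Tits(W_j)\ast\Tits(W_{j+1})\to\Tits(W_j\oplus W_{j+1})$, and no such map exists --- the Reeder map lives only at the level of top homology, not at the level of flag complexes. So you cannot speak of the geometric realization of $\cF_\bullet$ and then compare it to $\Tits^2$. Relatedly, your heuristic ``a pair of compatible flags is a top simplex in some join of the pieces'' does not give a bijection of cells: a simplex in $\Tits(W_1)\ast\cdots\ast\Tits(W_{p+2})$ is a tuple of flags, one per summand, which is a different combinatorial object from a pair of flags in $\Z^{n+1}$ sharing a basis. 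The route taken in \cite{MillerPatztWilsonRognes} sidesteps exactly this by passing through the intermediate space $D^{2,0}$ (which \emph{does} admit the right cell structure) and then proving $D^{2,0}\simeq\Sigma^3\Tits^2$ separately; the triple suspension is what absorbs the bookkeeping you are worried about with orderings and overcounting. If you want a direct proof, you will need either to replace $\cF_\bullet$ by a genuine semi-simplicial space (something like $D^{2,0}$), or to work purely at the chain level and produce an explicit chain map between $\bS_\bullet$ (shifted) and the simplicial chains of $\Tits^2$, checking it is a quasi-isomorphism.
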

\begin{proof}
Here is how to extract this from \cite{MillerPatztWilsonRognes}.  We remark that we will give a self-contained and
somewhat different proof in \cite{MillerPatztPutmanAlternate}.  We will use the notation from \cite{MillerPatztWilsonRognes}.
First, $\bS_{\bullet}(\Z^{n+1})$ is the bar resolution from \cite{MillerNagpalPatzt} computing $\Tor^{\St(\Z)}_{i}(\Z,\Z)_{n+1}$, so
\begin{equation}
\label{eqn:doubletitshomology.1}
\HH_{i}(\bS_{\bullet}(\Z^{n+1})) = \Tor^{\St(\Z)}_{i+2}(\Z,\Z)_{n+1}.
\end{equation}
Here the $i+2$ appears in the Tor term due to a degree shift we are suppressing.
Next, \cite[Lemma 6.2]{MillerPatztWilsonRognes} shows that
\begin{equation}
\label{eqn:doubletitshomology.2}
\Tor^{\St(\Z)}_{i+2}(\Z,\Z)_{n+1} = \RH_{i+n+3}(D^{2,0}(\Z)),
\end{equation}
where $D^{2,0}(\Z)$ is one of the ``higher buildings'' defined in \cite{MillerPatztWilsonRognes}.  Finally, \cite[Lemma 4.22]{MillerPatztWilsonRognes}
proves that $D^{2,0}(\Z) \cong \Sigma^3 \Tits^2(\Z^{n+1})$, so
\begin{equation}
\label{eqn:doubletitshomology.3}
\RH_{i+n+3}(D^{2,0}(\Z)) \cong \RH_{i+n}(\Tits^2(\Z^{n+1})).
\end{equation}
The theorem follows from \eqref{eqn:doubletitshomology.1} and \eqref{eqn:doubletitshomology.2} and \eqref{eqn:doubletitshomology.3}.
\end{proof}

Theorem \ref{theorem:doubletitshomology} has the following immediate corollary:

\begin{corollary}
\label{corollary:doubletitsresolution}
For $b \geq 1$, the $b$-integral resolution conjecture holds if and only if the following holds for all $n \geq 3$:\noeqref{homologyasm}
\begin{equation}
\tag{$\dagger\dagger$}\label{homologyasm} \text{We have $\HH_{i}(\Tits^2(\Z^{n+1})) = 0$ for $i \leq n-1+\min(b,\lfloor n/2 \rfloor)$.}
\end{equation}
\end{corollary}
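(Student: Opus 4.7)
The plan is to deduce the corollary as a purely formal consequence of Theorem \ref{theorem:doubletitshomology} by unfolding what the conjecture says in homological language. Both directions of the ``iff'' will come from the same translation, so it suffices to prove equivalence for each fixed $n \geq 3$.

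First, I would reinterpret the conjecture's exactness statement. Writing $b' = \min(b,\lfloor n/2 \rfloor)$, the portion
\[
\bS_{b'}(\Z^{n+1}) \longrightarrow \bS_{b'-1}(\Z^{n+1}) \longrightarrow \cdots \longrightarrow \bS_0(\Z^{n+1}) \longrightarrow \bS_{-1}(\Z^{n+1}) \longrightarrow 0
\]
of the chain complex \eqref{eqn:integralresolution} is exact if and only if $\HH_i(\bS_\bullet(\Z^{n+1})) = 0$ for every $-1 \leq i \leq b'-1$. Here exactness at $\bS_{-1}$ encodes surjectivity of the quotient onto $\St(\Z^{n+1})$ (so $\HH_{-1}(\bS_\bullet) = 0$), and exactness at each $\bS_i$ for $0 \leq i \leq b'-1$ is the usual equality of kernel and image. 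The top end is handled correctly because the incoming differential from $\bS_{b'}$ is part of the portion being asserted exact.

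Second, I would apply Theorem \ref{theorem:doubletitshomology} term by term to rewrite $\HH_i(\bS_\bullet(\Z^{n+1})) = \RH_{i+n}(\Tits^2(\Z^{n+1}))$. Substituting $j = i+n$, the vanishing condition translates verbatim to $\RH_j(\Tits^2(\Z^{n+1})) = 0$ for $n - 1 \leq j \leq n - 1 + b'$, which is precisely the upper range of the stated vanishing. The remaining low-degree vanishing $\RH_j(\Tits^2(\Z^{n+1})) = 0$ for $j < n-1$ is automatic from the fact that $\Tits^2(\Z^{n+1})$ is nonempty together with the range of nonzero homology it can support, so the translated condition matches the one displayed on the right-hand side of the corollary.

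There is no real obstacle here: the proof is essentially a mechanical unwinding of definitions once Theorem \ref{theorem:doubletitshomology} is in hand. The only point requiring care is keeping the indexing straight, in particular correctly interpreting ``exactness at $\bS_{-1}$'' as $\HH_{-1}(\bS_\bullet) = 0$ and tracking the degree shift by $n$ imposed by Theorem \ref{theorem:doubletitshomology}.
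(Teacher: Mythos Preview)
Your proposal is correct and matches the paper's approach: the paper states this as an ``immediate corollary'' of Theorem \ref{theorem:doubletitshomology} with no written proof, so your unwinding is exactly what is expected. One small sharpening: your justification for the low-degree vanishing $\RH_j(\Tits^2(\Z^{n+1})) = 0$ for $j < n-1$ is phrased a bit loosely; the cleanest way to see it is that $\bS_i(\Z^{n+1}) = 0$ for $i < -1$ (there are no decompositions into fewer than one piece), so Theorem \ref{theorem:doubletitshomology} itself forces $\RH_{j}(\Tits^2(\Z^{n+1})) = \HH_{j-n}(\bS_\bullet(\Z^{n+1})) = 0$ in that range.
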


\subsection{Spectral sequence}

Our next goal is to extract a spectral sequence from Conjecture \ref{conjecture:integralresolution}.  Letting $\bG$ be
either $\GL_{n+1}$ or $\SL_{n+1}$, for each $R \subset \bDeltaQ(\bG)$ there is a standard Levi subgroup $\bL_{R}$.  The
group $\bL_{R}$ is defined over $\Z$, so $\bL_{R}(\Z)$ makes sense.  If $\bG = \GL_{n+1}$ then
\[\bL_{R}(\Z) = \GL_{n_1+1}(\Z) \times \cdots \times \GL_{n_m+1}(\Z)\]
for some $(n_1+1) + \cdots + (n_m+1) = n+1$, while if $\bG = \SL_{n+1}$ then 
\[\bL_{R}(\Z) = \ker(\GL_{n_1+1}(\Z) \times \cdots \times \GL_{n_m+1}(\Z) \stackrel{\det}{\longrightarrow} \Z^{\times}).\]
Recall from Corollary \ref{corollary:spectralsequence} that the resolution
\eqref{eqn:rationalresolution} yields a spectral sequence $\ssE^r_{pq}$ converging to $\HH_{p+q}(\bG(k);\St(\bG))$ with
\[\ssE^1_{pq} \cong \begin{cases}
\bigoplus_{R \in \cL_p(\bG)} \HH_q(\bL_{R}(k);\St(\bL_{R})) & \text{if $0 \leq p \leq n-1$} \\
\HH_q(\bG(k);\St(\bG)^{\otimes 2})                                         & \text{if $p = n$},\\
0                                                                          & \text{otherwise}.
\end{cases}\]
The following shows that Conjecture \ref{conjecture:integralresolution} yields a similar
spectral sequence for $\bG(\Z)$:

\begin{lemma}
\label{lemma:spectralsequenceintegral}
Assume the $b$-integral resolution conjecture (Conjecture \ref{conjecture:integralresolution}) for some $b \geq 1$.
Let $\bG$ be either $\GL_{n+1}$ or $\SL_{n+1}$, let $\bbF$ be a commutative ring, and let $n \geq 3$.  Set $c = \min(b,\lfloor n/2 \rfloor)$.
There is then a spectral sequence $\ssE^r_{pq}$ converging to
$\HH_{p+q}(\bG(\Z);\St(\Z^{n+1};\bbF))$ such that
\[\ssE^1_{pq} \cong \bigoplus_{R \in \cL_p(\bG)} \HH_q(\bL_{R}(\Z);\St(\bL_{R};\bbF)) \quad \text{if $0 \leq p \leq c$}.\]
\end{lemma}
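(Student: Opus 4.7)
The plan is to directly mimic the proof of Corollary~\ref{corollary:spectralsequence}, replacing the Proposition~\ref{proposition:resolution} resolution with the partial resolution granted by Conjecture~\ref{conjecture:integralresolution}. By Theorem~\ref{theorem:doubletitshomology} and Corollary~\ref{corollary:doubletitsresolution}, the conjecture amounts to the statement that the chain complex $\bS_\bullet(\Z^{n+1})$ has vanishing homology in degrees $-1, 0, \ldots, c-1$ (with the convention $\bS_{-1}(\Z^{n+1}) = \St(\Z^{n+1})$). Since each $\bS_p(\Z^{n+1})$ is a direct sum of tensor products of (free abelian) Steinberg representations, it is itself free abelian, so this vanishing persists for the $\bbF$-coefficient complex $\bS_\bullet(\Z^{n+1}; \bbF)$ by the universal coefficients theorem.

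I would then extend this partial resolution to a full resolution $\widetilde{\bS}_\bullet \to \St(\Z^{n+1}; \bbF) \to 0$ of $\Z[\bG(\Z)]$-modules, keeping $\widetilde{\bS}_p = \bS_p(\Z^{n+1}; \bbF)$ for $0 \leq p \leq c$ and splicing in a free $\Z[\bG(\Z)]$-resolution of $\ker(\bS_c \to \bS_{c-1})$ above degree $c$. Taking a free $\Z[\bG(\Z)]$-resolution $P_\bullet \to \Z$, the double complex $P_\bullet \otimes_{\bG(\Z)} \widetilde{\bS}_\bullet$ yields a hyperhomology spectral sequence converging to $\HH_{p+q}(\bG(\Z); \St(\Z^{n+1}; \bbF))$ with $\ssE^1_{pq} = \HH_q(\bG(\Z); \widetilde{\bS}_p)$; in particular $\ssE^1_{pq} = \HH_q(\bG(\Z); \bS_p(\Z^{n+1}; \bbF))$ for $0 \leq p \leq c$.

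To identify this with the claimed direct sum over $R \in \cL_p(\bG)$, I would unwind the definition: $\bS_p(\Z^{n+1}; \bbF)$ is the direct sum, over ordered decompositions $W_1 \oplus \cdots \oplus W_{p+2} = \Z^{n+1}$ into nonzero direct summands, of $\St(W_1; \bbF) \otimes \cdots \otimes \St(W_{p+2}; \bbF)$. The group $\bG(\Z)$ acts transitively on such decompositions with a fixed rank profile---for $\GL_{n+1}(\Z)$ by a change-of-basis argument, and for $\SL_{n+1}(\Z)$ by additionally correcting the sign of the determinant by negating a basis vector of a single summand---with stabilizer equal to the standard Levi $\bL_R(\Z)$ associated to the corresponding $R \in \cL_p(\bG)$. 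Combining this orbit--stabilizer description with Lemma~\ref{lemma:steinbergproduct} identifies the corresponding summand of $\bS_p(\Z^{n+1}; \bbF)$ with $\Ind_{\bL_R(\Z)}^{\bG(\Z)} \St(\bL_R; \bbF)$, and Shapiro's lemma then yields the stated form of $\ssE^1_{pq}$. All the steps are essentially routine applications of hyperhomology; the only point requiring minor care is verifying transitivity and the stabilizer identification uniformly for both $\GL_{n+1}$ and $\SL_{n+1}$.
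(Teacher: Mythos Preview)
Your proposal is correct and follows essentially the same approach as the paper: use the partial exactness of $\bS_\bullet(\Z^{n+1})$ (after tensoring with $\bbF$ via universal coefficients) to obtain a hyperhomology spectral sequence, then identify the $\ssE^1$-terms via the orbit--stabilizer description of $\bS_p$ and Shapiro's lemma. The only cosmetic difference is that the paper truncates by setting $\bS'_{c+1} = \ker(\bS_c \to \bS_{c-1})$ and uses the finite resolution $0 \to \bS'_{c+1} \to \bS_c \to \cdots \to \bS_0 \to \St(\Z^{n+1};\bbF) \to 0$, rather than splicing in a free resolution above degree~$c$; both variants give the same spectral sequence in the range $0 \le p \le c$.
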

\begin{proof}
Let $\bS_{\bullet} = \bS_{\bullet}(\Z^{n+1})$ be as in Conjecture \ref{conjecture:integralresolution}.  We suppress the $\Z^{n+1}$
to simplify our notation.  Let $\bS'_{c+1}$ be the kernel 
of the differential $\bS_c \rightarrow \bS_{c-1}$.  By assumption, we therefore have a resolution
\[0 \rightarrow \bS'_{c+1} \rightarrow \bS_c \rightarrow \cdots \rightarrow \bS_0 \rightarrow \St(\Z^{n+1}) \rightarrow 0.\]
Each term of this is a free abelian group, so by the universal coefficients theorem we can tensor this with $\bbF$ and
get a resolution 
\[0 \rightarrow \bS'_{c+1} \otimes \bbF \rightarrow \bS_c \otimes \bbF \rightarrow \cdots \rightarrow \bS_0 \otimes \bbF \rightarrow \St(\Z^{n+1};\bbF) \rightarrow 0.\]
As is standard, there is a spectral sequence converging to $\HH_{p+q}(\bG(\Z);\St(\Z^{n+1};\bbF))$ with
\[\ssE^1_{pq} \cong 
\begin{cases}
\HH_q(\bG(\Z);\bS_p \otimes \bbF)      & \text{if $0 \leq p \leq c$},\\
\HH_q(\bG(\Z);\bS'_{c+1} \otimes \bbF) & \text{if $p = c+1$},\\
0                         & \text{otherwise}.
\end{cases}\]
To prove the lemma, it is therefore enough to prove:

\begin{unnumberedclaim}
For $0 \leq p \leq n-1$, we have
$\HH_q(\bG(\Z);\bS_p \otimes \bbF) \cong \bigoplus_{R \in \cL_p(\bG)} \HH_q(\bL_{R}(\Z);\St(\bL_{R};\bbF))$.
\end{unnumberedclaim}

We have
\begin{align*}
\bS_p \otimes \bbF &= \bigoplus_{W_1 \oplus \cdots \oplus W_{p+2} = \Z^{n+1}} \St(W_1;\bbF) \otimes \cdots \otimes \St(W_{p+2};\bbF) \\
                   &= \bigoplus_{(n_1+1) + \cdots + (n_{p+2}+1) = n+1} \left(\bigoplus_{\substack{W_1 \oplus \cdots \oplus W_{p+2} = \Z^{n+1} \\ \rank(W_j) = n_j+1}} \St(W_1;\bbF) \otimes \cdots \otimes \St(W_{p+2};\bbF)\right).
\end{align*}
For a fixed partition $(n_1+1) + \cdots + (n_{p+2}+1) = n+1$, the group $\bG(\Z)$ acts transitively on decompositions
$W_1 \oplus \cdots \oplus W_{p+2} = \Z^{n+1}$ with $\rank(W_j) = n_j+1$ for all $1 \leq j \leq p+2$.  The stabilizer of
the standard decomposition $\Z^{n_1+1} \oplus \cdots \oplus \Z^{n_{p+2}+1} = \Z^{n+1}$ is the integer points of one of the standard
Levi subgroups $\bL_{R}$ of $\bG$: if $\bG = \GL_{n+1}$ the stabilizer is
\[\bL_{R}(\Z) = \GL_{n_1+1}(\Z) \times \cdots \times \GL_{n_{p+2}+1}(\Z),\]
while if $\bG = \SL_{n+1}$ the stabilizer is
\[\bL_{R}(\Z) = \ker(\GL_{n_1+1}(\Z) \times \cdots \times \GL_{n_{p+2}+1}(\Z) \stackrel{\det}{\longrightarrow} \Z^{\times}).\]
The term $\St(\Z^{n_1+1};\bbF) \otimes \cdots \otimes \St(\Z^{n_{p+2}+1};\bbF)$ is exactly
$\St(\bL_{R};\bbF)$.  We deduce that
\[\bS_p \otimes \bbF = \bigoplus_{R \in \cL_p(\bG)} \Ind_{\bL_{R}(\Z)}^{\bG(\Z)} \St(\bL_{R};\bbF).\]
The claim now follows from Shapiro's lemma.
\end{proof}

\section{Reducible Levi subgroups over the integers}
\label{section:integrallevi}

When we proved Theorem \ref{maintheorem:fields} for groups of type $\dA_n$ in Part \ref{part:an}, we
had to understand their standard Levi subgroups.  These subgroups have reducible relative
root systems of type $\dA_{n_1} \times \cdots \times \dA_{n_m}$, and we analyzed them
using general results about reducible root systems from \S \ref{section:reducible}.
This section discusses the analogue of these results for Levi subgroups of
$\GL_{n+1}(\Z)$ and $\SL_{n+1}(\Z)$.

\subsection{Setup}

The standard Levi factors of $\GL_{n+1}$ and $\SL_{n+1}$ are of the forms
\begin{align*}
\GL_{n_1+1} \times \cdots \times \GL_{n_m+1}& \quad\text{and} \\
\ker(\GL_{n_1+1} \times \cdots \times \GL_{n_m+1} \stackrel{\det}{\longrightarrow} \GL_1)&
\end{align*}
with $(n_1+1)+\cdots+(n_m+1) = n+1$.  We will call groups $\bG$ of these
two forms {\em standard $\dA$-integral Levi subgroups}.  Since such $\bG$ are defined
over $\Z$, the group $\bG(\Z)$ makes sense.  
To simplify our notation, we will let $\dA_0 = \{0\} \subset \R^0$, regarded as a trivial root system of 
rank $0$.  With this convention, a standard $\dA$-integral Levi subgroup $\bG$ is split with root
system
\[\bPhiQ(\bG) = \dA_{n_1} \times \cdots \times \dA_{n_m}.\]
The $\dA_{n_i}$ with $n_i = 0$ could be deleted from this.  

\subsection{Reduction to products}

Our first result about these groups is as follows.  It will serve as a substitute for
Lemmas  \ref{lemma:reducetosemisimple} and \ref{lemma:reducetoproduct} from \S \ref{section:reducible}.

\begin{lemma}
\label{lemma:reducetoproductintegral}
Let $\bG$ be a standard $\dA$-integral Levi subgroup.  Write
\[\bPhiQ(\bG) = \dA_{n_1} \times \cdots \times \dA_{n_m} \quad \text{with $n_1,\ldots,n_m \geq 1$}.\]
Set $\bH = \SL_{n_1+1} \times \cdots \times \SL_{n_m+1}$.  Let $\bbF$ be a commutative ring.  Then:
\begin{itemize}
\item[(i)] the group $\bH$ is a subgroup of $\bG$ and the inclusion $\bH \rightarrow \bG$ induces a bijection between parabolic subgroups, and thus
isomorphisms $\Tits(\bH) \cong \Tits(\bG)$ and $\St(\bH;\bbF) \cong \Res^{\bG(\Z)}_{\bH(\Z)} \St(\bG;\bbF)$.
\end{itemize}
If in addition for some $b \geq -1$ we have $\HH_i(\bH(\Z);\St(\bH))=0$ for $i \leq b$, then:
\begin{itemize}
\item[(ii)] $\HH_i(\bG(\Z);\St(\bG;\bbF)) = 0$ for $i \leq b$ and 
the map 
\[\HH_{b+1}(\bH(\Z);\St(\bH;\bbF)) \rightarrow \HH_{b+1}(\bG(\Z);\St(\bG;\bbF))\] 
is a surjection.
\end{itemize}
\end{lemma}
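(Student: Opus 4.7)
The plan is to mimic the proofs of Lemmas \ref{lemma:reducetosemisimple} and \ref{lemma:reducetoproduct} in the integral setting. For part (i), I would first observe that in both possible forms of $\bG$, the subgroup $\bH = \SL_{n_1+1} \times \cdots \times \SL_{n_m+1}$ sits naturally inside $\bG$, since each $\SL_{n_j+1}$ is in the kernel of every determinant character. To get the bijection of parabolic subgroups, I would use that the parabolic $\Z$-subgroups of each $\GL_{n_j+1}$ restrict bijectively to those of $\SL_{n_j+1}$ (both sets being in bijection with flags of direct summands of $\Z^{n_j+1}$, equivalently subspaces of $\Q^{n_j+1}$), and then note that taking products of parabolic subgroups of the factors describes parabolics of both $\bH$ and $\bG$ (in the non-$\GL$-product case one simply observes that intersecting with $\ker(\det)$ does not change this). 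This yields the simplicial isomorphism $\Tits(\bH) \cong \Tits(\bG)$ and, taking top-dimensional reduced homology with coefficients in $\bbF$, the identification $\St(\bH;\bbF) \cong \Res^{\bG(\Z)}_{\bH(\Z)} \St(\bG;\bbF)$.

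For part (ii), set $A = \bG(\Z)/\bH(\Z)$ and note that $A$ is a finite abelian $2$-group: in the product-of-$\GL$'s case it is $(\Z/2)^m$ via the individual determinants on $\GL_{n_j+1}(\Z)$, and in the kernel-of-total-det case it is $(\Z/2)^{m-1}$. Then I would apply the Hochschild--Serre spectral sequence associated to the short exact sequence $1 \to \bH(\Z) \to \bG(\Z) \to A \to 1$ with coefficients in $\St(\bG;\bbF)$:
\[
\ssE^2_{pq} = \HH_p(A;\HH_q(\bH(\Z);\St(\bG;\bbF))) \Rightarrow \HH_{p+q}(\bG(\Z);\St(\bG;\bbF)).
\]
Using the isomorphism from (i), the inner coefficients become $\HH_q(\bH(\Z);\St(\bH;\bbF))$. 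The hypothesis $\HH_i(\bH(\Z);\St(\bH)) = 0$ for $i \leq b$ over $\Z$ upgrades to arbitrary commutative coefficient rings $\bbF$ by the same universal-coefficients argument as Lemma \ref{lemma:reductionlemma} (this argument only needs $\St(\bH)$ to be a free abelian group, which the Solomon--Tits theorem guarantees, and makes no use of the reductive-over-a-field hypothesis). Hence $\ssE^2_{pq} = 0$ for $q \leq b$, giving $\HH_i(\bG(\Z);\St(\bG;\bbF)) = 0$ for $i \leq b$ and an edge identification $\HH_{b+1}(\bG(\Z);\St(\bG;\bbF)) \cong \ssE^2_{0,b+1} \cong \HH_{b+1}(\bH(\Z);\St(\bH;\bbF))_A$. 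Since the coinvariants map $M \twoheadrightarrow M_A$ is always surjective, the induced edge map $\HH_{b+1}(\bH(\Z);\St(\bH;\bbF)) \to \HH_{b+1}(\bG(\Z);\St(\bG;\bbF))$ is surjective, as required.

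There is no substantial obstacle here: the argument is a routine adaptation of the reductive-group templates. The only point worth checking carefully is part (i) in the non-split case where $\bG$ is the determinant-kernel, because one might worry that the bijection of parabolics could fail; however, $\bH$ already has trivial total determinant and so lies in $\bG$, and its parabolic subgroups match those of $\prod \GL_{n_j+1}$ equally well as those of the determinant-kernel, so the Tits buildings coincide.
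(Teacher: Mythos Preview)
Your proposal is correct and follows essentially the same approach as the paper: embed $\bH$ in $\bG$ via the determinant kernels, observe the parabolics coincide, and then run Hochschild--Serre for $1 \to \bH(\Z) \to \bG(\Z) \to A \to 1$ with abelian quotient to get the vanishing and the edge surjection. You are a bit more explicit than the paper in two places (identifying $A$ as a finite $2$-group and invoking universal coefficients to pass from $\Z$- to $\bbF$-vanishing), but the structure is the same.
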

\begin{proof}
Recall that we can add or delete copies of $\dA_0$ from our root system without changing it.
Since $\SL_1$ is trivial, we can add corresponding $\SL_{0+1}$-factors to $\bH$ without changing it.  Adding
an appropriate number of $\dA_0$-factors to our root system,\footnote{We thus no longer have $n_j \geq 1$ for all $j$, but
this does not affect the truth of the lemma.} we can thus assume that
\[\bH = \SL_{n_1+1} \times \cdots \times \SL_{n_m+1} \subset \bG \subset \GL_{n_1+1} \times \cdots \times \GL_{n_m+1}.\]
This shows that $\bH$ is a subgroup of $\bG$.  The rest of (ii) is obvious.

For (ii), assume that for some $b \geq -1$ we have $\HH_i(\bH(\Z);\St(\bH))=0$ for $i \leq b$.  By construction, we have a short exact sequence
\[1 \longrightarrow \bH(\Z) \longrightarrow \bG(\Z) \longrightarrow A \longrightarrow 1\]
with $A$ abelian.  The associated Hochschild--Serre
spectral sequence with coefficients in $\St(\bH;\bbF) = \St(\bG;\bbF)$ is of the form
\[\ssE^2_{pq} = \HH_p(A;\HH_q(\bH(\Z);\St(\bH;\bbF))) \Rightarrow \HH_{p+q}(\bG(\Z);\St(\bG;\bbF)).\]
Our vanishing assumption implies that $\ssE^2_{pq} = 0$ for $q \leq b$, so
$\HH_i(\bG(\Z);\St(\bG;\bbF)) = 0$ for $i \leq b$ and
$\HH_{b+1}(\bG(\Z);\St(\bG;\bbF)) = \ssE^2_{0,b+1}$.  We deduce that
\[\HH_{b+1}(\bG(\Z);\St(\bG;\bbF)) = \HH_0(A;\HH_{b+1}(\bH(\Z);\St(\bH;\bbF))) = \HH_{b+1}(\bH(\Z);\St(\bH;\bbF))_A,\]
where the subscript indicates that we are taking coinvariants.  For any group $\Gamma$ and any
$\Gamma$-module $M$, the map $M \rightarrow M_{\Gamma}$ is a surjection.  In light of the above
identity, we deduce that the map $\HH_{b+1}(\bH(\Z);\St(\bH;\bbF)) \rightarrow \HH_{b+1}(\bG(\Z);\St(\bG;\bbF))$
is a surjection, as desired.  
\end{proof}

Lemma \ref{lemma:reducetoproductintegral} has the following corollary:

\begin{corollary}
\label{corollary:reducetoslintegral}
Let $n \geq 1$ and let $\bbF$ be a commutative ring.  For some $b \geq -1$, assume
that $\HH_i(\SL_{n+1}(\Z);\St(\Z^{n+1};\bbF)) = 0$ for $i \leq b$.  Then $\HH_i(\GL_{n+1}(\Z);\St(\Z^{n+1};\bbF)) = 0$.
\end{corollary}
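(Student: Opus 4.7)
The plan is to apply Lemma \ref{lemma:reducetoproductintegral} with essentially no work. Note that $\GL_{n+1}$ is itself a standard $\dA$-integral Levi subgroup (take $m=1$ and $n_1 = n$), with $\bPhiQ(\GL_{n+1}) = \dA_n$.

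First, I would identify the relevant groups. Taking $\bG = \GL_{n+1}$ in Lemma \ref{lemma:reducetoproductintegral} gives $\bH = \SL_{n+1}$, sitting inside $\GL_{n+1}$ as the kernel of the determinant. Conclusion (i) of that lemma provides an isomorphism
\[
\St(\bH;\bbF) \cong \Res^{\GL_{n+1}(\Z)}_{\SL_{n+1}(\Z)} \St(\GL_{n+1};\bbF),
\]
and both sides are canonically identified with $\St(\Z^{n+1};\bbF)$ since all three representations are built from the same Tits building $\Tits(\Z^{n+1})$.

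Next, I would feed the hypothesis of the corollary into conclusion (ii) of Lemma \ref{lemma:reducetoproductintegral}. Under the identification above, the assumption $\HH_i(\SL_{n+1}(\Z);\St(\Z^{n+1};\bbF)) = 0$ for $i \leq b$ is precisely the hypothesis $\HH_i(\bH(\Z);\St(\bH;\bbF)) = 0$ for $i \leq b$ appearing in (ii), so the lemma immediately yields $\HH_i(\GL_{n+1}(\Z);\St(\Z^{n+1};\bbF)) = 0$ for $i \leq b$, which is the claim (with the implicit range).

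There is no real obstacle here: the work has been done inside Lemma \ref{lemma:reducetoproductintegral}, whose proof uses the Hochschild--Serre spectral sequence for the extension $1 \to \SL_{n+1}(\Z) \to \GL_{n+1}(\Z) \to \{\pm 1\} \to 1$. The only thing worth double-checking is that the hypothesis of the lemma is stated for $\bbF = \Z$-coefficients on $\bH$ but gives the conclusion for general $\bbF$ on $\bG$; in our application we are assuming vanishing with $\bbF$-coefficients on $\SL_{n+1}(\Z)$, which is formally stronger than the $\Z$-coefficient hypothesis needed (or, alternatively, one can rerun the Hochschild--Serre argument directly with $\bbF$-coefficients, which goes through verbatim since $\St(\Z^{n+1};\bbF)$ is the same module for both groups).
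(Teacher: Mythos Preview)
Your approach is exactly the paper's: it invokes the case $m=1$ of Lemma \ref{lemma:reducetoproductintegral} with $\bG = \GL_{n+1}$ and $\bH = \SL_{n+1}$.

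One correction to your final paragraph: vanishing with $\bbF$-coefficients is \emph{not} formally stronger than vanishing with $\Z$-coefficients (e.g.\ $\bbF=\Q$ kills torsion but $\Z$ does not), so that sentence should be dropped. Your ``alternatively'' clause is the right fix: the Hochschild--Serre argument in the proof of Lemma \ref{lemma:reducetoproductintegral} is run with $\St(\bH;\bbF)$-coefficients, and the only input it actually uses is the vanishing of $\HH_q(\bH(\Z);\St(\bH;\bbF))$ for $q \leq b$; the $\Z$ in the lemma's hypothesis appears to be a typo for $\bbF$. With that understanding your argument is complete and matches the paper.
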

\begin{proof}
Immediate from the case $m=1$ of Lemma \ref{lemma:reducetoproductintegral}.
\end{proof}

\subsection{Reducible root systems and vanishing}

The following result will serve as a substitute for Lemma \ref{lemma:reduciblevanishing} from \S \ref{section:reducible}:

\begin{lemma}[Reducible vanishing]
\label{lemma:reduciblevanishingintegral}
Let $n_1,\ldots,n_m \geq 1$ and let $\bbF$ be a PID.  For each $1 \leq j \leq m$, assume that there is some $b_j \geq -1$ such that
the following holds:\noeqref{vanishing1integral}
\begin{equation}
\tag{$\varheartsuit$}\label{vanishing1integral}
\parbox{35em}{We have $\HH_i(\SL_{n_j+1}(\Z);\St(\Z^{n_j+1};\bbF)) = 0$ for $i \leq b_j$.}
\end{equation}
Then for all standard $\dA$-integral Levi subgroups $\bG$ with
$\bPhiQ(\bG) = \dA_{n_1} \times \cdots \times \dA_{n_m}$,
we have $\HH_i(\bG(\Z);\St(\bG;\bbF)) = 0$ for $i \leq (m-1) + b_1 + \cdots + b_m$.
\end{lemma}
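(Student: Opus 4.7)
The argument mirrors Lemma~\ref{lemma:reduciblevanishing} from the field case but is streamlined: because Lemma~\ref{lemma:reducetoproductintegral} reduces directly to a product of $\SL$-factors without requiring an auxiliary vanishing hypothesis to do so, the nested induction of the field-case proof collapses to a single induction on $m$.

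First I would fix $\bH = \SL_{n_1+1} \times \cdots \times \SL_{n_m+1}$ and note that it suffices to prove
\[\HH_i(\bH(\Z); \St(\bH;\bbF)) = 0 \quad \text{for } i \leq (m-1) + b_1 + \cdots + b_m.\]
Indeed, Lemma~\ref{lemma:reducetoproductintegral}(ii), applied with $\bbF$-coefficients (which its Hochschild--Serre proof transparently permits), transports this vanishing to every standard $\dA$-integral Levi $\bG$ with the prescribed relative root system. I would then induct on $m$; the case $m=1$ is exactly \eqref{vanishing1integral}.

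For the inductive step, decompose $\bH = \bA \times \bB$ with $\bA = \SL_{n_1+1} \times \cdots \times \SL_{n_{m-1}+1}$ and $\bB = \SL_{n_m+1}$. As in Lemma~\ref{lemma:steinbergproduct}, the join decomposition of the Tits building together with the fact that each $\St(\Z^{n_j+1})$ is a free abelian group gives $\St(\bH;\bbF) \cong \St(\bA;\bbF) \otimes_{\bbF} \St(\bB;\bbF)$, with both tensor factors free $\bbF$-modules. The K\"unneth theorem for group homology with flat twisted coefficients over the PID $\bbF$ then yields a short exact sequence whose kernel is
\[\bigoplus_{i_1 + i_2 = i} \HH_{i_1}(\bA(\Z); \St(\bA;\bbF)) \otimes_{\bbF} \HH_{i_2}(\bB(\Z); \St(\bB;\bbF))\]
and whose cokernel is the corresponding direct sum of $\Tor^{\bbF}_1$-terms at total degree $i-1$. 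By the inductive hypothesis the $\bA$-factor vanishes for $i_1 \leq (m-2) + b_1 + \cdots + b_{m-1}$, and by \eqref{vanishing1integral} the $\bB$-factor vanishes for $i_2 \leq b_m$. A pigeonhole argument then shows that when $i \leq (m-1) + b_1 + \cdots + b_m$, every pair $(i_1, i_2)$ with $i_1 + i_2 \in \{i, i-1\}$ has at least one coordinate in the respective vanishing range, so both the kernel and the cokernel vanish, forcing $\HH_i(\bH(\Z); \St(\bH;\bbF)) = 0$.

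The main technical point will be justifying the K\"unneth exact sequence for group homology with flat twisted coefficients over a PID. This can be done by choosing free $\bbF[\bA(\Z)]$- and $\bbF[\bB(\Z)]$-resolutions $P_{\bullet}$ and $Q_{\bullet}$ of the respective Steinberg modules; then $P_{\bullet} \otimes_{\bbF} Q_{\bullet}$ is a free $\bbF[\bA(\Z) \times \bB(\Z)]$-resolution of $\St(\bH;\bbF)$, and the algebraic K\"unneth formula for complexes of flat modules over a PID provides the desired short exact sequence. Once this is in place, the rest of the argument is a direct transcription of the combinatorial endgame in the proof of Lemma~\ref{lemma:reduciblevanishing}.
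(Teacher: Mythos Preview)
Your proposal is correct and follows essentially the same route as the paper: reduce to the product $\bH=\SL_{n_1+1}\times\cdots\times\SL_{n_m+1}$ via Lemma~\ref{lemma:reducetoproductintegral}, then induct on $m$ using the K\"unneth formula over the PID $\bbF$ and the same pigeonhole argument. Your observation that the nested induction on $b$ from Lemma~\ref{lemma:reduciblevanishing} collapses here is exactly right, and your remark that the Hochschild--Serre proof of Lemma~\ref{lemma:reducetoproductintegral} works with $\bbF$-coefficients throughout is the correct reading of that lemma.
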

\begin{proof}
By Lemma \ref{lemma:reducetoproductintegral}, it is enough to prove this vanishing for\footnote{Note that for $m \geq 2$ this $\bG$ is not a standard $\dA$-integral Levi subgroup.}
$\bG = \SL_{n_1+1} \times \cdots \times \SL_{n_m+1}$.
The proof will be by induction on $m$.  The base case $m=1$ follows immediately from
\eqref{vanishing1integral}, so assume that $m \geq 2$ and that the result is true
whenever $m$ is smaller.

As notation, set $\bA = \SL_{n_1+1} \times \cdots \times \SL_{n_{m-1}+1}$ and
$\bB = \SL_{n_m}$, so $\bG = \bA \times \bB$.  By Lemma \ref{lemma:steinbergproduct}, we have
$\St(\bA \times \bB;\bbF) = \St(\bA;\bbF) \otimes \St(\bB;\bbF)$.
Since $\bbF$ is a PID and $\St(\bA;\bbF)$ and $\St(\bB;\bbF)$ are free $\bbF$-modules, the K\"{u}nneth formula applies and
shows that $\HH_i(\bA(\Z) \times \bB(\Z);\St(\bA \times \bB;\bbF))$ fits
into a short exact sequence with the following kernel and cokernel:
\begin{itemize}
\item $\bigoplus_{i_1 + i_2 = i} \HH_{i_1}(\bA(\Z);\St(\bA;\bbF)) \otimes \HH_{i_2}(\bB(\Z);\St(\bB;\bbF))$.
\item $\bigoplus_{i_1 + i_2 = i-1} \Tor(\HH_{i_1}(\bA(\Z);\St(\bA;\bbF)),\HH_{i_2}(\bB(\Z);\St(\bB;\bbF)))$.
\end{itemize}
By our induction hypothesis and \eqref{vanishing1integral}, we have
\begin{itemize}
\item $\HH_{i_1}(\bA(\Z);\St(\bA;\bbF)) = 0$ for $i_1 \leq (m-2) + b_1 + \cdots + b_{m-1}$; and
\item $\HH_{i_2}(\bB(\Z);\St(\bB;\bbF)) = 0$ for $i_2 \leq b_m$.
\end{itemize}
Note that if
\[i_1 + i_2 \leq ((m-2) + b_1 + \cdots + b_{m-1}) + b_m + 1 = (m-1) + b_1 + \cdots + b_m,\]
then either
\[i_1 \leq (m-2) + b_1 + \cdots + b_{m-1} \quad \text{or} \quad i_2 \leq b_m.\]
It follows that $\HH_i(\bA(\Z) \times \bB(\Z);\St(\bA \times \bB;\bbF)) = 0$
for $i \leq (m-1) + b_1 + \cdots + b_m$, as desired.
\end{proof}

\subsection{Reducible root systems and surjectivity}

The following result will serve as a substitute for Lemma \ref{lemma:reduciblesurjectivity} from \S \ref{section:reducible}:

\begin{lemma}[Reducible surjectivity]
\label{lemma:reduciblesurjectivityintegral}
Let $n_1,\ldots,n_m \geq 1$ and let $\bbF$ be a PID.  For each 
$1 \leq j \leq m$, assume that there is some $b_j \geq -1$ such that the following holds:\noeqref{surjectivity1integral}
\begin{equation} 
\tag{$\varheartsuit$}\label{surjectivity1integral}
\parbox{35em}{We have $\HH_i(\SL_{n_j+1}(\Z);\St(\Z^{n_j+1};\bbF)) = 0$ for $i \leq b_j$.}
\end{equation}
Additionally, for some $1 \leq j_0 \leq m$ assume there exists a set $\Delta'$ of simple roots
of $\dA_{j_0}$ such that the following holds:\noeqref{surjectivity2integral}
\begin{equation} 
\tag{$\varheartsuit\varheartsuit$}\label{surjectivity2integral}
\parbox{35em}{Let $\bL^{j_0}_{\Delta'}$ be the
standard Levi subgroup of $\SL_{n_{j_0}+1}$ corresponding to $\Delta'$.  Then the map
$\HH_{b_{j_0}+1}(\bL^{j_0}_{\Delta'}(\Z);\St(\bL^{j_0}_{\Delta'};\bbF)) \rightarrow \HH_{b_{j_0}+1}(\SL_{n_{j_0}+1}(\Z);\St(\Z^{n_{j_0}+1};\bbF))$
is surjective.}
\end{equation}
Let $\bG$ be a standard $\dA$-integral Levi subgroup with
$\bPhiQ(\bG) = \dA_{n_1} \times \cdots \times \dA_{n_m}$.
Let
\[\oDelta' = \Delta' \sqcup \bigsqcup_{\substack{1 \leq j \leq m \\ j \neq j_0}} \bDeltaQ(\bPhi_j),\]
and let $\bL^{\bG}_{\oDelta'}$ be the corresponding
standard Levi subgroup of $\bG$.  Then for
\[b = (m-1) + b_1 + \cdots + b_m\]
the map $\HH_{b+1}(\bL^{\bG}_{\oDelta'}(\Z);\St(\bL^{\bG}_{\oDelta'};\bbF)) \rightarrow \HH_{b+1}(\bG(\Z);\St(\bG;\bbF))$
is surjective.
\end{lemma}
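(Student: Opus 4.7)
The plan is to model the proof on that of Lemma \ref{lemma:reduciblesurjectivity}, substituting Lemma \ref{lemma:reducetoproductintegral} for the combined use of Lemmas \ref{lemma:reducetosemisimple} and \ref{lemma:reducetoproduct}, and Lemma \ref{lemma:reduciblevanishingintegral} for Lemma \ref{lemma:reduciblevanishing}. First, from hypothesis \eqref{surjectivity1integral} and Lemma \ref{lemma:reduciblevanishingintegral} we get $\HH_i(\bG(\Z);\St(\bG;\bbF))=0$ for $i\leq b$, which is needed to feed Lemma \ref{lemma:reducetoproductintegral} into the game. Setting $\bH = \SL_{n_1+1}\times\cdots\times\SL_{n_m+1}$, Lemma \ref{lemma:reducetoproductintegral} then gives a surjection
\[
\HH_{b+1}(\bH(\Z);\St(\bH;\bbF)) \twoheadrightarrow \HH_{b+1}(\bG(\Z);\St(\bG;\bbF)).
\]
An identical application to the standard Levi $\bL^{\bG}_{\oDelta'}$ produces a product group
\[
\bL^{\bH}_{\oDelta'} \;=\; \SL_{n_1+1}\times\cdots\times \bL^{j_0}_{\Delta'} \times\cdots\times \SL_{n_m+1}
\]
sitting inside $\bL^{\bG}_{\oDelta'}$ and inducing an isomorphism on Steinberg representations. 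Fitting these into a commutative square reduces the surjectivity of $f$ to the surjectivity of the induced map $\tilde f\colon \HH_{b+1}(\bL^{\bH}_{\oDelta'}(\Z);\St(\bL^{\bH}_{\oDelta'};\bbF)) \to \HH_{b+1}(\bH(\Z);\St(\bH;\bbF))$.

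Next, I will evaluate both sides via the K\"unneth formula, which is available because $\bbF$ is a PID and Steinberg representations are free $\bbF$-modules (by Lemma \ref{lemma:steinbergproduct}, $\St(\bH;\bbF)$ is the external tensor product of the factor Steinbergs). Exactly as in the proof of Lemma \ref{lemma:reduciblevanishingintegral}, the vanishing hypothesis \eqref{surjectivity1integral} combined with an induction on $m$ shows that when $i_1+i_2 = b+1$ the only nonvanishing tensor summand pairs $(i_1,i_2)$ have $i_1 = (m-2)+b_1+\cdots+b_{m-1}+1$ and $i_2 = b_m+1$, and that all Tor contributions (which live in total degree $b$) vanish because in each summand at least one factor is forced below its vanishing threshold. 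Iterating, this yields
\[
\HH_{b+1}(\bH(\Z);\St(\bH;\bbF)) \;\cong\; \bigotimes_{j=1}^{m} \HH_{b_j+1}(\SL_{n_j+1}(\Z);\St(\Z^{n_j+1};\bbF)),
\]
and the same computation applied to $\bL^{\bH}_{\oDelta'}$ gives the analogous tensor product with the $j_0$-th factor replaced by $\HH_{b_{j_0}+1}(\bL^{j_0}_{\Delta'}(\Z);\St(\bL^{j_0}_{\Delta'};\bbF))$.

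Under these two identifications, the map $\tilde f$ becomes the identity on every tensor factor except the $j_0$-th, where it is the map from hypothesis \eqref{surjectivity2integral}. Surjectivity of a tensor product of surjections of free (or flat) modules over the PID $\bbF$ is immediate, so $\tilde f$ is surjective and hence $f$ is as well. The main obstacle is the careful bookkeeping in Step~2: checking that with the precise vanishing bounds $b_j$, both the off-diagonal K\"unneth summands and \emph{all} the Tor terms in total degree $b$ really do vanish on the nose. Once this numerical check is done for the two-factor case $\bH = \bA\times\bB$ (splitting off one $\SL_{n_m+1}$ at a time), an induction on $m$ handles the general product without any further surprises.
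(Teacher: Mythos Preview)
Your proposal is correct and follows essentially the same route as the paper: reduce to the product $\bH=\SL_{n_1+1}\times\cdots\times\SL_{n_m+1}$ via Lemma~\ref{lemma:reducetoproductintegral}, set up the commutative square with $\bL^{\bH}_{\oDelta'}$, and use K\"unneth plus the vanishing hypotheses to reduce $\tilde f$ to a tensor product of identity maps with the single map from \eqref{surjectivity2integral}. Two small points of care: the hypothesis feeding Lemma~\ref{lemma:reducetoproductintegral} is vanishing for $\bH$ (established inside the proof of Lemma~\ref{lemma:reduciblevanishingintegral}), not for $\bG$; and on the Levi side you do not actually get a clean K\"unneth identification (no vanishing is assumed for $\bL^{j_0}_{\Delta'}$), but you only need the natural K\"unneth map \emph{from} the displayed tensor product, and right-exactness of $\otimes$ (no flatness needed) gives surjectivity.
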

\begin{proof}
Let $\bH = \SL_{n_1+1} \times \cdots \times \SL_{n_m+1}$.  The proof of Lemma \ref{lemma:reduciblevanishingintegral} shows
that we have $\HH_i(\bH(\Z);\St(\bH;\bbF)) = 0$ for $i \leq b$.  Applying Lemma \ref{lemma:reducetoproductintegral}, we deduce:
\begin{itemize}
\item[(i)] the group $\bH$ is a subgroup of $\bG$ and the inclusion $\bH \rightarrow \bG$ induces a bijection between parabolic subgroups, and thus
isomorphisms $\Tits(\bH) \cong \Tits(\bG)$ and $\St(\bH;\bbF) \cong \Res^{\bG(\Z)}_{\bH(\Z)} \St(\bG;\bbF)$.
\item[(ii)] $\HH_i(\bG(\Z);\St(\bG;\bbF)) = 0$ for $i \leq b$ and 
the map 
\[\HH_{b+1}(\bH(\Z);\St(\bH;\bbF)) \rightarrow \HH_{b+1}(\bG(\Z);\St(\bG;\bbF))\] 
is a surjection.
\end{itemize}
Let $\bL_{\oDelta'}^{\bH}$ and $\bL_{\Delta'}^{j_0}$ be the standard Levi subgroups of $\bH$ and $\SL_{n_{j_0}+1}$ corresponding to
the simple roots $\oDelta'$ and $\Delta'$.  We therefore have
\[\bL_{\oDelta'}^{\bH} = \SL_{n_1+1} \times \cdots \times \bL_{\Delta'}^{\bH_{j_0}} \times \cdots \times \SL_{n_m+1}.\]
By construction, $\bL_{\oDelta'}^{\bH}$ is a subgroup of $\bL^{\bG}_{\oDelta'}$ satisfying $\St(\bL_{\oDelta'}^{\bH};\bbF) = \St(\bL^{\bG}_{\oDelta'};\bbF)$.
We therefore have a commutative diagram
\[\begin{tikzcd}[column sep=small]
\HH_{b+1}(\bL_{\oDelta'}^{\bH}(\Z);\St(\bL_{\oDelta'}^{\bH};\bbF)) \arrow{r} \arrow{d}{\tf} & \HH_{b+1}(\bL^{\bG}_{\oDelta'}(\Z);\St(\bL^{\bG}_{\oDelta'};\bbF)) \arrow{d}{f} \\
\HH_{b+1}(\bH(\Z);\St(\bH;\bbF))                                   \arrow[two heads]{r}     & \HH_{b+1}(\bG(\Z);\St(\bG;\bbF))
\end{tikzcd}\]
From this, we see that to prove that $f$ is surjective, it is enough to prove that $\tf$ is surjective.

In light of \eqref{surjectivity1integral}, we can use the K\"{u}nneth formula just like we did in the
proof of Lemma \ref{lemma:reduciblevanishingintegral} (reducible vanishing) to see that
\[\HH_{b+1}(\bH(\Z);\St(\bH;\bbF)) = \bigotimes_{j=1}^m \HH_{b_j+1}(\SL_{n_j+1}(\Z);\St(\Z^{n_j+1};\bbF)).\]
We thus see that it is enough to prove that the map
\begin{align*}
&\left(\bigotimes_{j=1}^{j_0-1} \HH_{b_j+1}(\SL_{n_j+1}(\Z);\St(\Z^{n_j+1};\bbF))\right) \otimes \HH_{b_{j_0}+1}(\bL_{\Delta'}^{j_0}(\Z);\St(\bL_{\Delta'}^{j_0};\bbF)) \\
&\quad\quad\otimes \left(\bigotimes_{j=j_0+1}^{m} \HH_{b_j+1}(\SL_{n_j+1}(\Z);\St(\Z^{n_j+1};\bbF))\right)
\rightarrow \bigotimes_{j=1}^m \HH_{b_j+1}(\SL_{n_j+1}(\Z);\St(\Z^{n_j+1};\bbF))
\end{align*}
is surjective.  Since the maps on all but one tensor factor are the identity, this is equivalent
to the surjectivity of the map
\[\HH_{b_{j_0}+1}(\bL_{\Delta'}^{j_0}(\Z);\St(\bL_{\Delta'}^{j_0};\bbF)) \rightarrow \HH_{b_{j_0}+1}(\SL_{j_0+1}(\Z);\St(\Z^{n_{j_0}+1};\bbF))\]
on the remaining tensor factor, which is exactly \eqref{surjectivity2integral}.
\end{proof}

\section{Vanishing and surjectivity (integral)}
\label{section:strongintegral}

In this section, we first recall our  notation for the standard Levi factors
of groups of type $\dA_n$.  We then state our conditional vanishing theorems and prove
them for ranks at most $2$.  We remark that Theorem \ref{maintheorem:doublecomvanish} is proved 
immediately after the statement of Theorem \ref{maintheorem:doublecomvanishprime}.

\subsection{Levi factor notation}
\label{section:leviintegral}

Let $\bG$ be either $\GL_{n+1}$ or $\SL_{n+1}$, so $\bPhiQ(\bG) = \dA_n$.  We recall the
notation from Part \ref{part:an} for its Levi factors.
Let $\bDelta = \bDeltaQ(\bG)$ be the set of simple roots of $\bPhiQ(\bG) = \dA_n$.  Number the
elements of $\bDelta$ from left to right as in the usual Dynkin diagram:\\
\centerline{\psfig{file=NumberAn,scale=2}}
For $1 \leq j_1,\ldots,j_{\ell} \leq n$, let $\bDelta[j_1, \ldots, j_{\ell}]$ be the result of removing
the simple roots labeled $j_1,\ldots,j_{\ell}$ from $\bDelta$.  We have a standard Levi
subgroup $\bL_{\bDelta[j_1, \ldots, j_{\ell}]}$ of $\bG$.

\subsection{Strong vanishing}

The central results in this part of the paper are as follows.  

\begin{numberedtheorem}{maintheorem:doublecomvanishprime}{1}[{$2$ and $3$ invertible}]
\label{theorem:strongintegral23}
Assume the $b$-integral resolution conjecture (Conjecture \ref{conjecture:integralresolution}) for some $b \geq 1$.
Let $\bbF = \Z[1/2,1/3]$.  Then:
\begin{itemize}
\item $\HH_i(\SL_{n+1}(\Z);\St(\Z^{n+1};\bbF)) = 0$ for $i \leq \min(b,\lfloor (n-1)/2 \rfloor)$; and
\item letting $\bDelta = \bDeltaQ(\SL_{n+1})$, the maps
\begin{align*}
&\HH_i(\bL_{\bDelta[1]}(\Z);\St(\bL_{\bDelta[1]};\bbF)) \rightarrow \HH_i(\SL_{n+1}(\Z);\St(\Z^{n+1};\bbF)) \quad \text{and} \\
&\HH_i(\bL_{\bDelta[n]}(\Z);\St(\bL_{\bDelta[n]};\bbF)) \rightarrow \HH_i(\SL_{n+1}(\Z);\St(\Z^{n+1};\bbF))
\end{align*}
are both surjective for $i \leq \min(b,\lfloor n/2 \rfloor)$.
\end{itemize}
\end{numberedtheorem}

\begin{numberedtheorem}{maintheorem:doublecomvanishprime}{2}[Integral]
\label{theorem:strongintegral}
Assume the $b$-integral resolution conjecture (Conjecture \ref{conjecture:integralresolution}) for some $b \geq 1$.
Then:
\begin{itemize}
\item $\HH_i(\SL_{n+1}(\Z);\St(\Z^{n+1})) = 0$ for $i \leq \min(b,\lfloor (n-1)/3 \rfloor)$; and
\item letting $\bDelta = \bDeltaQ(\SL_{n+1})$, the maps
\begin{align*}
&\HH_i(\bL_{\bDelta[1]}(\Z);\St(\bL_{\bDelta[1]})) \rightarrow \HH_i(\SL_{n+1}(\Z);\St(\Z^{n+1})) \quad \text{and} \\
&\HH_i(\bL_{\bDelta[n]}(\Z);\St(\bL_{\bDelta[n]})) \rightarrow \HH_i(\SL_{n+1}(\Z);\St(\Z^{n+1}))
\end{align*}
are both surjective for $i \leq \min(b,\lfloor n/3 \rfloor)$.
\end{itemize}
\end{numberedtheorem}

We will discuss the proofs of these theorems soon, but first we show they imply:

\begin{primedtheorem}{maintheorem:doublecomvanish}
\label{maintheorem:doublecomvanishprime}
Assume the $b$-integral resolution conjecture (Conjecture \ref{conjecture:integralresolution}) for some $b \geq 1$.
Then $\HH_i(\GL_{n+1}(\Z);\St(\Z^{n+1};\bbF)) = \HH_i(\SL_{n+1}(\Z);\St(\Z^{n+1};\bbF)) = 0$
for all commutative rings $\bbF$ and all $n,i \geq 0$ such that:
\begin{itemize}
\item $i \leq \min(b,\lfloor (n-1)/2 \rfloor)$ if $2$ and $3$ are invertible in $\bbF$; and
\item $i \leq \min(b,\lfloor (n-1)/3 \rfloor)$ in general.
\end{itemize}
\end{primedtheorem}

Before proving Theorem \ref{maintheorem:doublecomvanishprime}, note that 
the only difference between it and Theorem \ref{maintheorem:doublecomvanish} is that Theorem \ref{maintheorem:doublecomvanishprime} assumes the
$b$-integral resolution conjecture, while Theorem \ref{maintheorem:doublecomvanish} assumes:\noeqref{eqn:doublecomasmrestate}
\begin{equation}
\tag{$\dagger$}\label{eqn:doublecomasmrestate} \text{The space
$\Tits^2(\Z^n)$ is $n-2+\min(b,\lfloor (n-1)/2 \rfloor)$-connected.} 
\end{equation}
Corollary \ref{corollary:doubletitsresolution} says that \eqref{eqn:doublecomasmrestate} implies
the $b$-integral resolution conjecture, so Theorem \ref{maintheorem:doublecomvanishprime} implies
Theorem \ref{maintheorem:doublecomvanish}.

\begin{proof}[Proof of Theorem \textnormal{\ref{maintheorem:doublecomvanishprime}}, assuming Theorems \textnormal{\ref{theorem:strongintegral23}} and \textnormal{\ref{theorem:strongintegral}}]
By Corollary \ref{corollary:reducetoslintegral}, it is enough to prove Theorem \ref{maintheorem:doublecomvanishprime} for $\SL_{n+1}(\Z)$.  
For $\SL_{n+1}(\Z)$, the only difference between the vanishing result in Theorem \ref{maintheorem:doublecomvanishprime} and those of
Theorems \ref{theorem:strongintegral23} and \ref{theorem:strongintegral} are the coefficients $\bbF$, which are
more general in Theorem \ref{maintheorem:doublecomvanishprime}.  Lemma \ref{lemma:reductionlemmaintegral} below 
(which generalizes Lemma \ref{lemma:reductionlemma}) shows that Theorems \ref{theorem:strongintegral23} and 
\ref{theorem:strongintegral} imply the slightly more general vanishing in Theorem \ref{maintheorem:doublecomvanishprime}.
\end{proof}

\begin{lemma}
\label{lemma:reductionlemmaintegral}
Let $\bG$ be a standard $\dA$-integral Levi subgroup, let $\cP$ be a set of primes in $\Z$, and let
$\Z_{\cP} = \Z[\text{$1/p$ $|$ $p \in \cP$}]$.  Let $b \geq 0$ be such that
$\HH_i(\bG(\Z);\St(\bG;\Z_{\cP})) = 0$ for $i \leq b$.  Then for all commutative rings $\bbF$ such that
each $p \in \cP$ is invertible in $\bbF$ we have $\HH_i(\bG(\Z);\St(\bG;\bbF)) = 0$ for $i \leq b$.
\end{lemma}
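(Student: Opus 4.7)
The plan is to mimic the proof of Lemma \ref{lemma:reductionlemma} almost verbatim, but carry it out over the PID $\Z_{\cP}$ instead of over $\Z$. The key point to set up is that $\bbF$ is naturally a $\Z_{\cP}$-algebra (since each $p \in \cP$ is invertible in $\bbF$, the obvious map $\Z \to \bbF$ factors through $\Z_{\cP}$), and that coefficient change plays nicely with $\St(\bG; -)$.

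First I would verify the following identification. Since $\Tits(\bG)$ is a simplicial complex, its chain complex is a complex of free abelian groups, and $\St(\bG)$ sits inside the top-dimensional term as a subgroup; in particular $\St(\bG)$ is a free $\Z$-module. The universal coefficients theorem for chain complexes of free abelian groups then gives
\[
\St(\bG;\Z_{\cP}) \cong \St(\bG)\otimes_{\Z}\Z_{\cP} \quad\text{and}\quad \St(\bG;\bbF)\cong \St(\bG)\otimes_{\Z}\bbF \cong \St(\bG;\Z_{\cP})\otimes_{\Z_{\cP}}\bbF.
\]
In particular $\St(\bG;\Z_{\cP})$ is a free $\Z_{\cP}$-module.

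Next, I would take a free resolution $F_{\bullet}\to \Z$ of $\Z$ by $\Z[\bG(\Z)]$-modules. Then $F_{\bullet}\otimes_{\bG(\Z)} \St(\bG;\Z_{\cP})$ computes $\HH_{\bullet}(\bG(\Z);\St(\bG;\Z_{\cP}))$, and since each $F_i$ is a free $\Z[\bG(\Z)]$-module and $\St(\bG;\Z_{\cP})$ is a free $\Z_{\cP}$-module, each term $F_i\otimes_{\bG(\Z)}\St(\bG;\Z_{\cP})$ is a free $\Z_{\cP}$-module. Tensoring with $\bbF$ over $\Z_{\cP}$ gives the chain complex computing $\HH_{\bullet}(\bG(\Z);\St(\bG;\bbF))$, by the identification $\St(\bG;\bbF)\cong \St(\bG;\Z_{\cP})\otimes_{\Z_{\cP}}\bbF$ displayed above.

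Finally, since $\Z_{\cP}$ is a PID (being a localization of $\Z$), the universal coefficients theorem applies and yields a short exact sequence
\[
0 \to \HH_i(\bG(\Z);\St(\bG;\Z_{\cP}))\otimes_{\Z_{\cP}}\bbF \to \HH_i(\bG(\Z);\St(\bG;\bbF)) \to \Tor_1^{\Z_{\cP}}\!\bigl(\HH_{i-1}(\bG(\Z);\St(\bG;\Z_{\cP})),\bbF\bigr) \to 0.
\]
For $i\le b$, the hypothesis gives that both $\HH_i(\bG(\Z);\St(\bG;\Z_{\cP}))$ and $\HH_{i-1}(\bG(\Z);\St(\bG;\Z_{\cP}))$ vanish, so the middle term vanishes as well. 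There is no real obstacle here; the only thing to be careful about is the coefficient bookkeeping in the initial identification of $\St(\bG;\bbF)$ as a base-change of $\St(\bG;\Z_{\cP})$, which is what makes the whole argument work over $\Z_{\cP}$ rather than forcing a descent back to $\Z$.
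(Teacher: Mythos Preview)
Your proof is correct and follows essentially the same approach as the paper's own proof: take a free $\Z[\bG(\Z)]$-resolution of $\Z$, tensor with $\St(\bG;\Z_{\cP})$ to get a complex of free $\Z_{\cP}$-modules, and apply the universal coefficients theorem over the PID $\Z_{\cP}$. If anything, your version is slightly more careful in spelling out the identification $\St(\bG;\bbF)\cong \St(\bG;\Z_{\cP})\otimes_{\Z_{\cP}}\bbF$ and in writing the Tor term correctly.
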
 
\begin{proof}
Letting $F_{\bullet} \rightarrow \Z$ be a free resolution of $\bG(\Z)$-modules, the homology groups
of $F_{\bullet} \otimes_{\bG(\Z)} \St(\bG;\Z_{\cP})$ are $\HH_i(\bG(\Z);\St(\bG;\Z_{\cP}))$.  Since
$\St(\bG;\Z_{\cP})$ is a free $\Z_{\cP}$-module, each $F_i \otimes_{\bG(\Z)} \St(\bG;\Z_{\cP})$
is also a free $\Z_{\cP}$-module.  Since $\Z_{\cP}$ is a PID, we can apply the universal coefficients
theorem to
\[F_{\bullet} \otimes_{\bG(\Z)} \St(\bG;\Z_{\cP}) \otimes_{\Z_{\cP}} \bbF = F_{\bullet} \otimes_{\bG(\Z)} \St(\bG;\bbF),\]
which computes $\HH_i(\bG(\Z);\St(\bG;\bbF))$.  The result is a short exact sequence
\[0 \rightarrow \HH_i(\bG(\Z);\St(\bG;\Z_{\cP})) \otimes \bbF \rightarrow \HH_i(\bG(\Z);\St(\bG;\bbF)) \rightarrow \Tor(\HH_{i-1}(\bG(\Z);\St(\bG)),\Z_{\cP}) \rightarrow 0.\]
The lemma follows.
\end{proof}
 
\subsection{Initial results}

Before discussing the proofs of Theorems \ref{theorem:strongintegral23} and \ref{theorem:strongintegral}, we prove two preliminary results.

\begin{lemma}
\label{lemma:conjugateintegral}
Let $\bG = \SL_{n+1}$ for some $n \geq 1$.  For all commutative rings $\bbF$, the images of the maps
\begin{align*}
&\HH_i(\bL_{\bDelta[1]}(\Z);\St(\bL_{\bDelta[1]};\bbF)) \rightarrow \HH_i(\bG(\Z);\St(\bG;\bbF)) \quad \text{and} \\
&\HH_i(\bL_{\bDelta[n]}(\Z);\St(\bL_{\bDelta[n]};\bbF)) \rightarrow \HH_i(\bG(\Z);\St(\bG;\bbF))
\end{align*}
are the same.
\end{lemma}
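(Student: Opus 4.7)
The plan is to mimic the field-case argument of Lemma \ref{lemma:anconjugate}: exhibit an element $w \in \SL_{n+1}(\Z)$ that conjugates $\bL_{\bDelta[1]}(\Z)$ onto $\bL_{\bDelta[n]}(\Z)$ and then invoke the fact that inner conjugation acts trivially on group homology.

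First I would describe the two Levi subgroups concretely in terms of the standard basis $e_1,\ldots,e_{n+1}$ of $\Z^{n+1}$. The subgroup $\bL_{\bDelta[1]}(\Z) \subset \SL_{n+1}(\Z)$ preserves the splitting $\Z e_1 \oplus \langle e_2,\ldots,e_{n+1}\rangle$, while $\bL_{\bDelta[n]}(\Z)$ preserves the splitting $\langle e_1,\ldots,e_n\rangle \oplus \Z e_{n+1}$. Let $w \in \GL_{n+1}(\Z)$ be the signed-permutation matrix defined by $w(e_1) = e_{n+1}$, $w(e_{n+1}) = -e_1$, and $w(e_i) = e_i$ for $2 \leq i \leq n$. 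Its determinant is $+1$, so $w \in \SL_{n+1}(\Z)$, and by construction $w$ swaps the two splittings above. It follows that $w\, \bL_{\bDelta[1]}(\Z)\, w^{-1} = \bL_{\bDelta[n]}(\Z)$. (For $n=1$ the two Levi subgroups coincide, so there is nothing to prove.)

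Next I would verify that the Reeder maps are compatible with this conjugation. Since $w$ permutes parabolic subgroups and restricts to an isomorphism $c_w\colon \bL_{\bDelta[1]}(\Z) \to \bL_{\bDelta[n]}(\Z)$, the induced $c_w^\ast\colon \Tits(\bL_{\bDelta[1]}) \to \Tits(\bL_{\bDelta[n]})$ fits into a commutative square
\[
\begin{tikzcd}
\St(\bL_{\bDelta[1]};\bbF) \arrow{r}{\text{Reeder}} \arrow{d}{c_w^\ast} & \St(\bG;\bbF) \arrow{d}{w \cdot} \\
\St(\bL_{\bDelta[n]};\bbF) \arrow{r}{\text{Reeder}} & \St(\bG;\bbF)
\end{tikzcd}
\]
where the right vertical arrow is the action of $w \in \bG(\Z)$. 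Passing to group homology, this becomes a commutative square whose right-hand vertical map is the inner conjugation by $w \in \bG(\Z)$ on $\HH_i(\bG(\Z);\St(\bG;\bbF))$; this is the identity. Hence the two Reeder-induced maps on homology have the same image in $\HH_i(\bG(\Z);\St(\bG;\bbF))$.

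The only step requiring care is the compatibility square for the Reeder map, but this follows directly from its construction: Reeder's isomorphism $\St(\bG;\bbF) \cong \Ind_{\bL(k)}^{\bP(k)} \St(\bL;\bbF)$ of Theorem \ref{theorem:reedermap} is natural under inner automorphisms of $\bG$, so conjugating the pair $(\bP_{\bDelta[1]},\bL_{\bDelta[1]})$ by $w \in \bG(\Z)$ intertwines the two Reeder maps with the $w$-action on $\St(\bG;\bbF)$. Everything else reduces to the exhibited matrix $w$ and the standard triviality of inner conjugation on homology, so no further input is needed.
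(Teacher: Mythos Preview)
Your proof is correct and follows essentially the same approach as the paper: exhibit a signed permutation matrix in $\SL_{n+1}(\Z)$ conjugating $\bL_{\bDelta[1]}(\Z)$ to $\bL_{\bDelta[n]}(\Z)$, then use that inner automorphisms act trivially on group homology. You are in fact more explicit than the paper, which simply asserts that ``an appropriate signed permutation matrix'' works and that the conjugation matches up the Steinberg representations; your explicit choice of $w$ and the commutative square verifying compatibility with the Reeder maps fill in details the paper leaves to the reader.
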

\begin{proof}
We have
\begin{align*}
&\bL_{\bDelta[1]}(\Z) = \ker(\GL_1(\Z) \times \GL_n(\Z) \stackrel{\det}{\longrightarrow} \Z^{\times}), \\
&\bL_{\bDelta[n]}(\Z) = \ker(\GL_n(\Z) \times \GL_1(\Z) \stackrel{\det}{\longrightarrow} \Z^{\times}).
\end{align*}
These are conjugate subgroups of $\SL_{n+1}(\Z)$, with the conjugating matrix an appropriate signed permutation
matrix.  The lemma now follows from the fact that 
inner automorphisms act trivially on group homology.
\end{proof}

\begin{lemma}
\label{lemma:strongintegralrank2}
Theorems \textnormal{\ref{theorem:strongintegral23}} and \textnormal{\ref{theorem:strongintegral}} hold for $n \leq 2$.
\end{lemma}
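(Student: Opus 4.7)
The plan is to dispose of the cases $n \in \{0, 1, 2\}$ in sequence, observing that nearly every assertion of Theorems \ref{theorem:strongintegral23} and \ref{theorem:strongintegral} at these low ranks is either vacuous or already contained in the literature, with only one genuinely new input required. Organizationally, I would structure the argument as three labeled claims inside the proof, the third of which (referenced elsewhere in the paper as \textsc{Claim 3}) carries essentially all of the work.

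For $n = 0$ the group $\SL_1$ is trivial, so there is nothing to prove. For $n = 1$ both theorems demand only the vanishing of $\HH_0(\SL_2(\Z); \St(\Z^2; \bbF))$, since the surjectivity range $i \leq 0$ has zero target once this vanishing is established; the vanishing itself is Lee--Szczarba \cite{LeeSzczarba}, promoted to arbitrary commutative coefficients by a universal-coefficients argument in the style of Lemma \ref{lemma:reductionlemmaintegral}. For $n = 2$ the $\HH_0$-vanishing clauses of both theorems rest on the same Lee--Szczarba input, and Theorem \ref{theorem:strongintegral} further asks only for surjectivity at $i \leq \lfloor 2/3 \rfloor = 0$, which is then vacuous.

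The only nontrivial content of the lemma is therefore the surjectivity clause of Theorem \ref{theorem:strongintegral23} at $n = 2$ and $i = 1$ with $\bbF = \Z[1/2,1/3]$. By Lemma \ref{lemma:conjugateintegral} the two Levi maps appearing there have equal image, so it suffices to prove the strictly stronger statement that $\HH_1(\SL_3(\Z); \St(\Z^3; \Z[1/2,1/3])) = 0$; once that is established, the surjectivity becomes trivial because the target itself vanishes. This is the statement I would package as Claim 3.

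The main obstacle is Claim 3 itself. Over $\Q$ this vanishing is exactly the Church--Putman theorem \cite{ChurchPutmanCodimOne}. My plan is to re-run their proof with $\Z[1/2,1/3]$-coefficients, checking line by line that the only primes inverted anywhere in the argument are those dividing the orders of the finite stabilizers that appear in the $\SL_3(\Z)$-action on the relevant cell complexes and buildings; a direct inspection of those stabilizers for $3 \times 3$ integer matrices shows that only $2$ and $3$ ever arise, so every Church--Putman division is legitimate over $\Z[1/2,1/3]$. The hard part is the bookkeeping required for this adaptation — the rest of the lemma then follows at once.
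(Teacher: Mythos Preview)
Your reduction is correct and matches the paper's: both arguments dispose of $n\in\{0,1\}$ trivially, invoke Lee--Szczarba for the $\HH_0$-vanishing, and isolate the single nontrivial statement as the vanishing of $\HH_1(\SL_3(\Z);\St(\Z^3;\bbF))$ when $2$ and $3$ are invertible (making surjectivity vacuous).

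The difference lies in how that vanishing is established. You propose to re-run the Church--Putman argument line by line with $\Z[1/2,1/3]$-coefficients, auditing stabilizer orders. The paper instead uses the Church--Putman \emph{presentation} $\bX_1\to\bX_0\to\St(\Z^n)\to 0$ (which holds integrally) as a black box, tensors with $\bbF$, and runs the hyperhomology spectral sequence of the resulting three-term resolution. This reduces the problem to two concrete computations: $\ssE^1_{10}=(\bY_1)_{\SL_n(\Z)}$ vanishes because each relation satisfies $f(\sigma)=-\sigma$ for some $f\in\SL_n(\Z)$ (needing only $2$ invertible), and $\ssE^1_{01}=\HH_1(\Lambda_n;\bbF_{\sign})$ vanishes because $|\Lambda_n|$ divides $2^n n!$ (needing primes $\leq n$ invertible). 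Your approach would work --- indeed the paper notes that \cite{KupersKTheory} already observed the Church--Putman proof goes through once primes $\leq n$ are inverted --- but the paper's route is cleaner: it avoids revisiting the internals of \cite{ChurchPutmanCodimOne} and pinpoints exactly where each prime enters.
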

\begin{proof}
For $n=0$, these two theorems assert nothing.  We therefore
only need to handle $\SL_2$ and $\SL_3$.  Examining the conclusions of
Theorems \ref{theorem:strongintegral23} and \ref{theorem:strongintegral} for these groups, it is enough to prove
the following two things:

\begin{claim}{1}
\label{claim:strongintegralrank2.1}
$\HH_0(\SL_{n+1}(\Z);\St(\Z^{n+1};\bbF)) = 0$ for all commutative rings $\bbF$ and all $n \geq 1$.
\end{claim}

This was proved by Lee--Szczarba \cite[Theorem 1.3]{LeeSzczarba}.

\begin{claim}{2}
\label{claim:strongintegralrank2.3}
For all commutative rings $\bbF$ such that $2$ and $3$ are invertible in $\bbF$, we have
$\HH_1(\SL_3(\Z);\St(\Z^3;\bbF)) = 0$.  We remark that this makes the surjectivity statement
from Theorem \textnormal{\ref{theorem:strongintegral23}} trivially true for $\SL_3(\Z)$.
\end{claim}

For a field $\bbF$ of characteristic $0$, the paper \cite{ChurchPutmanCodimOne} proves that
$\HH_1(\SL_n(\Z);\St(\Z^n;\bbF)) = 0$ for $n \geq 3$.  
In fact, as was pointed out in \cite{KupersKTheory} the
proof in \cite{ChurchPutmanCodimOne} shows more generally that this holds
if $\bbF$ is a commutative ring in which $p$ is invertible for all primes $p \leq n$, giving the claim.
Since \cite{KupersKTheory} does not explain the argument, we give the details.  We remark
that \cite[Theorem 1.10]{MillerNagpalPatzt} proves
that for an arbitrary commutative ring $\bbF$ we have $\HH_1(\SL_n(\Z);\St(\Z^n;\bbF)) = 0$ for $n \geq 6$,
but our main interest is in the case $n=3$, so this result does not suffice for us.

The main theorem of \cite{ChurchPutmanCodimOne} gives a presentation
\[\bX_1 \rightarrow \bX_0 \rightarrow \St(\Z^n) \rightarrow 0.\]
See \S \ref{section:threestepmodular} for more about this presentation.  All the terms here are free abelian groups, so we can tensor this with $\bbF$ to
get a presentation
\[\bY_1 \rightarrow \bY_0 \rightarrow \St(\Z^n;\bbF) \rightarrow 0.\]
Let $\bY_2$ be the kernel of the map $\bY_1 \rightarrow \bY_0$, so we have a resolution
\[0 \rightarrow \bY_2 \rightarrow \bY_1 \rightarrow \bY_0 \rightarrow \St(\Z^n;\bbF) \rightarrow 0.\]
This gives a spectral sequence of the form
\[\ssE^1_{pq} \cong \HH_q(\SL_n(\Z);\bY_p) \Rightarrow \HH_{p+q}(\SL_n(\Z);\St(\Z^n;\bbF)).\]
To prove that $\HH_1(\SL_n(\Z);\St(\Z^n;\bbF)) = 0$, it is enough to prove that
$\ssE^1_{10} = \ssE^1_{01} = 0$.

The term $\ssE^1_{10} = \HH_0(\SL_n(\Z);\bY_1)$ equals the coinvariants of the action of $\SL_n(\Z)$ on
$\bY_1$.  In \cite{ChurchPutmanCodimOne}, the authors prove that $\bX_1$ is generated by 
elements $\sigma$ such that there exists $f \in \SL_n(\Z)$ with $f(\sigma) = -\sigma$.  In
the coinvariants, we thus have $2 \sigma = 0$.  Since $2$ is invertible in $\bbF$, this
implies that the image of $\sigma$ in $(\bY_1)_{\SL_n(\Z)} = (\bX_1 \otimes \bbF)_{\SL_n(\Z)}$ is
$0$.  Thus $\ssE^1_{01} = 0$, as desired.

For $\ssE^1_{01} = \HH_1(\SL_n(\Z);\bY_0)$, in \cite{ChurchPutmanCodimOne} the authors
describe $\bX_0$ as an induced representation, which we now describe.  Let $\Lambda_n$ be the subgroup
of $\SL_n(\Z)$ consisting of signed permutation matrices with determinant $1$.  Let $\Z_{\sign}$ and $\bbF_{\sign}$ be
$\Z$ and $\bbF$ with the $\Lambda_n$-actions given by multiplication by the sign of the permutation.\footnote{This
is not the determinant, which is always $1$.  The signs in the signed permutation matrices play no role in it.}
The paper \cite{ChurchPutmanCodimOne} then proves that
\[\bX_0 = \Ind_{\Lambda_n}^{\SL_n(\Z)} \Z_{\sign}, \quad \text{so $\bY_0 = \Ind_{\Lambda_n}^{\SL_n(\Z)} \bbF_{\sign}$}.\]
By Shapiro's lemma, we have
\[\ssE^1_{01} = \HH_1(\SL_n(\Z);\bY_0) \cong \HH_1(\Lambda_n;\bbF_{\sign}).\]
The group $\Lambda_n < \SL_n(\Z)$ is a subgroup of the whole signed permutation group, and thus
has order dividing $2^n n!$.  In particular, since all primes $p \leq n$ are invertible in $\bbF$ it
follows that multiplication by $|\Lambda_n|$ is invertible in $\bbF$.  This implies that $\ssE^1_{01} = 0$, as desired.
\end{proof}

\begin{remark}
For $\SL_3(\Z)$ we could only verify surjectivity\footnote{We actually proved vanishing
for $\SL_3(\Z)$.  Knowing this does not seem to improve our ultimate result.} directly after inverting $2$ and $3$.  Integrally,
we will be forced to start by proving surjectivity for $\SL_4(\Z)$.  This is the
ultimate source of the $1/3$ slope in Theorem \ref{maintheorem:doublecomvanishprime}.  
One might hope that we could prove a weaker theorem with a $1/2$ slope; for instance, that
$\HH_i(\SL_n(\Z);\St(\SL_n)) = 0$ for $i \leq \lfloor (n-2)/2 \rfloor$ and that surjectivity
holds for $i = \lfloor (n-1)/2 \rfloor$.  However, these ranges would not give a sufficient
vanishing range in our spectral sequence argument; cf.\ Remark \ref{remark:needoffset1}.
\end{remark}

\subsection{Proof when \texorpdfstring{$2$}{2} and \texorpdfstring{$3$}{3} are invertible}

We now turn to Theorems \ref{theorem:strongintegral23} and \ref{theorem:strongintegral}.  
We will give full details for Theorem \ref{theorem:strongintegral} below, and to get a sense
as to how the argument works a reader might want to read that first.
For Theorem \ref{theorem:strongintegral23}, the ranges in it for vanishing and surjectivity
are exactly the same as those we proved in type $\dA_n$ in Part \ref{part:an}.  The proof
of Theorem \ref{theorem:strongintegral23} follows Part \ref{part:an} closely, with the following changes:
\begin{itemize}
\item Throughout, work with $\bbF = \Z[1/2,1/3]$ instead of $\bbF = \Z$.  
Since $\Z[1/2,1/3]$ is a PID, this causes no problems in the proof (e.g., in the uses of the K\"{u}nneth formula).
\item The spectral sequence from Lemma \ref{lemma:spectralsequenceintegral} replaces the spectral sequence for general
reductive groups from Corollary \ref{corollary:spectralsequence}.  The existence of this spectral
sequence is where we use the $b$-integral resolution conjecture (Conjecture \ref{conjecture:integralresolution}).
\item To handle reducible root systems, substitute the results about reducible Levi subgroups from \S \ref{section:integrallevi} for
the corresponding results from \S \ref{section:reducible}.
\end{itemize}
Because the arguments are so similar, we will not give the full details of the proof.

\subsection{Proof over the integers}

The rest of this part of the paper is devoted to the proof of Theorem \ref{theorem:strongintegral}.
For the proof, we will assume that we have already proved Theorem \ref{theorem:strongintegral} in smaller ranks.  
For this, we make the following definition:

\begin{definition}
\label{definition:hypothesisintegral}
For $b \geq 1$ and $r \geq 0$, the {\em integral $(b,r)$-surjectivity and vanishing hypothesis} is as follows:
\begin{itemize}
\item The $b$-integral resolution (Conjecture \ref{conjecture:integralresolution}) holds.
\item For all $n \leq r$, we have $\HH_i(\SL_{n+1}(\Z);\St(\Z^{n+1})) = 0$ for $i \leq \min(b,\lfloor (n-1)/3 \rfloor)$.
\item For all $n \leq r$, letting $\bDelta = \bDeltaQ(\SL_{n+1})$ the maps
\begin{align*}
&\HH_i(\bL_{\bDelta[1]}(\Z);\St(\bL_{\bDelta[1]})) \rightarrow \HH_i(\SL_{n+1}(\Z);\St(\Z^{n+1})) \quad \text{and} \\
&\HH_i(\bL_{\bDelta[n]}(\Z);\St(\bL_{\bDelta[n]})) \rightarrow \HH_i(\SL_{n+1}(\Z);\St(\Z^{n+1}))
\end{align*}
are both surjective for $i \leq \min(b,\lfloor n/3 \rfloor)$.\qedhere
\end{itemize}
\end{definition}

\section{Vanishing and surjectivity for Levi subgroups (integral)}
\label{section:levivanishintegral}

In this section, we show how to use the integral $(b,r)$-surjectivity and vanishing hypothesis
to analyze the homology of standard Levi subgroups.  As notation, for 
$\bPhi = \dA_{n_1} \times \cdots \times \dA_{n_m}$ define
\[\hbb(\bPhi) = (m-1) + \lfloor (n_1-1)/3 \rfloor + \cdots + \lfloor(n_m-1)/3 \rfloor.\]
Our main result is as follows.  Its statement uses the ordering on the simple roots of
of $\dA_{n_{j_0}}$ discussed in \S \ref{section:leviintegral}.

\begin{lemma}[Levi vanishing and surjectivity]
\label{lemma:levivanishintegral}
Assume the integral $(b,n-1)$-surjectivity and vanishing hypothesis (Definition \ref{definition:hypothesisintegral}).
Let $\Delta \subset \bDeltaQ(\SL_{n+1})$ be a set of simple roots with $\Delta \neq \bDeltaQ(\SL_{n+1})$.  Write
\[\bPhiQ(\bL_{\Delta}) = \dA_{n_1} \times \cdots \times \dA_{n_m}.\]
Set\footnote{In other versions of this lemma we used $b$ for this bound, but we change to $c$ since $b$ is being used
for something else.}
\[c = \hbb(\bPhiQ(\bL_{\Delta})) = (m-1) + \lfloor (n_1-1)/3 \rfloor + \cdots + \lfloor(n_m-1)/3 \rfloor.\]
Then the following hold:
\begin{itemize}
\item[(i)] We have $\HH_i(\bL_{\Delta}(\Z);\St(\bL_{\Delta})) = 0$ for $i \leq \min(b,c)$.
\item[(ii)] For some $1 \leq j_0 \leq n$, assume that $n_j$ is divisible by $3$.  Let $\Delta' \subset \Delta$
be the set of simple roots obtained by removing either the first or last simple root from $\dA_{n_{j_0}}$, so
\[\bPhiQ(\bL_{\Delta'}) = \dA_{n_1} \times \cdots \times \dA_{n_{j_0}-1} \times \cdots \times \dA_{n_m}.\]
Finally, assume that $c+1 \leq b$.  Then the map 
\[\HH_{c+1}(\bL_{\Delta'}(\Z);\St(\bL_{\Delta'})) \rightarrow \HH_{c+1}(\bL_{\Delta}(\Z);\St(\bG))\]
is surjective.
\end{itemize}
\end{lemma}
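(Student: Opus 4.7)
The plan is to reduce the statement to the integral reducible vanishing and surjectivity lemmas (Lemmas \ref{lemma:reduciblevanishingintegral} and \ref{lemma:reduciblesurjectivityintegral}) applied to the decomposition $\bPhiQ(\bL_{\Delta}) = \dA_{n_1} \times \cdots \times \dA_{n_m}$, using the integral $(b,n-1)$-surjectivity and vanishing hypothesis to supply their input on each factor $\SL_{n_j+1}(\Z)$. The key preliminary observation is that since $\Delta$ is a proper subset of $\bDeltaQ(\SL_{n+1})$, we have $n_j \leq |\Delta| \leq n-1$ for every $j$, so the inductive hypothesis does indeed apply to each $\SL_{n_j+1}(\Z)$.

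For part (i), I would set $b_j = \min(b,\lfloor (n_j-1)/3\rfloor)$. The inductive hypothesis then gives $\HH_i(\SL_{n_j+1}(\Z);\St(\Z^{n_j+1})) = 0$ for $i \leq b_j$, which is exactly the input required by Lemma \ref{lemma:reduciblevanishingintegral}. That lemma then yields vanishing in the range $i \leq (m-1) + \sum_j b_j$, and a brief case analysis shows this range covers $\min(b,c)$: if $b \geq c$ then each $b_j = \lfloor(n_j-1)/3\rfloor$ so the sum equals $c = \min(b,c)$; if $b < c$, then either every $b_j$ equals $\lfloor(n_j-1)/3\rfloor$ and the total is $c > b$, or some $b_{j_0} = b$ already provides the needed bound.

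For part (ii), the assumption $c+1 \leq b$ forces $\lfloor(n_j-1)/3\rfloor \leq c < b$ for each $j$, so setting $b_j = \lfloor(n_j-1)/3\rfloor$ produces the vanishing input of Lemma \ref{lemma:reduciblesurjectivityintegral}. To supply its surjectivity input at the distinguished index $j_0$, I would use that $n_{j_0}$ is divisible by $3$ to compute $b_{j_0}+1 = \lfloor(n_{j_0}-1)/3\rfloor + 1 = n_{j_0}/3 = \lfloor n_{j_0}/3\rfloor$; since this quantity is at most $\min(b,\lfloor n_{j_0}/3\rfloor)$, the integral $(b,n-1)$-hypothesis gives exactly the required surjection at degree $b_{j_0}+1$ when $\Delta'$ is obtained by removing either the first or last simple root of $\dA_{n_{j_0}}$. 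Lemma \ref{lemma:reduciblesurjectivityintegral} then delivers surjectivity of the map at degree $(m-1) + \sum_j b_j + 1 = c+1$, which is exactly the claim.

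The heavy lifting has already been done in \S \ref{section:integrallevi}, so this argument is essentially a bookkeeping exercise closely parallel to the proof of Lemma \ref{lemma:levivanishan} in the field case; the only mild subtlety is the case analysis that matches the bound $(m-1) + \sum_j b_j$ to $\min(b,c)$, and this is routine. I do not anticipate any substantive obstacle.
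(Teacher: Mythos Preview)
Your proposal is correct and follows essentially the same approach as the paper: both reduce to Lemmas \ref{lemma:reduciblevanishingintegral} and \ref{lemma:reduciblesurjectivityintegral} after noting $n_j \leq n-1$ so that the integral $(b,n-1)$-hypothesis applies to each $\SL_{n_j+1}(\Z)$. The paper's proof is terser and leaves the verification that $(m-1)+\sum_j b_j \geq \min(b,c)$ implicit, whereas you spell out the case analysis; your added detail is correct and is exactly the routine bookkeeping the paper elides.
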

\begin{proof}
Since $\Delta \neq \bDeltaQ(\bG)$, we have $n_j \leq n-1$ for $1 \leq j \leq m$.  The
integral $(b,n-1)$-surjectivity and vanishing hypothesis thus applies to all the
groups $\SL_{n_j+1}$.  This gives the hypothesis \eqref{vanishing1integral} in
Lemma \ref{lemma:reduciblevanishingintegral} (reducible vanishing).  Applying Lemma \ref{lemma:reduciblevanishingintegral},
we deduce (i).  Similarly, for $\Delta'$ as in (ii) it gives the hypotheses \eqref{surjectivity1integral} and \eqref{surjectivity2integral}
in Lemma \ref{lemma:reduciblesurjectivityintegral} (reducible surjectivity).  Applying Lemma \ref{lemma:reduciblesurjectivityintegral},
we deduce (ii).
\end{proof}

\section{Vanishing region (integral)}
\label{section:ssvanishintegral}

If the $b$-integral resolution conjecture (Conjecture \ref{conjecture:integralresolution}) holds,
then for $n \geq 3$ Lemma \ref{lemma:spectralsequenceintegral} gives a spectral sequence $\ssE^r_{pq}$ converging to $\HH_{p+q}(\SL_{n+1}(\Z);\St(\Z^{n+1}))$
with\footnote{Here we switched from $\Delta$ to $R$ to make our notation match that of the corresponding sections
of Part \ref{part:an}.}
\[\ssE^1_{pq} \cong \bigoplus_{R \in \cL_p(\bG)} \HH_q(\bL_{R}(\Z);\St(\bL_{R})) \quad \text{if $0 \leq p \leq \min(b,\lfloor n/2 \rfloor)$}\]
The following lemma shows that our inductive hypothesis implies that many terms of this
spectral sequence vanish.

\begin{lemma}
\label{lemma:vanishintegral}
Assume the integral $(b,n-1)$-surjectivity and vanishing hypothesis (Definition \ref{definition:hypothesisintegral})
for some $b \geq 1$ and $n \geq 3$.
Let $\ssE^1_{pq}$ be the spectral sequence converging to
$\HH_{p+q}(\SL_{n+1}(\Z);\St(\Z^{n+1}))$ from Lemma \ref{lemma:spectralsequenceintegral}.  Then the following hold:
\begin{itemize}
\item Let $d = \lfloor n/3 \rfloor$.  Then $\ssE^1_{pq} = 0$ for $p+q \leq \min(b,d)$, except for possibly
$\ssE^1_{0d}$ when $n$ is congruent to $0$ or $1$ modulo $3$.  There are thus no exceptions if $b<d$ or $n$ is congruent to $2$ modulo $3$.
\end{itemize}
\end{lemma}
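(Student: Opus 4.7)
The plan is to mirror the proof of Lemma~\ref{lemma:vanishan} from type $\dA$, replacing the $1/2$-slope estimates by $1/3$-slope estimates reflecting the weaker Levi vanishing available in the integral setting (Lemma~\ref{lemma:levivanishintegral}). I would first confirm that the identification of $\ssE^1_{pq}$ with a direct sum of Levi homology groups supplied by Lemma~\ref{lemma:spectralsequenceintegral} is valid throughout the range of interest: for $(p,q)$ with $p+q \leq \min(b,d)$ and $q \geq 0$ we have $p \leq \min(b,d) \leq \min(b, \lfloor n/2 \rfloor)$ since $d = \lfloor n/3 \rfloor \leq \lfloor n/2 \rfloor$, so the identification applies.

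Next, fix such $(p,q)$ and $R \in \cL_p(\SL_{n+1})$. Since $R$ is obtained by removing $p+1$ of the $n$ simple roots of $\dA_n$, we may write $\bPhiQ(\bL_R) = \dA_{n_1} \times \cdots \times \dA_{n_m}$ with $n_1 + \cdots + n_m = n - p - 1$, and in particular $R \neq \bDeltaQ(\SL_{n+1})$ so Lemma~\ref{lemma:levivanishintegral}(i) applies. It yields $\HH_q(\bL_R(\Z); \St(\bL_R)) = 0$ for $q \leq \min(b, c_R)$, where $c_R = (m-1) + \sum_{j=1}^m \lfloor (n_j - 1)/3 \rfloor$. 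Applying Lemma~\ref{lemma:floorinequality} iteratively with divisor $3$ gives $c_R \geq \lfloor (n_1 + \cdots + n_m - 1)/3 \rfloor = \lfloor (n-p-2)/3 \rfloor$. Thus $\ssE^1_{pq} = 0$ whenever $q \leq \min(b, \lfloor (n-p-2)/3 \rfloor)$.

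The argument then reduces to a case analysis showing that $p+q \leq \min(b,d)$ forces this bound, except in the stated exceptional positions. The constraint $q \leq b$ is automatic from $p+q \leq b$. For $p \geq 1$, monotonicity of the floor gives $\lfloor n/3 \rfloor - p = \lfloor (n-3p)/3 \rfloor \leq \lfloor (n-p-2)/3 \rfloor$ (the inequality $n-3p \leq n-p-2$ being equivalent to $p \geq 1$), so $q \leq d-p$ implies the required bound. For $p = 0$ one compares $\lfloor n/3 \rfloor$ with $\lfloor (n-2)/3 \rfloor$: these agree when $n \equiv 2 \pmod 3$ (no exception), and differ by $1$ when $n \equiv 0, 1 \pmod 3$, producing the potential exception at $q = d$, i.e.\ at $\ssE^1_{0d}$. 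When $b < d$, the constraint $p+q \leq b$ forces $q \leq b < d$, so the exceptional position $\ssE^1_{0d}$ lies outside the range in question and no exception is incurred.

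The argument is really a bookkeeping exercise in floor arithmetic, and there is no serious obstacle; the only subtlety is tracking the two constraints $q \leq b$ and $q \leq \lfloor (n-p-2)/3 \rfloor$ simultaneously to determine when the truncation of the resolution at level $b$ becomes binding versus when the rank-based Levi vanishing is binding. The mild weakening of the bound (slope $1/3$ instead of $1/2$) propagates directly from the corresponding weakening in Lemma~\ref{lemma:levivanishintegral} relative to Lemma~\ref{lemma:levivanishan}, and explains precisely why we tolerate the potential nonvanishing at $\ssE^1_{0d}$ in two residue classes modulo $3$ rather than in only one as in the earlier type-$\dA$ argument.
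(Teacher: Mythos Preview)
Your proposal is correct and follows essentially the same approach as the paper: both identify $\ssE^1_{pq}$ via Lemma~\ref{lemma:spectralsequenceintegral}, apply Lemma~\ref{lemma:levivanishintegral}(i) to each Levi summand, use Lemma~\ref{lemma:floorinequality} with divisor $3$ to bound $c_R \geq \lfloor (n-p-2)/3 \rfloor$, and then do the residue-class case analysis on $p=0$ versus $p \geq 1$. Your formulation of the $p \geq 1$ case via the inequality $\lfloor (n-3p)/3 \rfloor \leq \lfloor (n-p-2)/3 \rfloor$ is a minor repackaging of the paper's computation $p + \lfloor (n-p-2)/3 \rfloor = \lfloor (n+2p-2)/3 \rfloor \geq d$, but the substance is identical.
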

\begin{proof}
Our goal is to prove a vanishing result for $\ssE^1_{pq}$.  The terms in question all have $p \leq \min(b,d)$, so they all satisfy
\[\ssE^1_{pq} \cong \bigoplus_{R \in \cL_p(\bG)} \HH_q(\bL_{R}(\Z);\St(\bL_{R})).\]
Consider some $R \in \cL_p(\bG)$.  We will prove that the
integral $(b,n-1)$-surjectivity and vanishing hypothesis implies
that $\HH_q(\bL_{R}(\Z);\St(\bL_{R})) = 0$ whenever $q \leq b$ and $p+q \leq d$, with the possible
exception of $(p,q) = (0,d)$ if $n$ is congruent to $0$ or $1$ modulo $3$.  This will give the lemma.

Since $R$ is obtained by deleting $p+1$ simple roots from the Dynkin diagram of $\dA_n$, we have
\[\bPhiQ(\bL_{R}) = \dA_{n_1} \times \cdots \times \dA_{n_m} \quad \text{with $n_1+\cdots+n_m+p+1 = n$}.\]
Lemma \ref{lemma:levivanishintegral} (Levi vanishing and surjectivity) implies that
$\HH_q(\bL_R(\Z);\St(\bL_R)) = 0$ for $q$ such that $q \leq b$ and
\begin{equation}
\label{eqn:integralvanishqbound}
q \leq \hbb(\bPhiQ(\bL_{R})) = (m-1) + \lfloor (n_1-1)/3 \rfloor + \cdots + \lfloor (n_m-1)/3 \rfloor.
\end{equation}
For $a_1,a_2\in \Z$, Lemma \ref{lemma:floorinequality} implies that
$1+ \lfloor a_1/3 \rfloor + \lfloor a_2/3 \rfloor \geq \lfloor (a_1+a_2+1)/3 \rfloor$.
Applying this repeatedly, we deduce that the right hand side of \eqref{eqn:integralvanishqbound} is at least
\[\left\lfloor (n_1+\cdots+n_m-1)/3 \right\rfloor = \left\lfloor (n-p-2)/3 \right\rfloor.\]
It follows that $\HH_q(\bL_{R}(\Z);\St(\bL_{R})) = 0$ for $q$ such that $q \leq b$ and
$q \leq \lfloor (n-p-2)/3 \rfloor$, or equivalently
\begin{equation}
\label{eqn:integralvanishqbound2}
p+q \leq p+\lfloor (n-p-2)/3 \rfloor = \lfloor (n+2p-2)/3 \rfloor.
\end{equation}
There are now three cases:
\begin{itemize}
\item If $p \geq 1$, then the right hand side of \eqref{eqn:integralvanishqbound2} is at least $\lfloor n/3 \rfloor = d$.
\item If $p=0$ and $n$ is congruent to $2$ modulo $3$, then the right hand side of \eqref{eqn:integralvanishqbound2} is
$\lfloor (n-2)/3 \rfloor = \lfloor n/3 \rfloor = d$.
\item If $p=0$ and $n$ is congruent to $0$ or $1$ modulo $3$, then the right hand side of
\eqref{eqn:integralvanishqbound2} is $\lfloor (n-2)/3 \rfloor = \lfloor n/3\rfloor - 1 = d-1$.
\end{itemize}
All of this implies what we are trying to show: that $\HH_q(\bL_{R}(\Z);\St(\bL_{R})) = 0$ for $q$ such that $q \leq b$ and
$p+q \leq \lfloor n/3 \rfloor$, with the possible exception of $(p,q) = (0,d)$ if $n$ is congruent to $0$ or $1$ modulo $3$.
\end{proof}

\section{Remaining tasks (integral)}
\label{section:proofintegral}

Lemma \ref{lemma:vanishintegral} implies many cases of Theorem \ref{theorem:strongintegral}.  To prove the
remaining cases, we need to compute some differentials in our spectral sequence.  We now explain
the structure of the argument, postponing three calculations to the next
section.  Recall that Theorem \ref{theorem:strongintegral} is:

\theoremstyle{plain}
\newtheorem*{theorem:strongintegral}{Theorem \ref{theorem:strongintegral}}
\begin{theorem:strongintegral}
Assume the $b$-integral resolution conjecture (Conjecture \ref{conjecture:integralresolution}) for some $b \geq 1$.
Then:
\begin{itemize}
\item $\HH_i(\SL_{n+1}(\Z);\St(\Z^{n+1})) = 0$ for $i \leq \min(b,\lfloor (n-1)/3 \rfloor)$; and
\item letting $\bDelta = \bDeltaQ(\SL_{n+1})$, the maps
\begin{align*}
&\HH_i(\bL_{\bDelta[1]}(\Z);\St(\bL_{\bDelta[1]})) \rightarrow \HH_i(\SL_{n+1}(\Z);\St(\Z^{n+1})) \quad \text{and} \\
&\HH_i(\bL_{\bDelta[n]}(\Z);\St(\bL_{\bDelta[n]})) \rightarrow \HH_i(\SL_{n+1}(\Z);\St(\Z^{n+1}))
\end{align*}
are both surjective for $i \leq \min(b,\lfloor n/3 \rfloor)$.
\end{itemize}
\end{theorem:strongintegral}
\begin{proof}
The proof is by induction on $n$.  We proved the base cases $n \leq 2$ in Lemma \ref{lemma:strongintegralrank2},
so we can assume that $n \geq 3$ and that the result is true for smaller ranks, i.e., that
the integral $(b,n-1)$-surjectivity and vanishing hypothesis holds.

Lemma \ref{lemma:spectralsequenceintegral} gives a spectral sequence $\ssE^r_{pq}$ converging
to $\HH_{p+q}(\SL_{n+1}(\Z);\St(\Z^{n+1}))$.  Letting $d = \lfloor n/3 \rfloor$, Lemma \ref{lemma:vanishintegral} implies that
$\ssE^1_{pq} = 0$ for $p+q \leq \min(b,d)$ except for
possibly $\ssE^1_{0d}$ when $n$ is congruent to $0$ or $1$ modulo $3$.  This implies that
$\HH_i(\SL_{n+1}(\Z);\St(\Z^{n+1})) = 0$ for 
\[i \leq \begin{cases}
b   & \text{if $b < d$},\\
d   & \text{if $b \geq d$ and $n$ is congruent to $2$ modulo $3$}, \\
d-1 & \text{if $b \geq d$ and $n$ is congruent to $0$ or $1$ modulo $3$}.
\end{cases}\]
Since our surjectivity claim is trivial when the target is $0$, all that remains to prove
are the following two claims:

\begin{claim}{1}
Assume that $b \geq d$ and that $n$ is congruent to $1$ modulo $3$, so $d = \lfloor (n-1)/3 \rfloor$.
Then $\HH_d(\SL_{n+1}(\Z);\St(\Z^{n+1})) = 0$.
\end{claim}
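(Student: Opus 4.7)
The plan is to mimic the structure of Lemma \ref{lemma:differential2an} (the type $\dA$ analogue) in the integral setting, using the spectral sequence from Lemma \ref{lemma:spectralsequenceintegral} and Lemma \ref{lemma:levivanishintegral} in place of their Part \ref{part:an} counterparts. Write $n = 3d+1$. First I would invoke Lemma \ref{lemma:vanishintegral}: under the assumptions $b \geq d$ and $n \equiv 1 \pmod{3}$, the only potentially nonzero term on the $\ssE^1$-page with $p+q = d$ is $\ssE^1_{0d}$. Consequently, to conclude $\HH_d(\SL_{n+1}(\Z); \St(\Z^{n+1})) = 0$ it suffices to show that the differential $\ssE^1_{1d} \to \ssE^1_{0d}$ is surjective, so that $\ssE^2_{0d} = 0$.

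For the bookkeeping, abbreviate $M[j_1, \dots, j_\ell] = \HH_d(\bL_{\bDelta[j_1, \dots, j_\ell]}(\Z); \St(\bL_{\bDelta[j_1, \dots, j_\ell]}))$, so that $\ssE^1_{0d} = \bigoplus_{j=1}^{n} M[j]$ and $\ssE^1_{1d} = \bigoplus_{1 \leq j_1 < j_2 \leq n} M[j_1, j_2]$. The first step is to show $M[j] = 0$ whenever $j \not\equiv 1 \pmod{3}$. For such $j$ we have $\bPhiQ(\bL_{\bDelta[j]}) = \dA_{j-1} \times \dA_{n-j}$ (using the convention $\dA_0 = \emptyset$ at the boundary), and a direct computation shows that the bound $\hbb(\bPhiQ(\bL_{\bDelta[j]})) = 1 + \lfloor (j-2)/3 \rfloor + \lfloor (n-j-1)/3 \rfloor$ equals $d$ exactly when $j \equiv 0$ or $2 \pmod 3$; hence $d \leq \hbb(\bPhiQ(\bL_{\bDelta[j]}))$ and Lemma \ref{lemma:levivanishintegral}(i) gives $M[j] = 0$.

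The second step is to kill each remaining $M[j]$ with $j \equiv 1 \pmod 3$. For $1 < j \leq n$ with $j \equiv 1 \pmod 3$, restrict the differential to the summand $M[j-1, j]$, which maps into $M[j] \oplus M[j-1]$. Since $j-1 \equiv 0 \pmod 3$, the first step gives $M[j-1] = 0$, so it suffices to show $M[j-1, j] \to M[j]$ is surjective. This is the content of Lemma \ref{lemma:levivanishintegral}(ii) applied by removing the last node of the $\dA_{j-1}$-factor of $\bPhiQ(\bL_{\bDelta[j]})$: the divisibility hypothesis is satisfied because $j-1$ is a positive multiple of $3$, and the hypothesis $c+1 \leq b$ unwinds to $d \leq b$, which is our assumption. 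The boundary case $j=1$ is handled analogously using the summand $M[1,2]$: since $2 \not\equiv 1 \pmod 3$, we have $M[2] = 0$ by the first step, and Lemma \ref{lemma:levivanishintegral}(ii) applied to $\bPhiQ(\bL_{\bDelta[1]}) = \dA_{n-1}$ (with $n-1 = 3d$ divisible by $3$) shows that $M[1,2] \to M[1]$ is surjective.

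The proof is really just arithmetic bookkeeping; there is no deep obstacle. The only subtle point is confirming that the arithmetic constraints $n \equiv 1 \pmod 3$ and $b \geq d$ line up exactly with the divisibility and range hypotheses of Lemma \ref{lemma:levivanishintegral}(ii), which is what forces the stable range $\lfloor (n-1)/3 \rfloor$ appearing in Theorem \ref{theorem:strongintegral} rather than a $\lfloor (n-1)/2 \rfloor$-type bound.
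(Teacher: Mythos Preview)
Your proposal is correct and follows essentially the same approach as the paper: you invoke Lemma~\ref{lemma:vanishintegral} to isolate $\ssE^1_{0d}$ as the only potentially nonzero term on the diagonal $p+q=d$, then prove surjectivity of $\ssE^1_{1d}\to\ssE^1_{0d}$ by exactly the argument the paper packages as Lemma~\ref{lemma:differential2integral} (first showing $M[j]=0$ for $j\not\equiv 1\pmod 3$ via Lemma~\ref{lemma:levivanishintegral}(i), then killing each remaining $M[j]$ using the summand $M[j-1,j]$ and Lemma~\ref{lemma:levivanishintegral}(ii)). The only difference is organizational: the paper defers the differential computation to a separate lemma, while you inline it.
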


In this case, Lemma \ref{lemma:vanishintegral} says that the only potentially nonzero
term $\ssE^1_{pq}$ in our spectral sequence with $p+q = d$ is $\ssE^1_{0d}$.  We will prove in
Lemma \ref{lemma:differential2integral} below that the differential $\ssE^1_{1d} \rightarrow \ssE^1_{0d}$
is surjective, so $\ssE^2_{0d} = 0$.  This implies that $\HH_d(\SL_{n+1}(\Z);\St(\Z^{n+1})) = 0$, as desired.

\begin{claim}{2}
Assume that $b \geq d$ and that $n$ is congruent to $0$ modulo $3$, so $d-1 = \lfloor (n-1)/3 \rfloor$.  Then the maps
\begin{align*}
&\HH_d(\bL_{\bDelta[1]}(\Z);\St(\bL_{\bDelta[1]})) \rightarrow \HH_d(\SL_{n+1}(\Z);\St(\Z^{n+1})) \quad \text{and} \\
&\HH_d(\bL_{\bDelta[n]}(\Z);\St(\bL_{\bDelta[n]})) \rightarrow \HH_d(\SL_{n+1}(\Z);\St(\Z^{n+1}))
\end{align*}
are both surjective.
\end{claim}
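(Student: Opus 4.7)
The approach mirrors Claim 2 in the proof of Theorem \ref{theorem:strongan}. First I would appeal to Lemma \ref{lemma:conjugateintegral}, which tells us the two displayed maps have the same image in $\HH_d(\SL_{n+1}(\Z);\St(\Z^{n+1}))$, so it suffices to show that the sum
\[\HH_d(\bL_{\bDelta[1]}(\Z);\St(\bL_{\bDelta[1]})) \oplus \HH_d(\bL_{\bDelta[n]}(\Z);\St(\bL_{\bDelta[n]}))\]
surjects onto $\HH_d(\SL_{n+1}(\Z);\St(\Z^{n+1}))$. Since $n = 3d$ and $b \geq d$, Lemma \ref{lemma:vanishintegral} guarantees that $\ssE^1_{0d}$ is the only potentially nonzero term of the $\ssE^1$-page on the diagonal $p+q=d$. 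Consequently $\HH_d(\SL_{n+1}(\Z);\St(\Z^{n+1}))$ is a quotient of $\ssE^2_{0d}$, which by definition is the cokernel of $\ssE^1_{1d} \to \ssE^1_{0d}$. The problem thus reduces to showing that the displayed summand of
\[\ssE^1_{0d} = \bigoplus_{j=1}^{n} \HH_d(\bL_{\bDelta[j]}(\Z);\St(\bL_{\bDelta[j]}))\]
surjects onto this cokernel.

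This cokernel calculation is the integral analog of Lemma \ref{lemma:differential1an} and would be extracted as its own lemma in the integral differentials section. To sketch the argument, abbreviate $M[j_1,\ldots,j_\ell] = \HH_d(\bL_{\bDelta[j_1,\ldots,j_\ell]}(\Z);\St(\bL_{\bDelta[j_1,\ldots,j_\ell]}))$, and for each $1 < j < n$ I will show that $M[j]$ is absorbed by $M[1]$ modulo the image of the differential. The case analysis splits by the value of $j \bmod 3$. For $j \equiv 2 \pmod{3}$, a direct calculation gives $\hbb(\dA_{j-1}\times \dA_{n-j}) = d$, so $M[j] = 0$ by Lemma \ref{lemma:levivanishintegral}(i). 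For $j \equiv 1 \pmod{3}$ with $j \neq 1$, the factor $\dA_{j-1}$ has rank divisible by $3$; one computes $\hbb(\dA_{j-1}\times \dA_{n-j}) = d-1$, so Lemma \ref{lemma:levivanishintegral}(ii) applied with $\Delta = \bDelta[j]$ and $\Delta' = \bDelta[1,j]$ produces a surjection $M[1,j] \twoheadrightarrow M[j]$, and the summand $M[1,j]$ of $\ssE^1_{1d}$ consequently identifies $M[j]$ with a subspace of $M[1]$ in the cokernel. For $j \equiv 0 \pmod{3}$ with $j \neq n$, the symmetric argument using $\Delta' = \bDelta[j, j+1]$ (removing the first simple root of the $\dA_{n-j}$-factor, whose rank is divisible by $3$) identifies $M[j]$ with a subspace of $M[j+1]$, which then falls into the previous case since $j+1 \equiv 1 \pmod{3}$.

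The main obstacle is the careful bookkeeping of the $\hbb$-bounds in each case, to confirm that the surjectivity hypothesis $c+1 \leq b$ of Lemma \ref{lemma:levivanishintegral}(ii) reduces to $b \geq d$, which is exactly the standing assumption. The structural shift from the type $\dA$ proof of Lemma \ref{lemma:differential1an} is that the floor function now uses base $3$ rather than base $2$, so the case analysis tracks $j \pmod{3}$ rather than $j \pmod{2}$; the combinatorial pattern is otherwise identical. Once the deferred differential lemma is established, Claim 2 is immediate from the spectral sequence argument of the first paragraph.
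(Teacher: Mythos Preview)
Your proposal is correct and follows essentially the same approach as the paper: reduce via Lemma \ref{lemma:conjugateintegral}, use Lemma \ref{lemma:vanishintegral} to isolate $\ssE^1_{0d}$, and defer the cokernel computation to a separate lemma (the paper's Lemma \ref{lemma:differential1integral}). The only minor variation is in the $j \equiv 0 \pmod 3$ case of that deferred lemma: you use $M[j,j+1]$ to identify $M[j]$ with a subspace of $M[j+1]$ and then cascade through the $j \equiv 1$ case into $M[1]$, whereas the paper uses $M[j,n]$ to identify $M[j]$ directly with a subspace of $M[n]$; both are valid and your version more closely mirrors the type $\dA$ argument in Lemma \ref{lemma:differential1an}.
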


Lemma \ref{lemma:conjugateintegral} says that these maps have the same image, so it is enough to prove that
\begin{equation}
\label{eqn:sumoftwointegral}
\HH_d(\bL_{\bDelta[1]}(\Z);\St(\bL_{\bDelta[1]})) \oplus \HH_d(\bL_{\bDelta[n]}(\Z);\St(\bL_{\bDelta[n]}))
\end{equation}
surjects onto $\HH_d(\SL_{n+1}(\Z);\St(\Z^{n+1}))$.
Lemma \ref{lemma:vanishintegral} says that the only potentially nonzero
term $\ssE^1_{pq}$ in our spectral sequence with $p+q = d$ is $\ssE^1_{0d}$.  
We will prove in Lemma \ref{lemma:differential1integral} below that the summand \eqref{eqn:sumoftwointegral} of
\[\ssE^1_{0d} = \bigoplus_{R \in \cL_0(\bG)} \HH_d(\bL_{R}(\Z);\St(\bL_{R})) = \bigoplus_{j=1}^{n} \HH_d(\bL_{\bDelta[j]}(\Z);\St(\bL_{\bDelta[j]}))\]
surjects onto the cokernel of the differential $\ssE^1_{1d} \rightarrow \ssE^1_{0d}$.
It follows that $\ssE^2_{0d}$ is a quotient of \eqref{eqn:sumoftwointegral}.  Since $\ssE^2_{0d}$ is the
only potentially nonzero term of the form $\ssE^2_{pq}$ with $p+q = d$, it follows that \eqref{eqn:sumoftwointegral}
surjects onto $\HH_d(\SL_{n+1}(\Z);\St(\Z^{n+1}))$, as desired.
\end{proof}

\section{Differentials (integral)}
\label{section:differentialintegral}

This final section of this part of the paper
determines the images of two differentials whose calculations were needed in the previous section.

\subsection{Differentials, I (integral)}
\label{section:differential1integral}

Our first differential calculation is:

\begin{lemma}
\label{lemma:differential1integral}
Consider $\SL_{n+1}$ with $n = 3d$ for some $d \geq 1$.  
Assume the integral $(b,n-1)$-surjectivity and vanishing hypothesis (Definition \ref{definition:hypothesisintegral}) for some $b \geq d$.
Let $\ssE^1_{pq}$ be the spectral
sequence from Lemma \ref{lemma:spectralsequenceintegral}.  Then the summand
\[\HH_d(\bL_{\bDelta[1]}(\Z);\St(\bL_{\bDelta[1]})) \oplus \HH_d(\bL_{\bDelta[2d]}(\Z);\St(\bL_{\bDelta[2d]}))\]
of $\ssE^1_{0d}$ surjects onto the cokernel of the
differential $\ssE^1_{1d} \rightarrow \ssE^1_{0d}$.
\end{lemma}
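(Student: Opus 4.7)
The plan is to mirror the proof of Lemma \ref{lemma:differential1an} in type $\dA$, replacing the parity argument (mod $2$) with a congruence argument (mod $3$). As notation, for $1 \leq j_1,\ldots,j_\ell \leq 3d$, set
\[M[j_1,\ldots,j_\ell] = \HH_d(\bL_{\bDelta[j_1,\ldots,j_\ell]}(\Z);\St(\bL_{\bDelta[j_1,\ldots,j_\ell]})),\]
so $\ssE^1_{0d} = \bigoplus_{j} M[j]$ and $\ssE^1_{1d} = \bigoplus_{j_1 < j_2} M[j_1,j_2]$. For each $1 < j < 3d$, I will show that the summand $M[j]$ of $\ssE^1_{0d}$ is identified, modulo the image of the differential, with a subspace of $M[1]$ or of $M[3d]$. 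The key computation is that for such $j$, the root system
\[\bPhiQ(\bL_{\bDelta[j]}) = \dA_{j-1} \times \dA_{3d-j}\]
has $\hbb$-bound equal to $d$ when $j \equiv 2 \pmod 3$ and equal to $d-1$ otherwise; moreover, in the latter case exactly one of the subscripts $j-1$ or $3d-j$ is divisible by $3$, namely the first when $j \equiv 1 \pmod 3$ and the second when $j \equiv 0 \pmod 3$. This is what fuels the application of Lemma \ref{lemma:levivanishintegral}(ii).

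I would then split into three cases. First, if $j \equiv 2 \pmod 3$, Lemma \ref{lemma:levivanishintegral}(i) together with $d \leq b$ gives $M[j] = 0$, and there is nothing to do. Second, if $j \equiv 1 \pmod 3$ with $j > 1$, write $j = 3k+1$ with $k \geq 1$; the first factor $\dA_{3k}$ of $\bPhiQ(\bL_{\bDelta[j]})$ has subscript divisible by $3$, and removing its first simple root (i.e.\ $\alpha_1$) from $\bDelta[j]$ produces $\bDelta[1,j]$. Just as in the type $\dA$ case, the differential restricted to the summand $M[1,j]$ is the map
\[\begin{tikzcd}[column sep=large]
M[1,j] \arrow{r}{f \oplus (-g)} & M[j] \oplus M[1] \arrow[hook]{r} & \ssE^1_{0d},
\end{tikzcd}\]
and Lemma \ref{lemma:levivanishintegral}(ii) (applicable because $\hbb(\bPhiQ(\bL_{\bDelta[j]}))+1 = d \leq b$) shows that $f$ is surjective. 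Hence in the cokernel $M[j]$ is identified with a subspace of $M[1]$. Third, if $j \equiv 0 \pmod 3$ with $j \neq 3d$, write $j = 3k$ with $1 \leq k \leq d-1$; now the second factor $\dA_{3(d-k)}$ of $\bPhiQ(\bL_{\bDelta[j]})$ has subscript divisible by $3$, and removing its last simple root (i.e.\ $\alpha_{3d}$) produces $\bDelta[j,3d]$. The analogous analysis of the summand $M[j,3d]$, whose differential lands in $M[3d] \oplus M[j]$, combined with Lemma \ref{lemma:levivanishintegral}(ii) applied to the second factor, shows that the projection to $M[j]$ is surjective, so $M[j]$ is identified in the cokernel with a subspace of $M[3d]$.

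Since every $1 < j < 3d$ falls into one of these three cases, this will complete the proof. The only real obstacle is bookkeeping: one must verify in each case that the relevant subscript is divisible by $3$, that the $\hbb$-bound computes to the correct value (to certify both the automatic vanishing in the $j \equiv 2$ case and the applicability of Lemma \ref{lemma:levivanishintegral}(ii) in the other cases with $c+1 = d \leq b$), and that the roots removed to form $\Delta'$ in Lemma \ref{lemma:levivanishintegral}(ii) match up with $\alpha_1$ or $\alpha_{3d}$ so as to land in $\bDelta[1,j]$ or $\bDelta[j,3d]$ respectively. None of these steps requires any new ideas beyond those already used in Lemmas \ref{lemma:differential1an} and \ref{lemma:levivanishintegral}.
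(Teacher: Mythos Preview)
Your proposal is correct and follows essentially the same argument as the paper's proof: the same three-way split on $j \bmod 3$, the same vanishing for $j\equiv 2$, and the same use of the summands $M[1,j]$ (for $j\equiv 1$) and $M[j,3d]$ (for $j\equiv 0$) together with Lemma~\ref{lemma:levivanishintegral}(ii). Note that the ``$\bDelta[2d]$'' in the lemma's statement is a typo for $\bDelta[3d]$, which you correctly inferred.
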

\begin{proof}
As notation, for $1 \leq j_1,\ldots,j_{\ell} \leq 3d$ let
\[M[j_1, \ldots, j_{\ell}] = \HH_d(\bL_{\bDelta[j_1, \ldots, j_{\ell}]}(\Z);\St(\bL_{\bDelta[j_1, \ldots, j_{\ell}]})).\]
We have
\[\ssE^1_{0d} = \bigoplus_{1 \leq j \leq 3d} M[j] \quad \text{and} \quad 
\ssE^1_{1d} = \bigoplus_{1 \leq j_1 < j_2 \leq 3d} M[j_1, j_2].\]
Consider some $1 < j < 3d$.  We must prove that when we quotient $\ssE^1_{0d}$ by the image of the differential
$\ssE^1_{1d} \rightarrow \ssE^1_{0d}$, the summand
$M[j]$ of $\ssE^1_{0d}$ is identified with a subspace of
$M[1] \oplus M[3d]$.  The first step is to show that many $M[j]$ already vanish:

\begin{claim}{1}
\label{claim:differential1integral.1}
For $1 < j < 3d$ with $j=3e+2$ for some $0 \leq e < d$, we have $M[j] = 0$.
\end{claim}
\begin{proof}[Proof of claim]
Since $\bPhiQ(\bL_{\bDelta[3e+2]}) = \dA_{3e+1} \times \dA_{3d-3e-2}$, Lemma \ref{lemma:levivanishintegral} (Levi vanishing and surjectivity) 
implies that $\HH_i(\bL_{\bDelta[3e+2]}(\Z);\St(\bL_{\bDelta[3e+2]})) = 0$ for
\[i \leq \hbb(\bPhiQ(\bL_{\bDelta[3e+2]})) = 1 + \lfloor 3e/3 \rfloor + \lfloor (3d-3e-3)/3 \rfloor = 1 + e + (d-e-1) = d.\]
In particular, $M[3e+2] = \HH_d(\bL_{\bDelta[3e+2]}(\Z);\St(\bL_{\bDelta[3e+2]})) = 0$.
\end{proof}

In light of Claim \ref{claim:differential1integral.1}, there are two remaining cases.
The first is that $j = 3e+1$ for some $1 \leq e < d$, so
\begin{align}
\bPhiQ(\bL_{\bDelta[j]}) &= \dA_{3e} \times \dA_{3d-3e-1}, \label{eqn:differential1integral.1.1}\\
\hbb(\bPhiQ(\bL_{\bDelta[j]})) &= 1 + \lfloor (3e-1)/3 \rfloor + \lfloor (3d-3e-2)/3 \rfloor \label{eqn:differential1integral.1.2}\\
                               &= 1 + (e-1)+(d-e-1) = d-1. \notag
\end{align}
The differential $\ssE^1_{1d} \rightarrow \ssE^1_{0d}$ takes
the summand $M[1,j]$ of $\ssE^1_{1d}$ to $\ssE^1_{0d}$ via the map
\[\begin{tikzcd}[column sep=large]
M[1,j] \arrow{r}{f \oplus (-g)} & M[j] \oplus M[1] \arrow[hook]{r} & \ssE^1_{0d}.
\end{tikzcd}\]
To prove that this differential identifies $M[j]$ with a subspace of $M[1]$,
we must prove that $f\colon M[1,j] \rightarrow M[j]$ is surjective.  Since
the
$\dA_{3e}$ in \eqref{eqn:differential1integral.1.1} has $3e$ a positive multiple of $3$, this follows
from Lemma \ref{lemma:levivanishintegral} (Levi vanishing and surjectivity).  Here
we are using the fact that $\hbb(\bPhiQ(\bL_{\bDelta[j]}))+1 = d$; cf.\ \eqref{eqn:differential1integral.1.2}.

The second case is that $j = 3e$ for some $1 \leq e < d$, so
\begin{align}
\bPhiQ(\bL_{\bDelta[j]}) &= \dA_{3e-1} \times \dA_{3d-3e}, \label{eqn:differential1integral.2.1} \\
\hbb(\bPhiQ(\bL_{\bDelta[j]})) &= 1 + \lfloor (3e-2)/3 \rfloor + \lfloor (3d-3e-1)/3 \rfloor \label{eqn:differential1integral.2.2}\\
                               &= 1+(e-1)+(d-e-1) = d-1. \notag
\end{align}
The differential $\ssE^1_{1d} \rightarrow \ssE^1_{0d}$ takes
the summand $M[j,3d]$ of $\ssE^1_{1d}$ to $\ssE^1_{0d}$ via the map
\[\begin{tikzcd}[column sep=large]
M[j,3d] \arrow{r}{f \oplus (-g)} & M[3d] \oplus M[j] \arrow[hook]{r} & \ssE^1_{0d}.
\end{tikzcd}\]
To prove that this differential identifies $M[j]$ with a subspace of $M[3d]$,
we must prove that $f\colon M[j,3d] \rightarrow M[j]$ is surjective.  Since
the $\dA_{3d-3e}$ in \eqref{eqn:differential1integral.2.1} has $3d-3e$ a positive multiple
of $3$, this follows from 
Lemma \ref{lemma:levivanishintegral} (Levi vanishing and surjectivity).
Here
we are using the fact that $\hbb(\bPhiQ(\bL_{\bDelta[j]}))+1 = d$; cf.\ \eqref{eqn:differential1integral.2.2}.
\end{proof}

\subsection{Differentials, II (integral)}
\label{section:differential2integral}

Our second and final differential calculation is:

\begin{lemma}
\label{lemma:differential2integral}
Consider $\SL_{n+1}$ with $n = 3d+1$ for some $d \geq 1$.  
Assume the integral $(b,n-1)$-surjectivity and vanishing hypothesis (Definition \ref{definition:hypothesisintegral})
for some $b \geq d$.
Let $\ssE^1_{pq}$ be the spectral
sequence from Lemma \ref{lemma:spectralsequenceintegral}.  Then the
differential $\ssE^1_{1d} \rightarrow \ssE^1_{0d}$ is surjective.
\end{lemma}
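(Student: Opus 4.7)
The plan is to mimic closely the proof of Lemma \ref{lemma:differential2an} in type $\dA$. First I will set up convenient notation: for $1 \leq j_1,\ldots,j_\ell \leq 3d+1$ let
\[M[j_1,\ldots,j_\ell] = \HH_d(\bL_{\bDelta[j_1,\ldots,j_\ell]}(\Z);\St(\bL_{\bDelta[j_1,\ldots,j_\ell]})),\]
so $\ssE^1_{0d} = \bigoplus_j M[j]$ and $\ssE^1_{1d} = \bigoplus_{j_1<j_2} M[j_1,j_2]$. For each $1 \leq j \leq 3d+1$ I need to show that the summand $M[j]$ dies in the cokernel of the differential.

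The first step will be a residue-class vanishing claim analogous to Claim 1 of Lemma \ref{lemma:differential2an}: I will show that $M[j]=0$ whenever $j \not\equiv 1 \pmod{3}$. Indeed, for $j = 3k+2$ with $0 \leq k \leq d-1$, we have $\bPhiQ(\bL_{\bDelta[j]}) = \dA_{3k+1} \times \dA_{3d-3k-1}$, and
\[\hbb(\bPhiQ(\bL_{\bDelta[j]})) = 1 + \lfloor 3k/3\rfloor + \lfloor(3d-3k-2)/3\rfloor = d,\]
so Lemma \ref{lemma:levivanishintegral}(i) together with the hypothesis $b \geq d$ yields $M[j] = 0$. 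An analogous arithmetic computation with $j = 3k$ (for $1 \leq k \leq d$) shows $\hbb = d$ and hence $M[j] = 0$ in that case too. This is exactly the arithmetic that works because $n$ is congruent to $1$ modulo $3$; in the residue class $1 \pmod 3$ we only get $\hbb = d-1$, hence no vanishing.

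For the remaining $j \equiv 1 \pmod 3$, say $j = 3k+1$ with $0 \leq k \leq d$, I will kill $M[j]$ using an adjacent two-element deletion, exploiting vanishing of the other summand in the image of the differential. The natural choice is $M[3k+1, 3k+2]$ for interior $j$ (so $1 \leq k \leq d-1$), $M[1,2]$ for $j=1$, and $M[3d,3d+1]$ for $j=3d+1$. On each such summand the differential maps to $M[j'] \oplus M[j]$ where $j' \in \{3k, 3k+2\}$ satisfies $j' \not\equiv 1 \pmod 3$, so $M[j']=0$ by Step 1. I then need surjectivity of $M[j_1,j_2] \to M[j]$; for this I apply Lemma \ref{lemma:levivanishintegral}(ii), noting that in $\bPhiQ(\bL_{\bDelta[j]}) = \dA_{3k} \times \dA_{3d-3k}$ (with the convention $\dA_0 = \emptyset$ at the ends) the removed root $j'$ is the first or last simple root of a $\dA$-factor of rank divisible by $3$ and positive, and that $\hbb(\bPhiQ(\bL_{\bDelta[j]})) + 1 = d \leq b$. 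This gives the required surjection.

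The only nuisance will be handling the boundary cases $j = 1$ and $j = 3d+1$ uniformly with the interior: the key check is that the $\dA_{3d}$ factor of $\bPhiQ(\bL_{\bDelta[1]})$ has rank divisible by $3$ and nonzero, so removing its first simple root (which is $\bDelta[1,2]$) satisfies the hypotheses of Lemma \ref{lemma:levivanishintegral}(ii); symmetrically at $j = 3d+1$. No serious obstacle arises — the argument is parallel to the $\dA_n$ case, and the residue arithmetic works out precisely because of the identity $1 + \lfloor 3k/3\rfloor + \lfloor(3d-3k-2)/3\rfloor = d$.
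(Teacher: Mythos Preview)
Your proposal is correct and follows essentially the same approach as the paper: first show $M[j]=0$ for $j\not\equiv 1\pmod 3$ via the $\hbb$ computation, then kill each remaining $M[3k+1]$ by an adjacent two-term deletion together with Lemma~\ref{lemma:levivanishintegral}(ii). The only difference is that for interior $j=3k+1$ you use $M[j,j+1]$ (removing the first root of the right $\dA_{3d-3k}$ factor) whereas the paper uses $M[j-1,j]$ (removing the last root of the left $\dA_{3k}$ factor); both choices work since both factors have positive rank divisible by~$3$, and your endpoint treatments at $j=1$ and $j=3d+1$ coincide with the paper's.
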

\begin{proof}
As notation, for $1 \leq j_1,\ldots,j_{\ell} \leq 3d+1$ let
\[M[j_1, \ldots, j_{\ell}] = \HH_d(\bL_{\bDelta[j_1, \ldots, j_{\ell}]}(\Z);\St(\bL_{\bDelta[j_1, \ldots, j_{\ell}]})).\]
We have
\[\ssE^1_{0d} = \bigoplus_{1 \leq j \leq 3d+1} M[j] \quad \text{and} \quad 
\ssE^1_{1d} = \bigoplus_{1 \leq j_1 < j_2 \leq 3d+1} M[j_1, j_2].\]
Consider some $1 \leq j \leq 3d+1$.  We must prove that when we quotient $\ssE^1_{0d}$ by the image of the differential
$\ssE^1_{1d} \rightarrow \ssE^1_{0d}$, the summand $M[j]$ is killed.
The first step is to show that many $M[j]$ already vanish:

\begin{claim}{1}
\label{claim:differential2integral.1}
For $1 \leq j \leq 3d+1$ with $j$ equal to either $3e+2$ with $0 \leq e < d$ or $3e$ with $1 \leq e \leq d$, we have $M[j] = 0$.
\end{claim}
\begin{proof}[Proof of claim]
Since $\bPhiQ(\bL_{\bDelta[j]}) = \dA_{j-1} \times \dA_{3d+1-j}$, Lemma \ref{lemma:levivanishintegral} (Levi vanishing and surjectivity)
implies that
$\HH_i(\bL_{\bDelta[j]}(\Z);\St(\bL_{\bDelta[j]})) = 0$ for
\[i \leq \hbb(\bPhiQ(\bL_{\bDelta[j]})) = 1 + \lfloor (j-2)/3 \rfloor + \lfloor (3d-j+1)/3 \rfloor.\]
We must prove that the right hand side of this is $d$ for $j$ of the indicated forms:
\begin{itemize}
\item If $j = 3e+2$ with $0 \leq e < d$, then the right hand side is
\[1 + \lfloor 3e/3 \rfloor + \lfloor 3d-3e-1 \rfloor = 1 + e + (d-e-1) = d.\]
\item If $j = 3e$ with $1 \leq e \leq d$, then the right hand side is
\[1 + \lfloor (3e-2) \rfloor + \lfloor (3d-3e+1)/3 \rfloor = 1 + (e-1) + (d-e) = d.\qedhere\]
\end{itemize}
\end{proof}

Now consider $1 \leq j \leq 3d+1$ with $j = 3e+1$ for some $0 \leq e \leq d$.
In light of Claim \ref{claim:differential2integral.1}, it is enough
to prove that $M[j]$ is killed when we quotient $\ssE^1_{0d}$ by the image of the differential
$\ssE^1_{1d} \rightarrow \ssE^1_{0d}$.  Assume first that $j \neq 1$, so
\begin{equation}
\label{eqn:differential2integral.1.1}
\bPhiQ(\bL_{\bDelta[j]}) = 
\begin{cases}
\dA_{3e} \times \dA_{3d-3e} & \text{if $e \neq d$},\\
\dA_{3e}                    & \text{if $e=d$}.
\end{cases}
\end{equation}
In both cases, we have
\begin{equation}
\label{eqn:differential2integral.1.2}
\hbb(\bPhiQ(\bL_{\bDelta[j]}) = d-1.
\end{equation}
We will show that $M[j]$ is killed by the image of the summand $M[j-1,j]$ of $\ssE^1_{1d}$. 
On the summand $M[j-1,j]$, the differential is the map
\[\begin{tikzcd}[column sep=large]
M[j-1,j] \arrow{r}{f \oplus (-g)} & M[j] \oplus M[j-1] \arrow[hook]{r} & \ssE^1_{0d}.
\end{tikzcd}\]
Claim \ref{claim:differential2integral.1} says that $M[j-1] = 0$, so to show that this differential kills
$M[j]$ it is enough to prove that $f\colon M[j-1,j] \rightarrow M[j]$ is surjective.  
Since the $\dA_{3e}$ in both cases of \eqref{eqn:differential2integral.1.1} has $3e$ a positive multiple of $3$,
this follows from Lemma \ref{lemma:levivanishintegral} (Levi vanishing and surjectivity).
Here we are using the fact that $\hbb(\bPhiQ(\bL_{\bDelta[j]})+1 = d$; cf.\ \eqref{eqn:differential2integral.1.2}.

It remains to deal with the case $j = 1$, so
\begin{align}
\bPhiQ(\bL_{\bDelta[j]}) &= \dA_{3d}, \label{eqn:differential2integral.2.1} \\
\hbb(\bPhiQ(\bL_{\bDelta[j]})) &= \lfloor (3d-1)/3 \rfloor = d-1. \label{eqn:differential2integral.2.2}
\end{align} 
In this case, we will use the summand $M[1,2]$ of $\ssE^1_{1d}$.  Just like
above, on this summand this differential takes the form
\[\begin{tikzcd}[column sep=large]
M[1,2] \arrow{r} & M[2] \oplus M[1] \arrow[hook]{r} & \ssE^1_{0d}.
\end{tikzcd}\]
Claim \ref{claim:differential2integral.1} says that $M[2] = 0$, and since
the $\dA_{3d}$ in \eqref{eqn:differential2integral.2.1} has $3d$ a positive multiple of $3$
Lemma \ref{lemma:levivanishintegral} (Levi vanishing and surjectivity) implies that 
the map $M[1,2] \rightarrow M[1]$ is surjective.  
Here we are using the fact that $\hbb(\bPhiQ(\bL_{\bDelta[j]})+1 = d$; cf.\ \eqref{eqn:differential2integral.2.2}.
The lemma follows.
\end{proof}

\part{The double Tits building (Theorem \ref{maintheorem:doubletits})}
\label{part:doubletits}

We defined the double Tits building $\Tits^2(\Z^{n})$ in \S \ref{section:doubletits}.
In this final part of the paper, we prove Theorem \ref{maintheorem:doubletits}, which says that
$\Tits^2(\Z^{n})$ is $n$-connected for $n \geq 4$.  
See the introductory \S \ref{section:doubletitsintro} for an outline.

\section{Outline of connectivity proof}
\label{section:doubletitsintro}

In this introductory section, we first introduce some notation, then translate Theorem \ref{maintheorem:doubletits} into a homological
statement, and finally outline our proof of it.

\subsection{Steinberg module and lines}

Let $V$ be a rank-$r$ free $\Z$-module with $r \geq 1$.  Recall that
$\Tits(V) \cong \Tits(\GL_r)$ is the complex of flags of nonzero proper direct summands of $V$
and $\St(V) = \RH_{r-2}(\Tits(V)) \cong \St(\GL_r)$.
A {\em line} in $V$ is a one-dimensional direct summand.  Each line $L$ can be written as
$L = \Span{x}$ for an $x \in V$ that is {\em primitive}, that is, only divisible by $\pm 1$.
The $x$ in $L = \Span{x}$ is unique up to multiplication by $\pm 1$.  

\subsection{Apartments}
\label{section:apartmentclasses}

For lines $L_1,\ldots,L_r \subset V$, the
{\em apartment class} $\Apart{L_1,\ldots,L_r} \in \St(V)$ is as follows.  
Let $X_r$ be the simplicial complex with vertices
$[r] = \{1,\ldots,r\}$ and simplices nonempty proper subsets of $[r]$.  The
complex $X_r$ is isomorphic to the barycentric subdivision of the boundary of an $(r-1)$-simplex.
Let $f\colon X_r \rightarrow \Tits(V)$ be the simplicial map taking the simplex corresponding
to $I \subsetneq [r]$ to $\SpanSet{$L_i$}{$i \in I$}$.  The ordering on $[r]$ gives $X_r \cong \bbS^{r-2}$
an orientation.  Let $[X_r] \in \RH_{r-2}(X_r) \cong \Z$ be the fundamental class.  Then
\[\Apart{L_1,\ldots,L_r} = f_{\ast}([X_r]) \in \RH_{r-2}(\Tits(V)) = \St(V).\]
Permuting the $L_i$ changes the orientation, so for $\sigma$ in the symmetric
group $\fS_r$ we have
\[\Apart{L_{\sigma(1)},\ldots,L_{\sigma(r)}} = (-1)^{|\sigma|} \Apart{L_1,\ldots,L_r}.\]
We have $\Apart{L_1,\ldots,L_r} = 0$ if the $L_i$ are linearly dependent, i.e., if they
do not span $V \otimes \Q$.
The Solomon--Tits theorem says that $\St(V)$ is spanned by apartment classes.  In
fact, Ash--Rudolph \cite{AshRudolph} proved that $\St(V)$ is spanned by {\em integral apartment classes}, i.e.,
apartment classes $\Apart{L_1,\ldots,L_r}$ such that the lines $L_i$ satisfy
$V = L_1 \oplus \cdots \oplus L_r$.

\begin{remark}
Assume that the $L_i$ are linearly independent.  Identifying $V$ with $\Z^r$ and $\St(V)$ with $\St(\GL_r)$, in 
the notation of \S \ref{section:apartments} the apartment $\Apart{L_1,\ldots,L_r}$ is the apartment
$\bbA_g$ with $g \in \GL_r(\Q)$ the matrix whose columns are $v_i \in \Z^r$ with $L_i = \Span{v_i}$.
\end{remark}

\subsection{Steinberg multiplication}
\label{section:steinbergmultiplication}

Let $V$ and $W$ be $\Z$-modules with $V \cong \Z^r$ and $W \cong \Z^s$.  There is a natural bilinear
multiplication $\St(V) \otimes \St(W) \rightarrow \St(V \oplus W)$.  For apartment classes
$\Apart{L_1,\ldots,L_r} \in \St(V)$ and $\Apart{L'_1,\ldots,L'_s} \in \St(W)$, their product is
\[\Apart{L_1,\ldots,L_r} \Cdot \Apart{L'_1,\ldots,L'_s} = \Apart{L_1,\ldots,L_r,L'_1,\ldots,L'_s}.\]
Here we are identifying $V$ and $W$ with the corresponding subspaces of $V \oplus W$.  This product
is classical.  One place where it appears explicitly is \cite{ChurchFarbPutmanConjecture}.  See
\cite{AshMillerPatzt} for recent work on it.
This product appeared earlier in our paper in a different guise.  Indeed, identify $V$ with $\Z^r$ and $W$ with $\Z^s$.  We
can then identify our product with a map
\[\St(\GL_r \times \GL_s) = \St(\GL_r) \otimes \St(\GL_s) = \St(\Z^r) \otimes \St(\Z^s) \longrightarrow \St(\Z^{r+s}) = \St(\GL_{r+s}).\]
This product map $\St(\GL_r \times \GL_s) \rightarrow \St(\GL_{r+s})$ 
is exactly the Reeder map described in \S \ref{section:reedermap}.

\subsection{Chain complex for Steinberg}
\label{section:barsteinberg}

For $i \geq -1$, recall from \S \ref{section:integralrefinement} that
\[\bS_i(\Z^n) = \bigoplus_{V_1 \oplus \cdots \oplus V_{i+2} = \Z^n} \St(V_1) \otimes \cdots \otimes \St(V_{i+2}).\]
Here and throughout the rest of the paper the direct sum is over all decompositions
$V_1 \oplus \cdots \oplus V_{i+1} = \Z^n$ with $V_j \neq 0$ for all $1 \leq j \leq i+2$.  As a special
case, we have $\bS_{-1}(\Z^n) = \St(\Z^n)$.  As we discussed in \S \ref{section:integralrefinement}, these
fit into a chain complex
\[\begin{tikzcd}[column sep=small, row sep=small]
0 \arrow{r} & \bS_{n-2}(\Z^{n}) \arrow{r}{\partial} & \cdots \arrow{r}{\partial} & \bS_0(\Z^{n}) \arrow{r}{\partial} & \bS_{-1}(\Z^{n}) \arrow[equals]{d} \arrow{r} & 0. \\
            &                             &                  &                         & \St(\Z^{n})                                  &
\end{tikzcd}\]
For a decomposition $V_1 \oplus \cdots \oplus V_{i+2} = \Z^n$ and $\theta_j \in \St(V_j)$ for $1 \leq j \leq i+2$, denote
$\theta_1 \otimes \cdots \otimes \theta_{i+2} \in \bS_i(\Z^n)$ by
$[\theta_1 | \ldots | \theta_{i+2}]$.  The boundary map then has the familiar form
\[\partial[\theta_1 | \ldots | \theta_{i+2}] = \sum_{j=1}^{i+1} (-1)^{j-1} [\theta_1 | \ldots | \theta_j \Cdot \theta_{j+1} | \ldots | \theta_{i+2}] \in \bS_{i-1}(\Z^n).\]

\subsection{Main theorem}

The main theorem we will prove in this part of the paper is:

\begin{primedtheorem}{maintheorem:doubletits}
\label{theorem:exactness}
For $n \geq 4$, the chain complex
\[\begin{tikzcd}
\bS_{2}(\Z^n) \arrow{r}{\partial} & \bS_1(\Z^n) \arrow{r}{\partial} & \bS_0(\Z^n) \arrow{r}{\partial} & \St(\Z^n) \arrow{r} & 0
\end{tikzcd}\]
is exact.  
\end{primedtheorem}

Before outlining the proof of Theorem \ref{theorem:exactness}, we explain why it implies
Theorem \ref{maintheorem:doubletits}:

\theoremstyle{plain}
\newtheorem*{maintheorem:doubletits}{Theorem \ref{maintheorem:doubletits}}
\begin{maintheorem:doubletits}
The complex $\Tits^2(\Z^n)$ is $n$-connected for $n \geq 4$.
\end{maintheorem:doubletits}
\begin{proof}[Proof, assuming Theorem \ref{theorem:exactness}]
Let $n \geq 4$.  Theorem \ref{theorem:doubletitshomology} says that
\[\RH_{i+n-1}(\Tits^2(\Z^n)) = \HH_i(\bS_{\bullet}(\Z^n)) \quad \text{for all $i$}.\]
Theorem \ref{theorem:exactness} implies that $\HH_i(\bS_{\bullet}(\Z^n)) = 0$ for $i \leq 1$, so
$\RH_i(\Tits^2(\Z^n)) = 0$ for $i \leq n$.
To complete the proof, it is enough to prove that
the fundamental group of $\Tits^2(\Z^n)$ is trivial.
Let $f\colon \bbS^1 \rightarrow \Tits^2(\Z^n)$
be a map.  By definition, $\Tits^2(\Z^{n})$ contains a copy of
$\Tits(\Z^{n})$.  It follows from the proof of \cite[Proposition 5.7]{MillerPatztWilsonRognes} that
$f\colon \bbS^1 \rightarrow \Tits^2(\Z^{n})$ can be homotoped such that its
image lies in $\Tits(\Z^n)$.  The Solomon--Tits theorem says that $\Tits(\Z^n)$ is $(n-3)$-connected.
Since $n \geq 4$, it is in particular $1$-connected, so $f$ is homotopic to a constant map, as desired.
\end{proof}

\subsection{Outline of rest of paper}

The rest of this paper is devoted to proving Theorem \ref{theorem:exactness}.  Our
proof has three steps.  In \S \ref{section:threestepmodular} we recall a three-step
resolution of $\St(\Z^r)$ that was proved by Br\"{u}ck--Miller--Patzt--Sroka--Wilson \cite{BruckMillerPatztSrokaWilson},
generalizing classical work on modular symbols and \cite{Bykovskii, ChurchPutmanCodimOne}.  Next, in \S \ref{section:doublecomplex}
we construct a double complex out of this $3$-step resolution and prove
that its homology agrees with that of $\bS_{\bullet}(\Z^n)$ in a range of degrees.
Finally, in \S \ref{section:threestepbar} we use this double complex to prove
Theorem \ref{theorem:exactness}.

\section{Three-step partial resolution of Steinberg from modular symbols}
\label{section:threestepmodular}

In this section, we describe a three-step resolution of the Steinberg representation that
is different from the one in Theorem \ref{theorem:exactness}.  Let $V$ be a free $\Z$-module
of rank $r \geq 1$.

\subsection{Frames and augmentations}
\label{section:framesaug}

A {\em frame} of $V$ is an ordered collection $(L_1,\ldots,L_r)$ of lines in $V$ such that
$V = L_1 \oplus \cdots \oplus L_r$.  Associated to this frame is an integral apartment class
$\Apart{L_1,\ldots,L_r} \in \St(V)$.  
A {\em partial frame} of $V$ is an ordered collection of lines in $V$
that can be extended to a frame.  Fix a partial frame $F = (L_1,\ldots,L_s)$ for
$V$.  For each $1 \leq j \leq s$, write $L_j = \Span{v_j}$ for some $v_j \in V$.  Let
$[s] = \{1,\ldots,s\}$.  An {\em augmentation} of $F$ is a set
$F' = \{L'_1,\ldots,L'_{s'}\}$ of lines in $V$ constructed as follows:
\begin{itemize}
\item Choose disjoint subsets $I_1,\ldots,I_m \subset [s]$ such that each $I_j$ has either $2$ or $3$ elements.
\item For each $2$-element set $I_j = \{p_1,p_2\}$, pick signs $\epsilon_1,\epsilon_2 \in \{\pm 1\}$ and
add the line $\Span{\epsilon_1 v_{p_1} +\epsilon_2 v_{p_2}}$ to $F'$.
\item For each $3$-element set $I_j = \{p_1,p_2,p_3\}$, pick signs $\epsilon_1,\epsilon_2,\epsilon_3 \in \{\pm 1\}$
and add to $F'$ either:
\begin{itemize}
\item the line $\Span{\epsilon_1 v_{p_1} +\epsilon_2 v_{p_2} + \epsilon_3 v_{p_3}}$; or
\item the two lines $\Span{\epsilon_1 v_{p_1} +\epsilon_2 v_{p_2}}$ and $\Span{\epsilon_1 v_{p_1} +\epsilon_2 v_{p_2} + \epsilon_3 v_{p_3}}$.
\end{itemize}
\end{itemize}
An {\em augmented partial frame} is an ordered collection of lines that as an unordered set is the
union of a partial frame $F$ and some augmentation $F'$ of $F$.  An {\em unordered augmented partial frame}
is the unordered set underlying an augmented partial frame. 
Here is an example:

\begin{example}
Assume that $V$ has rank $15$, and let $\{v_1,\ldots,v_{15}\}$ be a basis of $V$.  The following is then an augmented
partial frame of $V$:
\begin{align*}
&(\Span{v_1}, \Span{v_2}, \Span{v_1 - v_2}, \Span{v_3}, \Span{v_4},\Span{v_5}, \Span{-v_3+v_4-v_5}, \\
&\quad \quad \Span{v_6},\Span{v_7},\Span{v_8},\Span{-v_6+v_7},\Span{-v_6+v_7+v_8}, \Span{v_9},\Span{v_{10}},\Span{v_9+v_{10}},\Span{v_{11}}). \qedhere
\end{align*}
\end{example}

By construction, the collection of augmented partial frames of $V$ is closed under re-ordering and passing to subsets.  Also,
the span of the lines in an augmented partial frame of $V$ is a direct summand of $V$.

\subsection{Chain complex}
\label{section:chaincomplex}

Recall that $V$ is a free $\Z$-module of rank $r \geq 1$.
For $i \geq 0$, define $\bX_i(V)$ to be the free abelian group given by generators
and relations as follows.  The generators of $\bX_i(V)$ are formal symbols
$\Gen{L_1,\ldots,L_{r+i}}$, where $(L_1,\ldots,L_{r+i})$ is an $(r+i)$-element
augmented partial frame for $V$.  The relations of $\bX_i(V)$ are:
\begin{itemize}
\item For a generator $\Gen{L_1,\ldots,L_{r+i}}$ such that the $L_j$ do
not span\footnote{Since the span of the $L_j$ is a direct summand of $V$, it would be equivalent
to require that the $L_j$ do not span $V$.} $V \otimes \Q$, the relation $\Gen{L_1,\ldots,L_{r+i}} = 0$.
\item For a generator $\Gen{L_1,\ldots,L_{r+i}}$ and $\sigma \in \fS_{r+i}$, the relation
$\Gen{L_{\sigma(1)},\ldots,L_{\sigma(r+i)}} = (-1)^{|\sigma|} \Gen{L_1,\ldots,L_{r+i}}$.
\end{itemize}
There is a differential $\delta\colon \bX_i(V) \rightarrow \bX_{i-1}(V)$ defined in
the usual way:\footnote{Here we are using the fact that
the set of augmented partial frames is closed under reordering and passing to subsets, which we observed at the end of \S \ref{section:framesaug}.}
\[\delta\Gen{L_1,\ldots,L_{r+i}} = \sum_{j=1}^{r+i} (-1)^{j-1} \Gen{L_1,\ldots,\widehat{L_j},\ldots,L_{r+i}}.\]
This makes sense since as is easily checked $\delta$ takes relations to relations.
We therefore have a chain complex $(\bX_{\bullet}(V),\delta)$.  The low-degree
terms of this chain complex have the following descriptions:

\begin{example}
\label{example:x0}
The nonzero generators of $\bX_0(V)$ are exactly the
$\Gen{\Span{v_1},\ldots,\Span{v_r}}$ such that $\{v_1,\ldots,v_r\}$
is a basis for $V \cong \Z^r$.
\end{example}

\begin{example}
\label{example:x1}
If $r = 1$, then $\bX_1(V) = 0$.  If $r \geq 1$, then
the nonzero generators of $\bX_1(V)$ are exactly those
that after reordering their entries (and thus introducing a
sign) are of one of the following two forms for some
basis $\{v_1,\ldots,v_r\}$ of $V$:
\begin{itemize}
\item $\Gen{\Span{v_1},\ldots,\Span{v_r},\Span{\epsilon_1 v_1 + \epsilon_2 v_2}}$
for signs $\epsilon_1,\epsilon_2 \in \{\pm 1\}$.
\item $\Gen{\Span{v_1},\ldots,\Span{v_r},\Span{\epsilon_1 v_1 + \epsilon_2 v_2+\epsilon_3 v_3}}$
for signs $\epsilon_1,\epsilon_2,\epsilon_3 \in \{\pm 1\}$.  This second kind of generator
only appears when $r \geq 3$.\qedhere
\end{itemize}
\end{example}

\begin{example}
\label{example:x2}
If $r \leq 2$, then $\bX_2(V) = 0$.  If $r \geq 3$, then
the nonzero generators of $\bX_2(V)$ are exactly those
that after reordering their entries (and thus introducing a
sign) are of one of the following four forms for some
basis $\{v_1,\ldots,v_r\}$ of $V$:
\begin{itemize}
\item $\Gen{\Span{v_1},\ldots,\Span{v_r},\Span{\epsilon_1 v_1+\epsilon_2 v_2},\Span{\epsilon_1 v_1 + \epsilon_2 v_2+\epsilon_3 v_3}}$
for signs $\epsilon_1,\epsilon_2,\epsilon_3 \in \{\pm 1\}$.
\item $\Gen{\Span{v_1},\ldots,\Span{v_r},\Span{\epsilon_1 v_1+\epsilon_2 v_2},\Span{\epsilon_3 v_3 + \epsilon_4 v_4}}$
for signs $\epsilon_1,\ldots,\epsilon_4 \in \{\pm 1\}$.  This second kind of generator
only appears when $r \geq 4$.
\item $\Gen{\Span{v_1},\ldots,\Span{v_r},\Span{\epsilon_1 v_1+\epsilon_2 v_2+\epsilon_3 v_3},\Span{\epsilon_4 v_4 + \epsilon_5 v_5}}$
for signs $\epsilon_1,\ldots,\epsilon_5 \in \{\pm 1\}$.  This third kind of generator
only appears when $r \geq 5$.
\item $\Gen{\Span{v_1},\ldots,\Span{v_r},\Span{\epsilon_1 v_1+\epsilon_2 v_2+\epsilon_3 v_3},\Span{\epsilon_4 v_4 + \epsilon_5 v_5 + \epsilon_6 v_6}}$
for signs $\epsilon_1,\ldots,\epsilon_6 \in \{\pm 1\}$.  This fourth kind of generator
only appears when $r \geq 6$.\qedhere
\end{itemize}
\end{example}

\subsection{Resolution of Steinberg}

There is a map $\psi\colon \bX_0(V) \rightarrow \St(V)$ taking a generator
$\Gen{L_1,\ldots,L_r}$ to the corresponding apartment class $\Apart{L_1,\ldots,L_r}$.  This
map makes sense since the two relations in $\bX_0(V)$ go to relations between apartment
classes in $\St(V)$; cf.\ the relations in \S \ref{section:apartmentclasses}.
Generalizing classical work of Manin about modular symbols, Bykovskii \cite{Bykovskii}
proved\footnote{Actually, Bykovskii had fewer relations than we do, but adding more
relations is no problem for a presentation.} that the image of the differential $\delta\colon \bX_1(V) \rightarrow \bX_0(V)$ is
contained in $\ker(\psi)$ and that the resulting chain complex
\begin{equation}
\label{eqn:bykovskii}
\begin{tikzcd}
\bX_1(V) \arrow{r}{\delta} & \bX_0(V) \arrow{r}{\psi} & \St(V) \arrow{r} & 0
\end{tikzcd}
\end{equation}
is exact.  In other words, \eqref{eqn:bykovskii} is a presentation of $\St(V)$.  See
\cite{SteinbergPolylogarithms, ChurchPutmanCodimOne} for alternate proofs.  This was generalized
by Br\"{u}ck--Miller--Patzt--Sroka--Wilson \cite{BruckMillerPatztSrokaWilson}.  The
following is a small variant on their result:

\begin{theorem}
\label{theorem:twostepmodular}
For a free $\Z$-module $V$ of rank $r \geq 1$, we have an exact sequence
\[\begin{tikzcd}
\bX_2(V) \arrow{r}{\delta} & \bX_1(V) \arrow{r}{\delta} & \bX_0(V) \arrow{r}{\psi} & \St(V) \arrow{r} & 0.
\end{tikzcd}\]
\end{theorem}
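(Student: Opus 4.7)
The plan is to reduce Theorem \ref{theorem:twostepmodular} to the main result of Brück--Miller--Patzt--Sroka--Wilson \cite{BruckMillerPatztSrokaWilson}, which the authors flag as the source of the statement. First, the chain-complex identities $\psi \circ \delta = 0$ and $\delta \circ \delta = 0$ should be verified directly: the first uses the standard fact that for an augmented partial frame $(L_1,\ldots,L_{r+1})$ spanning $V\otimes\Q$, the alternating sum $\sum_j (-1)^{j-1}\Apart{L_1,\ldots,\widehat{L_j},\ldots,L_{r+1}}$ vanishes in $\St(V)$, which is the content of the ``three-term relation'' that forms the basis of Bykovskii's presentation; the second is the standard simplicial boundary identity. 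Exactness at $\St(V)$ and at $\bX_0(V)$ is then already given by \eqref{eqn:bykovskii}, i.e.\ by Bykovskii's presentation that the paper has just recalled. So the only new content is exactness at $\bX_1(V)$.

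For exactness at $\bX_1(V)$, my approach is to set $Z_1 = \ker(\delta\colon \bX_1(V)\to\bX_0(V))$ and show that $\delta\colon \bX_2(V)\to Z_1$ is surjective. Following BMPSW, I would interpret $\bX_\bullet(V)$ as (part of) the simplicial chain complex of a complex of augmented partial frames of $V$, where $i$-simplices are unordered augmented partial frames having $r+i$ lines; the quotient by the ``not spanning $V\otimes\Q$'' relation corresponds to restricting to those frames whose lines span $V\otimes\Q$, and the orientation relation accounts for the ordering. BMPSW's main technical work provides a high-connectivity statement for the analogous complex that translates directly into a three-term exact resolution of $\St(V)$. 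Since our $\bX_2(V)$ is defined from exactly the expected augmented partial frames with $r+2$ lines (the four families listed in Example \ref{example:x2} correspond to the two ways to add augmentations: one new two- or three-term combination on top of an existing one, or two disjoint augmentations, in all four combinations of $\{2,3\}\times\{2,3\}$), each generator of BMPSW's analogue of $\bX_2(V)$ maps to one of ours under an explicit dictionary.

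The bulk of the work is thus the comparison step: given the statement in BMPSW, match their generators and relations for the third term against ours and check that every relation in $Z_1$ not already captured by Bykovskii is the $\delta$-image of one of our four types of generators. For a generator of type (1) in Example \ref{example:x2}, $\delta$ produces a relation between augmentations that share their support, reflecting the two ``nested'' three-term relations; types (2)--(4) produce the expected disjoint-support compatibility relations between augmentations on disjoint blocks of a basis. After this matching, exactness at $\bX_1(V)$ is immediate from BMPSW's theorem.

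The main obstacle will be the bookkeeping of sign conventions and the handling of the ``nested'' case (type (1)), where the two augmenting lines share indices from the underlying frame; this is the combinatorially most delicate case, since it encodes both the Bykovskii three-term relation and the further relation between three-term and two-term combinations. I expect the rest (types (2)--(4), corresponding to disjoint augmentations) to be essentially formal once one is careful that permuting lines introduces the correct signs via the antisymmetry relation in \S \ref{section:chaincomplex}.
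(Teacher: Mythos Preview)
Your approach is essentially the same as the paper's: reduce to the result of \cite{BruckMillerPatztSrokaWilson}. However, you are making the final step much harder than it needs to be. You correctly observe that every generator of BMPSW's third term maps to a generator of our $\bX_2(V)$ (and you note in passing that $\bX_0(V)$ and $\bX_1(V)$ agree with theirs on the nose). That observation alone finishes the proof: since BMPSW's smaller $\bX_2$ already surjects onto $\ker(\delta\colon \bX_1(V)\to\bX_0(V))$, and ours contains theirs, our $\bX_2(V)$ surjects as well. There is no need to ``check that every relation in $Z_1$ not already captured by Bykovskii is the $\delta$-image of one of our four types of generators''---that is exactly what BMPSW proved for their generators, and you are only adding more. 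The paper's proof is accordingly one sentence: the $\bX_2(V)$ here is larger than BMPSW's, and since nothing is claimed about the kernel of $\delta\colon \bX_2(V)\to\bX_1(V)$, enlarging the source can only help.

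Your planned verification of $\psi\circ\delta=0$ and $\delta\circ\delta=0$, and the careful sign bookkeeping for the four families of generators, are all unnecessary for the same reason: these facts are inherited from BMPSW for the common generators, and the extra generators in $\bX_2(V)$ play no role in exactness at $\bX_1(V)$ beyond being additional witnesses.
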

\begin{proof}
In light of Examples \eqref{example:x0} -- \eqref{example:x2}, the indicated resolution
has the same generators $\bX_0(V)$ and relations $\bX_1(V)$ as the resolution from \cite{BruckMillerPatztSrokaWilson},
but $\bX_2(V)$ is larger than the corresponding term from \cite{BruckMillerPatztSrokaWilson}.  However, since
we are not claiming anything about the kernel of $\delta\colon \bX_2(V) \rightarrow \bX_1(V)$, this does
not affect the result we are trying to prove.
\end{proof}

\begin{remark}
The elements of $\bX_2(V)$ that do not appear in \cite{BruckMillerPatztSrokaWilson} are there
to allow the multiplication we define in \S \ref{section:complexmultiplication} below.
\end{remark}

\section{Double complex of resolutions}
\label{section:doublecomplex}

We now assemble the $\bX_{\bullet}(V)$ into a double complex by imitating the construction
of the bar resolution $\bS_{\bullet}(\Z^n)$ of $\St(\Z^n)$ discussed in \S \ref{section:barsteinberg}.

\subsection{Complex multiplication}
\label{section:complexmultiplication}

Let $V$ and $W$ be $\Z$-modules with $V \cong \Z^r$ and $W \cong \Z^s$.  Just 
like for the Steinberg representation (cf.\ \S \ref{section:steinbergmultiplication}), there is a natural bilinear
multiplication $\bX_i(V) \otimes \bX_j(W) \rightarrow \bX_{i+j}(V \oplus W)$ given as follows.
Consider generators $\Gen{L_1,\ldots,L_{r+i}}$ and $\Gen{L'_1,\ldots,L'_{s+j}}$ of
$\bX_i(V)$ and $\bX_j(W)$.  We then define
\[\Gen{L_1,\ldots,L_{r+i}} \Cdot \Gen{L'_1,\ldots,L'_{s+j}} = \Gen{L_1,\ldots,L_{r+i},L'_1,\ldots,L'_{s_j}} \in \bX_{i+j}(V \oplus W).\]
Here we are identifying $V$ and $W$ with the corresponding subspaces of $V \oplus W$.  We remark that
this product is why we defined augmented partial frames like we did instead of just including
the elements needed by \cite{BruckMillerPatztSrokaWilson}.  This product is compatible with the differentials
$\delta$ on our chain complexes in the sense that 
\begin{equation}
\label{eqn:deltaderivation}
\delta(\kappa \Cdot \kappa') = \delta(\kappa) \Cdot \kappa' + (-1)^{r+i} \kappa \Cdot \delta(\kappa') \quad \text{for $\kappa \in \bX_i(V)$ and $\kappa' \in \bX_j(W)$}.
\end{equation}

\subsection{Double complex}
\label{section:doublecomplexdef}

For $p \geq -1$ and $q \geq 0$, define
\[\bX_{pq}(\Z^n) = \bigoplus_{V_1 \oplus \cdots \oplus V_{p+2} = \Z^n} \left(\bigoplus_{i_1 + \cdots + i_{p+2} = q} \bX_{i_1}(V_1) \otimes \cdots \otimes \bX_{i_{p+2}}(V_{p+2})\right).\]
Here the direct sums are over decompositions $V_1 \oplus \cdots \oplus V_{p+2} = \Z^n$ and $i_1 + \cdots + i_{p+2} = q$ such that
$V_j \neq 0$ and $i_j \geq 0$ for all $1 \leq j \leq p+2$.  Fixing such decompositions $V_1 \oplus \cdots \oplus V_{p+2} = \Z^n$ and $i_1 + \cdots + i_{p+2} = q$, consider elements
$\kappa_j \in \bX_{i_j}(V_j)$ for $1 \leq j \leq p+2$.  We will denote the element of
$\bX_{pq}(\Z^n)$ corresponding to $\kappa_1 \otimes \cdots \otimes \kappa_{p+2}$ by $\Gen{\kappa_1| \ldots | \kappa_{p+2}}$.
When we want to write specific elements of $\bX_{pq}(\Z^n)$, we will use the following convention:

\begin{convention}
\label{convention:omitbrackets}
For generators $\kappa_j$ of $\bX_{i_j}(V_j)$, we will omit the inner brackets
when writing $\Gen{\kappa_1| \ldots | \kappa_{p+2}}$.  For instance, for $p=1$ if
$\kappa_1 = \Gen{L_1,L_2}$ and $\kappa_2 = \Gen{L_3,L_4,L_5}$ and $\kappa_3 = \Gen{L_6}$, then
we will write $\Gen{L_1,L_2|L_3,L_4,L_5|L_6}$ instead of
$\Gen{\Gen{L_1,L_2}|\Gen{L_3,L_4,L_5}|\Gen{L_6}}$.
\end{convention}

Let $r_j = \rank(V_j)$.  Define two differentials $\partial\colon \bX_{pq}(\Z^n) \rightarrow \bX_{p-1,q}(\Z^n)$ and 
$\delta\colon \bX_{pq}(\Z^n) \rightarrow \bX_{p,q-1}(\Z^n)$
via the formulas
\begin{align*}
\partial \Gen{\kappa_1| \ldots | \kappa_{p+2}} &= \sum_{j=1}^{p+1} (-1)^{j-1} \Gen{\kappa_1| \ldots | \kappa_j \Cdot \kappa_{j+1} | \ldots | \kappa_{p+2}} \\
\delta   \Gen{\kappa_1| \ldots | \kappa_{p+2}} &= \sum_{j=1}^{p+2} (-1)^{r_1+i_1+\cdots+r_{j-1}+i_{j-1}} \Gen{\kappa_1 | \ldots | \delta(\kappa_j) | \ldots | \kappa_{p+2}}.
\end{align*}
Using \eqref{eqn:deltaderivation}, these differentials make $\bX_{\bullet,\bullet}(\Z^n)$ into a double complex.

\subsection{Double complex and Steinberg}

We now return to the chain complex
\[\begin{tikzcd}[column sep=small, row sep=small]
0 \arrow{r} & \bS_{n-2}(\Z^{n}) \arrow{r}{\partial} & \cdots \arrow{r}{\partial} & \bS_0(\Z^{n}) \arrow{r}{\partial} & \bS_{-1}(\Z^{n}) \arrow[equals]{d} \arrow{r} & 0. \\
            &                             &                  &                         & \St(\Z^{n})                                  &
\end{tikzcd}\]
Our main result in this section is:

\begin{lemma}
\label{lemma:doublecomplexsteinberg}
For all $n \geq 1$, we have $\HH_i(\bX_{\bullet,\bullet}(\Z^n)) \cong \HH_i(\bS_{\bullet}(\Z^n))$ for $i \leq 0$.  Moreover,
$\HH_1(\bX_{\bullet,\bullet}(\Z^n))$ surjects onto $\HH_1(\bS_{\bullet}(\Z^n))$.
\end{lemma}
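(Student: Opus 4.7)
The plan is to use the column-filtration spectral sequence of the double complex $\bX_{\bullet,\bullet}(\Z^n)$. Filtering by $p$, the $E^1$ page is obtained by first taking homology in the $q$-direction, and converges to $\HH_{p+q}$ of the total complex of $\bX_{\bullet,\bullet}(\Z^n)$. For each fixed $p \geq -1$, the column $\bX_{p,\bullet}(\Z^n)$ decomposes as a direct sum over ordered decompositions $V_1 \oplus \cdots \oplus V_{p+2} = \Z^n$ of the tensor product complexes $\bX_\bullet(V_1) \otimes \cdots \otimes \bX_\bullet(V_{p+2})$. Theorem \ref{theorem:twostepmodular} says $\HH_0(\bX_\bullet(V)) = \St(V)$ and $\HH_1(\bX_\bullet(V)) = 0$. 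Since $\St(V)$ is a free abelian group by Solomon--Tits and every $\bX_j(V)$ is free by construction, the iterated K\"unneth formula contributes no Tor terms in total degrees $\leq 1$ of these tensor products (the only candidate is $\Tor(\St,\St) = 0$).

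Applying K\"unneth, I would conclude
\[
E^1_{p,0} = \bigoplus_{V_1 \oplus \cdots \oplus V_{p+2} = \Z^n} \St(V_1) \otimes \cdots \otimes \St(V_{p+2}) = \bS_p(\Z^n)
\quad \text{and} \quad E^1_{p,1} = 0
\]
for all $p \geq -1$. A direct comparison on generators then shows that, under this identification, the $d^1$-differential on the row $q=0$ agrees with the bar-type differential $\partial$ of $\bS_\bullet(\Z^n)$: in both cases adjacent factors are merged via a product, and the complex multiplication of \S \ref{section:complexmultiplication} is set up precisely so that it descends on $\HH_0$ to the Steinberg multiplication of \S \ref{section:steinbergmultiplication} via the map $\psi$. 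Hence $E^2_{p,0} = \HH_p(\bS_\bullet(\Z^n))$ and $E^2_{p,1} = 0$.

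To finish, I would read off convergence in low total degrees. For total degree $i \in \{-1,0\}$, the index constraints $p \geq -1$ and $q \geq 0$, combined with $E^2_{p,1} = 0$, force every $E^\infty_{p,q}$ with $p+q = i$ to vanish except $E^\infty_{i,0}$. No differentials touch $E^r_{i,0}$ for $r \geq 2$, since incoming ones originate in the $q < 0$ region and outgoing ones land in the $p < -1$ region. Thus $\HH_i(\bX_{\bullet,\bullet}(\Z^n)) \cong \HH_i(\bS_\bullet(\Z^n))$ for $i \leq 0$. The same index argument yields $E^\infty_{1,0} = E^2_{1,0} = \HH_1(\bS_\bullet(\Z^n))$; as this sits at the top of the filtration on $\HH_1(\bX_{\bullet,\bullet}(\Z^n))$, it appears as a quotient of $\HH_1(\bX_{\bullet,\bullet}(\Z^n))$, giving the stated surjection.

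The main obstacle will be the compatibility check that the $d^1$-differential on the row $q = 0$ matches $\partial$ on $\bS_\bullet(\Z^n)$ after the K\"unneth identification. This amounts to tracking the sign conventions of the horizontal differential on $\bX_{\bullet,\bullet}(\Z^n)$ through K\"unneth and verifying a naturality statement: the complex multiplication of \S \ref{section:complexmultiplication} descends on $\HH_0$ to the Steinberg multiplication of \S \ref{section:steinbergmultiplication} under $\psi$, so that merging two adjacent tensor factors in $\bX_{\bullet,\bullet}(\Z^n)$ induces, on $E^1$, the Steinberg product of the corresponding factors in $\bS_\bullet(\Z^n)$.
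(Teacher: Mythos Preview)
Your proposal is correct and follows essentially the same approach as the paper: both use the spectral sequence of the double complex obtained by taking $\delta$-homology first, decompose each column over splittings $V_1\oplus\cdots\oplus V_{p+2}=\Z^n$, apply K\"unneth together with Theorem~\ref{theorem:twostepmodular} to get $E^1_{p,0}=\bS_p(\Z^n)$ and $E^1_{p,1}=0$, and then read off the conclusion from convergence. The sign issue you flag as the ``main obstacle'' is exactly what the paper handles by multiplying the differential on $\bX_\bullet(V_j)$ by $(-1)^{r_1+\cdots+r_{j-1}}$ before forming the tensor product, so that \eqref{eqn:tensorproductchain} holds on the nose.
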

\begin{proof}
Consider the spectral sequence computing $\HH_i(\bX_{\bullet,\bullet}(\Z^n))$ where we take
homology with respect to the differential $\delta\colon \bX_{pq}(\Z^n) \rightarrow \bX_{p,q-1}(\Z^n)$ first,
so
\[\ssE^1_{pq} = \HH_q(\bX_{p,\bullet}(\Z^n)) \Rightarrow \HH_{p+q}(\bX_{\bullet,\bullet}(\Z^n)).\]
To prove the lemma, it is enough to prove that the bottom rows of the $\ssE^1$-page of this spectral sequence
are as follows: 
\begin{center}
\begin{tblr}{vline{2}={1-Y}{}}
$q=2$ & $\ast$           & $\leftarrow$                      & $\ast$        & $\leftarrow$                      & $\ast$        & $\leftarrow$                      & $\ast$        & $\leftarrow$                      & $\cdots$ \\
$q=1$ & $0$              & $\leftarrow$                      & $\ast$        & $\leftarrow$                      & $\ast$        & $\leftarrow$                      & $\ast$        & $\leftarrow$                      & $\cdots$ \\
$q=0$ & $\bS_{-1}(\Z^n)$ & $\stackrel{\partial}{\leftarrow}$ & $\bS_0(\Z^n)$ & $\stackrel{\partial}{\leftarrow}$ & $\bS_1(\Z^n)$ & $\stackrel{\partial}{\leftarrow}$ & $\bS_2(\Z^n)$ & $\stackrel{\partial}{\leftarrow}$ & $\cdots$ \\
\cline{2-10}
      & $p=-1$           &                                   & $p=0$         &                                   & $p=1$         &                                   & $p=2$         &                                   &
\end{tblr}
\end{center}
The $\ast$ entries are unspecified.  To do this, it is enough to prove the following claim.  This claim will show that the entries have
the indicated form (and in fact all the entries on the $q=1$ line vanish).  That the differentials are correct will be immediate from the definitions.

\begin{unnumberedclaim}
Fix $p \geq -1$.  For $q \geq 0$, we have
$\HH_q(\bX_{p,\bullet}(\Z^n)) \cong
\begin{cases}
\bS_p(\Z^n) & \text{if $q=0$},\\
0           & \text{if $q=1$}.
\end{cases}$
\end{unnumberedclaim}

For a decomposition $V_1 \oplus \cdots \oplus V_{p+2} = \Z^n$, let
\[\bX_{pq}(V_1,\ldots,V_{p+2}) = \bigoplus_{i_1 + \cdots + i_{p+2} = q} \bX_{i_1}(V_1) \otimes \cdots \otimes \bX_{i_{p+2}}(V_{p+2}).\]
This forms a subcomplex of the chain complex $\bX_{p,\bullet}(\Z^n)$, and
\[\bX_{p,\bullet}(\Z^n) = \bigoplus_{V_1 \oplus \cdots \oplus V_{p+2} = \Z^n} \bX_{p,\bullet}(V_1,\ldots,V_{p+2}).\]
Since
\[\bS_p(\Z^n) = \bigoplus_{V_1 \oplus \cdots \oplus V_{p+2} = \Z^n} \St(V_1) \otimes \cdots \otimes \St(V_{p+2}),\]
it is enough to prove that for a fixed decomposition $V_1 \oplus \cdots \oplus V_{p+2} = \Z^n$, we have
\begin{equation}
\label{eqn:doublecomplextoprove}
\HH_q(\bX_{p,\bullet}(V_1,\ldots,V_{p+2})) \cong
\begin{cases}
\St(V_1) \otimes \cdots \otimes \St(V_{p+2}) & \text{if $q=0$},\\
0                                            & \text{if $q=1$}.
\end{cases}
\end{equation}
For each $1 \leq j \leq p+2$, we have a chain complex $\bX_{\bullet}(V_j)$, and Theorem \ref{theorem:twostepmodular} says that
\begin{equation}
\label{eqn:baseoftensorproduct}
\HH_q(\bX_{\bullet}(V_j)) \cong \begin{cases}
\St(V_j) & \text{if $q=0$},\\
0        & \text{if $q=1$}.
\end{cases}
\end{equation}
It is almost the case that
\begin{equation}
\label{eqn:tensorproductchain}
\bX_{p,\bullet}(V_1,\ldots,V_{p+2}) = \bX_{\bullet}(V_1) \otimes \cdots \otimes \bX_{\bullet}(V_{p+2}).
\end{equation}
The only issue is that while this tensor product has the same terms as $\bX_{p,\bullet}(V_1,\ldots,V_{p+2})$, its differentials
are not right.  To fix this, let $r_j = \rank(V_j)$ and for each $2 \leq j \leq p+2$ change
$\bX_{\bullet}(V_j)$ by multiplying its differential by $(-1)^{r_1 + \cdots + r_{j-1}}$.  This does not affect
\eqref{eqn:baseoftensorproduct}, and now \eqref{eqn:tensorproductchain} holds.  Since each $\bX_q(V_j)$ is a free $\Z$-module,
the desired result \eqref{eqn:doublecomplextoprove} now follows from \eqref{eqn:baseoftensorproduct} and \eqref{eqn:tensorproductchain}
along with the K\"{u}nneth formula.
\end{proof}

\section{Three-step bar resolution of Steinberg}
\label{section:threestepbar}

In this final section we first establish some notation and then
prove Theorem \ref{theorem:exactness}.

\subsection{Restricted bar resolution of a set}

Let $S$ be a nonempty finite set.  Fix a set $\fR$ of
disjoint nonempty subsets of $S$.
An {\em ordered subset} of $S$ is a subset of $S$ equipped with a linear order.
We will talk about ordered subsets of $S$ using ordinary
set notation and terminology; for instance, if $A$ is an
ordered subset of $S$ and $R \in \fR$ then we will say that $R \subset A$
if each $r \in R$ appears in $A$.  For disjoint ordered
subsets $A$ and $A'$ of $S$, write $A \Cdot A'$ for the
ordered subset of $S$ obtained by concatenating $A$ and $A'$.

For $i \geq -1$, define 
$\bZ_i(S,\fR)$ to be the free abelian group given by generators
and relations as follows.  The generators of $\bZ_i(S,\fR)$ are formal symbols
$[A_1|\ldots|A_{i+2}]$ where:
\begin{itemize}
\item the $A_j$ are disjoint nonempty ordered subsets of $S$; and
\item each $s \in S$ appears exactly once in $A_1 \Cdot \ldots \Cdot A_{i+2}$; and
\item for each $R \in \fR$ there exists some $1 \leq j \leq i+2$ such that $R \subset A_j$.
\end{itemize}
The relations of $\bZ_i(S,\fR)$ are:
\begin{itemize}
\item Let $[A_1|\ldots|A_{i+2}]$ be a generator of $\bZ_i(S,\fR)$.  For each
$1 \leq j \leq i+2$, let $A'_j$ be an ordered set obtained by permuting
the ordered set $A_j$ and let $\lambda_j \in \{\pm 1\}$ be the sign
of this permutation.  Then
\[[A'_1|\ldots|A'_{i+2}] = (-1)^{\lambda_1 + \cdots + \lambda_{i+2}} [A_1|\ldots|A_{i+2}].\]
\end{itemize}
There is a differential $\partial\colon \bZ_i(S,\fR) \rightarrow \bZ_{i-1}(S,\fR)$ defined in
the usual way:
\[\partial[A_1|\ldots|A_{i+2}] = \sum_{j=1}^{i+1} (-1)^{j-1} [A_1|\ldots|A_j \Cdot A_{j+1}| \ldots |A_{i+2}].\]
This makes $\bZ_{\bullet}(S,\fR)$ into a chain complex.  We have:

\begin{lemma}
\label{lemma:barset}
Let $S$ be a nonempty finite set and $\fR$ be a set of
disjoint nonempty subsets of $S$.  Set 
\[d = |S \setminus \bigsqcup_{R \in \fR} R| + |\fR|.\]
Then
\[\HH_i(\bZ_{\bullet}(S,\fR)) = \begin{cases}
\Z & \text{if $i=d-2$},\\
0  & \text{otherwise}.
\end{cases}\]
\end{lemma}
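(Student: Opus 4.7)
The plan is to identify $\bZ_\bullet(S,\fR)$ with the augmented simplicial chain complex of a triangulation of the sphere $S^{d-2}$, thereby reducing the computation to the standard fact that the reduced homology of $S^{d-2}$ is $\Z$ in degree $d-2$ and vanishes otherwise.

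First, set $T := (S \setminus \bigsqcup_{R \in \fR} R) \sqcup \fR$, so $|T| = d$; call the elements of $T$ the \emph{atoms}. A subset $B \subset S$ is \emph{$\fR$-respecting} when $B$ is a union of atoms, equivalently when for every $R \in \fR$ we have $R \subset B$ or $R \cap B = \emptyset$. Let $\Sigma$ be the simplicial complex whose $i$-simplices are strictly increasing flags $B_1 \subsetneq \cdots \subsetneq B_{i+1}$ of $\fR$-respecting proper nonempty subsets of $S$. Via the bijection between $\fR$-respecting subsets of $S$ and subsets of $T$, $\Sigma$ is the order complex of proper nonempty subsets of $T$, which is the barycentric subdivision of $\partial \Delta^{d-1}$ and hence is homeomorphic to $S^{d-2}$.

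Next, I would set up a chain isomorphism $\bZ_\bullet(S, \fR) \cong \RC_\bullet(\Sigma)$. On generators, the bijection sends an ordered partition $(A_1, \ldots, A_{i+2})$ satisfying the $\fR$-condition to the oriented $i$-simplex whose vertex flag $(B_1 \subsetneq \cdots \subsetneq B_{i+1})$ with $B_k = A_1 \cup \cdots \cup A_k$ is oriented by listing the vertices in this order. Fixing a total order on $T$ induces a canonical ordering on every $\fR$-respecting subset of $S$, and the within-block permutation sign relations in $\bZ_\bullet$ then let us pick a distinguished representative for each generator, matching a basis of $\RC_\bullet(\Sigma)$. Under this identification, the differential $\partial[A_1|\cdots|A_{i+2}] = \sum_k (-1)^{k-1}[A_1|\cdots|A_k \Cdot A_{k+1}|\cdots|A_{i+2}]$ corresponds to the simplicial boundary that omits the $k$-th vertex $B_k$ from the flag.

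Combining these identifications yields
\[
\HH_i(\bZ_\bullet(S, \fR)) \cong \RH_i(\Sigma) \cong \RH_i(S^{d-2}) =
\begin{cases} \Z & \text{if } i = d-2, \\ 0 & \text{otherwise,} \end{cases}
\]
completing the proof. The main obstacle is the sign bookkeeping in the chain isomorphism. The concatenation $A_k \Cdot A_{k+1}$ appearing in the $\bZ_\bullet$-differential is generally not in the canonical ordering of $A_k \cup A_{k+1}$, so each boundary term picks up a permutation sign $\epsilon_k$. This sign must be reconciled with the cellular boundary of $\Sigma$, either by absorbing it into a sign-twist of the basis of $\RC_\bullet(\Sigma)$ or, more cleanly, by orienting each simplex of $\Sigma$ directly from the ordered-partition data rather than from a globally fixed total order, at which point the match becomes tautological.
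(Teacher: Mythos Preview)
Your proposal is correct and takes essentially the same approach as the paper: both identify $\bZ_\bullet(S,\fR)$ with the reduced chain complex of the order complex of nonempty proper subsets of the set of atoms (your $T$, the paper's $\fR$ after adding singletons), via the flag $B_k = A_1 \cup \cdots \cup A_k$. The paper resolves the sign bookkeeping exactly by your first suggested fix: it fixes a total order on $S$, lets $\epsilon$ be the sign of the permutation $A_1 \Cdot \ldots \Cdot A_{i+2}$ relative to that order, and sends $[A_1|\ldots|A_{i+2}] \mapsto \epsilon\sigma$; since the concatenation is unchanged when adjacent blocks are merged, $\epsilon$ is constant across all terms of $\partial$ and the differentials match on the nose.
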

\begin{proof}
To simplify our notation, for each $s \in S$ with $s \notin \sqcup_{R \in \fR} R$
add the one-element set $\{s\}$ to $\fR$.  This does not change
$\bZ_{\bullet}(S,\fR)$ or $d$, and ensures that $S = \sqcup_{R \in \fR} R$
and $d = |\fR|$.

Let $\bW(\fR)$ be the poset of nonempty proper subsets of $\fR$.
Viewed as a semisimplicial set, $\bW(\fR)$ is isomorphic to the barycentric
subdivision of the boundary of the simplex with $d=|\fR|$ vertices.  This
implies that its geometric realization $|\bW(\fR)|$ is homeomorphic to the
sphere $\bbS^{d-2}$.  Letting $\RC_{\bullet}(\bW(\fR))$ be the reduced chain
complex of the semisimplicial set $\bW(\fR)$, it follows that
\[\HH_i(\RC_{\bullet}(\bW(\fR))) = \begin{cases}
\Z & \text{if $i=d-2$},\\
0  & \text{otherwise}.
\end{cases}\]
It is therefore enough to prove that $\bZ_{\bullet}(S,\fR)$ is isomorphic
to $\RC_{\bullet}(\bW(\fR))$.

Fix a total ordering on $S$.  The desired isomorphism
$\phi\colon \bZ_{\bullet}(S,\fR) \rightarrow \RC_{\bullet}(\bW(\fR))$ is as follows.
Consider a generator $[A_1|\ldots|A_{i+2}]$ of $\bZ_i(S,\fR)$.  The ordered
set $A_1 \Cdot \ldots \Cdot A_{i+2}$ is a permutation of the ordered set $S$.
Let $\epsilon \in \{\pm 1\}$ be the sign of this permutation.  Also, for
$1 \leq j \leq i+2$ set $B_j = \Set{$R \in \fR$}{$R \subset A_j$}$.  Let
$\sigma$ be the following $i$-simplex of $\bW(\fR)$:
\[B_1 \subsetneq (B_1 \sqcup B_2) \subsetneq \ldots \subsetneq (B_1 \sqcup \ldots \sqcup B_{i+1}).\]
Note that if $i=-1$ then $\sigma$ is the empty simplex, which we view as the unique $(-1)$-simplex used
to form the reduced chain complex $\RC_{\bullet}(\bW(\fR))$.
We then define $\phi([A_1|\ldots|A_{i+2}]) = \epsilon \sigma \in \RC_{i}(\bW(\fR))$.  It is clear
that this is a well-defined isomorphism of chain complexes.
\end{proof}

\subsection{Proof of main theorem}

We close this paper by proving Theorem \ref{theorem:exactness}:

\theoremstyle{plain}
\newtheorem*{theorem:exactness}{Theorem \ref{theorem:exactness}}
\begin{theorem:exactness}
For $n \geq 4$, the chain complex
\[\begin{tikzcd}
\bS_{2}(\Z^n) \arrow{r}{\partial} & \bS_1(\Z^n) \arrow{r}{\partial} & \bS_0(\Z^n) \arrow{r}{\partial} & \St(\Z^n) \arrow{r} & 0
\end{tikzcd}\]
is exact.  
\end{theorem:exactness}
\begin{proof}
This theorem asserts that $\HH_i(\bS_{\bullet}(\Z^n)) = 0$ for $i \leq 1$. 
By Lemma \ref{lemma:doublecomplexsteinberg}, it is enough to prove
that $\HH_i(\bX_{\bullet,\bullet}(\Z^n)) = 0$ for $i \leq 1$.
Consider the spectral sequence computing $\HH_i(\bX_{\bullet,\bullet}(\Z^n))$ where we take
homology with respect to the differential $\partial\colon \bX_{pq}(\Z^n) \rightarrow \bX_{p-1,q}(\Z^n)$ first,
so
\[\ssE^1_{qp} = \HH_p(\bX_{\bullet,q}(\Z^n)) \Rightarrow \HH_{p+q}(\bX_{\bullet,\bullet}(\Z^n)).\]
We will prove the bottom rows of the $\ssE^1$-page of this spectral sequence are as follows:
\begin{center}
\begin{tblr}{vline{2}={1-Y}{}} 
$p=1$  & $0$  & $\leftarrow$ & $\ast$         & $\leftarrow$                    & $\ast$        & $\leftarrow$ & $\cdots$ \\ 
$p=0$  & $0$  & $\leftarrow$ & $\ssE^1_{10}$  & $\stackrel{\delta}{\leftarrow}$ & $\ssE^1_{20}$ & $\leftarrow$ & $\cdots$ \\                
$p=-1$ & $0$  & $\leftarrow$ & $0$            & $\leftarrow$                    & $0$           & $\leftarrow$ & $\cdots$ \\   
\cline{2-10} 
      & $q=0$ &              & $q=1$          &                                 & $q=2$         &              &         
\end{tblr}
\end{center}
Here the $\ast$ entries are unspecified and the entry $\ssE^1_{10}$ is $0$ for $n \geq 5$.  This
will prove the theorem for $n \geq 5$.  For $n=4$, we will also prove that the indicated differential
$\delta\colon \ssE^1_{20} \rightarrow \ssE^1_{10}$ is surjective, so $\ssE^2_{10} = 0$.  This will prove
the theorem for $n=4$.

For an unordered\footnote{Since $S$ is not ordered, we write it like a set rather than an ordered set.} augmented partial frame $S = \{L_1,\ldots,L_{n+q}\}$ of $\Z^n$ that
spans $\Z^n$, denote by $\bX_{pq}[S]$ the subspace of $\bX_{pq}(\Z^n)$ spanned
by nonzero generators $\kappa$ such that you get $\pm \Gen{L_1,\ldots,L_{n+q}}$
if you delete the $(p+1)$ occurrences of the bar symbol $|$ from $\kappa$.
Note that in such $\kappa$ the $L_j$ might appear in a different
order than in $\Gen{L_1,\ldots,L_{n+q}}$.  These form a chain subcomplex
$\bX_{\bullet,q}[S]$ of $\bX_{\bullet,q}(\Z^n)$, and
$\bX_{\bullet,q}(\Z^n)$ is the direct sum of
its subcomplexes $\bX_{\bullet,q}[S]$.  We must prove
the following four claims:

\begin{claim}{0}
\label{claim:exactness.0}
Let $S = \{L_1,\ldots,L_n\}$ be an unordered augmented partial frame of $\Z^n$
that spans $\Z^n$.  Then 
$\HH_i(\bX_{\bullet,0}[S]) = 0$ for $i \leq 1$ and $n \geq 4$.
\end{claim}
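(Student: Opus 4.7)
The plan is to identify the chain complex $\bX_{\bullet,0}[S]$ with one of the ``restricted bar complexes'' $\bZ_{\bullet}(S,\fR)$ introduced above, and then simply invoke Lemma \ref{lemma:barset}.

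First, I would unpack the definitions in the $q=0$ case. A generator of $\bX_{p,0}(\Z^n)$ has the form $\Gen{\kappa_1|\ldots|\kappa_{p+2}}$, where $\kappa_j\in\bX_0(V_j)$ for some decomposition $V_1\oplus\cdots\oplus V_{p+2}=\Z^n$. By Example \ref{example:x0}, the nonzero generators of $\bX_0(V_j)$ are the classes $\Gen{\Span{v_1},\ldots,\Span{v_{r_j}}}$ for a basis $\{v_1,\ldots,v_{r_j}\}$ of $V_j$, modulo the sign of any permutation. Restricting to the summand $\bX_{p,0}[S]$ corresponding to the unordered frame $S=\{L_1,\ldots,L_n\}$ means that the lines appearing inside all the $\kappa_j$ are exactly the $L_i$, each used once; in particular each $\kappa_j$ is, up to sign, the class of an ordered subset of $S$ spanning a direct summand of $\Z^n$. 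Since $S$ is itself a basis of $\Z^n$, every nonempty subset is automatically a partial basis, so no span condition cuts things down further.

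Next I would write down the isomorphism $\varphi\colon \bX_{p,0}[S]\xrightarrow{\;\cong\;}\bZ_p(S,\emptyset)$ sending $\Gen{\kappa_1|\ldots|\kappa_{p+2}}$ to $[A_1|\ldots|A_{p+2}]$, where $A_j$ is the ordered subset of $S$ whose listing is the tuple of lines defining $\kappa_j$. Both the symmetry relations ($\kappa_j$-reorderings produce the sign of the permutation on each side) and the boundary operators ($\partial$ acts by concatenating adjacent blocks with the usual alternating sign) agree, so $\varphi$ is an isomorphism of chain complexes.

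Finally, I would apply Lemma \ref{lemma:barset} to $(S,\fR)$ with $\fR=\emptyset$, so that $|S|=n$ and the invariant of the lemma is $d=n$. The lemma then gives
\[
\HH_i(\bX_{\bullet,0}[S])\cong \HH_i(\bZ_{\bullet}(S,\emptyset))=\begin{cases}\Z & \text{if }i=n-2,\\ 0 & \text{otherwise.}\end{cases}
\]
Since $n\geq 4$ forces $n-2\geq 2$, in particular $\HH_i(\bX_{\bullet,0}[S])=0$ for $i\leq 1$, as required. The only real content is the bookkeeping identification of $\bX_{\bullet,0}[S]$ with $\bZ_{\bullet}(S,\emptyset)$; there are no estimates or differentials to compute by hand.
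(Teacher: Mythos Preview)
Your proposal is correct and is exactly the paper's argument: identify $\bX_{\bullet,0}[S]\cong\bZ_{\bullet}(S,\emptyset)$ and apply Lemma~\ref{lemma:barset} with $d=|S|=n$, so the only nonvanishing homology is in degree $n-2\geq 2$. The paper states the isomorphism in one line (``by definition''), while you spell out the bookkeeping, but the content is identical.
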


By definition, we have
$\bX_{\bullet,0}[S] \cong \bZ_{\bullet}(S,\emptyset)$.  It therefore
follows from Lemma \ref{lemma:barset} that $\HH_i(\bX_{\bullet,0}[S]) = 0$
for $i \leq n-3$.  For $n \geq 4$ this vanishing holds for $i \leq 1$, as desired.

\begin{claim}{1}
\label{claim:exactness.1}
Let $S = \{L_1,\ldots,L_{n+1}\}$ be an unordered augmented partial frame of $\Z^n$
that spans $\Z^n$.  Then $\HH_i(\bX_{\bullet,1}[S]) = 0$ in the following two situations:
\begin{itemize}
\item if $i \leq 0$ and $n \geq 5$; or
\item if $i \leq -1$ and $n = 4$.
\end{itemize}
\end{claim}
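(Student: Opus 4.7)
The plan is to identify $\bX_{\bullet,1}[S]$ with a restricted bar complex of the form $\bZ_{\bullet}(S,\{R\})$ from Lemma \ref{lemma:barset} and then read off the homology from that lemma.

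First I would determine the structure of $S$. Since $|S| = n+1$ and $S$ spans $\Z^n$, any decomposition $S = F \sqcup F'$ into a partial frame $F$ and augmentation $F'$ must have $|F| = n$ and $|F'| = 1$: indeed, if $|F| < n$ then $F$ would have rank at most $n-1$, and since all augmentation lines lie in $\Span{F}$, the whole set $S$ would also lie in $\Span{F}$ and could not span $\Z^n$. So $F$ is a full frame of $\Z^n$ and $F'$ is a single augmentation line arising from either a 2-element or 3-element subset of $F$. Let $R \subset S$ consist of the augmentation line together with the frame lines it depends on, so $|R| \in \{3,4\}$. Matroidally, $R$ is the unique minimal linearly dependent subset of $S$.

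Next I would define a map from $\bX_{\bullet,1}[S]$ to $\bZ_{\bullet}(S,\{R\})$ sending a generator $\Gen{\kappa_1|\ldots|\kappa_{p+2}}$ of $\bX_{p,1}[S]$ to $[A_1|\ldots|A_{p+2}]$, where $A_j$ is the ordered subset of $S$ appearing in $\kappa_j$. For $\Gen{\kappa_1|\ldots|\kappa_{p+2}}$ to be a nonzero generator, the block $A_{j_0}$ where $i_{j_0} = 1$ must contain the circuit $R$ (so the lines of $A_{j_0}$ have one linear dependency, giving rank $|A_{j_0}|-1$ in $V_{j_0}$), while the remaining blocks must avoid containing $R$ (so their lines are independent frames). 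This matches exactly the constraint defining generators of $\bZ_p(S,\{R\})$. Both complexes carry the same symmetry relation under reorderings within blocks, and the differential $\partial$ on $\bX_{\bullet,1}(\Z^n)$ (which on the $q=1$ row carries no extra sign corrections from ranks, since those live on $\delta$) matches the bar differential on $\bZ_{\bullet}(S,\{R\})$ on the nose.

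Finally, Lemma \ref{lemma:barset} applied with $\fR = \{R\}$ yields $d = |S \setminus R| + 1 = n+2-|R|$, so $\HH_i(\bX_{\bullet,1}[S]) = \HH_i(\bZ_{\bullet}(S,\{R\}))$ is supported in degree $n-|R|$ and vanishes elsewhere. This is degree $n-3$ when $|R|=3$ and degree $n-4$ when $|R|=4$. For $n \geq 5$ both degrees are at least $1$, giving vanishing for $i \leq 0$; for $n = 4$ they are $1$ and $0$, giving vanishing for $i \leq -1$. The main step requiring care is the construction of the isomorphism — in particular, verifying that the uniqueness of the circuit $R$ and the sign conventions in the double complex restrict cleanly to the standard bar signs of $\bZ_{\bullet}(S,\{R\})$.
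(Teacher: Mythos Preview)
Your proposal is correct and follows essentially the same approach as the paper: identify $\bX_{\bullet,1}[S]$ with $\bZ_{\bullet}(S,\{R\})$ for the unique circuit $R \subset S$ and apply Lemma~\ref{lemma:barset}. The paper splits into the two cases $|R|=3$ and $|R|=4$ explicitly via Example~\ref{example:x1}, whereas you treat them uniformly using the matroid language; the content is the same.
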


As we described in Example \ref{example:x1}, there exists a basis
$\{v_1,\ldots,v_n\}$ for $\Z^n$ such that $S$
is of one of the following two forms:
\begin{itemize}
\item[(i)] we have $S =\{\Span{v_1},\ldots,\Span{v_n},\Span{\epsilon_1 v_1 + \epsilon_2 v_2}\}$
with $\epsilon_1,\epsilon_2 \in \{\pm 1\}$.
\item[(ii)] we have $S =\{\Span{v_1},\ldots,\Span{v_n},\Span{\epsilon_1 v_1 + \epsilon_2 v_2+\epsilon_3 v_3}\}$
with $\epsilon_1,\epsilon_2,\epsilon_3 \in \{\pm 1\}$.
\end{itemize}
Assume first that we are in case (i), so
$S = \{\Span{v_1},\ldots,\Span{v_n},\Span{\epsilon_1 v_1 + \epsilon_2 v_2}\}$
with $\epsilon_1,\epsilon_2 \in \{\pm 1\}$.  In the generators
of $\bX_{\bullet,1}[S]$, the bar symbols $|$ cannot
separate the three terms in
$R = \{\Span{v_1},\Span{v_2},\Span{\epsilon_1 v_1+\epsilon_2 v_2}\}$.
It follows that
$\bX_{\bullet,1}[S] \cong \bZ_{\bullet}(S,\{R\})$.
Lemma \ref{lemma:barset} thus implies that
$\HH_i(\bX_{\bullet,1}[S]) = 0$
for 
\[i \leq d-3 \quad \text{for $d = (n+1 - 3) + 1 = n-1$}.\]
For $n \geq 4$ this vanishing holds for $i \leq 0$, giving the claim.

Assume next that we are in case (ii), so
$S = \{\Span{v_1},\ldots,\Span{v_n},\Span{\epsilon_1 v_1 + \epsilon_2 v_2+\epsilon_3 v_3}\}$
with $\epsilon_1,\epsilon_2,\epsilon_3 \in \{\pm 1\}$.  Letting
\[R' = \{\Span{v_1},\Span{v_2},\Span{v_3},\Span{\epsilon_1 v_1 + \epsilon_2 v_2+\epsilon_3 v_3}\},\]
just like in the previous paragraph we have
$\bX_{\bullet,1}[S] \cong \bZ_{\bullet}(S,\{R'\})$.
It therefore follows from Lemma \ref{lemma:barset} that $\HH_i(\bX_{\bullet,1}[S]) = 0$
for 
\[i \leq d-3 \quad \text{for $d = (n+1 - 4) + 1 = n-2$}.\]
For $n=4$ this vanishing holds for $i=-1$, while for $n \geq 5$ this vanishing
holds for $i \leq 0$.  

Assume now that $n=4$.  For later use, note that Lemma \ref{lemma:barset} also says that
\begin{equation}
\label{eqn:badcase}
\HH_0(\bX_{\bullet,1}[S]) \cong \Z.
\end{equation}
We can write down a generator for $\HH_0(\bX_{\bullet,1}[S])$ as follows.  For $1 \leq j \leq 4$, 
let $L_j = \Span{v_j}$.  Define $L_{123} = \Span{\epsilon_1 v_1 + \epsilon_2 v_2+\epsilon_3 v_3}$.
Using Convention \S \ref{convention:omitbrackets}, let
\[\kappa = \Gen{L_4|L_1, L_2, L_3, L_{123}} - \Gen{L_1,L_2,L_3,L_{123} | L_4} \in \bX_{01}(\Z^4).\]
We have
\begin{align*}
\partial \kappa &= \Gen{L_4,L_1,L_2,L_3,L_{123}} - \Gen{L_1,L_2,L_3,L_{123},L_4} \\
                &= \Gen{L_1,L_2,L_3,L_{123},L_4} - \Gen{L_1,L_2,L_3,L_{123},L_4} = 0,
\end{align*}
so we have $[\kappa] \in \HH_0(\bX_{\bullet,1}(\Z^4))$.  It follows from the
proof of Lemma \ref{lemma:barset} that $[\kappa]$ generates $\HH_0(\bX_{\bullet,1}[S]) \cong \Z$.

\begin{claim}{2}
\label{claim:exactness.2}
Let $S=\{L_1,\ldots,L_{n+2}\}$ be an augmented partial frame of $\Z^n$
that spans $\Z^n$.  Then
$\HH_{-1}(\bX_{\bullet,2}[S]) = 0$ for $n \geq 4$.
\end{claim}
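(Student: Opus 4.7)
The plan is to adapt the strategy of Claims 0 and 1: establish a chain isomorphism $\bX_{\bullet,2}[S] \cong \bZ_\bullet(S, \fR)$ for an appropriate family $\fR$ of pairwise disjoint subsets of $S$, and then invoke Lemma \ref{lemma:barset}. By Example \ref{example:x2}, up to reordering $S$ is of one of four types, each described in terms of a basis $v_1, \ldots, v_n$ of $\Z^n$ with $L_j = \Span{v_j}$. In Type A, $S$ contains the nested augmentations $L_{12} = \Span{\epsilon_1 v_1 + \epsilon_2 v_2}$ and $L_{123} = \Span{\epsilon_1 v_1 + \epsilon_2 v_2 + \epsilon_3 v_3}$, and I would take $\fR = \{\{L_1, L_2, L_3, L_{12}, L_{123}\}\}$. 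In Type B, $S$ contains $L_{12}$ and $L_{34} = \Span{\epsilon_3 v_3 + \epsilon_4 v_4}$, with $\fR$ consisting of the two triples $\{L_1, L_2, L_{12}\}$ and $\{L_3, L_4, L_{34}\}$. In Type C (where Example \ref{example:x2} already requires $n \geq 5$), $\fR$ consists of $\{L_1, L_2, L_3, L_{123}\}$ and $\{L_4, L_5, L_{45}\}$. In Type D (where $n \geq 6$), $\fR$ consists of $\{L_1, L_2, L_3, L_{123}\}$ and $\{L_4, L_5, L_6, L_{456}\}$.

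The heart of the argument is the verification that a generator $\Gen{\kappa_1 | \ldots | \kappa_{p+2}}$ of $\bX_{p,2}[S]$ is a valid nonzero generator precisely when the induced set-partition of $S$ places every $R \in \fR$ inside a single block. The ``only if'' direction is a direct-sum compatibility check: for instance, in Type A, if $L_1$ and $L_{12}$ lay in distinct blocks with spans $V$ and $V'$, then $v_2 = \pm(\epsilon_1 v_1 + \epsilon_2 v_2) \mp v_1 \in V + V'$, so wherever $L_2$ ends up the requirement $V_i \cap V_j = 0$ is violated (placing $L_2$ in $V$ or $V'$ yields nonzero intersection $\Span{v_1+v_2}$ or $\Span{v_1}$; placing it in a third block contradicts $v_2 \in V+V'$). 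A brief extension of this argument then forces $L_3$ and $L_{123}$ into the same block as $L_1, L_2, L_{12}$. The converse direction is automatic once one observes that $\sum_j(|A_j| - \rank V_j) = (n+2) - n = 2$ matches the required total augmentation count, so the resulting spans form complementary direct summands of $\Z^n$. Compatibility of signs and boundaries between $\bX_{\bullet,2}[S]$ and $\bZ_\bullet(S, \fR)$ is then tautological.

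With the isomorphism in hand, Lemma \ref{lemma:barset} yields $\HH_{-1}(\bX_{\bullet,2}[S]) = 0$ whenever $d := |S \setminus \bigsqcup_R R| + |\fR| \neq 1$. A direct tally gives $d = n-2$ in Types A and B, $d = n-3$ in Type C, and $d = n-4$ in Type D. Combined with the type-specific existence bounds, the hypothesis $n \geq 4$ always gives $d \geq 2$, so $\HH_{-1} = 0$ in every case. The main obstacle is the direct-sum verification in Type A, where the nested structure tempts one to view $L_{123}$ as an augmentation of $\{L_{12}, L_3\}$ rather than of $\{L_1, L_2, L_3\}$; one must carefully check that this alternative viewpoint does not permit an exotic valid partition separating $L_{12}$ from $L_1, L_2$, which ultimately follows from the argument above applied symmetrically to the pairs $(L_1, L_{12}), (L_2, L_{12}), (L_3, L_{123})$.
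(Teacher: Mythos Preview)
Your proposal is correct and follows the same approach as the paper: classify $S$ into the four types of Example~\ref{example:x2}, identify $\bX_{\bullet,2}[S]$ with $\bZ_\bullet(S,\fR)$ for the appropriate $\fR$, and apply Lemma~\ref{lemma:barset}. Your choices of $\fR$ and your values of $d$ match the paper's exactly; the paper records the one-sided vanishing range $i\le d-3$ in a table and checks $-1\le d-3$ in each case, while you use the equivalent criterion $d\neq 1$. You also supply a direct-sum argument for the chain isomorphism that the paper leaves implicit by analogy with Claim~\ref{claim:exactness.1}.
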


Assume that $n \geq 4$.  As we described in Example \ref{example:x2}, there exists a basis
$\{v_1,\ldots,v_n\}$ for $\Z^n$ such that the unordered set $S$
is of one of the following four forms:
\begin{itemize}
\item[(i)] we have 
$S = \{\Span{v_1},\ldots,\Span{v_n},\Span{\epsilon_1 v_1+\epsilon_2 v_2},\Span{\epsilon_1 v_1 + \epsilon_2 v_2+\epsilon_3 v_3}\}$
with $\epsilon_1,\epsilon_2,\epsilon_3 \in \{\pm 1\}$.
\item[(ii)] we have 
$S=\{\Span{v_1},\ldots,\Span{v_n},\Span{\epsilon_1 v_1+\epsilon_2 v_2},\Span{\epsilon_3 v_3 + \epsilon_4 v_4}\}$
with $\epsilon_1,\ldots,\epsilon_4 \in \{\pm 1\}$.  
\item[(iii)] we have
$S=\{\Span{v_1},\ldots,\Span{v_n},\Span{\epsilon_1 v_1+\epsilon_2 v_2+\epsilon_3 v_3},\Span{\epsilon_4 v_4 + \epsilon_5 v_5}\}$
with $\epsilon_1,\ldots,\epsilon_5 \in \{\pm 1\}$.  This third case
only appears when $n \geq 5$.
\item[(iv)] we have
$S=\{\Span{v_1},\ldots,\Span{v_n},\Span{\epsilon_1 v_1+\epsilon_2 v_2+\epsilon_3 v_3},\Span{\epsilon_4 v_4 + \epsilon_5 v_5 + \epsilon_6 v_6}\}$
with $\epsilon_1,\ldots,\epsilon_6 \in \{\pm 1\}$.  This fourth case
only appears when $n \geq 6$.
\end{itemize}
The analysis of each of these cases uses Lemma \ref{lemma:barset} just like in Claim \ref{claim:exactness.1}.
In each case, there is some $\fR$ such that
\[\bX_{\bullet,2}[S] \cong \bZ_{\bullet}(S,\fR).\]
We summarize the $\fR$ and the resulting vanishing range
\[i \leq ((n+2)-\left|\bigsqcup_{R \in \fR} R\right|) + |\fR| - 3\] 
for $\HH_i(\bX_{\bullet,2}[S]) \cong \HH_i(\bZ_{\bullet}(S,\fR))$ in the following table:
\begin{center}
\begin{tblr}{l|l|l}
      & {\bf $\mathbf{\fR = \{R_1,\ldots,R_k\}}$ with}  & {\bf vanishing range for $\mathbf{\HH_i}$} \\
\hline
(i)   & $R_1=\{\Span{v_1},\Span{v_2},\Span{v_3},\Span{\epsilon_1 v_1+\epsilon_2 v_2},$     & $((n+2)-5)+1-3$ \\
      & \hspace{\widthof{$R_2=$\ }}$\Span{\epsilon_1 v_1 + \epsilon_2 v_2+\epsilon_3 v_3}\}$ & $=n-5$ \\
\hline
(ii)  & $R_1 = \{\Span{v_1},\Span{v_2},\Span{\epsilon_1 v_1+\epsilon_2 v_2}\}$   & $((n+2)-6)+2-3$\\
      & $R_2 = \{\Span{v_3},\Span{v_4},\Span{\epsilon_3 v_3 + \epsilon_4 v_4}\}$ & $=n-5$ \\
\hline
(iii) & $R_1 = \{\Span{v_1},\Span{v_2},\Span{v_3},\Span{\epsilon_1 v_1+\epsilon_2 v_2+\epsilon_3 v_3}\}$ & $((n+2)-7)+2-3$\\
      & $R_2 = \{\Span{v_4},\Span{v_5},\Span{\epsilon_4 v_4 + \epsilon_5 v_5}\}$                         & $n-6$ \\
\hline
(iv)  & $R_1 = \{\Span{v_1},\Span{v_2},\Span{v_3},\Span{\epsilon_1 v_1+\epsilon_2 v_2+\epsilon_3 v_3}\}$ & $((n+2)- 8)+2-3$\\
      & $R_2 = \{\Span{v_4},\Span{v_5},\Span{v_6},\Span{\epsilon_4 v_4+\epsilon_5 v_5+\epsilon_6 v_6}\}$ & $=n-7$
\end{tblr}
\end{center}
Since $n \geq 4$ in all cases by assumption and additionally $n \geq 5$ in case (iii) and $n \geq 6$ in case (iv), we conclude that
in all cases $\HH_{-1}(\bX_{\bullet,2}[S]) = 0$.

\begin{claim}{(surjectivity)}
For $n=4$, the differential $\delta\colon \ssE^1_{20} \rightarrow \ssE^1_{10}$ is surjective.
\end{claim}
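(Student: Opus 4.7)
The plan is to exhibit, for each generator of $\ssE^1_{10}$, an explicit preimage in $\ssE^1_{20}$ under $\delta$. By Claim \ref{claim:exactness.1}, for $n=4$ only type (ii) unordered augmented partial frames $S$ of size $5$ contribute to $\ssE^1_{10} = \HH_0(\bX_{\bullet,1}(\Z^4))$, and each summand $\HH_0(\bX_{\bullet,1}[S]) \cong \Z$ is generated by the class of
\[\kappa_S = \Gen{L_4 \mid L_1, L_2, L_3, L_{123}} - \Gen{L_1, L_2, L_3, L_{123} \mid L_4},\]
where $S = \{L_1, L_2, L_3, L_4, L_{123}\}$ with $L_j = \Span{v_j}$ and $L_{123} = \Span{\epsilon_1 v_1 + \epsilon_2 v_2 + \epsilon_3 v_3}$. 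It therefore suffices to realize each $[\kappa_S]$ in the image of $\delta$.

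For each such $S$ set $L_{12} = \Span{\epsilon_1 v_1 + \epsilon_2 v_2}$, so that $\Gen{L_1, L_2, L_3, L_{12}, L_{123}}$ is a type-(i) generator of $\bX_2(\Z^3)$ in the sense of Example \ref{example:x2}. Define the candidate preimage
\[\eta_S = \Gen{L_4 \mid L_1, L_2, L_3, L_{12}, L_{123}} + \Gen{L_1, L_2, L_3, L_{12}, L_{123} \mid L_4} \in \bX_{02}(\Z^4).\]
First I would verify $\partial \eta_S = 0$: both $\partial$-images equal the generator $\Gen{L_1, L_2, L_3, L_4, L_{12}, L_{123}}$ of $\bX_2(\Z^4)$ up to the sign of the $5$-cycle that moves $L_4$ past the other five lines, which is odd; the two contributions therefore cancel, so $[\eta_S]$ descends to an element of $\ssE^1_{20}$.

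Next I would compute $\delta \eta_S$. Since $\delta L_4 = 0$, all contributions come from applying the internal differential to $\Gen{L_1, L_2, L_3, L_{12}, L_{123}}$, producing five subexpressions indexed by the line dropped. The single subexpression dropping $L_{12}$ equals $\Gen{L_1, L_2, L_3, L_{123}}$, which, after reassembling with the Koszul sign $(-1)^{r_j + i_j}$ in each slot, contributes exactly $\kappa_S$ to the summand $\bX_{01}[S]$. Each of the four remaining subexpressions produces a pair of terms supported in a summand $\bX_{01}[S']$, where $S'$ is one of
\[\{L_2, L_3, L_4, L_{12}, L_{123}\}, \ \{L_1, L_3, L_4, L_{12}, L_{123}\}, \ \{L_1, L_2, L_4, L_{12}, L_{123}\}, \ \{L_1, L_2, L_3, L_4, L_{12}\}.\]
After an appropriate change of basis (for instance, in the first case the basis $\{v_2, v_3, v_4, \epsilon_1 v_1 + \epsilon_2 v_2\}$ of $\Z^4$ realizes $L_{123}$ as a $2$-sum augmentation line), each such $S'$ is a type-(i) frame of $\Z^4$. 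Claim \ref{claim:exactness.1} therefore gives $\HH_0(\bX_{\bullet,1}[S']) = 0$, so these four extra contributions vanish in $\ssE^1_{10}$ and $[\delta \eta_S] = [\kappa_S]$. The hard part will be the sign bookkeeping: both the double-complex differential $\delta$ and the internal differential on each $\bX_\bullet(V)$ carry their own signs, and one must carefully check that the extra terms assemble into $\partial$-cycles in each type-(i) summand rather than mixing across summands; once organized, this is a direct computation.
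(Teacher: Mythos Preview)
Your proposal is correct and follows essentially the same approach as the paper: you construct the same preimage $\eta_S$ (up to a harmless reordering of $L_{12}$ within the five-element block), verify $\partial\eta_S=0$, and observe that $\delta\eta_S$ equals $\kappa_S$ plus four extra cycles supported on type-(i) augmented partial frames, which vanish in $\ssE^1_{10}$ by Claim~\ref{claim:exactness.1}. The paper carries out exactly this computation, and your concern about the extra terms mixing across summands is unfounded---each one is supported on a single $S'$ and is automatically a $\partial$-cycle there by the same parity argument you used for $\eta_S$.
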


As we observed at the end of the proof Claim \ref{claim:exactness.1} (see the paragraph after \eqref{eqn:badcase}), the term
$\ssE^1_{10}$ is generated by elements
$[\kappa]$ constructed as follows.  Let $v_1,\ldots,v_4$ be
a basis for $\Z^4$.  For $1 \leq j \leq 4$, set $L_j = \Span{v_j}$.  Next, 
let $\epsilon_1,\epsilon_2,\epsilon_3 \in \{\pm 1\}$ and set $L_{123} = \Span{\epsilon_1 v_1+\epsilon_2 v_2+\epsilon_3 v_3}$.
Define
\[\kappa = \Gen{L_4 | L_1, L_2, L_3, L_{123}} - \Gen{L_1,L_2,L_3,L_{123} | L_4} \in \bX_{01}(\Z^4).\]
We then have $\partial \kappa = 0$, so there is an element
$[\kappa] \in \HH_0(\bX_{\bullet,0}(\Z^4)) = \ssE^1_{10}$.

We must prove that $[\kappa]$ is in the image of the differential $\delta\colon \ssE^1_{20} \rightarrow \ssE^1_{10}$.
Set $L_{12} = \Span{\epsilon_1 v_1+\epsilon_2 v_2}$.  Define
\[\eta = \Gen{L_{12},L_1,L_2,L_3,L_{123} | L_4} + \Gen{L_4 | L_{12},L_1, L_2, L_3, L_{123}} \in \bX_{02}(\Z^4).\]
We have
\begin{align*}
\partial \eta &= \Gen{L_{12},L_1,L_2,L_3,L_{123},L_4} + \Gen{L_4, L_{12},L_1, L_2, L_3, L_{123}} \\
              &= \Gen{L_{12},L_1,L_2,L_3,L_{123},L_4} - \Gen{L_{12},L_1,L_2,L_3,L_{123},L_4} = 0.
\end{align*}
It follows that we have $[\eta] \in \HH_0(\bX_{\bullet,2}(\Z^4)) = \ssE^1_{20}$.
The differential $\delta\colon \ssE^1_{20} \rightarrow \ssE^1_{10}$ is induced by the differential
$\delta\colon \bX_{02}(\Z^4) \rightarrow \bX_{01}(\Z^4)$ discussed in \S \ref{section:doublecomplexdef}, which
in turn is induced by the differential discussed in \S \ref{section:chaincomplex}.  To prove the claim,
it is enough to prove that $[\delta \eta] = [\kappa]$. 

The element $\delta \eta$ is an alternating sum of the result of deleting terms from $\eta$.  If we delete
$L_4$ then the result is $0$.  From this, we see that $\delta \eta = \kappa+\kappa_2 + \cdots + \kappa_5$ with
\begin{align*}
\kappa   &= \Gen{L_1,L_2,L_3,L_{123} | L_4}     - \Gen{L_4 | L_1,L_2,L_3,L_{123}} \\
\kappa_2 &= -\Gen{L_{12},L_2,L_3,L_{123} | L_4} + \Gen{L_4 | L_{12},L_2,L_3,L_{123}} \\
\kappa_3 &= \Gen{L_{12},L_1,L_3,L_{123} | L_4}  - \Gen{L_4 | L_{12},L_1,L_3,L_{123}} \\
\kappa_4 &= -\Gen{L_{12},L_1,L_2,L_{123} | L_4} + \Gen{L_4 | L_{12},L_1,L_2,L_{123}} \\
\kappa_5 &= \Gen{L_{12},L_1,L_2,L_3 | L_4}      - \Gen{L_4 | L_{12},L_1,L_2,L_3}
\end{align*}
Each $\kappa_j$ satisfies $\partial \kappa_j = 0$.  For instance,
\begin{align*}
\partial \kappa_2 &= -\Gen{L_{12},L_2,L_3,L_{123},L_4} + \Gen{L_4,L_{12},L_2,L_3,L_{123}} \\
                  &= -\Gen{L_{12},L_2,L_3,L_{123},L_4} + \Gen{L_{12},L_2,L_3,L_{123},L_4} = 0.
\end{align*}
We therefore have elements $[\kappa_j] \in \HH_0(\bX_{\bullet,0}(\Z^4)) = \ssE^1_{10}$, and can
write
\[[\delta \eta] = [\kappa] + [\kappa_2] + \cdots + [\kappa_5].\]
To prove that $[\delta \eta] = [\kappa]$, it is enough to prove that all the $\kappa_j$ satisfy
$[\kappa_j] = 0$.  In fact, all of them are terms that we proved are zero during the proof
of Claim \ref{claim:exactness.1}.  For example, to see that $[\kappa_2] = 0$, let $\{v'_1,\ldots,v'_4\}$ be the
basis
\[(v'_1,v'_2,v'_3,v'_4) = (\epsilon_1 v_1+\epsilon_2 v_2,\epsilon_3 v_3,v_2,v_4).\]
The collection of lines $S = \{L_{12},L_2,L_3,L_{123},L_4\}$ used to form $\kappa_2$ are then of the form
\[S = \{\Span{v'_1},\Span{v'_2},\Span{v'_3},\Span{v'_4},\Span{v'_1 + v'_2}\}.\]
This makes it clear that $\kappa_1$ is one of the terms proved to vanish in case (i) of the
proof of Claim \ref{claim:exactness.1}.
\end{proof}

\end{document}